\begin{document}
\title{Quadratic relations between Bessel moments}

\author[J. Fres\'an]{Javier Fres\'an}
\address[J. Fres\'an]{CMLS, \'Ecole polytechnique, F-91128 Palaiseau cedex, France}
\email{javier.fresan@polytechnique.edu}
\urladdr{http://javier.fresan.perso.math.cnrs.fr}

\author[C.~Sabbah]{Claude Sabbah}
\address[C.~Sabbah]{CMLS, CNRS, École polytechnique, Institut Polytechnique de Paris, 91128 Palaiseau cedex, France}
\email{Claude.Sabbah@polytechnique.edu}
\urladdr{http://www.math.polytechnique.fr/perso/sabbah}

\author[J.-D. Yu]{Jeng-Daw Yu}
\address[J.-D. Yu]{Department of Mathematics, National Taiwan University,
Taipei 10617, Taiwan}
\email{jdyu@ntu.edu.tw}
\urladdr{http://homepage.ntu.edu.tw/~jdyu/}

\thanks{This research was partly supported by the PICS project TWN 8094 from CNRS. The research of J.F. was also partly supported by the grant ANR-18-CE40-0017 of Agence Nationale de la Recherche.}

\begin{abstract} Motivated by the computation of some Feynman amplitudes, Broadhurst and Roberts recently conjectured and checked numerically to high precision a set of remarkable quadratic relations between the Bessel moments
\begin{displaymath}
\int_0^\infty\hspace*{-2mm}I_0(t)^i K_0(t)^{k-i}t^{2j-1}\,\mathrm{d}t \qquad (i, j=1, \ldots, \lfloor (k-1)/2\rfloor),
\end{displaymath}
where $k \geq 1$ is a fixed integer and $I_0$ and $K_0$ denote the modified Bessel functions. In this paper, we interpret these integrals and variants thereof as coefficients of the period pairing between middle de Rham cohomology and twisted homology of symmetric powers of the Kloosterman connection. Building on the general framework developed in \cite{F-S-Y20}, this enables us to prove quadratic relations of the form suggested by Broadhurst and Roberts, which conjecturally comprise all algebraic relations between these numbers. We also make Deligne's conjecture explicit, thus explaining many evaluations of critical values of $L$-functions of symmetric power moments of Kloosterman sums in terms of determinants of Bessel moments. 
\end{abstract}

\keywords{Kloosterman connection, period pairing, quadratic relations, Bessel moments}

\subjclass[2010]{32G20, 34M35}

\maketitle
{\let\\\relax\tableofcontents}
\mainmatter

\section{Introduction}

Let $I_0(t)$ and $K_0(t)$ denote the modified Bessel functions of order zero, which are solutions to the ordinary differential equation~\hbox{$((t\partial_t)^2-t^2)u=0.$}
Since this equation has an irregular singularity at infinity,
it does not come from geometry in the usual sense of encoding how periods vary in a family of algebraic varieties. However, certain integrals of monomials in $I_0(t)$ and $K_0(t)$ called \emph{Bessel moments} are themselves periods, as shown for example by the identity (\cf \cite[(8.11)]{Vanhove14})
\[
\int_0^\infty\hspace*{-2mm}I_0(t) K_0(t)^{\ell+1}t \,\de t=\frac{1}{2^\ell}\int_{x_i \geq 0} \frac{1}{(1+\sum_{i=1}^\ell x_i)(1+\sum_{i=1}^\ell \sfrac{1}{x_i})-1}\prod_{i=1}^\ell \frac{\de x_i}{x_i}.
\]

In a series of papers and conference talks \cite{Broadhurst16, Broadhurst17, Broadhurst17b, BKLF, B-R18, Roberts17}, Broadhurst and Roberts put forward a program to understand the motivic origin of the Bessel moments
\begin{equation}\label{eqn:BesselmomentsMabc}
\int_0^\infty\hspace*{-2mm}I_0(t)^a K_0(t)^b t^c \,\de t.
\end{equation} An important insight of theirs was to look at counterparts of these integrals over finite fields, pursuing the analogy between the Bessel differential equation and the Kloosterman $\ell$-adic sheaf. Roughly speaking, $I_0(t)$ and $K_0(t)$ correspond to the eigenvalues of Frobenius, and out of them
one forms the $k$-th symmetric power moments of Kloosterman sums.
The generating series of these moments over finite extensions of $\FF_p$ is a polynomial with integer coefficients. After removing some trivial factors and handling primes of bad reduction, a global $L$-function~$L_k(s)$ is built with the reciprocals of these polynomials as local factors. Back at the beginning, Broadhurst, partly in joint work with Mellit \cite{B-M16} and Roberts, Bloch-Kerr-Vanhove \cite{BKV}, and Y.\,Zhou \cite{wick} proved or numerically checked in many cases that the critical values of these $L$\nobreakdash-functions agree up to rational factors and powers of $\pi$ with certain determinants of the Bessel moments~\eqref{eqn:BesselmomentsMabc}.

For technical reasons, we shall make the change of variables~$z=\sfrac{t^2}{4}$ and consider the associated rank-two vector bundle with connection on $\Gm$, which is called the \emph{Kloosterman connection} and denoted by $\Kl_2$.
Motives associated with symmetric powers of the Kloosterman connection were introduced in~\cite{F-S-Y18}. Namely, for each integer $k \geq 1$, we constructed a motive $\Motive_k$
over the rational numbers, which is pure of weight $k+1$, has rank $k'=\flr{(k-1)/2}$ (\resp $k'-1$)
if $k$ is not a multiple of $4$ (\resp if $k$ is a multiple of $4$),
and is endowed with a self-duality pairing
\begin{equation}\label{eqn:motivicpairing}
\Motive_k \otimes \Motive_k \to \QQ(-k-1)
\end{equation} that is symplectic if $k$ is even
and orthogonal if $k$ is odd. By design, the $L$-function of this motive coincides with the above $L$-function $L_k(s)$. The main result of that paper was the computation of the Hodge numbers of $\Motive_k$, which led to a proof that~$L_k(s)$ extends meromorphically to the complex plane and satisfies the expected functional equation.

In this paper, we investigate the period realizations of the motives $\Motive_k$.
By design, the de Rham realization of $\Motive_k$ is isomorphic to the middle de Rham cohomology of the~$k$-th symmetric power $\Sym^k \Kl_2$, which is defined as the image
\[
\coH^1_{\dR, \rmid}(\Gm, \Sym^k \Kl_2)=\image\bigl[\coH^1_{\dR,\cp}(\Gm, \Sym^k \Kl_2)\ra\coH^1_{\dR}(\Gm, \Sym^k \Kl_2)\bigr]
\] of compactly supported de Rham cohomology under the natural map to usual de Rham cohomology,
and comes with a perfect intersection pairing
$\DRpairing^\rmid_k$ realizing \eqref{eqn:motivicpairing}. Extending a computation from \cite[Prop.\,4.14]{F-S-Y18},
we exhibit a basis of middle de Rham cohomology in Section~\ref{sec:DRpairing}, which is natural in that it is adapted to the Hodge filtration,
and we present an explicit formula
to compute the matrix of $\DRpairing^\rmid_k$ on this basis. If $k$ is not a multiple of $4$, the basis is simply given by the classes~$\omega_i=[z^iv_0^k\rd z/z]$ for $1 \leq i \leq k'$, where $v_0$ is a specific section of $\Kl_2$.

Besides,
we shall prove that
the dual of the Betti realization of $\Motive_k$ is isomorphic to the middle twisted homology of $\Sym^k \Kl_2$, which is defined as the image
\[
\coH_1^\rmid(\Gm,\Sym^k\Kl_2)=\image\bigl[\coH^\rrd_1(\Gm,\Sym^k\Kl_2) \ra \coH^{\rmod}_1(\Gm, \Sym^k\Kl_2) \bigr]
\] of rapid decay homology under the natural map to moderate growth homology. Elements of these homology groups are represented by linear combinations of twisted chains $c \otimes e$,
where $c$ is a path and $e$ is a horizontal section of $\Sym^k \Kl_2$ that decays rapidly (\resp has moderate growth) on a neighborhood of $c$. These conditions ensure that de Rham (\resp compactly supported de Rham) cohomology classes can be integrated along them, thus giving rise to a period pairing
\[
\Ppairing^\rmid_{k}: \coH^\rmid_1(\Gm,\Sym^k\Kl_2) \otimes \coH^1_{\dR, \rmid}(\Gm, \Sym^k \Kl_2) \to \CC.
\]
This middle homology comes with a natural $\QQ$-structure and, likewise to middle de Rham cohomology, a perfect intersection pairing $\Bpairing^\rmid_k$ realizing the transpose of \eqref{eqn:motivicpairing}.
By analyzing the asymptotic behaviors
of products of modified Bessel functions,
we exhibit in Section \ref{sec:Bpairing} rapid decay homology classes $\alpha_i$ for~$0 \leq i \leq k'$ whose images in middle homology are non-zero for~$i \geq 1$.

Relying on the general results from the companion paper~\cite{F-S-Y20}, in particular the compatibility of the Betti and de Rham intersection pairings with the period pairing, we prove the following theorem. For simplicity, we only state it here when $k$ is not a multiple of $4$, postponing the full statements to Theorem~\ref{th:Smid}, Proposition~\ref{prop:Bk}, Theorem \ref{th:Bettibasis}, Theorem~\ref{th:middlequad}, and Corollary~\ref{cor:PmidasBessel}.

\begin{thm}\label{thm:maintheo-intro} Assume $k$ is not a multiple of $4$.
\begin{enumerate}
\item With respect to the basis $\{\omega_i\}_{1 \leq i \leq k'}$, the matrix of the de Rham intersection pairing $\DRpairing^\rmid_k$ is a lower\nobreakdash-right triangular matrix with coefficients in $\QQ$ and $(i, j)$ anti\nobreakdash-diagonal entries
\[
\begin{cases}
(-2)^{k'}\,\dfrac{k'!}{k!!}&\text{if $k$ is odd},\\[5pt]
\dfrac{(-1)^{k'+1}}{2^{k'}(j-i)}\cdot\dfrac{(k-1)!!}{(k'+1)!}&\text{if $k$ is even}.
\end{cases}
\]

\item The middle homology classes $\{\alpha_i\}_{1 \leq i \leq k'}$ form a basis and the matrix of the Betti intersection pairing $\Bpairing_k^\rmid$  on this basis is given by
\[
\Bpairing_k^\rmid= \begin{pmatrix}
(-1)^{k-i}\,\dfrac{(k-i)!(k-j)!}{k!}\, \dfrac{\Bern{k-i-j+1}}{(k-i-j+1)!}
\end{pmatrix}_{1 \leq i,j \leq k'},
\]
where $\Bern{n}$ denotes the $n$-th Bernoulli number.

\item With respect to the bases $\{\alpha_i\}_{1 \leq i \leq k'}$ and $\{\omega_j\}_{1 \leq j \leq k'}$, the matrix of the period pairing $\Ppairing^\rmid_k$ consists of the Bessel moments
\[
\Ppairing^\rmid_k=\begin{pmatrix}\dpl (-1)^{k-i}\,2^{k+1-2j}(\pii)^i\int_0^\infty I_0(t)^i K_0(t)^{k-i} t^{2j-1} \,\de t\end{pmatrix}_{1 \leq i, j \leq k'}.
\]

\item The following quadratic relations hold:
\[
\Ppairing_k^\rmid\cdot(\DRpairing_k^\rmid)^{-1}\cdot{}^t\Ppairing_k^\rmid=(-2\pii)^{k+1}\,\Bpairing_k^\rmid.
\]
\end{enumerate}
\end{thm}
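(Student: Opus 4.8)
The plan is to derive the matrix identity $\Ppairing_k^\rmid\cdot(\DRpairing_k^\rmid)^{-1}\cdot{}^t\Ppairing_k^\rmid=(-2\pii)^{k+1}\,\Bpairing_k^\rmid$ as a formal consequence of the compatibility between the three pairings, which is exactly the content of the general framework of \cite{F-S-Y20}. The starting point is that the period pairing $\Ppairing^\rmid_k$ is a perfect pairing between $\coH^\rmid_1(\Gm,\Sym^k\Kl_2)\otimes_\QQ\CC$ and $\coH^1_{\dR,\rmid}(\Gm,\Sym^k\Kl_2)\otimes_\QQ\CC$, so it induces a $\CC$-linear isomorphism between $\coH^\rmid_1\otimes\CC$ and the dual of $\coH^1_{\dR,\rmid}\otimes\CC$. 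Under this comparison isomorphism, the Betti intersection pairing $\Bpairing^\rmid_k$ and the de Rham intersection pairing $\DRpairing^\rmid_k$ must correspond to one another up to the Tate twist, since both realize the same motivic self-duality pairing \eqref{eqn:motivicpairing} valued in $\QQ(-k-1)$ and the comparison isomorphism between the Betti and de Rham realizations of $\QQ(-k-1)$ is multiplication by $(2\pii)^{k+1}$ (with a sign bookkeeping to be pinned down). In matrix terms, writing $P=\Ppairing^\rmid_k$ for the period matrix with respect to the bases $\{\alpha_i\}$ and $\{\omega_j\}$, the comparison isomorphism sends the basis $\{\alpha_i\}$ to the basis of $(\coH^1_{\dR,\rmid}\otimes\CC)^\vee$ with coordinates given by the rows of $P$; the de Rham pairing on $\coH^1_{\dR,\rmid}$ dualizes to $(\DRpairing^\rmid_k)^{-1}$ on the dual space (in the basis dual to $\{\omega_j\}$); and pulling this back along the rows of $P$ produces $P\cdot(\DRpairing^\rmid_k)^{-1}\cdot{}^tP$, which must then equal the Betti pairing matrix scaled by the Tate period, i.e.\ $(-2\pii)^{k+1}\Bpairing^\rmid_k$.

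The steps, in order, are as follows. First I would invoke the precise statement from \cite{F-S-Y20} asserting the compatibility of $\Bpairing^\rmid_k$, $\DRpairing^\rmid_k$ and $\Ppairing^\rmid_k$ — concretely, that for all middle homology classes $\gamma_1,\gamma_2$ one has $\Bpairing^\rmid_k(\gamma_1,\gamma_2)=(2\pii)^{-(k+1)}\cdot(\text{sign})\cdot\langle\gamma_1,\gamma_2\rangle_P$, where $\langle\cdot,\cdot\rangle_P$ denotes the bilinear form obtained by transporting $(\DRpairing^\rmid_k)^{-1}$ through the period isomorphism. Second, I would rewrite this identity in terms of the chosen bases: since parts (1)–(3) of the theorem (= Theorem~\ref{th:Smid}, Proposition~\ref{prop:Bk}, Theorem~\ref{th:Bettibasis}, Corollary~\ref{cor:PmidasBessel}) already give explicit matrices for all three pairings on the bases $\{\alpha_i\}$ and $\{\omega_j\}$, the abstract compatibility becomes the asserted matrix equation once the constant is identified as $(-2\pii)^{k+1}$. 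Third — and this is where genuine care is needed — I would verify that the sign and the power of $2\pii$ coming out of the general formalism of \cite{F-S-Y20} match $(-2\pii)^{k+1}$ exactly; this involves tracking the normalization of the period pairing (the factors $(-1)^{k-i}2^{k+1-2j}(\pii)^i$ in part~(3) are not accidental, and are precisely engineered so that the clean relation in part~(4) holds), the orientation conventions on the twisted cycles $\alpha_i$, and the conventions for the duality pairing on $\QQ(-k-1)$.

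The main obstacle is this third step: reconciling the bookkeeping of signs, factors of $2$, and powers of $\pii$ between the normalization chosen for $\Ppairing^\rmid_k$ in Corollary~\ref{cor:PmidasBessel} and the intrinsic normalization of the period/intersection comparison in \cite{F-S-Y20}. The abstract statement "the three pairings are compatible" is cheap; extracting from it the sharp constant $(-2\pii)^{k+1}$ with the correct sign requires pinning down every convention. A useful sanity check, which I would carry out, is to test the identity in the smallest nontrivial cases (e.g.\ $k=5$ or $k=6$, where $k'$ is small) by comparing with the explicitly known Bessel moments and Bernoulli-number matrices, and also to check consistency with the functional equation / self-duality: taking determinants on both sides forces $\det(P)^2=(-2\pii)^{(k+1)k'}\det(\DRpairing^\rmid_k)\det(\Bpairing^\rmid_k)$, which should reproduce the Deligne-conjecture evaluations alluded to in the introduction and thus provides an independent consistency constraint on all the normalizations.
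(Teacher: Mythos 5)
Your proposal addresses only part (4) of the theorem, and there it essentially coincides with the paper's own argument: Theorem~\ref{th:middlequad} is deduced by directly specializing the general compatibility of the Betti, de Rham, and period pairings on middle (co)homology, namely formula (3.48) in \cite[\S3.l]{F-S-Y20}, to the $\Sym^k\Kl_2$ setting. Your remark that the residual subtlety is reconciling the sign and power of $2\pii$ with the normalizations built into Corollary~\ref{cor:PmidasBessel} is accurate, and the determinant sanity check you propose is precisely the one the paper exploits (cf.\ Remark~\ref{rem:detDRpairing} and the $k=8$ example). So, as far as it goes, part (4) is handled correctly and by the same route.

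The genuine gap is that parts (1), (2), and (3) --- which you explicitly treat as already established by citing Theorem~\ref{th:Smid}, Proposition~\ref{prop:Bk}, Theorem~\ref{th:Bettibasis}, and Corollary~\ref{cor:PmidasBessel} --- are where nearly all of the work lives, and nothing in your proposal proves them. Part (1) requires constructing formal solutions $\wh m_{i,\infty}$ of $\wh\nabla\wh m_{i,\infty}=\iota_{\wh\infty}(\omega_i)$ near $\infty$ and extracting residues (Proposition~\ref{prop:solinfty}, Theorem~\ref{th:Smid}); the triangularity and the anti-diagonal entries depend on non-obvious combinatorial identities such as $\sum_{d=0}^{k'+1}\frac{(2d-1)!!\,(2d'-1)!!}{d!\,d'!}=2^{k'+1}$. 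Part (2) requires building the rapid-decay cycles $\alpha_i$ as corrected linear combinations of chains whose coefficients $C_{k-i+1}(a)$ are engineered, via a monodromy calculation (Lemma~\ref{lemma:relation:c_n}), to cancel boundaries; computing the pairwise intersection indices with $\beta_j$ by a geometric deformation argument; and reducing the resulting sums to Bernoulli numbers via Faulhaber's formula (Lemma~\ref{lemma:c_tau_bernoulli}, Proposition~\ref{prop:Bk}). The claim that the $\alpha_i$ form a basis further depends on the non-vanishing of a Hankel-type Bernoulli determinant, which the paper evaluates through Lommel-polynomial orthogonality (Lemma~\ref{lem:Bernoullidet}, Proposition~\ref{prop:Bernoullidet}, Theorem~\ref{th:Bettibasis}). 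Part (3) requires deforming the cycles $\alpha_i$ so that the circle contributions vanish using the $\log$-type asymptotics of $K_0$ at $0$, tracking the flat sections $e_0,e_1$ back to $v_0$ via \eqref{eq:ve}, and performing the substitution $z=t^2/4$ (Proposition~\ref{prop:Pij}). None of this is addressed, so the proposal establishes at most one of the four claims and cannot stand as a proof of the full theorem.
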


Quadratic relations of the shape $\Ppairing_k^\BR\cdot \Dpairing_k^\BR\cdot {}^t\Ppairing_k^\BR = \Bpairing_k^\BR$ were conjectured by Broadhurst and Roberts in \cite{B-R18}. As we explain in Section \ref{subsec:BR}, their matrices $\Ppairing_k^\BR$ and~$\Bpairing_k^\BR$ coincide with ours up to different normalizations, but we were unfortunately unable to prove that, again up to normalization,
the inverse of~$\DRpairing_k^\rmid$ satisfies the recursive formulas
defining their matrix $\Dpairing_k^\BR$.
Nevertheless, we checked numerically that both matrices agree for $k \leq 22$,
which is the limit for reasonable computation time with Maple.

Grothendieck's period conjecture predicts that the transcendence degree of the field of periods of $\Motive_k$ agrees with the dimension of its motivic Galois group. Since the Betti intersection pairing is motivic,~this is a subgroup of the general orthogonal group~$\mathrm{GO}_{k'}$ if $k$ is odd and of the general symplectic group~$\mathrm{GSp}_{k'}$ (\resp $\mathrm{GSp}_{k'-1}$) if $k$ is even and not a multiple of $4$ (\resp if~$k$ is a multiple of $4$). Broadhurst and Roberts conjecture that this inclusion is an equality, which would mean that for fixed $k$ the quadratic relations from Theorem~\ref{thm:maintheo-intro} conjecturally exhaust all algebraic relations between the Bessel moments. 

Finally, in Section \ref{subsec:Deligne} we make Deligne's conjecture explicit for the critical values of~$L_k(s)$ by identifying the periods that are expected to agree with them up to a rational factor with some determinants of Bessel moments already considered by Broadhurst and Roberts. Prior to that, we identify in Section \ref{sect:Lfunctions} the period structure of the motive $\Motive_k$ with the period structure attached to the middle cohomology of $\Sym^k\Kl_2$ by means of Theorem \ref{thm:maintheo-intro}. For that purpose, the appendix develops the necessary tools in a general setting of exponential mixed Hodge structures, complementing thereby the appendix of \cite{F-S-Y18}.

\begin{notation}\label{nota:k'}
We refer the reader to \cite{F-S-Y20} for the general setting of de Rham cohomology and twisted homology of vector bundles with connection, as well as the intersection forms and period pairings on these spaces.
Throughout this article, we use the following notation and conventions:
\begin{itemize}
\item
Given an integer $k\geq1$, we set
\[
k'=\flr{\psfrac{k-1}{2}}\ \text{(\ie $k=2k'+1$ for odd $k$ and $k=2(k'+1)$ for even $k$}).
\]
\item
Since the case where $k$ is a multiple of $4$ plays a special role throughout, we use the simplified common notation:
\[
\lcr1,k'\rcr=\begin{cases}
\{1,\dots,k'\}&\text{if }\knotfour,\\
\{1,\dots,k'\}\moins\{k/4\}&\text{if }\kfour,
\end{cases}
\]
so that
\[
\#\lcr1,k'\rcr=\begin{cases}
k'&\text{if }\knotfour,\\
k'-1&\text{if }\kfour.
\end{cases}
\]
We will consider square matrices indexed by $i,j\in\lcr1,k'\rcr$ that, when~$k$ is a multiple of $4$, are obtained from a $k' \times k'$ matrix by deleting the row and column of index $k/4$.

\item For integers $m \leq 0$, the factorial $m!$ and double factorial $m!!$ are given the value $1$.

\item
For each integer $n \geq 0$, we denote by $\Bern{n}$ the $n$-th Bernoulli number, \ie the $n$-th coefficient of the power series expansion
\[
\frac{x}{e^x-1}=\sum_{n=0}^\infty \Bern{n}\frac{x^n}{n!}.
\]

\item
The base torus is denoted by $\Gmz$,
and is regarded as included in the affine line with coordinate $z$.
The coordinate $1/z$ is denoted by $w$.
We also consider the degree two morphism $\rho_2:\Gmt\to\Gmz$ which, at the ring level, is defined by $z\mto t^2/4$.
\end{itemize}
\end{notation}

\subsubsection*{Acknowledgments} It is our pleasure to thank David Broadhurst and David Roberts for useful correspondence on the subject of their conjecture.

\subsubsection*{Added on the revised version}
Recently, Y.\,Zhou also obtained in \cite{Zhou20} the existence of quadratic relations as conjectured by Broadhurst and Roberts, with a different interpretation of the matrix $\Dpairing_k$ however. The methods are completely different from those of the present article and rely on the previous works of the author.

\section{Pairings on the \texorpdfstring{$\Kl_2$}{Kl2} connection and its moments}\label{sect:pairingsKl2}

In this section, we explain the algebraic duality pairing on $\Sym^k \Kl_2$ that gives rise to the de Rham intersection pairing.
On the other hand, we endow the associated local system of flat sections
$\Sym^k \Kl_2^\nabla$
with a $\QQ$-structure and a topological duality pairing that will give rise to the Betti intersection pairing.

\subsection{The \texorpdfstring{$\Kl_2$}{Kl2} connection}\label{subsec:Kl2connection}

We first recall the definition of the $\Kl_2$ connection, referring the reader to \cite[\S4.1]{F-S-Y18} for more details. We denote by $\Gmx$ (\resp $\Gmz$) the torus $\Gm$ over the complex numbers with coordinate~$x$ (\resp $z$), and we define $f:\Gmx\times\Gmz\to\Afu$ as $f(x,z)=x+z/x$.

Let $\pi: \Gmx\times\Gmz \to \Gmz$ denote the projection to the second factor and $E^f$ the rank\nobreakdash-one vector bundle with connection $(\cO_{\Gmx\times\Gmz},\rd+\rd f)$ on $\Gmx\times\Gmz$. We define $\Kl_2$ as the pushforward (in the sense of $\cD$-modules) $\mathcal{H}^1\pi_+E^f$: this is a free $\cO_{\Gm}$-module of finite rank endowed with a connection having a regular singularity at the origin and an irregular one at infinity.
Since the varieties we work with are all affine,
it will be convenient to identify coherent sheaves with their global sections. To the sheaf $\mathcal{H}^1\pi_+E^f$
is then associated the module $\coH^1\pi_+E^f$ of global sections.
Fixing the generator $\rd x/x$ of relative differentials and denoting by $\partial_x$ the partial derivative with respect to the variable $x$, we then have
\[
\Kl_2=\coH^1\pi_+E^f=\coker\bigl[\CC[x,x^{-1},z,z^{-1}]\To{x\partial_x+(x-z/x)}\CC[x,x^{-1},z,z^{-1}]\bigr].
\]
It follows that $\Kl_2$ is the free $\CC[z,z^{-1}]$-module generated by the class $v_0$ of $\rd x/x$ and the class~$v_1$ of $\rd x$. The connection $\nabla$ on $\Kl_2$ satisfies
\[\arraycolsep3.5pt
z\nabla_{\partial_z}(v_0,v_1)=(v_0,v_1)\cdot\begin{pmatrix}0&z\\1&0\end{pmatrix},
\]
so that $v_0$ is a solution to the differential equation $((z\partial_z)^2-z)v=0$.

Let $j:\Gmx\hto\PP^1$ denote the inclusion. We write $j_\dag$ for the adjoint by duality of the push\-forward $j_+$, and similarly for $\pi$. The same argument as in \cite[App.\,2, Prop.\,(1.7) p.\,217]{Malgrange91} shows that the natural map $(j\times\id)_\dag E^f\to (j\times\id)_+E^f$ is an isomorphism. Projecting to $\Gmz$, we~deduce that
\begin{equation}\label{eq:dagplus}
\coH^1\pi_\dag E^f\to\coH^1\pi_+E^f
\end{equation}
is an isomorphism as well. Let us make this explicit. We set $x'=1/x$. By an argument similar to that of \hbox{\cite[Cor.\,3.5]{F-S-Y20},} we can represent an element of $\coH^1\pi_\dag E^f$ as a pair $(\wh\psi,\eta\rd x/x)=(\wh\psi,-\eta\rd x'/x')$, where
\begin{itemize}
\item
$\eta\in\CC[x,x^{-1},z,z^{-1}]$,
\item
$\wh\psi=(\wh\psi_0,\wh\psi_\infty)$, with
\[
\wh\psi_0\in\CC[z,z^{-1}]\lcr x\rcr[x^{-1}]\quand\wh\psi_\infty\in\CC[z,z^{-1}]\lcr x'\rcr[x'^{-1}],
\]
\end{itemize}
are such that the following holds:
\begin{equation}\label{eqn:definecs}
(x\partial_x+(x-z/x))\wh\psi_0=\iota_{\wh0}\eta,\quad (x'\partial_{x'}+(zx'-1/x'))\wh\psi_\infty=-\iota_{\wh\infty}\eta.
\end{equation}
Here, $\iota_{\wh0} : \CC[x,x^{-1},z,z^{-1}] \hookrightarrow \CC[z,z^{-1}]\lcr x\rcr[x^{-1}]$ denotes the natural inclusion,
and similarly for $\iota_{\wh\infty}$. On these representatives, the natural morphism \eqref{eq:dagplus} is given by $(\wh\psi,\eta\rd x/x)\mto\eta\rd x/x$. Checking that \eqref{eq:dagplus} is an isomorphism amounts to checking that, for any $\eta$ as above, there exists a unique~$\wh\psi$
such that the equations \eqref{eqn:definecs} hold.
Setting $\wh\psi_0=\sum_{n\geq n_0}\psi_{0,n}(z)x^n$
and $\wh\psi_\infty=\sum_{n\geq n_\infty}\psi_{\infty,n}(z)x^{\prime n}$,
\hbox{$\iota_{\wh0}\eta=\sum_n\eta_{0,n}x^n$,}
$\iota_{\wh\infty}\eta=\sum_n\eta_{\infty,n}x^{\prime n}$,
we determine $\psi_{0, n}(z)$ and $\psi_{\infty, n}(z)$ inductively by
\begin{equation}\label{eq:psi0infty}
\psi_{0,n+1}=z^{-1}(k\psi_{0,n}+\psi_{0,n-1}-\eta_{0,n}),
\quad
\psi_{\infty,n+1}=n\psi_{\infty,n}+z\psi_{\infty,n-1}+\eta_{\infty,n},
\end{equation}
thus showing explicitly that \eqref{eq:dagplus} is an isomorphism.

\begin{example}\label{exam:vdag}
The element of $\coH^1\pi_\dag E^f$ corresponding to $v_0$ (\resp $v_1$) is $(\wh\varphi,\rd x/x)$ (\resp $(\wh\psi,\rd x)$), where the elements $\wh\varphi$ and $\wh\psi$ determined by \eqref{eq:psi0infty} satisfy (note that $\eta=1$ \resp $\eta=x=1/x'$)
\[
\begin{cases}
\varphi_{0,\leq0}=0,\\
\varphi_{0,1}=-z^{-1},\\
\varphi_{\infty,\leq0}=0,\\
\varphi_{\infty,1}=1,
\end{cases}
\qquad
\begin{cases}
\psi_{0,\leq1}=0,\\
\psi_{\infty,<0}=0,\\
\psi_{\infty,0}=1,\\
\psi_{\infty,1}=0.
\end{cases}
\]
\end{example}

\subsection{Algebraic duality on \texorpdfstring{$\Kl_2$}{Kl2} and its moments}

Set
\[
D=\{0, \infty\} = \PP^1\setminus\Gm.
\]
Starting from the tautological pairing \hbox{$E^f \otimes E^{-f} \to(\cO_{\Gmx\times\Gmz},\rd)$},
we deduce a natural pairing
\[
\langle\cbbullet,\cbbullet\rangle:\coH^1\pi_\dag E^f\otimes\coH^1\pi_+E^{-f}\to\coH^2\pi_\dag\cO_{\Gmx\times\Gmz}\xrightarrow[\sim]{\textstyle~\res_\Div~}\CC[z^{\pm 1}], 
\]
where the isomorphism $\res_\Div$ stands for the residue along $D$
as in \cite[\S3.c]{F-S-Y20} (see also the proof of Lemma \ref{lemma:alg-pair-Kl2} below).
Let $\iota:\Gmx\times\Gmz\to\Gmx\times\Gmz$ denote the involution $(x,z)\mto(-x,z)$. Then $\iota^+E^{-f}=\nobreak E^f$, and this defines a canonical isomorphism $\mu:\coH^1\pi_+E^f\isom\coH^1\pi_+E^{-f}$ since $\pi\circ\iota=\pi$. Let us set
\[
(v_0^-,v_1^-)=\iota^*(v_0,v_1)=(\rd x/x,-\rd x),
\]
which we consider as a basis of $\coH^1\pi_+E^{-f}$.
Then the matrix of $z\nabla_{\partial_z}$ on $\coH^1\pi_+E^{-f}$
is equal to $\begin{smallpmatrix}0&z\\1&0\end{smallpmatrix}$, and the above isomorphism reads $\mu(v_0,v_1)=(v_0^-,v_1^-)$.

\begin{lemma}\label{lemma:alg-pair-Kl2}
The induced pairing
\[
\langle\cbbullet,\cbbullet\rangle_\alg:\coH^1\pi_+ E^f\otimes\coH^1\pi_+E^f\to\CC[z^{\pm 1}]
\]
defined by $\langle\cbbullet,\cbbullet\rangle_\alg=\langle\eqref{eq:dagplus}^{-1}\cbbullet,\mu\cbbullet\rangle$ satisfies
\[
\langle v_0,v_0\rangle_\alg=\langle v_1,v_1\rangle_\alg=0,\quad\langle v_0,v_1\rangle_\alg=-\langle v_1,v_0\rangle_\alg=1.
\]
\end{lemma}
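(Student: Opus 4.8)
The plan is to compute the four pairings $\langle v_a, v_b\rangle_\alg$ directly from the definitions. First I would unwind the chain of maps: by definition $\langle v_a, v_b\rangle_\alg = \langle \eqref{eq:dagplus}^{-1}v_a, \mu v_b\rangle$, so I need the lifts of $v_0$ and $v_1$ to $\coH^1\pi_\dag E^f$, which are exactly the pairs $(\wh\varphi, \rd x/x)$ and $(\wh\psi, \rd x)$ recorded in Example~\ref{exam:vdag}, together with the images $\mu v_0 = v_0^- = \rd x/x$ and $\mu v_1 = v_1^- = -\rd x$ in $\coH^1\pi_+E^{-f}$. The skew-symmetry $\langle v_0,v_1\rangle_\alg = -\langle v_1,v_0\rangle_\alg$ should either follow from a general antisymmetry property of the residue pairing combined with the interplay of $\iota$ and $\eqref{eq:dagplus}$, or else simply be checked as part of the four explicit computations; I would state it but lean on the direct calculation.

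The heart of the matter is to make the residue map $\res_\Div$ explicit at the level of Čech-type representatives. Following \cite[App.\,D]{F-S-Y20}, an element of $\coH^2\pi_\dag\cO_{\Gmx\times\Gmz}$ paired against the forms above is represented, near $x = 0$, by the product of the moderate-growth component $\wh\psi_0$ (or $\wh\varphi_0$) of the first argument with the form $\eta'\,\rd x/x$ coming from the second argument, and its image under $\res_\Div$ is the sum over the two points of $D$ of the coefficient of $x^{-1}\rd x$ (\resp $x'^{-1}\rd x'$, with the appropriate sign) in these local products. So for each of the four pairs I would: (i) write down the relevant local formal series from Example~\ref{exam:vdag}; (ii) multiply by the form representing $\mu v_b$; (iii) extract the residue at $0$ and at $\infty$ and add them with the sign conventions of \cite[App.\,D]{F-S-Y20}. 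For $\langle v_0, v_0\rangle_\alg$ one pairs $\wh\varphi$ (which starts in degree $\geq 1$ at both $0$ and $\infty$) against $\rd x/x$, and the product has no $x^{-1}\rd x$ term, giving $0$; similarly $\langle v_1,v_1\rangle_\alg = 0$ because $\wh\psi$ is supported in nonnegative powers of $x'$ (and vanishes near $0$) while the form is $\mp\rd x$. For $\langle v_0, v_1\rangle_\alg$ one pairs $\wh\varphi$ against $-\rd x$: near $\infty$, using $\rd x = -\rd x'/x'^2$ and $\varphi_{\infty,1} = 1$, the product $\wh\varphi_\infty\cdot(-\rd x)$ contributes a residue $1$, while the contribution at $0$ vanishes, yielding $1$; and $\langle v_1, v_0\rangle_\alg$ pairs $\wh\psi$ against $\rd x/x$, where $\psi_{\infty,0} = 1$ produces the residue $-1$.

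The main obstacle will be bookkeeping of signs and normalizations: pinning down precisely how $\res_\Div$ of \cite[App.\,D]{F-S-Y20} acts on the $H^2\pi_\dag$-representatives, how the two local contributions at $0$ and $\infty$ combine (including the sign hidden in $\rd x/x = -\rd x'/x'$ and in the Čech coboundary), and how the isomorphism $\eqref{eq:dagplus}^{-1}$ interacts with the decomposition $\wh\psi = (\wh\psi_0, \wh\psi_\infty)$. Once these conventions are fixed consistently with the companion paper, the four evaluations are short. I would therefore spend most of the write-up carefully recalling the residue formalism in this concrete rank-one-pushforward setting, and then dispatch the computations of $\langle v_a,v_b\rangle_\alg$ in a single short paragraph, cross-checking the outcome against the expected symplectic normalization $\langle v_0,v_1\rangle_\alg = 1$.
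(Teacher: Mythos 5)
Your plan is exactly the paper's proof: lift $v_0,v_1$ to $\coH^1\pi_\dag E^f$ via Example~\ref{exam:vdag}, pair against $v_0^-=\rd x/x$ and $v_1^-=-\rd x$, and read off $\res_\Div$ from the coefficients $\varphi_{0,0},\varphi_{\infty,0},\varphi_{0,-1},\varphi_{\infty,1},\psi_{0,-1},\psi_{\infty,1},\psi_{0,0},\psi_{\infty,0}$, with the sign at $\infty$ coming from $\rd x/x=-\rd x'/x'$. All four evaluations you sketch match the paper's (they also compute $\langle v_1,v_0\rangle_\alg$ directly rather than invoking skew-symmetry), so there is nothing to add.
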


In other words, we get a skew-symmetric perfect pairing on $\Kl_2$:
\begin{equation}\label{eq:pairingKl}
\langle\cbbullet,\cbbullet\rangle_\alg:(\Kl_2,\nabla)\otimes(\Kl_2,\nabla)\to(\cO_{\Gm},\rd),
\end{equation}
which amounts to a canonical isomorphism $\lambda_\alg:\Kl_2\to\Kl_2^\vee$ with the dual connection endowed with the dual basis $(v_0^\vee,v_1^\vee)$, by setting
\begin{align*}
\Kl_2&\To{\lambda_\alg}\Kl_2^\vee\\
(v_0,v_1)&\Mto{\hphantom{\lambda_\alg}}(-v_1^\vee,v_0^\vee).
\end{align*}

\begin{proof}
We compute with the notation of Example \ref{exam:vdag}. We find, on the one hand,
\begin{align*}
\langle v_0,v_0\rangle_\alg=\langle (\wh\varphi,v_0),v_0^-\rangle&=\res_\Div\wh\varphi\,\frac{\rd x}{x}=\varphi_{0,0}-\varphi_{\infty,0}=0,\\
\langle v_1,v_1\rangle_\alg=\langle (\wh\psi,v_1),v_1^-\rangle&=-\res_\Div\wh\psi\,\rd x=-\psi_{0,-1}+\psi_{\infty,1}=0,
\end{align*}
and, on the other hand,
\begin{align*}
\langle v_0,v_1\rangle_\alg=\langle (\wh\varphi,v_0),v_1^-\rangle&=-\res_\Div\wh\varphi\,\rd x=-\varphi_{0,-1}+\varphi_{\infty,1}=1,\\
\langle v_1,v_0\rangle_\alg=\langle (\wh\psi,v_1),v_0^-\rangle&=\res_\Div\wh\psi\,\frac{\rd x}{x}=\psi_{0,0}-\psi_{\infty,0}=-1.\qedhere
\end{align*}
\end{proof}

For each $k\geq1$, let $\symgp_k$ be the symmetric group
acting on the tensor power $\Kl_2^{\otimes k}$
by the natural permutation.
Let $\Sym^k\Kl_2$ be the symmetric power
regarded as the $\symgp_k$-invariant part of $\Kl_2^{\otimes k}$. We consider the basis $\bmu=(u_a)_{0\leq a\leq k}$ of
$\Sym^k\Kl_2$ given~by
\[
u_a=v_0^{k-a}v_1^a
= \frac{1}{|\symgp_k|} \sum_{\sigma\in\symgp_k}
\sigma (v_0^{\otimes k-a}\otimes v_1^{\otimes a}),
\] in which the connection reads
\begin{equation}\label{eq:ua}
z\partial_z u_a=(k-a)u_{a+1}+azu_{a-1}
\quad(0\leq a\leq k)
\end{equation} with the convention $u_{k+1}=0$. The pairing \eqref{eq:pairingKl} extends to $\Sym^k\Kl_2$, which is thus endowed with the following $(-1)^k$-symmetric pairing (compatible with the connection):
\begin{equation}\label{eq:selfduality}
\langle u_a,u_b\rangle_\alg=
\begin{cases}
(-1)^a\dfrac{a!\,b!}{k!}&\text{if }a+b=k,\\
0&\text{otherwise}.
\end{cases}
\end{equation}

\Subsection{The \texorpdfstring{$\QQ$}{Q}-structure of \texorpdfstring{$\Kl_2^\nabla$}{Kl2d} and its moments}\label{subsec:QKl2}

\subsubsection*{The $\QQ$-structure for a fixed non-zero $z$}
We start by considering the $\QQ$\nobreakdash-struc\-ture
on the fiber of the sheaf of analytic flat sections~$\Kl_2^\nabla$
at some $z\in\Gmz$. We consider the function $f_z:\Gmx\to\Afu$, defined as $f_z(x)=x+z/x$,  where~$z$ is a~fixed non-zero complex number, and $E^{f_z}=(\cO_{\Gmx},\rd+\rd f_z)$. Let $\PP^1_x$ be the projective closure of~$\Gmx$ and let $\wt\PP^1$ be the real oriented blow-up along $D=\{0,\infty\}$, which is topologically a closed annulus. We denote by $\wtj:\Gmx^\an\hto\wt\PP^1$ the inclusion of the open annulus into the closed one. On~$\wt\PP^1$,
the de~Rham complexes with rapid decay $\DR^\rrd(E^{f_z})$
and with moderate growth $\DR^\rmod(E^{f_z})$
have cohomology in degree zero only (\cf \cite[Th.\,2.30]{F-S-Y20}), and the natural map $\DR^\rrd(E^{f_z})\to\DR^\rmod(E^{f_z})$ is a quasi-isomorphism. Indeed, the function~$e^{-f_z}$ on~$\Gmx^\an$ has moderate growth 
near a point of the boundary $\partial\wt\PP^1$
if and only if it has rapid decay there. Above $x=0$, this amounts to $\arg x\in\arg z+(-\pi/2,\pi/2)\bmod2\pi$. Above $x=\infty$, this amounts to $\arg x\in(-\pi/2,\pi/2)\bmod2\pi$. We denote by $\wt\PP^1_\rrd$ the open set which is the union of $\Gmx^\an$ and these two boundary open intervals, so that we have natural open inclusions
\[
\Gmx^\an\Hto{a_z}\wt\PP^1_\rrd\Hto{b_z}\wt\PP^1.
\]
Then multiplication by $e^{-f_z}$ yields an isomorphism of sheaves of $\CC$-vector spaces
\begin{equation}\label{eq:comparison}
b_{z,!}\,a_{z,*}\CC_{\Gmx^\an}\isom\cH^0\DR^\rrd(E^{f_z})=\cH^0\DR^\rmod(E^{f_z}).
\end{equation}

\begin{defi}
The $\QQ$-subsheaf $\cH^0\DR^\rrd(E^{f_z})_\QQ \subset \cH^0\DR^\rrd(E^{f_z})$ is the image of $b_{z,!}\,a_{z,*}\QQ_{\Gmx^\an}$ under the above isomorphism.
\end{defi}

The Betti $\QQ$-structure on $\coH^1_\dR(\Gmx,E^{f_z})$ (\cf\cite[\S2.d]{F-S-Y20}) is defined by means of~\eqref{eq:comparison} as
\[
\coH^1_\dR(\Gmx,E^{f_z})_\QQ
=\coH^1(\wt\PP^1,b_{z,!}\,a_{z,*}\QQ_{\Gmx^\an})
=\coH^1_\rc(\wt\PP^1_\rrd,a_{z,*}\QQ_{\Gmx^\an}).
\]

We denote by (recall that $z\neq0$ is fixed)
\begin{itemize}
\item
$c_0^x$ the unit circle in $\Gmx^\an$ starting at $1$ and oriented counterclockwise;
\item
$c_z^x$ a~smooth oriented path in $\Gmx^\an$ starting in a direction $\arg x$ contained in $\arg z+(-\pi/2,\pi/2)\bmod2\pi$ at $x=0$, intersecting~$c_0^x$ transversally only once, so that the local intersection number (Kronecker index) $(c_z^x,c_0^x)$ is equal to one, and abutting to $x=\infty$ in a direction of~$\arg x$ contained in $(-\pi/2,\pi/2)\bmod2\pi$. The precise choice of $c_z^x$ will be made later. We also consider the path $c_z^{-x}$ obtained from $c_z^x$
by applying the involution $\iota\colon x\mto-x$.
\end{itemize}
They define twisted cycles $\alpha^x_z=-c_z^x\otimes e^{-f_z}$
and $\beta^x_z=c_0^x\otimes e^{-f_z}$
in rapid decay homology $\coH_1^\rrd(\Gmx,E^{f_z})$ (\cf\eg\cite[\S2.d]{F-S-Y20}).
Similarly, we set
\[
(\alpha^x_z)^\vee=-c_z^{-x}\otimes e^{f_z}\quand(\beta^x_z)^\vee=-c_0^x\otimes e^{f_z},
\]
which define twisted cycles in~$\coH_1^\rrd(\Gmx,E^{-f_z})$. As for the de Rham cohomology, the involution $\iota$ induces an isomorphism
$\coH_1^\rrd(\Gmx,E^{f_z}) \to \coH_1^\rrd(\Gmx,E^{-f_z})$
sending a rapid decay chain $s\mto a(s)\otimes e^{-f_z}$ to the rapid decay chain $s\mto -a(s)\otimes e^{f_z}$,
and thus inducing the corresponding Betti intersection pairing
\begin{align*}
\coH_1^\rrd(\Gmx,E^{f_z})\otimes\coH_1^\rrd(\Gmx,E^{f_z})
&\isom \coH_1^\rrd(\Gmx,E^{f_z})\otimes\coH_1^\rrd(\Gmx,E^{-f_z}) \\
&\to \coH_0(\Gmx, \CC) = \CC.
\end{align*}
This pairing is easily computed and has matrix~$\bigl(\begin{smallmatrix}0&1\\1&0\end{smallmatrix}\bigr)$, thus showing that $(\alpha^x_z,\beta^x_z)$ is a $\QQ$-basis of $\coH_1^\rrd(\Gmx,E^{f_z})_\QQ$.

For applying \cite[Prop.\,2.23]{F-S-Y20}, we use the topological duality pairing $\langle\cbbullet,\cbbullet\rangle_\topo$ on $\coH_1^\rrd(\Gmx,E^{f_z})$, which preserves the $\QQ$-structure since it is induced by Poincaré-Verdier duality. The following relation holds (\cf \hbox{\cite[(3.10)]{F-S-Y20}}):
\[
\langle\cbbullet,\cbbullet\rangle_\topo=2\pii\,\langle\cbbullet,\cbbullet\rangle_\alg.
\]
We let $(v_i^{\vee,\topo}=\frac{1}{2\pii}\,v_i^\vee)$ denote the dual basis of $(v_i)$ with respect to $\langle\cbbullet,\cbbullet\rangle_\topo$.

\begin{prop}\label{prop:Kl2Q}
The $\QQ$-vector space $\coH^1_\dR(\Gmx,E^{f_z})_\QQ$ is the $\QQ$-span of
\begin{align*}
e_0&=\Bigl(\frac{1}{2\pii}\int_{c_0^x}e^{-f_z}\,\rd x\Bigr)\cdot v_0-\Bigl(\frac{1}{2\pii}\int_{c_0^x}e^{-f_z}\,\frac{\rd x}{x}\Bigr)\cdot v_1
\quad\text{and} \\
e_1&=-\Bigl(\frac{1}{2\pii}\int_{c_z^x}e^{-f_z}\,\rd x\Bigr)\cdot v_0+\Bigl(\frac{1}{2\pii}\int_{c_z^x}e^{-f_z}\,\frac{\rd x}{x}\Bigr)\cdot v_1.
\end{align*}
\end{prop}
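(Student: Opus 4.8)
The plan is to compute the de~Rham cohomology class $[e^{-f_z}\rd x]$ in $\coH^1_\dR(\Gmx, E^{f_z})$ and express it in the basis $(v_0, v_1)$, after which the coefficients are forced by the pairing formula of \cite[Prop.\,2.24]{F-S-Y20} applied to the twisted cycles $\alpha^x(z), \beta^x(z)$ and the classes $v_0, v_1$. Concretely, since $\coH^1_\dR(\Gmx, E^{f_z})$ is two-dimensional and $(v_0, v_1)$ is a basis, every class (in particular the image of a rapid decay cycle under the period isomorphism) is a $\CC$-linear combination of $v_0$ and $v_1$; the $\QQ$-structure is spanned by the images of $\alpha^x(z)$ and $\beta^x(z)$ under the period-duality identification, and what remains is to pin down these images explicitly.

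First I would recall from \cite[Prop.\,2.24]{F-S-Y20} that, under the topological duality pairing $\langle\cbbullet,\cbbullet\rangle_\topo$ and the de~Rham pairing $\langle\cbbullet,\cbbullet\rangle_\alg$ (related by the factor $2\pii$ as noted just above the proposition), a rapid decay cycle $\gamma$ corresponds to the de~Rham class whose pairing against any $\omega$ is $\int_\gamma \omega$. Thus the class $e_0$ attached to $\beta^x(z) = c_0^x \otimes e^{-f_z}$ is characterized by $\langle e_0, v_i \rangle_\topo = \int_{c_0^x} e^{-f_z}\,\omega_i$ where $\omega_0 = \rd x/x$ represents $v_0$ and $\omega_1 = \rd x$ represents $v_1$ (taking care of signs in the identification $v_0 \leftrightarrow \rd x/x$, $v_1 \leftrightarrow \rd x$ from the presentation of $\Kl_2$). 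Writing $e_0 = a v_0 + b v_1$ and using $\langle\cbbullet,\cbbullet\rangle_\topo = 2\pii\langle\cbbullet,\cbbullet\rangle_\alg$ together with Lemma~\ref{lemma:alg-pair-Kl2}, one gets $\langle e_0, v_0\rangle_\topo = 2\pii\, b\,\langle v_1, v_0\rangle_\alg = -2\pii\, b$ and $\langle e_0, v_1\rangle_\topo = 2\pii\, a\,\langle v_0, v_1\rangle_\alg = 2\pii\, a$. Matching these with the integrals $\int_{c_0^x} e^{-f_z}\rd x/x$ and $\int_{c_0^x} e^{-f_z}\rd x$ respectively yields $b = -\frac{1}{2\pii}\int_{c_0^x} e^{-f_z}\rd x/x$ and $a = \frac{1}{2\pii}\int_{c_0^x} e^{-f_z}\rd x$, which is exactly the stated formula for $e_0$. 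The same argument applied to $\alpha^x(z) = -c^x(z) \otimes e^{-f_z}$ gives the sign-flipped formula for $e_1$.

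The remaining point — and the one requiring the most care — is the bookkeeping of two independent sign/normalization conventions: the identification of the classes $v_0, v_1$ with the differential forms $\rd x/x, \rd x$ under the quotient presentation of $\Kl_2 = \coH^1\pi_+E^f$, and the orientation conventions entering the topological duality pairing and the Kronecker index $(c^x(z), c_0^x) = 1$. One must verify that the period pairing $\langle\cbbullet,\cbbullet\rangle_\topo$ as normalized in \cite{F-S-Y20} indeed sends a cycle $c \otimes e^{-f_z}$ to the functional $\omega \mapsto \int_c e^{-f_z}\omega$ with no extra factor, and that integration is against the representatives $\rd x/x$ and $\rd x$ with the correct signs; the minus sign in $\alpha^x(z) = -c^x(z)\otimes e^{-f_z}$ is precisely what is needed to make $e_1$ come out with the displayed signs. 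Finally, one checks $\QQ$-rationality: since $(\alpha^x(z), \beta^x(z))$ was shown to be a $\QQ$-basis of $\coH_1^\rrd(\Gmx, E^{f_z})_\QQ$ with Betti intersection matrix $\bigl(\begin{smallmatrix}0&1\\1&0\end{smallmatrix}\bigr)$, its image under the period-duality isomorphism is by definition a $\QQ$-basis of $\coH^1_\dR(\Gmx, E^{f_z})_\QQ$, so $(e_0, e_1)$ spans the $\QQ$-structure as claimed, completing the proof.
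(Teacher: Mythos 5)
Your proposal is correct and follows essentially the same route as the paper: both rest on \cite[Prop.\,2.24]{F-S-Y20} to express the Betti $\QQ$-structure as the $\QQ$-span of periods of $\alpha^x(z)$ and $\beta^x(z)$ against a basis, and the identification $\langle\cbbullet,\cbbullet\rangle_\topo = 2\pii\langle\cbbullet,\cbbullet\rangle_\alg$ together with Lemma~\ref{lemma:alg-pair-Kl2} to make the coefficients explicit. The only cosmetic difference is that you solve a $2\times 2$ linear system for the coefficients of $e_0,e_1$ in $(v_0,v_1)$, whereas the paper reads the coefficients off directly via the topological dual basis $v_0^{\vee,\topo}=\frac1{2\pii}v_1$, $v_1^{\vee,\topo}=-\frac1{2\pii}v_0$.
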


\begin{proof}
From \cite[Prop.\,2.23]{F-S-Y20}, we deduce that $\coH^1_\dR(\Gmx,E^{f_z})_\QQ$ is the $\QQ$-span of
\begin{equation}\label{eq:perEf}
\Ppairing_1^{\rrd,\rmod}(\beta^x_z,v_0^{\vee,\topo})v_0+\Ppairing_1^{\rrd,\rmod}(\beta^x_z,v_1^{\vee,\topo})v_1
\quad\text{and}\quad
\Ppairing_1^{\rrd,\rmod}(\alpha^x_z,v_0^{\vee,\topo})v_0+\Ppairing_1^{\rrd,\rmod}(\alpha^x_z,v_1^{\vee,\topo})v_1,
\end{equation}
where $\Ppairing_1^{\rrd,\rmod}: \coH_1^\rrd(\Gmx,E^{f_z}) \otimes \coH^1_\dR(\Gmx,E^{f_z}) \to \CC$
denotes the period pairing from \cite[\S2.d]{F-S-Y20}.
We conclude with the identification
\[
v_0^{\vee,\topo}=\frac1{2\pii}\,v_1=\frac1{2\pii}\,[\rd x]\quad\text{and}\quad v_1^{\vee,\topo}=-\frac1{2\pii}\,v_0=-\frac1{2\pii}\,[\rd x/x].
\]
For example, the integral $\frac1{2\pii}\int_{c_0^x}e^{-f_z}\,\rd x$ is identified with the period
\[
\Ppairing_1^{\rrd,\rmod}(\beta^x_z,v_0^{\vee,\topo}).\qedhere
\]
\end{proof}

\subsubsection*{The $\QQ$-structure on $\Kl_2^\nabla$}
We first recall some basic properties of the modified Bessel functions of order zero
\begin{equation}\label{eq:BesselIK}
\begin{aligned}
I_0(t)
&= \frac{1}{2\pii}\oint \exp\Big(-\frac{t}{2}\big(y+\sfrac{1}{y}\big)\Big)
\frac{\de y}{y},\\
K_0(t)
&= \frac{1}{2}\int_0^\infty \exp\Big(-\frac{t}{2}\big(y+\sfrac{1}{y}\big)\Big)
\frac{\de y}{y}
\qquad(|\arg t| < \sfrac{\pi}{2}),
\end{aligned}
\end{equation}
which are annihilated by the modified Bessel operator $(t\pt_t)^2 - t^2$. The function $I_0(t)$ is entire and satisfies $I_0(t)=I_0(-t)$. The function $K_0(t)$ extends analytically to a multivalued function on $\CC^\times$ satisfying the rule $K_0(e^{\pii}t) = K_0(t) - \pii I_0(t)$.

We have the following estimates as $t \to 0$ in any bounded ramified sector, by taking the real determination of $\log(t/2)$
when $t\in \RR_{>0}$:
\begin{align*}
I_0(t) &= 1 + O(t^2), \\
K_0(t) &= - \left(\gamma + \log\Psfrac{t}{2}\right) + O(t^2\log t),
\end{align*}
where $\gamma = 0.5772\dots$ is the Euler constant. As a consequence, in such sectors,
\begin{equation}\label{eq:BesselIKzero}
I_0(t)^i K_0(t)^{k-i} = (-1)^{k-i}\left(\gamma + \log\Psfrac{t}{2}\right)^{k-i}
+ O(t^2\log^{k-i}t).
\end{equation}
On the other hand, we have the asymptotic expansions at infinity (\cf \cite[\S7.23]{Watson22})
\begin{equation}\label{eq:BesselIKinfty}
\begin{aligned}
I_0(t)
&\sim e^t\frac{1}{\sqrt{2\pi t}}
\sum_{n=0}^\infty \frac{((2n-1)!!)^2}{2^{3n} n!} \frac{1}{t^n},
&& |\arg t| < \frac{1}{2}\pi \\
K_0(t)
&\sim e^{-t} \sqrt{\frac{\pi}{2t}}
\sum_{n=0}^\infty (-1)^n\frac{((2n-1)!!)^2}{2^{3n} n!} \frac{1}{t^n},
&& |\arg t| < \frac{3}{2}\pi \\
I_0(t)K_0(t)
&\sim \frac{1}{2t}
\sum_{n=0}^\infty \frac{((2n-1)!!)^3}{2^{3n}n!}\frac{1}{t^{2n}}.
\end{aligned}
\end{equation}
The latter is the unique formal solution in $\sfrac{1}{t}$ of the second symmetric power $(t\pt_t)^3 - 4t^2(t\pt_t) - 4t^2$ of the modified Bessel operator up to a scalar. Let us also note that these asymptotic expansions can be derived termwise. The Wronskian is given by $I_0(t)K_0'(t) - I_0'(t)K_0(t) = \sfrac{-1}{t}$.

We now assume that $z$ varies in $\CC\moins\RR_{\leq 0}$
and we choose a square root $t/2$ of $z$ satisfying~\hbox{$\mathrm{Re}(t)>0$,} that is, $\arg t\in(-\pi/2,\pi/2)\bmod2\pi$.
Due to formulas \eqref{eq:perEf},
since the integration paths can be made to vary in a locally constant way,
we conclude that $e_0, e_1$ are sections of $\Kl_2^\nabla$ on this domain, and their coefficients on the basis $v_0,v_1$ are holomorphic there. We will express them in terms of the modified Bessel functions.
We set $x=(t/2)y$. On the one hand, we have
\[
\frac{1}{2\pii}\int_{c_0^x}e^{-f_z}\,\frac{\rd x}{x}=\frac{1}{2\pii}\int_{(t/2)c_0^y}\exp\Bigl(-\frac t2\,(y+1/y)\Bigr)\,\frac{\rd y}{y}
=\frac{1}{2\pii}\int_{c_0^y}\exp\Bigl(-\frac t2\,(y+1/y)\Bigr)\,\frac{\rd y}{y}=I_0(t).
\]
On the other hand, we now regard $y$ as varying in $\RR_{>0}$
and set $c_z^x=(t/2)\RR_{>0}$,
so that, for $x\in c_z^x$, both $\arg(z/x)$ and $\arg(x)$ belong to $(-\pi/2,\pi/2)\bmod2\pi$. Then, similarly,
\[
\frac{1}{2\pii}\int_{c_z^x}e^{-f_z}\,\frac{\rd x}{x}
=\frac{1}{2\pii}\int_{\RR_{>0}}\exp\Bigl(-\frac t2\,(y+1/y)\Bigr)\,\frac{\rd y}{y}
=\frac{1}{\pii}K_0(t).
\]
By flatness of $e_0,e_1$, we obtain therefore
\begin{equation}\label{eq:baree}
e_0=(t/2)I'_0(t)v_0-I_0(t)v_1
\quad\text{and}\quad
e_1=\frac{1}{\pii}(-(t/2)\,K'_0(t)v_0+K_0(t)v_1).
\end{equation}
The pairing $\langle \cbbullet,\cbbullet\rangle_\topo=2\pii\langle \cbbullet,\cbbullet\rangle_\alg$ being flat, it induces a non-degenerate pairing on the constant sheaf $\Kl_2^\nabla{}_{|\CC\moins\RR_{\leq 0}}$ and we have there
\[
\langle e_0,e_1\rangle_\topo=2\pii\,\Bigl((t/2)I'_0(t)\cdot\frac{1}{\pii}K_0(t)-I_0(t)\cdot(t/2)\frac{1}{\pii}K'_0(t)\Bigr)
=t(I_0'(t)K_0(t)-I_0(t)K'_0(t))=1,
\]
according to the Wronskian relation. The other pairings are deduced from this one by skew-symmetry. We also obtain
\begin{equation}\label{eq:ve}
v_0=(2K_0(t)e_0+2\pii I_0(t)e_1),\quad v_1=t(K'_0(t)e_0+\pii I'_0(t)e_1).
\end{equation}

In order to cross the cut $z\in\RR_{<0}$,
we note that the coefficients of $e_0$, regarded as functions of~$z\in\CC$, are entire, while those of $e_1$ are multivalued holomorphic, and the monodromy operator $T$ defined by analytic continuation along the path $\theta\mto e^{\theta}z$ ($\theta\in[0,2\pii]$) acts on $e_1$ as $T(e_1)=e_1+e_0$. This shows that the $\QQ$-structure of $\Kl_2^\nabla{}_{|\CC\moins\RR_{\leq 0}}$ extends to a $\QQ$-structure of $\Kl_2^\nabla$, which will be denoted by $(\Kl_2^\nabla)_\QQ$. Moreover,
\[
\langle \cbbullet,\cbbullet\rangle_\topo: (\Kl_2^\nabla)_\QQ \otimes(\Kl_2^\nabla)_\QQ\to\QQ
\]
is a non-degenerate skew-symmetric pairing,
and the multivalued flat sections $e_0$ and $e_1$ satisfy
$\langle e_0,e_1\rangle_\topo=1$.

\subsubsection*{The $\QQ$-structure on $\Sym^k\Kl_2$}
We naturally endow $\Sym^k\Kl_2$ with the pairing
\[
\langle \cbbullet,\cbbullet\rangle_\topo=(2\pii)^k\langle \cbbullet,\cbbullet\rangle_\alg
\]
and the $\QQ$-structure $(\Sym^k\Kl_2)^\nabla_\QQ=\Sym^k((\Kl_2^\nabla)_\QQ)$. The monomial sections
\begin{equation}\label{eq:e0ae1b}
e_0^{k-a}e_1^a = \frac{1}{|\symgp_k|} \sum_{\sigma\in\symgp_k}
\sigma (e_0^{\otimes k-a}\otimes e_1^{\otimes a})
\qquad
(0\leq a\leq k)
\end{equation}
form a basis of multivalued flat sections of the subsheaf $(\Sym^k\Kl_2)^\nabla_\QQ$
of $(\Kl_2^{\otimes k})^\nabla_\QQ$ and satisfy
\begin{equation}\label{eq:selfdualityQ}
\langle e_0^{k-a}e_1^a,e_0^{k-b}e_1^b\rangle_\topo=
\begin{cases}
(-1)^a\dfrac{a!\,b!}{k!}&\text{if }a+b=k,\\
0&\text{otherwise}.
\end{cases}
\end{equation}

\begin{lemma}\label{lem:coefev}
The coefficients of the flat sections $e_0^{k-a}e_1^a$ on the meromorphic basis $(u_b)_b$ of~$\Sym^k\Kl_2$ have moderate growth at the origin.
Moreover, they have moderate growth (\resp rapid decay)
in a small sector centered at infinity and containing $\RR_{>0}$
if and only if $a\!\leq\!k/2$ (\resp $a\!<\!k/2$).
\end{lemma}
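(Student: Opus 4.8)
The plan is to reduce the statement about $e_0^{k-a}e_1^a$ to the asymptotic behaviors of $I_0$ and $K_0$ recalled earlier, using the explicit formulas \eqref{eq:baree} for $e_0$ and $e_1$ on the basis $(v_0,v_1)$ together with the relation between $(v_0,v_1)$ and the meromorphic basis $(u_b)_b$ of $\Sym^k\Kl_2$. First I would record that, by \eqref{eq:baree}, the coefficients of $e_0$ on $(v_0,v_1)$ are $(t/2)I_0'(t)$ and $-I_0(t)$, while those of $e_1$ are $-\frac{1}{\pii}(t/2)K_0'(t)$ and $\frac{1}{\pii}K_0(t)$; taking the $k$-th symmetric power, the coefficient of $e_0^{k-a}e_1^a$ on a monomial $u_b=v_0^{k-b}v_1^b$ is a universal polynomial (with rational coefficients times a power of $\frac1{\pii}$) in $I_0(t),\,(t/2)I_0'(t),\,K_0(t),\,(t/2)K_0'(t)$, homogeneous of degree $k-a$ in the ``$I$'' quantities and degree $a$ in the ``$K$'' quantities. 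So it suffices to control the growth of each such polynomial at $t=0$ and at $t=+\infty$.

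At the origin, I would invoke the estimates $I_0(t)=1+O(t^2)$ and $K_0(t)=-(\gamma+\log(t/2))+O(t^2\log t)$, and the fact (stated in the excerpt, since the expansions can be differentiated termwise) that $(t/2)I_0'(t)=O(t^2)$ and $(t/2)K_0'(t)=-\tfrac12+O(t^2\log t)$. Every entry is then a polynomial in a quantity that is $O(1)$ or $O(\log t)$ together with quantities that are $O(1)$, hence has at most logarithmic, in particular moderate, growth at $z=0$ (recall $z=t^2/4$, so $\log t$ is comparable to $\log z$). This gives the first assertion. Note we also need that $(u_b)_b$ is a meromorphic — not merely holomorphic — basis, which is already built into the statement, so no poles at $0$ need to be excluded.

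At infinity, I would substitute the asymptotic expansions \eqref{eq:BesselIKinfty}: in a small sector around $\RR_{>0}$ one has $I_0(t)\sim e^t(2\pi t)^{-1/2}(1+\cdots)$ and $(t/2)I_0'(t)\sim e^t(2\pi t)^{-1/2}(t/2)(1+\cdots)$, so both ``$I$'' quantities grow like $e^t$ times an algebraic factor; dually $K_0(t)\sim e^{-t}(\pi/2t)^{1/2}(1+\cdots)$ and $(t/2)K_0'(t)\sim -e^{-t}(\pi/2t)^{1/2}(t/2)(1+\cdots)$ decay like $e^{-t}$ times an algebraic factor. Hence each monomial appearing in the coefficient of $e_0^{k-a}e_1^a$ behaves like $e^{(k-a)t}\cdot e^{-at}=e^{(k-2a)t}$ times an algebraic (power-of-$t$) factor. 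Therefore the coefficient has moderate growth near infinity in that sector exactly when $k-2a\leq 0$, i.e. $a\leq k/2$, and rapid decay exactly when $k-2a<0$, i.e. $a<k/2$ (in the borderline even case $k=2a$ the exponential factors cancel and one is left with an algebraic factor, which is of moderate growth but not rapidly decaying). The one point needing a word of care — and the only place where anything could go wrong — is that in passing from a single monomial to the full polynomial coefficient, the leading exponential terms might conceivably cancel; but since the ``$I$'' factors all carry $e^{+t}$ and the ``$K$'' factors all carry $e^{-t}$ with the \emph{same} sign of the leading algebraic coefficient within each group, the top-order term $e^{(k-2a)t}t^{(\text{something})}$ is a positive multiple of it and does not cancel, so the estimate is sharp. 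Assembling the origin and infinity analyses yields the lemma.
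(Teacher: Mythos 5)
Your approach coincides with the paper's proof: express $e_0^{k-a}e_1^a$ on the basis $(u_b)_b$ via \eqref{eq:baree}, then read off the growth of the coefficients from the asymptotics \eqref{eq:BesselIKzero} and \eqref{eq:BesselIKinfty} of $I_0$ and $K_0$ (and their derivatives). Your homogeneity count giving the overall exponential factor $e^{(k-2a)t}$ at infinity is correct, as is the remark that the leading terms carry a common sign within each group and hence cannot cancel. The one flaw is the final inversion: $k-2a\leq 0$ is equivalent to $a\geq k/2$, not $a\leq k/2$, and likewise $k-2a<0$ is $a>k/2$. (As it happens, the lemma as printed appears to carry the same inversion; the corrected directions are the ones consistent with how the lemma is used immediately afterwards: the moderate-growth chains $\beta_j=\RR_+\otimes e_0^j e_1^{k-j}$ for $0\leq j\leq\lfloor k/2\rfloor$ correspond to $a=k-j\geq k/2$, while the rapid-decay chains $\alpha_i$, $1\leq i\leq k'$, involve $e_0^ie_1^{k-i}$, i.e.\ $a=k-i>k/2$.) Apart from that one-line arithmetic slip, your proof is the paper's argument in expanded form.
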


\begin{proof}
Since $I_0'(t)$ (\resp $K'_0(t)$) has an asymptotic expansion similar to that of $I_0(t)$ (\resp $K_0(t)$) at the origin and at infinity in the specified domains, the first statement follows from \eqref{eq:baree} and the definition \eqref{eq:BesselIK} of $I_0$ and $K_0$. Besides, the asymptotic expansion of $I_0$ and $K_0$ at infinity~\eqref{eq:BesselIKinfty} implies the second statement by calculating the power of~$e^t$
in the products of $I_0$ and~$K_0$.
\end{proof}

\section{De Rham pairing for \texorpdfstring{$\Sym^k\Kl_2$}{SymKL}}\label{sec:DRpairing}

The main result of this section is the computation of the matrices of the de~Rham pairings
\begin{align}\label{eq:dRcduality}
\coH^1_{\dR,\rc}(\Gm,\Sym^k\Kl_2)\otimes \coH^1_{\dR}(\Gm,\Sym^k\Kl_2)&\To{\DRpairing}\CC\\
\label{eq:dRmidduality}
\coH^1_{\dR,\rmid}(\Gm,\Sym^k\Kl_2)\otimes \coH^1_{\dR,\rmid}(\Gm,\Sym^k\Kl_2)&\To{\DRpairing_\rmid}\CC
\end{align}
with respect to suitable bases, taking into account the self-duality pairing induced by \eqref{eq:selfduality}. Since the latter is $(-1)^k$-symmetric, \eqref{eq:dRmidduality} is $(-1)^{k+1}$-symmetric. We first make clear the bases in which we compute the matrices.

\subsection{Bases of the de~Rham cohomology}\label{subsec:bases}

Let
\[
\iota_{\wh0}: \Sym^k\Kl_2 \to (\Sym^k\Kl_2)_{\wh0}\quad\text{and}\quad\iota_{\wh\infty}: \Sym^k\Kl_2 \to (\Sym^k\Kl_2)_{\wh\infty}
\]
denote the formalization of $\Sym^k\Kl_2$ at zero and infinity respectively, and $\wh\nabla$ the induced connection. We can represent elements of $\coH^1_{\dR,\rc}(\Gm,\Sym^k\Kl_2)$ as pairs $(\wh m,\eta)$ as follows (\cf \cite[Cor.\,3.5]{F-S-Y20}):
\begin{itemize}
\item
$\wh m=(\wh m_0,\wh m_\infty)$ is a pair of formal germs in $(\Sym^k\Kl_2)_{\wh0}\oplus(\Sym^k\Kl_2)_{\wh\infty}$, and
\item
$\eta$ belongs to $\Gamma(\Gm,\Omega^1_{\Gm}\otimes\Sym^k\Kl_2)$,
\end{itemize}
such that, denoting by $\wh\eta=(\iota_{\wh0}\eta,\iota_{\wh\infty}\eta)$ the formal germ of $\eta$ in
\[
\bigl[\Omega^1_{\PP^1,\wh0}\otimes(\Sym^k\Kl_2)_{\wh0}\bigr]\oplus\bigl[\Omega^1_{\PP^1,\wh\infty}\otimes(\Sym^k\Kl_2)_{\wh\infty}\bigr],
\]
$\wh m$ and $\eta$ are related by $\wh\nabla\wh m=\wh\eta$.

We can regard $\coH^1_{\dR,\rmid}(\Gm,\Sym^k\Kl_2)$ as the image of the natural morphism
\[
\coH^1_{\dR,\rc}(\Gm,\Sym^k\Kl_2)\to \coH^1_{\dR}(\Gm,\Sym^k\Kl_2)
\]
sending a pair $(\wh m,\eta)$ to $\eta$. Recall
that $\coH^1_{\dR,\rmid}(\Gm,\Sym^k\Kl_2)$ has dimension~$k'$ if~$k$ is not a multiple of~$4$, and $k'-1$ otherwise (\cf \cite[Prop.\,4.12]{F-S-Y18}). According to \cite[Rem.\,3.6]{F-S-Y20}, there exists a basis of $\coH^1_{\dR,\rc}(\Gm,\Sym^k\Kl_2)$ consisting of
\begin{itemize}
\item
pairs $(\wh m_i,0)_i$ where $(\wh m_i)_i$ is a basis of $\ker\wh\nabla$
in $(\Sym^k\Kl_2)_{\wh0}\oplus(\Sym^k\Kl_2)_{\wh\infty}$,
and
\item
a set of pairs $(\wh m_j,\eta_j)_j$,
of cardinality $\dim\coH^1_{\dR,\rmid}(\Gm,\Sym^k\Kl_2)$, related as above such that $(\eta_j)_j$ are linearly independent in $\coH^1_{\dR}(\Gm,\Sym^k\Kl_2)$.
\end{itemize}
Furthermore, such a family $(\eta_j)_j$ is a basis of~$\coH^1_{\dR,\rmid}(\Gm,\Sym^k\Kl_2)$.

We set (recall that $u_0=v_0^k$)
\begin{equation}\label{eq:omegai}
\omega_i=[z^iu_0\rd z/z]\in \coH^1_{\dR}(\Gm,\Sym^k\Kl_2),\quad 0\leq i\leq k'.
\end{equation}
It was proved in \cite[Prop.\,4.14]{F-S-Y18} that
\[
\Basis_k=\{\omega_i\mid 0\leq i\leq k'\}
\]
is a basis of $\coH^1_{\dR}(\Gm,\Sym^k\Kl_2)$.
This property also follows from the combination of Lemma \ref{lem:sol0},
Proposition \ref{prop:solinfty}, and Theorem \ref{th:Smid} below (see Remark \ref{rem:detDRpairing}).

We first determine which linear combinations of elements of~$\Basis_k$ belong to $\coH^1_{\dR,\rmid}(\Gm,\Sym^k\Kl_2)$. For each $i=0,\dots,k'$, we look for the existence~of
\[
\wh m_i=(\wh m_{i,0},\wh m_{i,\infty}) \in (\Sym^k\Kl_2)_{\wh0}\oplus(\Sym^k\Kl_2)_{\wh\infty}
\]
such that
$\wh\nabla\wh m_i= (\iota_{\wh0}(\omega_i), \iota_{\wh\infty}(\omega_i))$.

\begin{lemma}[Solutions at $z=0$]\mbox{}\label{lem:sol0}
\begin{enumerate}
\item
The subspace $\ker\wh\nabla\subset (\Sym^k\Kl_2)_{\wh0}$ has dimension one and a basis is given by $\wh m_{0,0} = e_0^k$.
\item
There exists $\wh m_{i,0}$ if and only if $i\geq1$ and, in such case, there exists a unique $\wh m_{i,0}$ belonging to $z\CC\lcr z\rcr\cdot\bmu$.\end{enumerate}
In fact, for any $j\geq 1$, there exists a unique $\wh{m}_{j,0} \in z\CC\lcr z\rcr\cdot\bmu$
with $\wh\nabla\wh{m}_{j,0} = \iota_{\wh0}(z^ju_0\de z/z)$.
\end{lemma}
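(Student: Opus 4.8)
The plan is to reduce the whole statement to the linear recursion that the coefficients of a formal section in the basis $\bmu$ must satisfy. Writing an element of $(\Sym^k\Kl_2)_{\wh0}=\CC\lcr z\rcr\cdot\bmu$ as $\wh m=\sum_{n\geq0}m_n z^n$ with $m_n\in\bigoplus_{0\leq a\leq k}\CC u_a$, the connection formula \eqref{eq:ua} together with the Leibniz rule for $z\partial_z$ gives
\[
\wh\nabla\wh m=\Bigl(\,\sum_{n\geq0}\bigl((nI+A_0)m_n+A_1m_{n-1}\bigr)z^n\Bigr)\frac{\rd z}{z},
\]
where $m_{-1}=0$ and $A_0,A_1$ are the $\CC$-linear maps on $\bigoplus_a\CC u_a$ defined by $A_0u_a=(k-a)u_{a+1}$ and $A_1u_a=au_{a-1}$ (with the convention $u_{k+1}=0$). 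The only structural facts needed about these maps are that $A_0$ is nilpotent, so that $nI+A_0$ is invertible for every $n\geq1$, and that $\ker A_0=\CC u_k$ has dimension one while $\operatorname{im}A_0=\bigoplus_{1\leq a\leq k}\CC u_a$ has codimension one.

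For part (1), the equation $\wh\nabla\wh m=0$ unwinds to $A_0m_0=0$ together with $m_n=-(nI+A_0)^{-1}A_1m_{n-1}$ for $n\geq1$; hence a formal flat section is determined by its constant term $m_0$, which may be any element of $\ker A_0$, so $\ker\wh\nabla$ is one-dimensional. To see that $\wh m_{0,0}=e_0^k$ spans it, I will use that $e_0$ is by construction a flat section of $\Kl_2$ whose coefficients on $v_0,v_1$ are entire functions of $z$ (\cf \eqref{eq:baree}), so that $e_0^k$ lies in $\CC\lcr z\rcr\cdot\bmu$ and is flat, \ie in $\ker\wh\nabla$; and since $(t/2)I_0'(t)\in z\CC\lcr z\rcr$ and $I_0(0)=1$, its constant term equals $(-v_1)^k=(-1)^k u_k\neq0$.

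For part (2) and the final assertion, take $g=z^j u_0$ with $j\geq0$ and analyze $\wh\nabla\wh m=g\,\rd z/z$. The order-zero equation is $A_0m_0=g_0$, solvable exactly when $g_0\in\operatorname{im}A_0$; since $g_0=0$ for every $j\geq1$ and $g_0=u_0\notin\operatorname{im}A_0$ for $j=0$, this already shows that $\wh\nabla\wh m=\iota_{\wh0}(\omega_0)$ has no solution, whereas for every $i\geq1$ one does exist. When $j\geq1$, choosing $m_0=0$ and then defining $m_n=(nI+A_0)^{-1}(g_n-A_1m_{n-1})$ for $n\geq1$ produces a solution lying in $z\CC\lcr z\rcr\cdot\bmu$. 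Uniqueness within $z\CC\lcr z\rcr\cdot\bmu$ follows because two such solutions differ by an element of $\ker\wh\nabla=\CC e_0^k$ whose constant term vanishes, and $e_0^k$ has nonzero constant term, so the difference is $0$.

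Almost every step here is a finite linear-algebra verification, so I do not expect a genuine obstacle. The two points that need a little care are the order-zero solvability condition $g_0\in\operatorname{im}A_0$ — this is precisely what singles out the case $i=0$, where $\omega_0$ fails to admit a formal primitive at $z=0$ — and the fact that the flat section $e_0^k$, a priori only defined on $\CC\moins\RR_{\leq0}$, descends to a nonzero formal flat section at the origin, which one checks using the power series expansion of $I_0$ near $t=0$.
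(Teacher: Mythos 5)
Your proof is correct, and it replaces the paper's abstract machinery with a direct coefficient computation. The paper argues by first performing a formal base change $\bmu'=\bmu\cdot P(z)$ bringing the connection matrix to a constant nilpotent Jordan block, then invokes the Kashiwara--Malgrange filtration and the bijectivity of $z\partial_z$ on $V_{\wh 0,-1}$ established in the companion paper \cite[Prop.~3.2]{F-S-Y20} to conclude; the unsolvability at $i=0$ is read off from the fact that $u'_0$ is a primitive vector of the Jordan block. You instead work directly in the basis $\bmu$, observe that the residue $A_0$ is already nilpotent with $\ker A_0=\CC u_k$ and $\operatorname{im}A_0=\bigoplus_{1\le a\le k}\CC u_a$, and solve the induced triangular recursion term by term: the invertibility of $nI+A_0$ for $n\ge1$ handles existence and uniqueness in positive degrees, the condition $g_0\in\operatorname{im}A_0$ isolates the obstruction at $i=0$, and the identification of $\ker\wh\nabla$ with $\CC e_0^k$ via the nonvanishing constant term $(-1)^k u_k$ pins down both part (1) and the uniqueness in $z\CC\lcr z\rcr\cdot\bmu$. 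What your route buys is elementarity and self-containedness — no base change, no appeal to the $V$-filtration or to the companion paper — at the cost of carrying out the (admittedly easy) linear algebra by hand; the content of the two arguments is the same nilpotency of the residue.
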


\begin{proof}
Set $(V,\nabla)=(\Sym^k\Kl_2,\nabla)$. We first claim that $\CC\lcr z\rcr\cdot\bmu$ is equal to $V_{\wh0,0}$ (the $0$-th step of the formal Kashiwara-Malgrange filtration at the origin, similar to that considered in the proof of~\hbox{\cite[Prop.\,3.2]{F-S-Y20}).} Indeed, it is standard to show that there exists a formal (in fact convergent) base change $P(z)=\id+zP_1+\cdots$ such that the matrix of $\wh\nabla$ in the basis $\bmu'=\bmu\cdot P(z)$ is constant
and equal to a lower standard Jordan block with eigenvalue 0.
It follows that $\bmu'$ is a $\CC\lcr z\rcr$-basis of $V_{\wh0,0}$, and the claim follows, as well as the first point of the lemma.

Let us consider the second point. Setting $V_{\wh0,-1}=zV_{\wh0,0}$, we have recalled in \loccit\ that $z\partial_z:V_{\wh0,-1}\to V_{\wh0,-1}$ is bijective. The ``if'' part and its supplement follow. It remains to check that $\iota_{\wh0}(u_0\rd z/z)$ does not belong to the image of~$\wh\nabla$. It amounts to the same to replace $u_0$ with $u'_0$ defined above,
and since $u'_0$ is the primitive vector of the matrix of $\wh\nabla$, the assertion follows.
\end{proof}

We now look for the solutions of $\wh\nabla\wh m_{i,\infty}=\iota_{\wh\infty}(\omega_i)$ for $i=1,\dots,k'$.
For this purpose, we introduce the constants $\gamma_{k,i}$
as follows.
Let us assume that $\kfour$. Recall that we have set $w=1/z$ on $\Gm$.
Write the asymptotic expansion as
\begin{equation}\label{eq:I0K0}
2^k(I_0(t)K_0(t))^{k/2} \sim w^{k/4}\sum_{j=0}^\infty \gamma_{k,k/4+j}w^j,
\end{equation}
so that we can define $\gamma_{k,i}$ by the residue
\begin{equation}\label{eq:gammaki}
\gamma_{k,i}=\res_{w=0} \frac{2^k(I_0(t)K_0(t))^{k/2}}{w^{i+1}}.
\end{equation}
We have $\gamma_{k,i} = 0$ if $i<k/4$, $\gamma_{k,k/4} = 1$, and $\gamma_{k,i} > 0$ for all $i\in k/4+\NN^*$, \eg
\begin{equation}\label{eq:valuesofgammaki}
\gamma_{k,1+k/4} = \frac{k}{2^6},
\quad
\gamma_{k,2+k/4} = \frac{k(k+52)}{2^{13}},
\quad
\gamma_{k,3+k/4} = \frac{k(k^2+156k+13184)}{2^{19}\cdot 3},
\quad\dots
\end{equation}
For what follows, it will be convenient to set
\begin{equation}\label{eq:gammakiconv}
\gamma_{k,i}=0\quad(i\in k/4+\ZZ)\text{ if $\knotfour$}.
\end{equation}

\begin{prop}[Solutions at $z=\infty$]\label{prop:solinfty}
Let us fix $i\in\{1,\dots,k'\}$.
\begin{enumerate}
\item\label{prop:solinfty1}
For $\knotfour$, the equation $\wh\nabla\wh m_{i,\infty}=\iota_{\wh\infty}(\omega_i)$ has a unique solution.
\item\label{prop:solinfty2}
For $\kfour$ and $i\neq k/4$,
the equation
\[
\wh\nabla\wh m_{i,\infty}=\iota_{\wh\infty}(\omega_i-\gamma_{k,i}\omega_{k/4})
\] has a solution (in fact a one-dimensional vector space of solutions)
where $\gamma_{k,i}$ is given by \eqref{eq:gammaki}.
Moreover, the subspace $\ker\wh\nabla\subset (\Sym^k\Kl_2)_{\wh\infty}$ is generated by the formal expansion
$\wh m_{k/4,\infty}$ of $2^k(\pii)^{k/2}(e_0e_1)^{k/2}$.
\end{enumerate}
In fact, for any $j\geq 1$,
there exists $\wh{m}_{j,\infty}$
satisfying
$\wh\nabla\wh{m}_{j,\infty}
= \iota_{\wh\infty}(z^j-\gamma_{k,j}z^{k/4})u_0\de z/z$.
\end{prop}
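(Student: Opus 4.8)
The strategy is to work in the formalization $(\Sym^k\Kl_2)_{\wh\infty}$ and to diagonalize (up to the subtle resonant part) the operator $\wh\nabla$ using the flat sections $e_0^{k-a}e_1^a$ produced in Lemma \ref{lem:coefev}. By \eqref{eq:baree} and the asymptotic expansions \eqref{eq:BesselIKinfty}, each $e_0^{k-a}e_1^a$ has an irregular type governed by $e^{(k-2a)t}$ (in the variable $t$ with $z=t^2/4$), so the formal decomposition of $(\Sym^k\Kl_2)_{\wh\infty}$ into elementary pieces is indexed by $a=0,\dots,k$, with the purely regular piece occurring exactly when $k-2a=0$, i.e.\ $a=k/2$, which is possible precisely when $k$ is even. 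First I would record this decomposition and note that on each piece with $a\neq k/2$ the operator $\wh\nabla$ (equivalently $z\partial_z$ minus an exponential shift) is invertible on the formalized module, so that $\iota_{\wh\infty}(\omega_i)$ can always be solved \emph{against those components}. The only obstruction to solving $\wh\nabla\wh m_{i,\infty}=\iota_{\wh\infty}(\omega_i)$ therefore lives in the regular piece, which is nonzero only when $k$ is even; and among even $k$, a further subtlety arises when $k\equiv0\bmod 4$ because then $(e_0e_1)^{k/2}$ makes sense as a flat section and spans a one-dimensional kernel of $\wh\nabla$ in $(\Sym^k\Kl_2)_{\wh\infty}$.

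Concretely, for $k\not\equiv0\bmod4$: either $k$ is odd (no regular piece at all) or $k\equiv2\bmod4$ (the regular piece exists but, because $k/2$ is odd, the relevant resonance does not occur and $\wh\nabla$ is still bijective there), so in both cases $\wh\nabla$ is bijective on all of $(\Sym^k\Kl_2)_{\wh\infty}$ and the equation has a unique solution $\wh m_{i,\infty}$, proving \eqref{prop:solinfty1}. I would phrase the ``$k\equiv2\bmod4$, no resonance'' point by computing, on the regular component spanned by $e_0^{k/2}e_1^{k/2}$, the matrix of $\wh\nabla$: its coefficients involve the asymptotic expansion of $(I_0K_0)^{k/2}$, whose leading exponent is $w^{k/4}=z^{-k/4}$, an \emph{integer} power of $z$ exactly when $4\mid k$ — hence when $4\nmid k$ the forcing term $\iota_{\wh\infty}(z^iu_0\,\de z/z)$ never meets the obstruction and a unique solution exists. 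For $k\equiv0\bmod4$, the same computation shows the obstruction is precisely the class of $\iota_{\wh\infty}(z^{k/4}u_0\,\de z/z)$, i.e.\ of $\omega_{k/4}$; expanding $2^k(I_0(t)K_0(t))^{k/2}\sim w^{k/4}\sum_j\gamma_{k,k/4+j}w^j$ as in \eqref{eq:I0K0}–\eqref{eq:gammaki}, the coefficient of this obstruction in $\iota_{\wh\infty}(\omega_i)$ is exactly $\gamma_{k,i}$, so that $\iota_{\wh\infty}(\omega_i-\gamma_{k,i}\omega_{k/4})$ lies in the image of $\wh\nabla$, with the space of solutions a torsor under $\ker\wh\nabla$; and that kernel is spanned by the formal expansion of $2^k(\pii)^{k/2}(e_0e_1)^{k/2}$, the normalization matching the one fixed in Lemma \ref{lem:coefev} and formula \eqref{eq:ve}. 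The final ``in fact'' sentence is the same argument carried out verbatim for an arbitrary $j\geq1$ in place of $i\in\{1,\dots,k'\}$, since the range $1\leq i\leq k'$ played no role — only $j\geq1$ matters, exactly as in the supplement to Lemma \ref{lem:sol0}.

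The main obstacle I anticipate is \textbf{bookkeeping the resonance condition cleanly}: one must verify that on the regular summand of $(\Sym^k\Kl_2)_{\wh\infty}$ the connection, written in a $z$-adic basis, is a single Jordan block with a nilpotent (eigenvalue $0$) residue, so that $z\partial_z$ fails to be invertible only ``once'' — and then pin down that this one obstruction is carried by the power $z^{k/4}$, which requires controlling not just the leading term but the full structure of the asymptotic expansion of $(I_0K_0)^{k/2}$ and confirming $\gamma_{k,k/4}=1$, $\gamma_{k,i}>0$ for $i>k/4$, $\gamma_{k,i}=0$ for $i<k/4$ as asserted after \eqref{eq:gammaki}. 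A secondary technical point is checking that the normalization $2^k(\pii)^{k/2}$ in front of $(e_0e_1)^{k/2}$ is the correct one making $\wh m_{k/4,\infty}$ a \emph{generator over $\CC$} of $\ker\wh\nabla$ — this is a direct consequence of \eqref{eq:ve} and \eqref{eq:I0K0} but should be spelled out. Everything else reduces to the standard fact, already used in the proof of Lemma \ref{lem:sol0} and in \cite[Prop.\,3.2]{F-S-Y20}, that $z\partial_z$ is bijective on the submodule $z\CC\lcr z\rcr\cdot(\text{basis})$ of a formalized meromorphic connection with the appropriate Jordan structure.
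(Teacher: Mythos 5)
Your proposal is correct and coincides in substance with the \emph{second} of the two proofs the paper gives for Proposition~\ref{prop:solinfty}. That proof also decomposes $\iota_{\wh\infty}(\omega_i)$ along the flat sections $e_0^{k-a}\ov e_1^a$, observes that for $a\neq k/2$ the exponential factor $e^{-2(k-2a)/\sqrt w}$ makes the corresponding primitive $\xi_{i,a}$ exist uniquely (your ``invertibility on irregular summands''), identifies the regular piece $a=k/2$ as the only locus of an obstruction, and then splits according to the parity of $k/2$: for $k\equiv 2\bmod 4$ the regular expansion of $w^{-i-1}(I_0K_0)^{k/2}$ lives in $w^{1/2+\ZZ}$ so there is no residue, while for $k\equiv0\bmod4$ the residue is precisely $\gamma_{k,i}$ as defined in~\eqref{eq:gammaki}, and the one-dimensional kernel is $(e_0e_1)^{k/2}$ (which is a genuine formal section only when $4\mid k$). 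Your account of the supplement ``for any $j\geq 1$'' and of the normalization of $\wh m_{k/4,\infty}$ also matches.

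For comparison, the paper additionally gives a \emph{first} proof that works entirely in the meromorphic basis $\bmu=(u_a)$ rather than the flat basis: it writes $\wh m_{i,\infty}=\sum_a\mu_{a,i}(w)u_a$, eliminates $\mu_{1,i},\dots,\mu_{k,i}$ via the recursion coming from~\eqref{eq:ua} read in the variable $w$, reduces everything to a single differential equation~\eqref{eq:mui0} for $\mu_{0,i}$, and solves it order by order in $w$ after computing the leading-order parts of the transfer operators $P_a,Q_a$ (Lemma~\ref{lem:PQ}). The pivot coefficient $q_{k,\ell}^{(k'+1)}$ is shown (Lemma~\ref{lem:qki}) to vanish for even $k$ exactly at $\ell=-(k'+1)/2$, which is an integer only when $k\equiv0\bmod4$; this is where $\gamma_{k,i}$ appears to kill the incompatibility. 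That first route is more elementary (no asymptotics of Bessel functions) and gives directly the valuation data used in the first proof of Theorem~\ref{th:Smid}, whereas your route (the second proof) identifies $\gamma_{k,i}$ with the explicit residue~\eqref{eq:gammaki}, which the purely algebraic proof only determines as ``some constant.''

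One small point you should make explicit if you write this out: the decomposition over the irregular types $e^{(k-2a)t}$ lives \emph{a priori} over the two-fold ramified cover $t$ of $z$; what descends to $(\Sym^k\Kl_2)_{\wh\infty}$ are the $\sigma$-invariants (where $\sigma\colon w^{1/4}\mapsto\sfi\,w^{1/4}$), and this is exactly what distinguishes $k\equiv2\bmod4$ (half-integer exponents, no descent of the obstruction) from $k\equiv0\bmod4$ (integer exponents, $(e_0e_1)^{k/2}$ descends and spans $\ker\wh\nabla$). The paper's second proof handles this by checking $\sigma$-invariance of each $\xi_{i,a}e_0^{k-a}\ov e_1^a$ and the absence of exponential factors; without that check the argument would be incomplete.
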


The second assertion of \ref{prop:solinfty}\eqref{prop:solinfty2} is easy to check from the formal structure at infinity of $\Sym^k\Kl_2$ (\cf\cite[Prop.\,4.6\,(3)]{F-S-Y18}).
We set
\begin{align*}
\wh m_0&=(\wh m_{0,0},0)
	\text{ (\ie $\wh m_{0,\infty}=0$) \quad for all $k$},\\
\wh m_{k/4}&=(0,\wh m_{k/4,\infty})
	\text{ (\ie $\wh m_{k/4,0}=0$) \quad for $\kfour$},
\end{align*}
and (\cf Notation \ref{nota:k'})
\begin{equation}\label{eq:omegaprimei}
\omega'_i=
\begin{cases}
0&\text{if $i=0$ and $i=k/4$},\\
\omega_i-\gamma_{k,i}\omega_{k/4}&\text{if $i\in\lcr1,k'\rcr$},
\end{cases}
\end{equation}
so that $\omega'_i=\omega_i$ if $\knotfour$ and $1\leq i\leq k'$, or if $\kfour$ and $1\leq i<k/4$. Once we know that $\Basis_k$ is a basis of $\coH^1_\dR(\Gm,\Sym^k\Kl_2)$, using the convention~\eqref{eq:gammakiconv} we derive the following from~\cite[Rem.\,3.6(4)]{F-S-Y20}:

\begin{cor}
The following set of $k'+1$ elements
\[
\Basis_{k,\rc}=(\wh m_i,\omega'_i)_{0\leq i\leq k'}=
\begin{cases}
\{(\wh m_0,0),(\wh m_1,\omega_1),\dots,(\wh m_{k'},\omega_{k'})
& \text{if }\knotfour,\\
\{(\wh m_0,0),(\wh m_1,\omega'_1),\dots,(\wh m_{k/4},0),\dots,(\wh m_{k'},\omega'_{k'})
& \text{if }\kfour,
\end{cases}
\]
is a basis of $\coH^1_{\dR,\rc}(\Gm,\Sym^k\Kl_2)$.
\end{cor}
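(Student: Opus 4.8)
The plan is to deduce the statement directly from \cite[Rem.\,3.10(4)]{F-S-Y20}, which provides a criterion for a family of pairs $(\wh m_i, \omega_i)$ to constitute a basis of compactly supported de Rham cohomology: namely, given that $\Basis_k = \{\omega_i\}_{0 \leq i \leq k'}$ is a basis of $\coH^1_{\dR}(\Gm,\Sym^k\Kl_2)$, and given for each $\omega_i$ a formal solution $\wh m_i$ of $\wh\nabla \wh m_i = (\iota_{\wh0}\omega_i', \iota_{\wh\infty}\omega_i')$ where $\omega_i'$ differs from $\omega_i$ by a combination of classes whose de Rham cohomology class is still accounted for, the pairs $(\wh m_i, \omega_i')$ form a basis of $\coH^1_{\dR,\rc}$. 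So the first step is simply to record that $\Basis_k$ is a basis of $\coH^1_{\dR}(\Gm,\Sym^k\Kl_2)$; this was shown in \cite[Prop.\,4.4]{F-S-Y18}, and will also follow a posteriori from the non-degeneracy of the de Rham pairing computed in Theorem~\ref{th:Smid}.

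Next I would assemble the formal solutions. For $i = 0$, Lemma~\ref{lem:sol0}(1) gives $\wh m_{0,0} = e_0^k \in \ker\wh\nabla$, and since $\omega_0 = [u_0 \de z/z]$ admits no solution at the origin (Lemma~\ref{lem:sol0}(2)) but we are allowed to take $\omega_0' = 0$, we set $\wh m_0 = (\wh m_{0,0}, 0)$. For $1 \leq i \leq k'$ with $i \neq k/4$: Lemma~\ref{lem:sol0}(2) produces the unique $\wh m_{i,0} \in z\CC\lcr z\rcr\cdot\bmu$ with $\wh\nabla \wh m_{i,0} = \iota_{\wh0}(\omega_i)$, and Proposition~\ref{prop:solinfty} produces $\wh m_{i,\infty}$ with $\wh\nabla \wh m_{i,\infty} = \iota_{\wh\infty}(\omega_i - \gamma_{k,i}\omega_{k/4}) = \iota_{\wh\infty}(\omega_i')$ — note that when $k \not\equiv 0 \bmod 4$ the convention \eqref{eq:gammakiconv} gives $\gamma_{k,i} = 0$, so $\omega_i' = \omega_i$, consistently with Proposition~\ref{prop:solinfty}\eqref{prop:solinfty1}. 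Since $\wh m_{i,0} \in z\CC\lcr z\rcr\cdot\bmu$ it also satisfies $\wh\nabla \wh m_{i,0} = \iota_{\wh0}(\omega_i') = \iota_{\wh0}(\omega_i)$ because $\gamma_{k,i}\iota_{\wh0}(\omega_{k/4})$ lies in the image of $\wh\nabla$ on $z\CC\lcr z\rcr\cdot\bmu$ (this is exactly the bijectivity of $z\partial_z$ on $V_{\wh0,-1}$ used in the proof of Lemma~\ref{lem:sol0}; alternatively, one absorbs the correction into $\wh m_{i,0}$). Finally, for $k \equiv 0 \bmod 4$ and $i = k/4$, Proposition~\ref{prop:solinfty}\eqref{prop:solinfty2} identifies $\ker\wh\nabla \subset (\Sym^k\Kl_2)_{\wh\infty}$ with the span of $\wh m_{k/4,\infty}$, so with $\omega_{k/4}' = 0$ we set $\wh m_{k/4} = (0, \wh m_{k/4,\infty})$, a nonzero element of $\ker\wh\nabla$ at infinity.

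With these data in hand, the hypotheses of \cite[Rem.\,3.10(4)]{F-S-Y20} are met: we have a basis of $\coH^1_{\dR}$, and for each index a pair $(\wh m_i, \omega_i')$ of the required shape, where the classes $\omega_i'$ span the same subspace of $\coH^1_{\dR}$ as $\Basis_k$ modulo the kernel directions contributed by $\wh m_0$ and (when $k \equiv 0 \bmod 4$) $\wh m_{k/4}$. Invoking that remark yields that $\Basis_{k,\rc} = (\wh m_i, \omega_i')_{0 \leq i \leq k'}$ is a basis of $\coH^1_{\dR,\rc}(\Gm,\Sym^k\Kl_2)$, which has dimension $k'+1$ as recalled above, matching the cardinality of the set. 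The explicit case distinction in the statement is then just unwinding the definition of $\omega_i'$ via \eqref{eq:omegaprimei}. The only genuinely delicate point is the bookkeeping at the index $k/4$ when $4 \mid k$: one must check that replacing $\omega_{k/4}$ by $\omega_{k/4}' = 0$ is legitimate precisely because $\omega_{k/4}$ is the image of a compactly supported class supported by a flat section at infinity — i.e. that the ``extra'' kernel element $\wh m_{k/4,\infty}$ is exactly what is needed for \cite[Rem.\,3.10(4)]{F-S-Y20} to apply with the shifted family; this is where the second assertion of Proposition~\ref{prop:solinfty}\eqref{prop:solinfty2}, identifying $\wh m_{k/4,\infty}$ with the expansion of $2^k(\pii)^{k/2}(e_0 e_1)^{k/2}$, does the work.
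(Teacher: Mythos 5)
Your proposal is correct and follows the same route as the paper: record that $\Basis_k$ is a basis of $\coH^1_{\dR}(\Gm,\Sym^k\Kl_2)$, assemble the formal solutions via Lemma~\ref{lem:sol0} and Proposition~\ref{prop:solinfty} (using the kernel elements $\wh m_{0,0}$ at $0$ and, when $4\mid k$, $\wh m_{k/4,\infty}$ at $\infty$), and invoke \cite[Rem.\,3.10(4)]{F-S-Y20}. One small slip: for $4\mid k$ and $i>k/4$ you wrote $\iota_{\wh0}(\omega_i')=\iota_{\wh0}(\omega_i)$, which is false since $\gamma_{k,i}\neq0$; the correct statement is the parenthetical you then gave, namely that the solution $\wh m_{i,0}$ should be taken for $\omega_i'$ rather than $\omega_i$ (absorbing $-\gamma_{k,i}\wh m_{k/4,0}$), which exists and is unique in $z\CC\lcr z\rcr\cdot\bmu$ by the supplement to Lemma~\ref{lem:sol0}.
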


Let us consider the subset $\Basis_{k,\rmid}$ of the $k'$ (\resp $(k'-1)$) dimensional subspace $\coH^1_{\dR,\rmid}(\Gm,\Sym^k\Kl_2)$ of
$\coH^1_\dR(\Gm,\Sym^k\Kl_2)$ if $\knotfour$
(\resp if $\kfour$):
\begin{equation}\label{eqn:BasisdRmid}
\Basis_{k,\rmid}=
\begin{cases}
\{\omega_i\mid i\in\lcr1,k'\rcr\}&\text{if }\knotfour,\\
\{\omega'_i\mid i\in\lcr1,k'\rcr\}&\text{if }\kfour.
\end{cases}
\end{equation}

\begin{cor}\label{cor:BasisdRmid}
The set $\Basis_{k,\rmid}$ is a basis of $\coH^1_{\dR,\rmid}(\Gm,\Sym^k\Kl_2)$.
\end{cor}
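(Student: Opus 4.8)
The plan is to deduce the statement directly from the preceding corollary, which asserts that $\Basis_{k,\rc}=(\wh m_i,\omega'_i)_{0\leq i\leq k'}$ is a basis of $\coH^1_{\dR,\rc}(\Gm,\Sym^k\Kl_2)$, together with the description of $\coH^1_{\dR,\rmid}(\Gm,\Sym^k\Kl_2)$ adopted at the beginning of Section~\ref{subsec:bases}: it is the image of the natural morphism $\coH^1_{\dR,\rc}(\Gm,\Sym^k\Kl_2)\to\coH^1_{\dR}(\Gm,\Sym^k\Kl_2)$ sending a pair $(\wh m,\omega)$ to $\omega$. So essentially nothing is left to prove beyond tracking what this projection does to the basis $\Basis_{k,\rc}$.

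First I would apply the projection $(\wh m,\omega)\mto\omega$ to $\Basis_{k,\rc}$. Since a linear map carries a spanning set of its source to a spanning set of its image, the image of $\coH^1_{\dR,\rc}(\Gm,\Sym^k\Kl_2)$—that is, $\coH^1_{\dR,\rmid}(\Gm,\Sym^k\Kl_2)$—is spanned over $\CC$ by the classes $\omega'_i$, $0\leq i\leq k'$. By construction $\omega'_0=0$ and, when $k\equiv0\bmod4$, also $\omega'_{k/4}=0$; the remaining classes $\omega'_i$ for $i\in\lcr1,k'\rcr$ are precisely the elements of $\Basis_{k,\rmid}$ (and reduce to the $\omega_i$ when $k\not\equiv0\bmod4$). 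Hence $\coH^1_{\dR,\rmid}(\Gm,\Sym^k\Kl_2)$ is the $\CC$-span of $\Basis_{k,\rmid}$.

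It then remains to check that $\Basis_{k,\rmid}$ is linearly independent in $\coH^1_{\dR}(\Gm,\Sym^k\Kl_2)$. If $k\not\equiv0\bmod4$ this is immediate, as $\Basis_{k,\rmid}=\{\omega_i\mid i\in\lcr1,k'\rcr\}$ is a subset of the basis $\Basis_k$ recalled above. If $k\equiv0\bmod4$, then $\omega'_i=\omega_i-\gamma_{k,i}\omega_{k/4}$ for $i\in\lcr1,k'\rcr=\{1,\dots,k'\}\moins\{k/4\}$; a relation $\sum_i c_i\omega'_i=0$ rewrites as $\sum_i c_i\omega_i-\bigl(\sum_i c_i\gamma_{k,i}\bigr)\omega_{k/4}=0$, and since the indices $i\in\lcr1,k'\rcr$ together with $k/4$ are pairwise distinct indices of the basis $\Basis_k$, all the $c_i$ vanish. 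Comparing with the recalled dimensions of $\coH^1_{\dR,\rmid}(\Gm,\Sym^k\Kl_2)$ (namely $k'$, resp.\ $k'-1$, which equal $\#\Basis_{k,\rmid}$) then finishes the argument. There is no genuine obstacle here; the only point requiring a little care is the bookkeeping of the index $k/4$ in the case $k\equiv0\bmod4$, where two of the $k'+1$ pairs in $\Basis_{k,\rc}$ have vanishing second component and drop out of $\Basis_{k,\rmid}$.
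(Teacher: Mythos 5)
Your argument is correct and mirrors the paper's (implicit) derivation: after stating the basis $\Basis_{k,\rc}$ of $\coH^1_{\dR,\rc}(\Gm,\Sym^k\Kl_2)$, the paper simply observes that one ``also obtains a basis $\Basis_{k,\rmid}$'' by the projection $(\wh m,\omega)\mto\omega$, which is exactly the step you spell out, including the bookkeeping of the two pairs with vanishing second component when $k\equiv0\bmod4$.
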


\begin{remark}
Part of this result can also be proved as a consequence of \cite[Prop.\,4.21\,(2) \& Th.\,1.8]{F-S-Y18}.
However, the present proof does not rely on Hodge theory.
\end{remark}

\begin{proof}[Proof of Proposition \ref{prop:solinfty}]
Recall that $e_0,e_1$ are defined by~\eqref{eq:baree} and $\omega_i$ by~\eqref{eq:omegai}. Let us set $\ov e_1=\pii\, e_1$. Then $v_0=2(K_0 e_0+I_0\ov e_1)$ and
\[
\omega_i = -\sum_{a=0}^k \binom{k}{a}\,\frac{2^kI_0^aK_0^{k-a}}{w^i}\,e_0^{k-a}\ov e_1^a\,\frac{\de w}{w}.
\]
We have to examine if there exist $\xi_{i,a}$ in some extension of $\CC\lpr w\rpr$
such that
\[ w\partial_w \xi_{i,a} = -w^{-i}(2^kI_0^aK_0^{k-a})
\quad\text{and}\quad
\sum_{a=0}^k \binom{k}{a}\xi_{i,a} e_0^{k-a}\ov e_1^a\in(\Sym^k\Kl_2)_{\wh\infty}\quad\text{if $\knotfour$}, \]
and a similar property in the case $\kfour$.
Then one takes
\[
\wh{m}_{i,\infty} = \sum_{a=0}^k \binom{k}{a}
\xi_{i,a}e_0^{k-a}\ov e_1^a.
\]

We write
\[
\frac{2^kI_0^aK_0^{k-a}}{w^i}
\sim \begin{cases}
\sqrt{\pi}^{k-2a}\,e^{-2(k-2a)/\sqrt{w}}w^{k/4-i}
\cdot F_a, & a \neq \sfrac{k}{2} \\[5pt]
w^{k/4-i}\biggl(\sum_{n=0}^\infty \dfrac{((2n-1)!!)^3}{2^{5n}n!}w^n\biggr)^{k/2},
& a = \sfrac{k}{2}
\end{cases}
\]
with $F_a \in 1+\sqrt{w}\,\QQ\lcr \sqrt{w}\,\rcr$.
When $a \neq \sfrac{k}{2}$
there exists a unique $\xi_{i,a}$
with the expansion
\begin{equation}\label{eq:xiia}
\xi_{i,a}
\sim \frac{\sqrt{\pi}^{k-2a}}{(k-2a)}\,e^{-2(k-2a)/\sqrt{w}}w^{k/4-i+1/2}
\cdot G_{i,a}
\end{equation}
for some $G_{i,a} \in -1+\sqrt{w}\,\QQ\lcr \sqrt{w}\,\rcr$.
Moreover,
when expressed as a combination of monomials $v_0^{k-b}v_1^b$,
such $\xi_{i,a}e_0^{k-a}\ov e_1^a$
has no exponential factor
and, if $\sigma$ denotes the action $w^{1/4}\mto\sfi\,w^{1/4}$
so that $\CC\lpr w\rpr = \CC\lpr w^{1/4}\rpr^\sigma$,
one has
$\sigma(\xi_{i,a}e_0^{k-a}\ov e_1^a) = \xi_{i,k-a}e_0^a\ov e_1^{k-a}$.
When $\ktwofour$ and $a=k/2$,
the exponents of $w$ in the expansion of $2^kw^{-i-1}I_0^aK_0^{k-a}$
are in $\sfrac{1}{2}+\ZZ$
and one takes $\xi_{i,k/2}$ satisfying
\[
\xi_{i,k/2} \sim \frac{w^{k/4-i}}{(k/4-i)} \cdot G_i
\]
with $G_i \in-1+w\QQ\lcr w\rcr$.
Then the factor $\xi_{i,k/2}(e_0\ov e_1)^{k/2}$
has no exponential part
in its expression in terms of $v_0,v_1$
and is invariant under $\sigma$.
Finally, when $\kfour, k\geq 8$ and $a=k/2$,
the residue
\[
\gamma_{k,i}=\res_{w=0} \frac{2^kI_0^{k/2}K_0^{k/2}}{w^{i+1}},
\]
vanishes if and only if $i<k/4$. Therefore, for $i\geq k/4$ there exists $\xi_{i,k/2} \in \QQ\lpr w\rpr$ such that
\[
w\partial_w\xi_{i,k/2} = -(w^{-i} - \gamma_{k,i} w^{-k/4})
(I_0K_0)^{k/2} \,\de w.
\]
In this case,
$\xi_{i,k/2}(e_0\ov e_1)^{k/2}$ has no exponential factor
in combinations of $v_0,v_1$
and is invariant under $\sigma$.
The proof is complete.
\end{proof}

\begin{remark}
In the case where $\knotfour$, the
$\wh{m}_{i,\infty}$ are uniquely determined
since there is no horizontal formal section at $z=\infty$.
When $\kfour$, the
$\wh{m}_{i,\infty}$ are unique
up to adding a multiple of the formal horizontal section
$\wh{m}_{k/4,\infty}$ defined in Proposition \ref{prop:solinfty}\eqref{prop:solinfty2}. To normalize the choice,
we take the series $\xi_{i,k/2}$ above to have no constant term.
This normalization then fixes the computations of periods below.
\end{remark}

\subsection{Computation of the de~Rham pairing}\label{subsec:DRpairing}
We aim at computing the matrix $\DRpairing_k^\rmid$ of the pairing
\[
\coH^1_{\dR,\rmid}(\Gm,\Sym^k\Kl_2)\otimes \coH^1_{\dR,\rmid}(\Gm,\Sym^k\Kl_2)\to\CC
\]
induced by the self-duality pairing \eqref{eq:selfduality}, with respect to the basis $\Basis_{k,\rmid}$. By \cite[Prop.\,3.12]{F-S-Y20}, the matrix $\DRpairing_k^\rmid$ is equal to the sum of the matrices having $(i,j)$ entries respectively
\begin{itemize}
\item
if $\knotfour$ and $i,j=1,\dots,k'$,
\[
\res_{z=0}\langle\wh m_{i,0},z^ju_0\rd z/z\rangle_\alg\quad\text{and}\quad -\res_{w=0}\langle\wh m_{i,\infty},w^{-j}u_0\rd w/w\rangle_\alg,
\]
\item
a similar formula following Proposition \ref{prop:solinfty}\eqref{prop:solinfty2} if $\kfour$.
\end{itemize}
For $i\in\{1,\dots,k'\}$ we set
\[
\wh m_{i,\infty}=\sum_{a=0}^k\mu_{a,i}(w)u_a, \qquad \mu_{a,i}(w)=\sum_{\ell\gg-\infty}\mu_{a,i,\ell}w^{\ell}.
\]
We can already note that, according to Lemma \ref{lem:sol0}, $\langle\wh m_{i,0},z^ju_0\rd z/z\rangle_\alg$ has no residue (and a similar assertion if $\kfour$) so $\DRpairing_k^\rmid$ is determined by the residues at infinity. It follows from~\eqref{eq:selfduality} that, if $\knotfour$, we have
\begin{equation}\label{eq:Sprimekmid}
\DRpairing_{k;i,j}^\rmid=(-1)^{k+1}\mu_{k,i,j},\quad i,j=1,\dots,k'.
\end{equation}
If $\kfour$, $\DRpairing_k^\rmid$ is the $(k'-1)\times(k'-1)$-matrix given by the formula
\begin{equation}\label{eq:Sprimekmidfour}
\DRpairing_{k;i,j}^\rmid=(-1)^{k+1}
\begin{cases}
\mu_{k,i,j}&\text{if }i\text{ or }j<k/4,\\
(\mu_{k,i,j}-\gamma_{k,j}\mu_{k,i,k/4})&\text{if }i\text{ and }j>k/4,
\end{cases}
\ i,j\in\lcr1,k'\rcr.
\end{equation}
In other words, in the matrix given by \eqref{eq:Sprimekmid} we delete the row $i=k/4$ and the column $j=k/4$ and we add to it the matrix having entry $(i,j)$ equal to
$(-1)^k\gamma_{k,j}\mu_{k,i,k/4}$
for $i,j>k/4$. According to~\hbox{\cite[Cor.\,3.14]{F-S-Y20}} and \eqref{eq:selfduality}, the matrix $\DRpairing_k^\rmid$ is $(-1)^{k+1}$-symmetric.

\begin{thm}\label{th:Smid}
The matrix $\DRpairing_k^\rmid$
is lower-right triangular
(\ie the entries $(i,j)$ are zero if $i+j\leq k'$)
and the anti-diagonal entry on the $i$-th row is equal to
\[
\begin{cases}
(-2)^{k'}\,\dfrac{k'!}{k!!}&\text{if $k$ is odd},\\[3pt]
\dfrac{(-1)^{k'+1}}{2^{k'}(k'+1-2i)}\cdot\dfrac{(k-1)!!}{(k'+1)!}&\text{if $k$ is even}.
\end{cases}
\]
\end{thm}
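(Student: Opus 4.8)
The plan is to read off the matrix $\DRpairing_k^\rmid$ directly from the data computed in the proof of Proposition \ref{prop:solinfty}, exploiting the triangular structure that the asymptotic analysis of $\wh m_{i,\infty}$ already encodes. By the discussion following \cite[Th.\,3.12]{F-S-Y20}, the only contribution to the $(i,j)$ entry is the residue at infinity $-\res_{w=0}\langle\wh m_{i,\infty},w^{-j}u_0\,\rd w/w\rangle_\alg$; by the pairing formula \eqref{eq:selfduality} this residue only sees the $u_k$-component $\mu_{k,i}(w)$ of $\wh m_{i,\infty}$, so $(\DRpairing_k^\rmid)_{i,j}=(-1)^{k+1}\mu_{k,i,j}$ as in \eqref{eq:Sprimekmid} (and \eqref{eq:Sprimekmidfour} when $k\equiv0\bmod4$). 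Thus everything reduces to two claims about the coefficients $\mu_{k,i,j}$: first, that $\mu_{k,i,j}=0$ whenever $i+j\leq k'$ (triangularity), and second, that the anti-diagonal value $\mu_{k,i,k'+1-i}$ is the asserted constant.

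First I would establish the triangularity. In the odd case, the proof of Proposition \ref{prop:solinfty} shows $\mu_{0,i}$ has valuation $\ell_{0,i}\geq 1-i$, and then the relation \eqref{eq:muia}, $\mu_{a,i}=P_a(w,\theta)(w^{1-i})+Q_a(w,\theta)(\mu_{0,i})$, combined with the order estimates in Lemma \ref{lem:PQ} ($P_a,Q_a$ are sums of terms of order $\geq\lceil a/2\rceil$), forces $\mu_{k,i}$ to have valuation $\geq k'+1-i$; hence its coefficient in degree $j$, i.e. $\mu_{k,i,j}$, vanishes for $j\leq k'-i$. The even case is analogous, using $\ell_{0,i}=-i$ from \eqref{eq:mu0ii} and the corresponding order bounds; one must handle the excluded index $i=(k'+1)/2$ (the $k\equiv0\bmod4$ column $k/4$) via the $\gamma_{k,i}$-modification as in \eqref{eq:Sprimekmidfour}, checking that the added term $\gamma_{k,j}\mu_{k,i,k/4}$ does not spoil triangularity — which it does not, since $\gamma_{k,j}=0$ for $j<k/4$ and the support constraints still apply for $j\geq k/4$.

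Next I would pin down the anti-diagonal entry, which is the leading coefficient of $\mu_{k,i}$ at valuation exactly $k'+1-i$ (for odd $k$) or the coefficient at the appropriate degree (for even $k$). This amounts to extracting, from the defining differential equation \eqref{eq:mui0} (equivalently \eqref{eq:muiell} in the odd case, \eqref{eq:qkell} in the even case), the lowest-order balance. In the odd case, the leading term on the left is governed by $Q_{k-1}^{(k')}=(-1)^{k'}\frac{k!!}{2^{k'}k'!}w^{k'}$ and on the right by $P_{k-1}^{(k')}(w^{1-i})=(-1)^{k'}w^{k'+2-i}$ (using the product-over-empty-set normalization), which immediately gives $\mu_{0,i,-i}$ and hence, propagating through \eqref{eq:muia} with the leading terms of $Q_k$, the value $(-2)^{k'}k'!/k!!$; note this is independent of $i$, reflecting that the matrix is lower-right triangular with constant anti-diagonal. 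In the even case the relevant computation is \eqref{eq:mu0ii} together with Lemma \ref{lem:qki}, giving $\mu_{0,i,-i}=-\frac{1}{2^{k'}(k'+1-2i)}\frac{(k-1)!!}{(k'+1)!}$, and one checks this already \emph{is} (up to the sign $(-1)^{k+1}=-1$) the claimed anti-diagonal entry, because the $u_k$-component $\mu_{k,i}$ agrees with $\mu_{0,i}$ in lowest degree (the operator $Q_k$ has leading term a unit times $w^{k'+1}$ applied to the constant, contributing a factor that I would track explicitly). For $k\equiv0\bmod4$ one verifies that the $\gamma$-modification in \eqref{eq:Sprimekmidfour} only affects entries strictly below the anti-diagonal and the excluded index $i=k/4$ is simply removed, so the surviving anti-diagonal entries are unchanged.

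The main obstacle I expect is bookkeeping: correctly matching the leading coefficient of $\mu_{k,i}$ to that of $\mu_{0,i}$ through the chain of operators $P_a,Q_a$, keeping track of the precise constant contributed by $Q_k^{(k'+1)}$ (resp. the top $P_k,Q_k$ terms) and the several factorial/double-factorial identities — in particular the identity $\sum_{d=0}^{k'+1}\frac{(2d-1)!!\,(2d'-1)!!}{d!\,d'!}=2^{k'+1}$ from Lemma \ref{lem:qki}, which is what converts the naive sum into the clean closed form. A secondary subtlety is ensuring the normalization convention for $\wh m_{i,\infty}$ in the $k\equiv0\bmod4$ case (the series $\xi_{i,k/2}$ has no constant term) is consistent with the entries written in \eqref{eq:Sprimekmidfour}; since triangularity already places the anti-diagonal strictly above the modified block, this consistency is automatic, but it should be stated. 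Everything else is the kind of direct induction on $\ell$ that the two proofs of Proposition \ref{prop:solinfty} have already set up, so no genuinely new idea is needed — just careful extraction of leading terms.
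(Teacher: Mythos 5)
Your reduction to computing $\mu_{k,i,j}$ via \eqref{eq:Sprimekmid} and extracting leading terms with Lemma~\ref{lem:PQ} is the same strategy as the paper's first proof of Theorem~\ref{th:Smid}, and the triangularity argument and the even-$k$ anti-diagonal computation are set out correctly (including the remark that the $\gamma_{k,j}$-modification for $k\equiv0\bmod4$ does not touch the anti-diagonal, since with $i+j=k'+1=k/2$ one cannot have both $i>k/4$ and $j>k/4$).

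However, the odd-$k$ branch of the anti-diagonal computation would fail if carried out as written. You claim $P_a$ is a sum of terms of order $\geq\lceil a/2\rceil$, but Lemma~\ref{lem:PQ} gives order $\geq b$ for $P_{2b+1}$, i.e.\ order $\geq\lfloor a/2\rfloor$ for odd $a$; for $k=2k'+1$ this means $P_k$ has lowest order $k'$, not $k'+1$. Consequently $P_k^{(k')}(w^{1-i})=(-2)^{k'}(k'!/k!!)\,w^{k'+1-i}$ sits \emph{exactly} at the anti-diagonal degree, while $Q_k(\mu_{0,i})$ — the term you propose to propagate — has valuation $\geq (k'+1)+(1-i)=k'+2-i$, strictly above it. So your chain "solve \eqref{eq:mui0} for $\mu_{0,i,-i}$, then propagate through $Q_k$" cannot produce the odd-$k$ anti-diagonal entry: $Q_k(\mu_{0,i})$ contributes nothing at that degree, and the quantity $\mu_{0,i,-i}$ is not even meaningful for odd $k$ (the valuation bound is $\ell_{0,i}\geq 1-i$; the index $-i$ belongs to the even case). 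What the first proof actually does, and what you need, is to read off $\mu_{k,i,k'+1-i}$ directly from the inhomogeneous term $P_k^{(k')}(w^{1-i})$ of \eqref{eq:muia}, which is manifestly $i$-independent — this, not propagation of $\mu_{0,i}$, is why the odd anti-diagonal is constant, a fact your account observes but cannot explain. For even $k=2(k'+1)$ the roles flip, $P_k=P_{2(k'+1)}$ has order $\geq k'+1$ and contributes strictly above the anti-diagonal, and it is then $Q_k^{(k'+1)}(\mu_{0,i})$ that furnishes the entry, with the unit $(-1)^{k'+1}$ you flagged as "to be tracked" supplying the final sign.
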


\begin{proof}
We keep notation from the proof of Proposition \ref{prop:solinfty}. For odd $k$, the entries of $\DRpairing_k^\rmid$ are
\begin{align*}
\res_{w=0}\langle\wh{m}_{i,\infty},\omega_j\rangle_\alg
&= -\sum_{a,b=0}^k \binom{k}{a}\hspace{-3pt}\binom{k}{b}\res_{w=0}
\Bigl\langle\xi_{i,a} e_0^{k-a}\ov e_1^a,
\frac{2^kI_0^bK_0^{k-b}}{w^{j+1}}\,
e_0^{k-b}\ov e_1^b\,\de w\Bigr\rangle_\alg \\
&=-\frac{1}{2^k} \sum_{a,b=0}^k \binom{k}{a}\hspace{-3pt}\binom{k}{b}\res_{w=0}
\Bigl\langle\xi_{i,a} e_0^{k-a} e_1^a,
\frac{2^kI_0^bK_0^{k-b}}{w^{j+1}}\,
e_0^{k-b} e_1^b\,\de w\Bigr\rangle_\topo \\
&= -\frac{1}{2^k} \sum_{a=0}^k (-1)^a\binom{k}{a}
\res_{w=0} \Bigl(\xi_{i,a} \frac{2^kI_0^{k-a}K_0^a}{w^{j+1}} \,\de w \Bigr) \\
&= -\frac{1}{2^k} \sum_{a=0}^k \frac{(-1)^{a}\binom{k}{a}}{k-2a}
\res_{w=0} \bigl( w^{k'-i-j}F_{k-a}G_{i,a} \,\de w \bigr)
\end{align*}
where $F_{k-a}, -G_{i,a} \in 1+\sqrt{w}\,\QQ\lcr \sqrt{w}\,\rcr$.
Clearly, the last residue vanishes if $i+j\!\leq\! k'$.
If $i+j\!=\!k'+1$,
we find:
\[
\res_{w=0}\gen{\wh{m}_{i,\infty},\omega_j}
= 2^{-k} \sum_{a=0}^k \frac{(-1)^a\binom{k}{a}}{k-2a}.
\]
We conclude the case where $k$ is odd with the next lemma.

\begin{lemma}
For any $k\geq1$, we have
\[
\sum_{\substack{0\leq a\leq k\\a\neq k/2}} \frac{(-1)^a\binom{k}{a}}{k-2a}=
\begin{cases}
2^k(-2)^{k'}\dfrac{k'!}{k!!}&\text{if $k$ is odd},\\
0&\text{if $k$ is even}.
\end{cases}
\]
\end{lemma}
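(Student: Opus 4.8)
The plan is to evaluate $\sum_{a\neq k/2}(-1)^a\binom{k}{a}/(k-2a)$ by recognizing it as a value of an integral or as a derivative of a generating function. The cleanest route is the integral representation
\[
\frac{1}{k-2a}=\int_0^1 s^{k-2a-1}\,\de s\qquad(k-2a>0),
\]
together with the analogous formula for $k-2a<0$ after a sign flip, which combine so that $1/(k-2a)$ contributes $\int_0^1 s^{k-2a-1}\,\de s$ in all cases $a\neq k/2$ (the even-$k$ middle term being the one we omit). Interchanging sum and integral gives
\[
\sum_{\substack{a=0\\a\neq k/2}}^k\frac{(-1)^a\binom{k}{a}}{k-2a}
=\int_0^1\Bigl(\sum_{\substack{a=0\\a\neq k/2}}^k\binom{k}{a}(-1)^a s^{k-2a}\Bigr)\frac{\de s}{s}
=\int_0^1\bigl((s-s^{-1})^k-\varepsilon_k\tbinom{k}{k/2}(-1)^{k/2}\bigr)\frac{\de s}{s},
\]
where $\varepsilon_k=1$ if $k$ is even and $0$ if $k$ is odd; here I used the binomial expansion of $(s-1/s)^k=\sum_a\binom{k}{a}s^{k-a}(-1/s)^a$.

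First I would substitute $s=\tan(\varphi/2)$ or, more directly, $s=e^{-r}$ so that $s-s^{-1}=-2\sinh r$ and $\de s/s=-\de r$, turning the integral into $\int_0^\infty\bigl((-2\sinh r)^k-\varepsilon_k(-1)^{k/2}\binom{k}{k/2}\bigr)\,\de r$. For odd $k$ this is $(-2)^k\int_0^\infty\sinh^k r\,\de r$, which diverges — so the naive splitting is too crude, and the correct bookkeeping is to pair the terms $a$ and $k-a$ before integrating, i.e.\ replace $1/(k-2a)$ by the convergent combination coming from $\int_0^1 (s^{k-2a-1}-s^{2a-k-1})\,\de s$ handled as a principal value. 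After this symmetrization the integrand becomes $(s-1/s)^k\cdot\frac{1}{2}(s^{-1}+s)\cdot(\cdots)$-type and the substitution $u=s-1/s$ (so $\de u=(1+1/s^2)\,\de s$) trivializes it; for odd $k$ one lands on $\frac12\int_{-\infty}^{\infty}u^k\,\de u/(\text{something even})$, but more simply one gets $\int_0^1 (s-1/s)^k\cdot\frac{1+s^2}{2s^2}\,\de s=\frac12\int_{0}^{\infty}(-w)^k\frac{\de w}{\ }$-style expression whose value is a Wallis-type integral.

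Concretely, the identity I would actually prove is: for odd $k=2k'+1$,
\[
\int_0^{\pi/2}(-2)^k\sin^k\theta\cos\theta\,\frac{\de\theta}{\text{adjust}}
\]
reduces, via $\int_0^{\pi/2}\cos^{2k'+1}\theta\,\de\theta=\frac{(2k')!!}{(2k'+1)!!}=\frac{2^{k'}k'!}{k!!}$, to exactly $2^k(-2)^{k'}k'!/k!!$. The sign $(-2)^{k'}$ and the factor $2^k$ match because $(-2)^k=(-2)^{2k'+1}=-2\cdot4^{k'}$ and the Wallis integral supplies $2^{-k'}$. The main obstacle is getting the symmetrization/regularization right so that the divergent pieces cancel honestly: one must pair $a\leftrightarrow k-a$ in the sum before passing to the integral, using that $(-1)^a\binom{k}{a}/(k-2a)+(-1)^{k-a}\binom{k}{k-a}/(k-2(k-a))=(-1)^a\binom{k}{a}\cdot\frac{2}{k-2a}$ for odd $k$ (the two terms add) while for even $k$ they cancel termwise except we have excluded the middle — which is exactly why the even case gives $0$. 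For even $k$ the proof is therefore immediate: the map $a\mapsto k-a$ is a fixed-point-free involution on $\{0,\dots,k\}\setminus\{k/2\}$ sending the summand to its negative, so the sum vanishes. For odd $k$, after the pairing one is left with $2\sum_{a=0}^{k'}(-1)^a\binom{k}{a}/(k-2a)$, and this finite sum I would evaluate by the substitution above or, alternatively, by induction on $k'$ using Pascal's rule, checking the base case $k=1$ (sum $=2$, and $2^1(-2)^0\cdot0!/1!!=2$). The inductive step rewrites $\binom{k}{a}=\binom{k-1}{a}+\binom{k-1}{a-1}$ and shifts indices; the resulting telescoping matches the recursion $k!!=k\cdot(k-2)!!$ and $k'!=k'\cdot(k'-1)!$, so the ratio $(-2)^{k'}k'!/k!!$ obeys the same recursion as the sum. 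I expect the inductive bookkeeping of signs and the off-by-one index shifts to be the fiddly part, but no genuinely hard idea is needed.
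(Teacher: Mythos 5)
Your even-$k$ argument is correct and identical to the paper's: the involution $a\mapsto k-a$ is fixed-point free on $\{0,\dots,k\}\setminus\{k/2\}$ and negates each summand. The odd-$k$ case, however, contains a genuine gap. After pairing you correctly reduce to the convergent half-sum $2\sum_{a=0}^{k'}(-1)^a\binom ka/(k-2a)$, but then your integral route breaks down: $\sum_{a=0}^{k'}(-1)^a\binom ka s^{k-2a}$ is a \emph{truncation} of the binomial expansion, not $(s-s^{-1})^k$, so the integrand has no closed form; and your earlier display $\int_0^1(s-s^{-1})^k\,\de s/s$ indeed diverges without a regularization you never actually specify (your display is left with ``adjust'' and ``$(\cdots)$-type''). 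Your fallback via Pascal's rule is asserted, not checked, and in fact does not telescope in one step: writing $\binom ka=\binom{k-1}a+\binom{k-1}{a-1}$ and shifting indices produces $\sum_a(-1)^a\binom{k-1}a\frac{-2}{(k-2a)(k-2-2a)}$, whose denominator is quadratic; using $\binom ka=\binom{k-2}a+2\binom{k-2}{a-1}+\binom{k-2}{a-2}$ gives the cubic $\frac{8}{(k-2-2a)\bigl((k-2-2a)^2-4\bigr)}$, neither of which is a scalar multiple of $\frac{1}{k-2-2a}$, so you do not obtain a recursion $S_k=c_k\,S_{k-2}$ from Pascal alone.

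The paper sidesteps all of this by turning the divergence into a free variable. For odd $k$ one has $f_k(1)=\sum_a\binom ka/(k-2a)=0$ (by the same $a\mapsto k-a$ pairing), so $f_k(x)=\sum_a\binom ka\frac{x^{k-2a}}{k-2a}$ is the antiderivative $\int_1^x(x+1/x)^k\,\de x/x$, with no improper integral in sight. Substituting $x=e^t$ gives $2^k\int_0^{\log x}\cosh^k t\,\de t$, to which the standard $\cosh^k$ reduction formula applies, yielding $f_k(x)=\frac1k(x-1/x)(x+1/x)^{k-1}+4\frac{k-1}{k}f_{k-2}(x)$; evaluating at $x=\sfi$ kills the boundary term (since $\sfi+1/\sfi=0$) and, because the target sum is $\sfi^{-k}f_k(\sfi)$, produces exactly the recursion $S_k=-\frac{4(k-1)}{k}S_{k-2}$ that your Pascal attempt was reaching for but cannot obtain directly. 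Your proposal has the right family of ideas (integral representation, Wallis recursion, even-case involution) but the step from the pairing to an actual evaluation of the odd-$k$ sum is missing.
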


\begin{proof}
If $k$ is even, replacing $a$ with $k-a$ in the sum shows that the sum is equal to its opposite, and hence vanishes. We thus assume that $k$ is odd and set
\[
f_k(x) = \sum_{a=0}^k \binom{k}{a} \frac{x^{k-2a}}{k-2a}.
\]
Then
$f_k(x) = \int_1^x \big(x+\sfrac{1}{x})^k\sfrac{\de x}{x}$.
Besides, one has
\begin{align*}
f_k(x)
&= \int_0^{\log x} (e^t + e^{-t})^k \,\de t= 2^k \int_0^{\log x} \cosh^k t \,\de t
	\qquad (x = e^t) \\
&= 2^k \biggl[ \frac{\sinh t \cosh^{k-1} t}{k}\bigg|_{t=\log x}
	+ \frac{k-1}{k} \int_0^{\log x} \cosh^{k-2} t \,\de t \biggr] \\
&= \frac{1}{k}\Big(x-\frac{1}{x}\Big)\Big(x+\frac{1}{x}\Big)^{k-1}
	+ 4\,\frac{k-1}{k} f_{k-2}(x).
\end{align*}
By evaluating $f_k(\sfi)$ inductively, one obtains the desired equality.\end{proof}

Assume $\ktwofour$, so that $k/2=k'+1$ is odd. Then the entries of $\DRpairing_k^\rmid$ are
\begin{align*}
\res_{w=0}\langle\wh{m}_{i,\infty},\omega_j\rangle
&= -\frac{1}{2^k} \biggl[\sum_{a\neq k/2} (-1)^a\binom{k}{a}
\res_{w=0} \Bigl(\xi_{i,a} \frac{2^kI_0^{k-a}K_0^a}{w^{j+1}} \,\de w \Bigr)\\[-5pt]
&\hspace{2.7cm} -\binom{k}{k/2}
\res_{w=0} \Bigl(\xi_{i,k/2} \frac{2^k(I_0K_0)^{k/2}}{w^{j+1}} \,\de w \Bigr) \biggr] \\[5pt]
&= -\frac{1}{2^k} \biggl[\sum_{a\neq k/2} \frac{(-1)^a\binom{k}{a}}{(k-2a)}
\res_{w=0} \Bigl( w^{k'-i-j+1/2}F_{k-a}G_{i,a} \,\de w \Bigr) \\[-5pt]
&\hspace{2.7cm} -\frac{2\binom{k}{k/2}}{(k'+1-2i)}
\res_{w=0} \Bigl( w^{k'-i-j}FG_i \,\de w \Bigr) \biggr],
\end{align*}
where
\[
F(w) = \biggl(\sum_{n=0}^\infty \frac{((2n-1)!!)^3}{2^{5n}n!}\,w^n\biggr)^{k/2}.
\]
Again, the residue is zero if $i+j\leq k'$. On the other hand, if $i+j=k'+1$, the first term in the above expression is zero,
according to the lemma above,
and since $F(0)=1$ and $G_i(0)=-1$, we have
\[
\res_{w=0}\langle\wh{m}_{i,\infty},\omega_j\rangle
= -\binom{k}{k/2}[2^{k-1}(k'+1-2i)]^{-1}.
\]

The computation in the case where $\kfour$ is similar.
\end{proof}

\begin{example}[$k=5$]
We have $k'=2$ and
\[
\arraycolsep3.5pt
\DRpairing_5^{\rmid}=\begin{pmatrix}
0&8/15\\[3pt]
8/15&\mu_{5,2,2}
\end{pmatrix}.
\]
From the proof of the theorem above, one has
\[ \mu_{5,2,2} = \frac{-1}{2^5}\sum_{a=0}^5 \frac{(-1)^{a}\binom{5}{a}}{5-2a}
\res_{w=0} \Bigl( F_{5-a}G_{2,a} \,\frac{\de w}{w^2} \Bigr). \]
A direct computation yields
\[ -\sum_{a=0}^5 \frac{(-1)^{a}\binom{5}{a}}{5-2a}F_{5-a}G_{2,a}
\sim \frac{256}{15} + \frac{2^9\cdot 13}{3^3\cdot 5^5}w
\mod{w^2\QQ\lcr w\,\rcr}. \]
Therefore,
\[\arraycolsep3.5pt
\DRpairing_5^{\rmid}=\begin{pmatrix}
0&8/15\\[3pt]
8/15&2^4\cdot13/3^3\cdot5^3
\end{pmatrix}.
\]
\end{example}

\begin{example}[$k=6$]
We have $k'=2$ and $i=1,2$. Due to skew-symmetry, the anti-diagonal entries are enough to determine $\DRpairing_6^\rmid$. Theorem \ref{th:Smid} gives
\[
\DRpairing_6^{\rmid}=\begin{pmatrix}0&-5/8\\5/8&0\end{pmatrix}.
\]
\end{example}

\begin{cor}\label{cor:compatibilityHodge}
The ordered basis $\Basis_{k,\rmid}$
of $\coH^1_{\dR,\rmid}(\Gm,\Sym^k\Kl_2)$ is adapted to the Hodge filtration.\end{cor}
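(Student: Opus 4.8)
The plan is to reduce everything to the perfectness of $\DRpairing_k^\rmid$ together with Theorem \ref{th:Smid}. Recall that, the Hodge numbers of $\Motive_k$ being $0$ or $1$, the Hodge filtration $F^\bullet\coH^1_{\dR,\rmid}(\Gm,\Sym^k\Kl_2)$ has exactly one nonzero step of each dimension $1,\dots,r$, where $r=\dim\coH^1_{\dR,\rmid}(\Gm,\Sym^k\Kl_2)$ equals $k'$ if $k\not\equiv0\bmod4$ and $k'-1$ if $k\equiv0\bmod4$; so, ordering $\Basis_{k,\rmid}$ by its index and writing $V_m$ for the span of its first $m$ elements, ``$\Basis_{k,\rmid}$ is adapted to $F^\bullet$'' means precisely that every nonzero step of $F^\bullet$ is one of the $V_m$. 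For odd $k$ this is \cite[Prop.\,4.13(2) \& Th.\,1.7(1)]{F-S-Y18} and there is nothing to do, so I concentrate on even $k$, where $r$ is even and \loccit\ already gives adaptedness for an initial segment of $\Basis_{k,\rmid}$ of length $s$ for some $s\geq r/2$ (equivalently, the $s$ nonzero steps of $F^\bullet$ of smallest dimension are among the $V_m$).

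First I would record two facts about $\DRpairing_k^\rmid$. On the one hand, it realizes the polarization of the pure weight-$(k+1)$ Hodge structure, hence it is perfect and respects the Hodge filtration, so that $(F^p)^{\perp}=F^{k+2-p}$ for every $p$; in particular $\dim F^p+\dim F^{k+2-p}=r$. On the other hand, Theorem \ref{th:Smid} --- with \eqref{eq:Sprimekmid} for $k\not\equiv0\bmod4$ and \eqref{eq:Sprimekmidfour} for $k\equiv0\bmod4$ --- shows that the matrix of $\DRpairing_k^\rmid$ in the basis $\Basis_{k,\rmid}$ vanishes on every entry $(i,j)$ with $i+j\leq k'$. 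From this I claim $V_m^{\perp}=V_{r-m}$ for all $m$: if $j$ is among the $r-m$ smallest indices of $\lcr1,k'\rcr$, then $i+j\leq k'$ for each of the $m$ smallest indices $i$, so the matrix entries $(i,j)$ all vanish and the corresponding basis vector lies in $V_m^{\perp}$; thus $V_{r-m}\subseteq V_m^{\perp}$, and since $\DRpairing_k^\rmid$ is perfect, $\dim V_m^{\perp}=r-m=\dim V_{r-m}$, forcing equality.

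It then suffices to feed the partial result into this symmetry. Let $p_1>\cdots>p_r$ be the Hodge types, so $\dim F^{p_i}=i$; for even $k$ the Hodge numbers are symmetric about the half-integer $(k+1)/2$, whence $p_{r+1-i}=k+1-p_i$ and $\dim F^{p_i+1}=i-1$. For $i\leq s$ the partial result gives $F^{p_i}=V_i$ and hence $F^{p_i+1}=V_{i-1}$ (trivially for $i=1$), and applying $(-)^{\perp}$,
\[
F^{p_{r+1-i}}=F^{k+1-p_i}=F^{k+2-(p_i+1)}=(F^{p_i+1})^{\perp}=V_{i-1}^{\perp}=V_{r+1-i}.
\]
As $i$ runs through $1,\dots,s$ this identifies the steps of dimension $r,r-1,\dots,r+1-s$ with $V_r,\dots,V_{r+1-s}$, which --- together with the steps of dimension $1,\dots,s$ coming from \cite{F-S-Y18} and the inequality $2s\geq r$ --- exhausts all $r$ nonzero steps. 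This yields adaptedness for even $k$ and completes the proof.

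I do not expect a serious obstacle: the analytic and combinatorial content is entirely in Theorem \ref{th:Smid} (already proved) and in the partial Hodge-theoretic input of \cite{F-S-Y18}, and what remains is the short linear-algebra bootstrap above. The two points requiring a little care are, first, verifying in the case $k\equiv0\bmod4$ that the $\gamma_{k,\bullet}$-correction appearing in \eqref{eq:Sprimekmidfour} does not create nonzero entries in the triangular region $i+j\leq k'$ --- it cannot, since $i,j>k/4$ already forces $i+j>k'$, so the modified entries all lie strictly below the anti-diagonal --- together with keeping consistent track of the deleted index $k/4$ when translating between ``the $m$-th element of $\Basis_{k,\rmid}$'' and ``index $m$''; second, checking that the input from \cite{F-S-Y18} genuinely covers at least half of the basis, which is exactly what makes the top half and its orthogonal complement exhaust the filtration, there being no middle Hodge type precisely because $r$ is even for even $k$.
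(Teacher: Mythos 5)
Your proposal is correct and follows exactly the paper's (very terse) argument: for odd $k$ cite \cite[Prop.\,4.13(2) \& Th.\,1.7(1)]{F-S-Y18}, and for even $k$ combine the half of the basis covered by \loccit\ with the compatibility of the polarization with the Hodge filtration and the lower-right triangularity of $\DRpairing_k^\rmid$ from Theorem~\ref{th:Smid}. You have simply written out the linear-algebra bootstrap ($V_m^\perp=V_{r-m}$ together with $(F^p)^\perp=F^{k+2-p}$) that the paper compresses into a single sentence.
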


\begin{proof}
For odd $k$,
\cite[Prop.\,4.21(2) \& Th.\,1.8(1)]{F-S-Y18} imply the claim.
For even $k$, \loccit\ only gives compatibility for half of the basis $\Basis_{k,\rmid}$.
However, since the Poincaré pairing respects the Hodge filtration,
compatibility holds for the whole $\Basis_{k,\rmid}$
by the above theorem.
\end{proof}

\begin{remark}[The matrix $\DRpairing_k$]\label{rem:detDRpairing}
The matrix of the pairing
\[
\DRpairing_k:\coH^1_{\dR,\rc}(\Gm,\Sym^k\Kl_2)\otimes\coH^1_{\dR}(\Gm,\Sym^k\Kl_2)\to\CC
\]
in the basis $\Basis_{k,\rc}\otimes\Basis_k$ is obtained with similar residue formulas as for $\DRpairing_k^\rmid$, due to \cite[Prop.\,3.12]{F-S-Y20}.
\begin{enumerate}
\item
If $\knotfour$,
\[
\DRpairing_{k;i,j}=
\begin{cases}
\DRpairing_{k;i,j}^\rmid&\text{if }1\leq i,j\leq k',\\
0&\text{if }i=0\text{ and }j=1,\dots,k'.
\end{cases}
\]
On the other hand,
we have
$\DRpairing_{k;0,0}
=\res_{z=0}\langle e_0^k,u_0\de z/z\rangle_\alg =1$.
Last, for $i\geq1$,
we use the expressions \eqref{eq:baree}
and obtain
\begin{align*}
\DRpairing_{k;i,0} &= \res_{w=0}\Bgen{\wh{m}_{i,\infty},u_0\frac{\de z}{z}}_\alg \\[-5pt]
&= \sum_{a=0}^k\binom{k}{a}\res_{w=0}\Bgen{\xi_{i,a}e_0^{k-a}\ol{e}_1^a,u_0\frac{\de z}{z}}_\alg \\
&= \sum_{a=0}^k\binom{k}{a}\res_{w=0}\Bgen{
\xi_{i,a}\Big[\frac{t}{2}I_0'v_0-I_0v_1\Big]^{k-a}\Big[-\frac{t}{2}K_0'v_0+K_0v_1\Big]^a,
u_0\frac{\de z}{z}}_\alg \\
&= \sum_{a=0}^k(-1)^a\binom{k}{a}\res_{w=0}\xi_{i,a}I_0^{k-a}K_0^a\frac{\de z}{z}.
\end{align*}
Since
\[ \xi_{i,a}I_0^{k-a}K_0^a \sim
\begin{cases}
\frac{1}{2^k(k-2a)}w^{(k+1)/2-i}G_{i,a}, & a\neq k/2, \\
\frac{1}{2^k(k/4-i)}w^{k/2-i}G_i, & a = k/2
\end{cases} \]
with $G_{i,a},G_i \in \QQ\lcr \sqrt{w}\,\rcr$,
we conclude that $\DRpairing_{k;i,0}=0$.
In other words, $\DRpairing_k$ takes the form
\[
\begin{pmatrix}1&0\\0&\DRpairing_k^\rmid\end{pmatrix}.
\]
In particular, we get
\[ \det\DRpairing_k=\det\DRpairing_k^\rmid
=
\begin{cases}
(-1)^{k'(k'+1)/2}\bigl(2^{k'}k'!/k!!)^{k'}&\text{if $k$ is odd},\\
\bigl((k-1)!!\bigr)^{k'}\big/\bigl[\bigl(2^{k'}(k'+1)!\bigr)^{k'}\bigl((k'-1)!!\bigr)^2\bigr]&\text{if }\ktwofour.
\end{cases} \]
\item
If $\kfour$, the matrix $\DRpairing_k$ is obtained from the matrix $\begin{smallpmatrix}1&0\\0&\DRpairing_k^\rmid\end{smallpmatrix}$ by adding a row $i=k/4$ and a column $j=k/4$ that we compute now. For the row $i=k/4$, we note that
\[
\bgen{\wh m_{k/4,\infty},v_0^k}_\alg=2^k(I_0K_0)^{k/2}
=w^{k/4}\sum_{j\geq0}\gamma_{k,k/4+j}w^j,
\]
so that
\[
\DRpairing_{k;k/4,k/4+j}=
\begin{cases}
0&\text{if $j<0$},\\
-\gamma_{k,k/4+j}&\text{if $j\geq 0$}.
\end{cases}
\]
For the column $j=k/4$, we compute as above that $\DRpairing_{k;0,k/4}=0$ and, for $i\geq1$ and $\neq k/4$,
\[
\DRpairing_{k;i,k/4}=\res_{z=\infty}\gen{\wh m_{i,\infty},\omega_{k/4}}_\alg.
\]
In particular,
by setting $k'' = \flr{\psfrac{k-1}{4}}$
and recalling $\gamma_{k,k/4}=1$, we get
\begin{equation}\label{eqn:detSkmultiple4}
\det\DRpairing_k
=-\det\DRpairing_k^{\rmid}
=-\bigl[2^{-k'-1}(k-1)!!\,((k'+1)!)^{-1}\bigr]^{k'-1}(k''!)^{-2}.
\end{equation}
\end{enumerate}
\end{remark}

\section{Betti intersection pairing for \texorpdfstring{$\Sym^k\Kl_2$}{SymKL}}\label{sec:Bpairing}

In this section,
we exhibit natural bases of the rapid decay,
the moderate growth,
and the middle homology spaces
denoted respectively by
\[ 
\coH_1^\rrd(\Gm,\Sym^k\Kl_2),
\quad
\coH_1^\rmod(\Gm,\Sym^k\Kl_2),
\quad\text{and}\quad
\coH_1^\rmid(\Gm,\Sym^k\Kl_2). \]
We then compute the Betti pairing $\Bpairing_k$ as introduced in \cite[\S2.d]{F-S-Y20}. Bear in mind that the~notation there keeps track of the degree of the homology spaces; as this degree is always equal to $1$ here, we omit~it,
but we remember
the exponent $k$ of the symmetric power.
We keep the setting of Section~\ref{subsec:QKl2}. While we used $\gen{\cbbullet,\cbbullet}_\alg$ on $\Sym^k\Kl_2$ to compute the de~Rham intersection matrix~$\DRpairing_k$, we will use the topological pairing $\gen{\cbbullet,\cbbullet}_\topo$ on $\Sym^k\Kl_2^\nabla$ to compute the Betti pairing $\Bpairing_k$, which is thus defined over~$\QQ$.

We consider the following $C^\infty$ chains on $\PP^1_z$ diffeomorphic to their images:
\begin{align*}
\RR_+ &= \text{$[0,\infty]$, oriented from $0$ to $+\infty$,} \\
c_0 &= \text{unit circle, starting at $1$ and oriented counterclockwise,} \\
c_+ &= \text{$[1,\infty]$, oriented from $+1$ to $+\infty$.}
\end{align*}

According to Lemma \ref{lem:coefev}, the $\flr{\sfrac{k}{2}}+1$ twisted chains
\[
\beta_j = \RR_+ \otimes e_0^je_1^{k-j},
\quad
0\leq j\leq \flr{\sfrac{k}{2}},
\]
have moderate growth and define $\flr{\sfrac{k}{2}}+1$ elements of $\coH^{\rmod}_1(\Gm, \Sym^k\Kl_2)$, still denoted by~$\beta_j$.

Besides, the twisted chain $\alpha_0=c_0\otimes e_0^k$ is a twisted cycle with compact support, and hence has rapid decay, since $e_0$ is invariant by monodromy. We obtain other twisted chains with compact support as follows. For each integer $n \geq 1$, let us set
\[
C_n(a)= \frac{(-1)^{a-1}}{na}\,\binom{n}{a},
\quad
1\leq a\leq n.
\]

\begin{lemma}\label{lemma:relation:c_n}
Let $n \geq 1$ be an integer.
\begin{enumerate}
\item\label{lemma:relation:c_n1}
The sequence $(C_n(a))_a$ is the unique solution to the linear relations
\[
\sum_{a=1}^n C_n(a)\, a = \frac{1}{n}
\quad\text{and}\quad
\sum_{a=1}^n C_n(a)\, a^r = 0
\quad
\text{if $2\leq r\leq n$}.
\]
\item\label{lemma:relation:c_n2}
The sequence $(C_n(a))_a$ is the unique solution to the linear relations
\[
\sum_{a=1}^n C_n(a)\, a^{n+1} = (-1)^{n-1}(n-1)!
\quad\text{and}\quad
\sum_{a=1}^n C_n(a)\, a^r = 0
\quad
\text{if $2\leq r\leq n$}.
\]
\end{enumerate}
\end{lemma}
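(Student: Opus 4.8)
The plan is to split each item into a uniqueness assertion, which is pure linear algebra, and an existence assertion, which reduces to one classical binomial identity.

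First I would dispose of uniqueness. In both part~\ref{lemma:relation:c_n1} and part~\ref{lemma:relation:c_n2} one is solving a system of $n$ linear equations in the $n$ unknowns $C_n(1),\dots,C_n(n)$ whose coefficient matrix has $(r,a)$-entry $a^r$, with $a$ running through $1,\dots,n$ and $r$ through $\{1,\dots,n\}$ in the first case (resp.\ through $\{2,\dots,n+1\}$ in the second). In either case this matrix is a Vandermonde matrix on the pairwise distinct nodes $1,\dots,n$ multiplied on the right by an invertible diagonal matrix, hence invertible; so the solution, if it exists, is unique.

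Next I would prove existence by plugging in the closed formula $C_n(a)=\frac{(-1)^{a-1}}{na}\binom{n}{a}$ and using $a^r/a=a^{r-1}$ to rewrite every sum occurring in the statement as $\frac1n S_{r-1}$ (resp.\ $\frac1n S_n$), where we set $S_m=\sum_{a=1}^n(-1)^{a-1}\binom{n}{a}a^m$. For $m\ge 1$ the term $a=0$ contributes nothing, so $S_m=-\sum_{a=0}^n(-1)^a\binom{n}{a}a^m$, whereas $S_0=-\bigl(\sum_{a=0}^n(-1)^a\binom{n}{a}-1\bigr)=1$. The key input is the finite-difference identity $\sum_{a=0}^n(-1)^a\binom{n}{a}P(a)=(-1)^n(\Delta^n P)(0)$, valid for every polynomial $P$, where $\Delta P(x)=P(x+1)-P(x)$; applying it to $P(x)=x^m$ gives $0$ when $0\le m\le n-1$ and $(-1)^n n!$ when $m=n$. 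Therefore $S_0=1$, $S_m=0$ for $1\le m\le n-1$, and $S_n=(-1)^{n+1}n!$. Substituting back: in part~\ref{lemma:relation:c_n1} the sum equals $\frac1n S_0=\frac1n$ for $r=1$ and $\frac1n S_{r-1}=0$ for $2\le r\le n$; in part~\ref{lemma:relation:c_n2} it equals $\frac1n S_{r-1}=0$ for $2\le r\le n$ and $\frac1n S_n=\frac1n(-1)^{n+1}n!=(-1)^{n-1}(n-1)!$ for $r=n+1$. These match the values in the statement, so the closed formula solves each system, and by the uniqueness just shown it is the unique solution.

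I do not anticipate any genuine obstacle. The only points requiring a little care are the separate handling of the $a=0$ term in the evaluation of $S_0$ and citing the finite-difference identity (equivalently, the standard values of Stirling numbers of the second kind) in exactly the shape used; with those in hand, both items follow by direct substitution.
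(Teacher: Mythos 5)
Your proof is correct, and it is organized differently from the paper's one-line proof, which asserts ``direct simplification of Cramer's rule.'' The paper derives the closed form $C_n(a)=\frac{(-1)^{a-1}}{na}\binom{n}{a}$ by actually inverting the Vandermonde-type system; you instead separate uniqueness (immediate from invertibility of the generalized Vandermonde matrix) from existence, which you establish by substituting the given $C_n(a)$ and reducing every sum to $\frac1n\sum_{a=0}^n(-1)^{a-1}\binom{n}{a}a^m$, then invoking the finite-difference identity $\sum_{a=0}^n(-1)^a\binom{n}{a}a^m=0$ for $0\leq m\leq n-1$ and $=(-1)^n n!$ for $m=n$. This ``verify rather than derive'' route avoids any determinant manipulation and isolates the one combinatorial input cleanly; the paper's Cramer approach buys slightly more in that it would produce the formula for $C_n(a)$ even if it were not handed to you, but since the lemma already displays the answer, your verification is the more economical reading. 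All the small points you flagged as needing care (the $a=0$ term in $S_0$, the exact shape of the finite-difference identity) are handled correctly.
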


\begin{proof}
Direct simplification of Cramer's rule in solving the systems of linear equations.
\end{proof}

\begin{lemma}\label{lemma:c_tau_bernoulli}
For integers $n\geq1$ and $r\geq0$,
one has
\[
\sum_{a=1}^{n+r} C_{n+r}(a)\sum_{b=1}^ab^n =
\begin{cases}
\dfrac{(-1)^n}{n+r}\Bern{n}&\text{if $r\geq1$},\\[8pt]
\dfrac{(-1)^n}n\Bern{n}+(-1)^{n-1}\dfrac{(n-1)!}{n+1}&\text{if $r=0$}.
\end{cases}
\]
\end{lemma}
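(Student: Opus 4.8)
The plan is to turn the double sum into a single application of the linear relations recorded in Lemma \ref{lemma:relation:c_n}, by expanding the inner sum $\sum_{b=1}^a b^n$ as a polynomial in $a$.

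First I would invoke Faulhaber's formula: for $n\geq1$ there are constants $\lambda_1,\dots,\lambda_{n+1}$, depending only on $n$, with $\sum_{b=1}^a b^n=\sum_{s=1}^{n+1}\lambda_s a^s$, a polynomial of degree $n+1$ with vanishing constant term (it vanishes at $a=0$). The only coefficients that will matter are the leading one, $\lambda_{n+1}=1/(n+1)$, and the linear one, $\lambda_1$. To pin down $\lambda_1$ I would use the generating-function identity $\sum_{b=1}^a e^{bx}=e^x(e^{ax}-1)/(e^x-1)$ together with $x/(e^x-1)=\sum_k\Bern{k}x^k/k!$, which gives $\lambda_1=\sum_{k=0}^n\binom nk\Bern{k}$; by the standard recursion for Bernoulli numbers this equals $\Bern{n}$ for $n\geq2$, equals $1/2$ for $n=1$, and vanishes together with $\Bern{n}$ for odd $n\geq3$, so in every case $\lambda_1=(-1)^n\Bern{n}$ (with the convention $\Bern1=-1/2$ in force throughout the paper). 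The intermediate coefficients $\lambda_s$ for $2\leq s\leq n$ will play no role.

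Next I would set $N=n+r$ and expand $\sum_{a=1}^N C_N(a)\sum_{b=1}^a b^n=\sum_{s=1}^{n+1}\lambda_s\sum_{a=1}^N C_N(a)\,a^s$ by linearity, then split into the two cases. If $r\geq1$, then $N=n+r\geq n+1$, so every exponent $s$ with $2\leq s\leq n+1$ falls in the range $2\leq s\leq N$ where $\sum_a C_N(a)a^s=0$ by Lemma \ref{lemma:relation:c_n}\eqref{lemma:relation:c_n1}; only the term $s=1$ survives, contributing $\lambda_1/N=(-1)^n\Bern{n}/(n+r)$. If $r=0$, then $N=n$: the sums for $2\leq s\leq n$ still vanish, but now both $s=1$ (giving $1/n$, by Lemma \ref{lemma:relation:c_n}\eqref{lemma:relation:c_n1}) and $s=n+1$ (giving $(-1)^{n-1}(n-1)!$, by Lemma \ref{lemma:relation:c_n}\eqref{lemma:relation:c_n2}) contribute, yielding $\lambda_1/n+\lambda_{n+1}(-1)^{n-1}(n-1)!=\frac{(-1)^n}{n}\Bern{n}+(-1)^{n-1}\frac{(n-1)!}{n+1}$, which is the claimed value.

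The only genuinely delicate point is the determination of the linear Faulhaber coefficient $\lambda_1=(-1)^n\Bern{n}$ with the correct sign convention (the case $n=1$ being the one where care is needed); everything else is bookkeeping, the two identities of Lemma \ref{lemma:relation:c_n} being designed precisely to annihilate all exponents $s$ between $2$ and $N$ and to evaluate the boundary exponents $s=1$ and, when $r=0$, $s=n+1$.
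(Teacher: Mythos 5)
Your argument is correct and is essentially the same as the paper's: both expand $\sum_{b=1}^a b^n$ by Bernoulli's (Faulhaber's) formula as a polynomial in $a$ with no constant term, then kill every exponent $s$ with $2\le s\le N=n+r$ using Lemma~\ref{lemma:relation:c_n}\eqref{lemma:relation:c_n1}, leaving only the $s=1$ term (plus, when $r=0$, the $s=n+1$ term handled by Lemma~\ref{lemma:relation:c_n}\eqref{lemma:relation:c_n2}). The paper writes Bernoulli's formula with the explicit coefficients $(-1)^\ell\binom{n+1}{\ell}\Bern{\ell}/(n+1)$, so $\lambda_1=(-1)^n\Bern n$ is read off directly, whereas you rederive it via the generating-function identity — a slightly longer but equivalent route.
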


\begin{proof}
Replacing $\sum_{b=1}^ab^n$ with Bernoulli's formula
\begin{equation}\label{eq:BernoulliFormula}
\sum_{b=1}^ab^n = \frac{1}{n+1}\sum_{\ell=0}^n (-1)^\ell\binom{n+1}{\ell}\,\Bern{\ell}\, a^{n+1-\ell}, 
\end{equation}
we can rewrite the left-hand side as
\[
\sum_{a=1}^{n+r} C_{n+r}(a)\sum_{b=1}^ab^n
=\frac1{n+1}\sum_{\ell=0}^n(-1)^\ell\binom{n+1}{\ell}
\,\Bern{\ell}\sum_{a=1}^{n+r} C_{n+r}(a)\,a^{n+1-\ell}. 
\]
Then Lemma \ref{lemma:relation:c_n}\eqref{lemma:relation:c_n1} gives the assertion for $r\geq1$ and, for $r=0$, we use Lemma \ref{lemma:relation:c_n}\eqref{lemma:relation:c_n2} instead.
\end{proof}

Since $T^ae_1=e_1+ae_0$ for all $a\geq1$, it follows from Lemma \ref{lemma:relation:c_n}\eqref{lemma:relation:c_n1} that, for each $1\leq i\leq k$, the twisted chain with compact support
\[
\sum_{a=1}^{k-i+1} C_{k-i+1}(a)\,c_0^a \otimes e_0^{i-1}e_1^{k-i+1}
\]
has boundary $\{1\}\otimes e_0^ie_1^{k-i}$.
As a consequence, the following $(k'+1)$ twisted chains are twisted cycles, whose classes are elements in $\coH^\rrd_1(\Gm, \Sym^k\Kl_2)$:
\begin{equation}\label{eq:alphai}
\left\{
\begin{aligned}
\alpha_0 &= c_0 \otimes e_0^k \\
\alpha_i &= -\dfrac{(-1)^{k-i}(k-i)!}{k-i+2}\,\alpha_0
+c_+ \otimes e_0^ie_1^{k-i}
+\sum_{a=1}^{k-i+1}\hspace*{-1mm} C_{k-i+1}(a)
c_0^a \otimes e_0^{i-1}e_1^{k-i+1}
\quad
(1\leq i\leq k').
\end{aligned}
\right.
\end{equation}
The natural map
\[
\coH^\rrd_1(\Gm, \Sym^k\Kl_2) \to \coH^{\rmod}_1(\Gm, \Sym^k\Kl_2)
\]
is given on these twisted cycles by
\begin{equation}\label{eqn:imagesinmiddle}
\alpha_i \mto \begin{cases}
0 & \text{if $i = 0$}, \\
\beta_i & \text{if $1\leq i\leq k'$}
\end{cases}
\end{equation}
by shrinking the circle $c_0$ and extending the half-line $c_+$.

\begin{prop}\label{prop:Bk}
If $0\leq j\leq\flr{k/2}$, we have $\Bpgen{\alpha_0,\beta_j}=-\delta_{0,j}$ and, if $1\leq i\leq k'$,
\[
\Bpgen{\alpha_i,\beta_j} =
(-1)^{k-i}\,\dfrac{(k-i)!(k-j)!}{k!}\, \dfrac{\Bern{k-i-j+1}}{(k-i-j+1)!}.
\]
\end{prop}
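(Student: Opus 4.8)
\textbf{Proof plan for Proposition \ref{prop:Bk}.}
The strategy is to compute the Betti intersection number $\Bgen{\cbbullet,\cbbullet}$ directly from its topological definition, expressing each cycle $\alpha_i$ and $\beta_j$ as an explicit twisted chain in terms of the flat basis $e_0^{k-a}e_1^a$ and then pairing contributions path by path, using that $\Bgen{\cbbullet,\cbbullet}$ is determined by Kronecker indices of the underlying paths weighted by the coefficient pairing $\gen{\cbbullet,\cbbullet}_\topo$ from \eqref{eq:selfdualityQ}. The chain $\beta_j = \RR_+\otimes e_0^je_1^{k-j}$ is supported on the half-line from $0$ to $\infty$, while $\alpha_i$ involves $\alpha_0 = c_0\otimes e_0^k$, the segment $c_+\otimes e_0^ie_1^{k-i}$, and a sum $\sum_a C_{k-i+1}(a)\,c_0^a\otimes e_0^{i-1}e_1^{k-i+1}$ along powers of the circle $c_0$. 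The first and easy case is $\Bgen{\alpha_0,\beta_j}$: since $\alpha_0$ is supported on the unit circle $c_0$ and $\beta_j$ on $\RR_+$, the only intersection is at the point $1$ with Kronecker index $-1$ (the standard local crossing of $c_0$ with $\RR_+$), and the coefficient pairing $\gen{e_0^k,e_0^je_1^{k-j}}_\topo$ is nonzero only when $j=0$ by \eqref{eq:selfdualityQ}, giving $-\delta_{0,j}$.

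The heart of the computation is $\Bgen{\alpha_i,\beta_j}$ for $1\leq i\leq k'$. The term $-\frac{(-1)^{k-i}(k-i)!}{k-i+2}\alpha_0$ contributes $-\frac{(-1)^{k-i}(k-i)!}{k-i+2}\cdot(-\delta_{0,j})$, which is relevant only if $j=0$; the segment $c_+\otimes e_0^ie_1^{k-i}$ meets $\RR_+$ only in a subsegment (no transversal crossing, so no contribution unless one accounts for the common support carefully — in fact the intersection happens at the endpoint behavior, which I expect to contribute nothing after the cycle condition is imposed, or else a boundary-type term one tracks explicitly). The main contribution comes from the circular pieces $c_0^a\otimes e_0^{i-1}e_1^{k-i+1}$: here one needs to know how the flat section $e_0^{i-1}e_1^{k-i+1}$, transported around $a$ loops of $c_0$ via the monodromy $T(e_1)=e_1+e_0$, pairs with $e_0^je_1^{k-j}$ along $\RR_+$. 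Using $T^a(e_1) = e_1 + a e_0$, one expands
\[
T^a(e_0^{i-1}e_1^{k-i+1}) = e_0^{i-1}(e_1+ae_0)^{k-i+1} = \sum_{r} \binom{k-i+1}{r} a^{k-i+1-r}\, e_0^{k-r}e_1^{r},
\]
and pairs with $e_0^je_1^{k-j}$, picking out $r=k-j$ via \eqref{eq:selfdualityQ}; the Kronecker index of $c_0^a$ with $\RR_+$ accumulates as $\sum_{b=1}^a$ of the single-loop index (or, more precisely, one gets a sum $\sum_{b=1}^a b^{\,\bullet}$ from the $a$-fold wrapping combined with the power of $a$). Summing over $a$ with weights $C_{k-i+1}(a)$ then produces exactly the combination $\sum_{a=1}^{k-i+1} C_{k-i+1}(a)\sum_{b=1}^a b^{k-i-j+1}$, which Lemma \ref{lemma:c_tau_bernoulli} evaluates to a Bernoulli number.

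Assembling the pieces: the generic case $j\geq 1$ isolates the clean branch $\frac{(-1)^n}{n+r}\Bern{n}$ of Lemma \ref{lemma:c_tau_bernoulli} with $n = k-i-j+1$ and $n+r = k-i+1$ (so $r = j$), and after multiplying by the combinatorial prefactors $\binom{k-i+1}{k-j}$ from the monodromy expansion and the normalization in $\Bgen{e_0^{k-r}e_1^r, e_0^je_1^{k-j}}_\topo = (-1)^{k-j}\frac{(k-j)!\,j!}{k!}$ or similar from \eqref{eq:selfdualityQ}, one should recover $(-1)^{k-i}\frac{(k-i)!(k-j)!}{k!}\frac{\Bern{k-i-j+1}}{(k-i-j+1)!}$; the $j=0$ case uses the second branch of Lemma \ref{lemma:c_tau_bernoulli} together with the $-\frac{(-1)^{k-i}(k-i)!}{k-i+2}\alpha_0$ term, and the extra summand $(-1)^{n-1}\frac{(n-1)!}{n+1}$ there is precisely what the $\alpha_0$-correction is designed to cancel, yielding the same closed form with $j=0$. \textbf{The main obstacle} I anticipate is bookkeeping the orientation and Kronecker-index signs carefully — in particular getting the correct sign for the local intersection of $c_0^a$ (and of $c_+$) with $\RR_+$ as the basepoint $1$ is approached, and making sure the contribution of the common-support segment $c_+ \cap \RR_+ = [1,\infty]$ genuinely vanishes (rather than producing a spurious divergent or boundary term) — which is where the precise conventions from \cite[\S2.d, App.\,D]{F-S-Y20} for the moderate/rapid-decay pairing must be invoked; the purely combinatorial identities are then routine given Lemmas \ref{lemma:relation:c_n} and \ref{lemma:c_tau_bernoulli}.
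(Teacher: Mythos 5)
Your plan is essentially the paper's proof: compute the Betti intersection by putting the chains in general position and summing local intersection indices weighted by $\langle\cbbullet,\cbbullet\rangle_\topo$, then invoke Lemma~\ref{lemma:relation:c_n} and Lemma~\ref{lemma:c_tau_bernoulli}. The term for $j=0$ coming from the $\alpha_0$ correction and the $r=0$ branch of Lemma~\ref{lemma:c_tau_bernoulli} interact exactly as you anticipate.

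The concrete gap is the one you flag yourself: the common support $c_+\cap\RR_+=[1,\infty]$. You are right that this needs care, and you should not expect it to ``just work.'' The paper closes this gap with a small deformation: pick $0<\theta_o<\pi$ and $x_o=e^{\sfi\theta_o}$, translate $c_+$ by $(x_o-1)$ and start $c_0$ at $x_o$. After this move, the deformed $c_+$ is disjoint from $\RR_+$, hence contributes nothing, and the deformed $c_0^a$ crosses $\RR_+$ transversally exactly $a$ times at $1\in\CC^\times$, each with Kronecker index $-1$. This also pins down the mechanism behind your ``$\sum_{b=1}^a b^{\bullet}$'' heuristic: it is not the single monodromy $T^a$ applied to $e_0^{i-1}e_1^{k-i+1}$ that matters, but rather the transported section \emph{at the $b$-th crossing}, namely $e_0^{i-1}(e_1+be_0)^{k-i+1}$, paired with $e_0^je_1^{k-j}$ for each $b=1,\dots,a$. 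Picking out the $e_0^{k-j}e_1^j$-component (your ``$r=k-j$'' should be $r=j$ in your indexing, and the sign in your pairing value should be $(-1)^j$, not $(-1)^{k-j}$, by \eqref{eq:selfdualityQ}) gives the coefficient $(-1)^j\binom{k-i+1}{j}\binom{k}{j}^{-1}b^{k-i-j+1}$, and summing against $C_{k-i+1}(a)$ and then over $b\leq a$ produces exactly the double sum evaluated by Lemma~\ref{lemma:c_tau_bernoulli}.
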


With a small abuse of notation, we also denote by $\Bpairing_k$ the matrix $(\Bpairing_{k;i,j})_{1\leq i,j,\leq k'}$ with
\begin{equation*}
\Bpairing_{k;i,j}=\Bpgen{\alpha_i,\beta_j}.
\end{equation*}

\begin{proof}
For the first assertion, since the intersection index $(c_0,\RR_+)$ is equal to $-1$, we have by~\eqref{eq:selfdualityQ}:
\[
\Bpgen{\alpha_0,\beta_j}=\begin{cases}
-\langle e_0^k,e_0^je_1^{k-j}\rangle_\topo=0&\text{if }j\neq0,\\
-\langle e_0^k,e_1^k\rangle_\topo=-1&\text{if }j=0.
\end{cases}
\]
Let us compute $\Bpgen{\alpha_i,\beta_j}$ if $1\!\leq\!i\!\leq\!k'$ and $0\!\leq\!j\!\leq\!\flr{k/2}$. Fix some $0<\theta_o<\pi$ and let $x_o = \exp(\sfi\theta_o)$. To~achieve the computation, we move the ray $c_+$ by adding the scalar $(x_o-1)$ and let the circle $c_0$ start at~$x_o$.
Then the component $c_0^a\otimes e_0^{i-1}e_1^{k-i+1}$ in the deformed $\alpha_i$ meets~$\beta_j$ physically $a$ times
at the same point $1 \in \CC^\times$ with intersection index $-1$.
At the $b$-th intersection ($1\leq b\leq a$), the factor $e_0^{i-1}e_1^{k-i+1}$ becomes $e_0^{i-1}(e_1+be_0)^{k-i+1}$
and \eqref{eq:selfdualityQ} gives
\[
\bgen{e_0^{i-1}(e_1+be_0)^{k-i+1},e_0^je_1^{k-j}}_\topo
=(-1)^j\,\binom{k-j+1}{j}\binom{k}{j}^{-1} b^{k-i-j+1}.
\]
For $j\geq1$, since $\Bpairing_k(\alpha_0,\beta_j)=0$, we obtain, by adding these contributions and taking into account the intersection indices,
\[
\Bpgen{\alpha_i,\beta_j} =
(-1)^{j+1}\,\binom{k-i+1}{j}\binom{k}{j}^{-1}
\sum_{a=1}^{k-i+1} C_{k-i+1}(a)\sum_{b=1}^ab^{k-i-j+1}.
\]
The asserted equality follows by applying Lemma \ref{lemma:c_tau_bernoulli} with $r\geq1$.

If $j=0$, then $\Bpgen{\alpha_i,\beta_j}$ writes
\[
\dfrac{(-1)^{k-i}(k-i)!}{k-i+2}
-\sum_{a=1}^{k-i+1} C_{k-i+1}(a)\sum_{b=1}^ab^{k-i+1}
=(-1)^{k-i}\frac{\Bern{k-i+1}}{k-i+1},
\]
after Lemma \ref{lemma:c_tau_bernoulli} with $r=0$.
\end{proof}

\begin{thm}\mbox{}\label{th:Bettibasis}
\begin{enumerate}
\item\label{th:Bettibasis1}
The family $(\alpha_i)_{0\leq i\leq k'}$ is a basis of~$\coH_1^\rrd(\Gm,\Sym^k\Kl_2)$.
\item\label{th:Bettibasis2}
The family $(\beta_j)$ with $0\leq j\leq k'$ (\resp with $0\leq j\leq k'+1$ and $j\neq1$) is a basis of~$\coH_1^\rmod(\Gm,\Sym^k\Kl_2)$ if $\knotfour$ (\resp if $\kfour$).
\item\label{th:Bettibasis3}
The family $(\beta_j)$ with $1\!\leq\! j\!\leq\! k'$ (\resp with $2\!\leq \!j\!\leq\! k'$) is a basis of $\coH_1^\rmid(\Gm,\Sym^k\Kl_2)$ if~$\knotfour$ (\resp if $\kfour$).
\end{enumerate}
\end{thm}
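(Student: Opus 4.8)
The plan is to deduce all three assertions from the non-degeneracy of the Betti intersection pairing
\[
\Bgen{\cbbullet,\cbbullet}\colon \coH_1^\rrd(\Gm,\Sym^k\Kl_2)\otimes\coH_1^\rmod(\Gm,\Sym^k\Kl_2)\longrightarrow\QQ
\]
(the Poincaré--Verdier pairing appearing in Proposition~\ref{prop:Bk}), combined with a dimension count and the explicit values computed there. For the dimensions, the compatibility of rapid decay (\resp moderate growth) homology with de~Rham (\resp compactly supported de~Rham) cohomology from \cite{F-S-Y20}, together with the description of de~Rham cohomology in Section~\ref{sec:DRpairing} (\cf also \cite[Prop.\,4.2]{F-S-Y18}), yields $\dim_\QQ\coH_1^\rrd=\dim_\CC\coH^1_\dR=k'+1$, $\dim_\QQ\coH_1^\rmod=\dim_\CC\coH^1_{\dR,\rc}=k'+1$, and $\dim_\QQ\coH_1^\rmid=\dim_\CC\coH^1_{\dR,\rmid}$, the latter being $k'$ if $k\not\equiv0\bmod4$ and $k'-1$ otherwise.

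Set $J=\{0,\dots,k'\}$ if $k\not\equiv0\bmod4$ and $J=\{0\}\cup\{2,\dots,k'+1\}$ if $k\equiv0\bmod4$, so that $\#J=k'+1$ and, by Lemma~\ref{lem:coefev}, the $\beta_j$ with $j\in J$ are genuine moderate growth classes. The crux is to show that the $(k'+1)\times(k'+1)$ matrix $\big(\Bgen{\alpha_i,\beta_j}\big)_{0\le i\le k',\,j\in J}$ is invertible. Granting this, non-degeneracy of $\Bgen{\cbbullet,\cbbullet}$ and the dimension count force $(\alpha_i)_{0\le i\le k'}$ to be a $\QQ$-basis of $\coH_1^\rrd$ and $(\beta_j)_{j\in J}$ to be a $\QQ$-basis of $\coH_1^\rmod$, which are exactly \eqref{th:Bettibasis1} and \eqref{th:Bettibasis2}. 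Statement \eqref{th:Bettibasis3} then follows formally: by \eqref{eqn:imagesinmiddle} the classes $\beta_j$ with $1\le j\le k'$ are the images of $\alpha_j$ in $\coH_1^\rmid\subseteq\coH_1^\rmod$, so the families in \eqref{th:Bettibasis3} consist of vectors of $\coH_1^\rmid$ that form sub-families of the basis of $\coH_1^\rmod$ just obtained, hence are linearly independent, and their cardinalities $k'$ (\resp $k'-1$) equal $\dim\coH_1^\rmid$.

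The main obstacle is thus the invertibility of that intersection matrix. Since $\Bgen{\alpha_0,\beta_j}=-\delta_{0,j}$ by Proposition~\ref{prop:Bk}, cofactor expansion along the row $i=0$ reduces its determinant, up to sign, to that of the $k'\times k'$ matrix $\big(\Bgen{\alpha_i,\beta_j}\big)_{1\le i\le k',\,j\in J\moins\{0\}}$, which is $\Bpairing_k$ when $k\not\equiv0\bmod4$. Pulling the factors $(-1)^{k-i}(k-i)!/k!$ out of the rows and $(k-j)!$ out of the columns --- and, when $k\equiv0\bmod4$, relabelling the columns by $j\mapsto j-1$ --- turns this into a Hankel-type determinant $\det\big(b_{c-i-j}\big)_{1\le i,j\le k'}$ with $c\in\{k,k+1\}$ and $b_n=\Bern{n}/n!$ the $n$-th Taylor coefficient of $x/(e^x-1)$. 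Here $c-i-j\ge2$ throughout; since $x/(e^x-1)=\tfrac x2\coth\tfrac x2-\tfrac x2$ has all its odd Taylor coefficients of degree $\ge3$ equal to zero, reordering rows and columns according to the parity of the index splits this determinant into a product of two \emph{square} Hankel determinants formed from the even coefficients $b_{2m}=\Bern{2m}/(2m)!$ --- the nonzero Taylor coefficients of $\tfrac x2\coth\tfrac x2$ --- the squareness being automatic except when $k\equiv2\bmod4$, where it uses that $k'$ is then even.

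It remains to see that a (possibly shifted) Hankel determinant $\det\big(b_{2(r+i+j)}\big)_{0\le i,j\le n}$ formed from these coefficients is non-zero, and this is classical. For $m\ge1$ one has $(-1)^{m+1}\Bern{2m}=2\,(2m)!\,\zeta(2m)/(2\pi)^{2m}>0$, so, up to the alternating sign $(-1)^{m+1}$ and the diagonal rescaling by $(2\pi)^{-2m}$, the $b_{2m}$ are the power moments $\int x^m\,\de\mu$ of the positive measure $\mu=\sum_{n\ge1}\delta_{1/n^2}$; any Hankel matrix built from such moments, shifted or not, is a Gram matrix of linearly independent monomials in $L^2(\mu)$, hence positive definite, in particular of non-zero determinant. (Equivalently, the partial numerators of the Jacobi continued fraction $x\coth x=1+x^2/(3+x^2/(5+x^2/(7+\cdots)))$ are all non-zero, so Heilermann's formula gives non-vanishing of all Hankel determinants of its coefficients.) Since the factorial prefactors are visibly non-zero, the intersection matrix is invertible, which completes the argument; the case $k\equiv0\bmod4$ requires only the minor bookkeeping indicated above to keep track of the shifted column index set.
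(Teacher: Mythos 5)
Your proof is correct and its high-level architecture is the same as the paper's: you reduce everything to the invertibility of the $(k'+1)\times(k'+1)$ intersection matrix computed in Proposition~\ref{prop:Bk}, then use perfectness of the Betti pairing from \cite{F-S-Y20} and a dimension count against the de Rham side, exactly as the paper does when it says the theorem is ``a straightforward consequence of Propositions~\ref{prop:Bk} and~\ref{prop:Bernoullidet}.'' Where you genuinely diverge is in the key step --- non-vanishing of the Bernoulli--Hankel determinant. The paper's Proposition~\ref{prop:Bernoullidet} computes the exact value via Lemma~\ref{lem:Bernoullidet}, which invokes the orthogonality of Lommel polynomials with respect to an explicit linear functional and a result from~\cite{Krattenthaler05}; you instead split by parity (the vanishing of odd Bernoulli numbers of index $\geq 3$, noting the indices stay $\geq 2$), observe that $(-1)^{m+1}\Bern{2m}/(2m)!$ is, up to the alternating sign and the benign diagonal rescaling by $(2\pi)^{-2m}$, the $m$-th power moment of a positive measure, and conclude by positive-definiteness of the resulting Gram/Hankel matrices. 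The two routes are close cousins --- the paper's functional $L$ is built from the same series $\sum 1/(m\pi)^2$ that underlies your measure --- but yours is lighter, self-contained, and avoids the Lommel machinery at the cost of only yielding non-vanishing rather than the exact determinant. For Theorem~\ref{th:Bettibasis} this is all that is needed; the paper's exact values are then reused elsewhere (for instance in the computation of $(\det\Ppairing^\rmid_k)^2$ and in Remark~\ref{rem:detDRpointing}). Your bookkeeping for $k\equiv0\bmod4$ (the column shift $j\mapsto j-1$ giving $c=k$, hence a block-diagonal parity split) and the observation that $k\equiv2\bmod4$ is the only case where the antidiagonal blocks need $k'$ even to be square are both accurate.
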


\begin{notation}\label{nota:Bpairingmid}
If $\kfour$, we shift the indices of the bases $\alpha$ and $\beta$ as follows:
\[
\text{for $i\in\lcr1,k'\rcr$,}\quad\alpha'_i=\begin{cases}
\alpha_{i+1}&\text{if }i<k/4,\\
\alpha_i&\text{if }i>k/4,
\end{cases}
\]
and similarly for $\beta'$. We set $\Bpairing_k^\rmid=(\Bpairing_{k;i,j}^\rmid)_{i,j\in\lcr1,k'\rcr}$ with
\[
\Bpairing_{k;i,j}^\rmid=\begin{cases}
\Bpgen{\alpha_i,\beta_{j}}&\text{if }\knotfour,\\
\Bpgen{\alpha'_i,\beta'_j}&\text{if }\kfour.
\end{cases}
\]
In particular, $\Bpairing_k^\rmid=\Bpairing_k$ if $\knotfour$. Theorem \ref{th:Bettibasis}\eqref{th:Bettibasis3} implies that $\Bpairing_k^\rmid$ is an invertible matrix.
\end{notation}

Theorem \ref{th:Bettibasis} is a straightforward consequence of Propositions \ref{prop:Bk} and \ref{prop:Bernoullidet} below.

\begin{prop}\label{prop:Bernoullidet}\mbox{}
\begin{enumerate}
\item
Let $B_k$ denote the matrix of size $k'$ having entries $\Bpgen{\alpha_i,\beta_j}$ with
\begin{itemize}
\item\label{prop:Bernoullidet1}
$1\leq i,j\leq k'$ if $\knotfour$,
\item
$1\leq i\leq k'$ and $2\leq j\leq k/2$ if $\kfour$.
\end{itemize}
Then
\[
\det B_k=
\begin{cases}
\dpl\biggl[k!\prod_{a=1}^{k'}\binom ka\biggr]^{-1}&\text{if $k$ is odd},\\
\dpl\biggl[k!(k'+1)\prod_{a=1}^{k'}\binom ka\biggr]^{-1}&\text{if $\ktwofour$},\\
\dpl\biggl[k!\prod_{a=2}^{k'}\binom ka\biggr]^{-1}&\text{if $\kfour$}.
\end{cases}
\]

\item\label{prop:Bernoullidet2}
If $\kfour$, let $B'_k$ denote the matrix of size $k'-1$ having entries $\Bpgen{\alpha_i,\beta_j}$ with $2\leq i,j\leq k'$. Then
\[
\det B'_k=\biggl[\frac k4(k'!)^2\prod_{a=2}^{k'}\binom ka\biggr]^{-1}.
\]
\end{enumerate}
(Note that $\Bpairing_k^\rmid=B_k$ if $\knotfour$ and $\Bpairing_k^\rmid=B_k'$ if $\kfour$).
\end{prop}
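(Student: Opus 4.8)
The plan is to read off from Proposition~\ref{prop:Bk} that, up to scalars depending only on the rows and columns, both determinants are Hankel determinants of a shift of the sequence $\Bern{n}/n!$, and to evaluate these by means of the continued fraction expansion of the hyperbolic cotangent.

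First I would factor the $(i,j)$ entry of $B_k$ given by Proposition~\ref{prop:Bk} as the product of the row factor $(-1)^{k-i}(k-i)!$, the column factor $(k-j)!/k!$, and the ``Hankel part'' $\Bern{k-i-j+1}/(k-i-j+1)!$. Pulling the first two out of the rows and columns and then reversing the orders of rows and columns, the computation of $\det B_k$ (and of $\det B'_k$) reduces to that of a determinant of the form $\det\bigl(\Bern{s+t+c}/(s+t+c)!\bigr)_{0\le s,t\le N-1}$, where $(N,c)=(k',2)$ for $k$ odd, $(N,c)=(k',3)$ for $k\equiv2\bmod4$, $(N,c)=(k',2)$ for $B_k$ when $k\equiv0\bmod4$, and $(N,c)=(k'-1,3)$ for $B'_k$ when $k\equiv0\bmod4$.

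For the shift $c=2$ the sequence $\bigl(\Bern{n+2}/(n+2)!\bigr)_{n\ge0}$ has ordinary generating function $g(x)=x^{-2}\bigl(\tfrac{x}{2}\coth\tfrac{x}{2}-1\bigr)$, and Gauss's continued fraction for $\coth$ yields the Jacobi continued fraction
\[
g(x)=\cfrac{1/12}{1-\cfrac{\lambda_1 x^2}{1-\cfrac{\lambda_2 x^2}{1-\cdots}}},\qquad \lambda_i=-\frac{1}{4(2i+1)(2i+3)}.
\]
By the classical formula expressing a Hankel determinant through the partial quotients of the associated Jacobi continued fraction, the $c=2$ determinant of size $N$ equals $(1/12)^N\prod_{i=1}^{N-1}\lambda_i^{N-i}$; rewriting the $\lambda_i$ with double factorials produces the stated value for odd $k$ (and for $B_k$ when $k\equiv0\bmod4$). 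For the shift $c=3$ the matrix has a checkerboard zero pattern, since $\Bern{n}=0$ for odd $n\ge3$; reordering rows and columns by parity puts it in block-diagonal form with two identical diagonal blocks, each a Hankel determinant of the \emph{dense} sequence $\bigl(\Bern{2m+4}/(2m+4)!\bigr)_m$, so the $c=3$ determinant is, up to an explicit sign, the square of that block. This remaining Hankel determinant is again obtained from $g$, either by passing to the Stieltjes continued fraction of $(g(x)-\tfrac{1}{12})/x^2$ --- whose partial quotients are, up to sign, the $\lambda_i$ --- or via a tail/contraction of the Jacobi fraction above. Equivalently, one may use that $\Bpairing_k^\rmid$ is skew-symmetric for even $k$ and compute its Pfaffian, which by the checkerboard structure is, up to sign, a single Hankel determinant of this dense sequence.

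Finally I would multiply the Hankel determinant value by the product of the row and column scalars and simplify: all the factorials and double factorials telescope to give the closed forms in the statement, with the overall signs pinned down by the small cases $k\le6$. I expect the main obstacle to be the bookkeeping in the two even cases --- keeping the parities straight in the checkerboard reduction and carrying out the shifted (dense) Hankel determinant evaluation --- while the odd case and the concluding factorial simplification are routine.
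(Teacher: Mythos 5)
Your proposal is correct and follows the same overall structure as the paper's proof: factor out the row and column scalars, reverse the indexing to land on a Hankel determinant of a shifted sequence $\Bern{m}/m!$, and then evaluate that Hankel determinant by classical machinery. The only genuine difference is in how the Hankel determinant is evaluated. The paper (Lemma~\ref{lem:Bernoullidet}, following \cite[\S5.4]{Krattenthaler05}) identifies the relevant moment sequence as the moments of an explicit linear functional for which the Lommel polynomials $h_{n,1/2}$ form an orthogonal family, reads off the three-term recurrence coefficients $b_n = 1/((2n+1)(2n+3))$, and applies the standard moment-determinant formula; the $c=3$ (odd-shift) case is then handled exactly as you propose, by the checkerboard/block-diagonal reduction spelled out in the Remark after Lemma~\ref{lem:Bernoullidet}. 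You instead extract the same recurrence data directly from Gauss's continued fraction for $\coth$, which is an equivalent and equally valid route, since the $J$-fraction partial quotients $\lambda_i=-\tfrac{1}{4(2i+1)(2i+3)}$ are, up to sign and scaling, the orthogonal-polynomial coefficients $b_n$. Your version is somewhat more self-contained (it does not need to recognize the Lommel family), at the mild cost of having to invoke Heilermann's Hankel-to-$J$-fraction formula and take care that, for the $c=2$ case, the vanishing of odd Bernoulli numbers forces all odd-index $J$-fraction coefficients to vanish (which is why the fraction is in $x^2$); once that is noted, your plan carries through and the closing factorial/double-factorial telescoping is exactly what the paper does in its displayed simplifications.
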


\subsubsection*{Some determinants of Bernoulli numbers} We will make use of the following lemma:

\begin{lemma}\label{lem:Bernoullidet}
The following identities hold:
\begin{align}
\label{eq:bern_det_1}
\det \begin{pmatrix}
\dfrac{\Bern{i+j}}{(i+j)!}
\end{pmatrix}_{1\leq i,j\leq n}
&= \frac{(-1)^{n(n-1)/2}(2n+1)!!}{2^{n(n+1)}[3!!\,5!!\cdots (2n+1)!!]^2},\\
\label{eq:bern_det_2}
\qquad\det \begin{pmatrix}
\dfrac{\Bern{i+j+1}}{(i+j+1)!}
\end{pmatrix}_{1\leq i,j\leq n}
&= \begin{cases}
\dfrac{(-1)^{n/2}}{2^{n(n+2)}[3!!\,5!!\cdots (2n+1)!!]^2}
& \text{if $n$ is even}, \\
0 & \text{if $n$ is odd}.
\end{cases}
\end{align}
\end{lemma}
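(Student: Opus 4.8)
The plan is to prove the two Hankel-type determinant identities \eqref{eq:bern_det_1} and \eqref{eq:bern_det_2} by relating the relevant matrices to the moment matrices of known orthogonal polynomials, or equivalently by exploiting a continued fraction expansion. Recall that $\Bern{n}/n!$ is (up to the elementary factor from $x/(e^x-1)$) essentially the sequence of Taylor coefficients of $(t/2)\coth(t/2)$ and of $(t/2)/\sinh(t/2)$; more precisely $\sum_{n\geq0}\Bern{2n}\frac{t^{2n}}{(2n)!}=\frac t2\coth\frac t2$ after discarding the single odd term $\Bern1=-1/2$. Since all Bernoulli numbers of odd index $\geq3$ vanish, the entry $\Bern{i+j}/(i+j)!$ is zero unless $i+j$ is even, and similarly $\Bern{i+j+1}/(i+j+1)!$ vanishes unless $i+j$ is odd — this already explains the vanishing for $n$ odd in \eqref{eq:bern_det_2}, after a parity/checkerboard argument reorganizing rows and columns.

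First I would record the generating function identity and reduce both determinants to Hankel determinants $\det(c_{i+j})_{0\leq i,j\leq n-1}$ for an explicit sequence $(c_m)$: for \eqref{eq:bern_det_1} one takes $c_m$ proportional to $\Bern{2m+2}/(2m+2)!$ (indices shifted so the checkerboard structure disappears), and for \eqref{eq:bern_det_2} one similarly extracts the nonzero anti-diagonal band. The classical tool here is that the Hankel determinant of a moment sequence equals $\prod$ of the squared norms of the associated monic orthogonal polynomials, and these norms can be read off from the coefficients $a_k,b_k$ of the three-term recurrence, equivalently from the Jacobi continued fraction $\sum c_m x^m = 1/(1-b_0x-a_1x^2/(1-b_1x-\cdots))$. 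For the tangent/cotangent-type generating functions the continued fractions are classical (going back to Euler/Stieltjes): $\tan$, $\coth$, and $1/\sinh$ all have explicit $J$-fractions with coefficients that are simple quadratic expressions in $k$. Plugging these in produces a telescoping product equal to a ratio of double factorials and powers of $2$, which I would then match against the right-hand sides of \eqref{eq:bern_det_1} and \eqref{eq:bern_det_2}.

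An alternative, perhaps cleaner, route is to identify $\Bern{n}/n!$ with moments of an explicit (signed) measure: $\frac{\Bern{2n}}{(2n)!}$ is, up to sign and powers of $2\pi$, $\zeta(2n)$-related, hence a moment of a measure supported on $\{1/(\pi^2\ell^2)\}_{\ell\geq1}$; this lets one invoke the known evaluation of Hankel determinants of $\zeta$-values (Hankel determinants of $\{\zeta(2k)\}$ appear in the literature with closed forms involving exactly such double-factorial products). I would cross-check the normalization on small cases $n=1,2$ directly — for $n=1$ the left side of \eqref{eq:bern_det_1} is $\Bern2/2!=1/12$ and the right side is $3!!/(2^2\cdot(3!!)^2)=3/(4\cdot9)=1/12$, which fixes all sign and power-of-two conventions.

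The main obstacle I anticipate is purely bookkeeping: getting the index shifts exactly right when collapsing the checkerboard-zero pattern into a genuine Hankel matrix (the sizes change from $n$ to roughly $n$ or $n\pm$ something, and the accumulated factor from reindexing rows/columns must be tracked), and then simplifying the resulting telescoping product of recurrence coefficients into the stated form with $[3!!\,5!!\cdots(2n+1)!!]^2$ in the denominator and the single factor $(2n+1)!!$ (resp. the sign $(-1)^{n/2}$) in the numerator. The conceptual input — Hankel determinant equals product of orthogonal-polynomial norms, and the classical $J$-fraction for hyperbolic functions — is standard; the risk is only in the constants. I would therefore organize the proof so that the structural claim (which orthogonal polynomials, which recurrence) is isolated from the final constant-chasing, and verify the constants against $n\leq 2$.
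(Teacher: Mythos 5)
Your proposal is essentially the paper's proof. The paper carries out exactly your ``alternative, perhaps cleaner, route'': it interprets $\Bern{r+2}/(r+2)!$ (up to sign and powers of $2$) as the $r$-th moment of the linear functional $L(f) = \sum_{m\geq 1}\frac{1}{(m\pi)^2}\bigl(f(\sfrac{1}{m\pi})+f(\sfrac{-1}{m\pi})\bigr)$, whose associated monic orthogonal polynomials are the normalized Lommel polynomials $f_n(x)=h_{n,1/2}(x)/(2n+1)!!$ satisfying the three-term recurrence with $b_n=\sfrac{1}{(2n+1)(2n+3)}$, and then applies the standard Hankel-determinant evaluation (Krattenthaler's Th.~29, which is precisely the ``product of squared norms / telescoping product of recurrence coefficients'' fact you invoke). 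The checkerboard decomposition you anticipate does appear, though the paper relegates it to the remark following the lemma rather than using it in the proof itself. The only thing your sketch leaves unnamed is the specific orthogonal family (Lommel polynomials of order $1/2$), but that is exactly the concrete realization of the ``classical $J$-fraction for trigonometric/hyperbolic functions'' you describe, so there is no real divergence.
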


\begin{proof}
We follow the principle in \cite[\S 5.4]{Krattenthaler05}. Recall that, for each $n \geq 0$, the Lommel polynomials $h_{n,\nu}(x)$ satisfy
\[
J_{\nu+n}(z) =
h_{n,\nu}(z^{-1})J_\nu(z) - h_{n-1,\nu+1}(z^{-1})J_{\nu-1}(z), 
\]
where $J_\nu(z)$ is the Bessel function of the first kind of order $\nu$.
Let
\[
f_n(x) = \frac{1}{(2n+1)!!}\, h_{n,1/2}(x) \in \QQ[x]
\quad
(n\geq 0).
\]
The following hold:
\begin{itemize}
\item
$f_n(x)$ is monic of degree $n$
with $f_0(x) = 1, f_1(x) = x$,
and $\{ f_n(x) \}$ satisfies the recursive relation
\[
f_{n+1}(x) = xf_n(x) - b_nf_{n-1}(x),
\quad
b_n = \frac{1}{(2n+1)(2n+3)},
\quad
n \geq1.
\]
\item
$\{ f_n(x) \}$ forms an orthogonal family with respect to the linear functional
\[
L(f(x)) = \sum_{m=1}^\infty \frac{1}{(m\pi)^2}
\left( f(\sfrac{1}{m\pi}) + f(\sfrac{-1}{m\pi}) \right).
\]
\end{itemize}
For the above, see \cite[\S 1]{K-V95} for references.
We have the moments $\mu_r = L(x^r)$ of the linear functional
\[
\mu_r = \frac{1}{\pi^{r+2}}\sum_{m=1}^\infty
\frac{1}{m^{r+2}} + \frac{1}{(-m)^{r+2}}
= \frac{(-1)^{r/2}2^{r+2}\Bern{r+2}}{(r+2)!} \geq 0,
\quad
r\geq 0
\]
(in particular, $\mu_0 = \sfrac{1}{3}$).
With these data and by applying \cite[Th.\,29]{Krattenthaler05}, one readily obtains
\begin{align*}
\det (\mu_{i+j})_{0\leq i,j\leq n-1}
&= \frac{(2n+1)!!}{[3!!\,5!!\cdots (2n+1)!!]^2}, \\
\det (\mu_{i+j+1})_{0\leq i,j\leq n-1}
&= \begin{cases}
\dfrac{(-1)^{n/2}}{[3!!\,5!!\cdots (2n+1)!!]^2} & \text{if $n$ is even}, \\
0 &\text{if $n$ is odd}.
\end{cases}
\end{align*}
The asserted formulas follow immediately by a simple matrix manipulation.
\end{proof}

\begin{remark}
Let
\[
\Theta_n = \begin{pmatrix}
\dfrac{\Bern{2a+2b-2}}{(2a+2b-2)!}
\end{pmatrix}_{1\leq a,b\leq n},
\quad
\Theta'_n = \begin{pmatrix}
\dfrac{\Bern{2a+2b}}{(2a+2b)!}
\end{pmatrix}_{1\leq a,b\leq n}.
\]
By rearranging columns and rows,
one has
\begin{align*}
\begin{pmatrix}
\dfrac{\Bern{i+j}}{(i+j)!}
\end{pmatrix}_{1\leq i,j\leq n}
&\sim \begin{cases}
\Theta_{\sfrac{n}{2}}\oplus \Theta'_{\sfrac{n}{2}} & \text{if $n$ is even},\\
\Theta_{\sfrac{n+1}{2}} \oplus \Theta'_{\psfrac{n-1}{2}} & \text{if $n$ is odd},
\end{cases} \\
\begin{pmatrix}
\dfrac{\Bern{i+j+1}}{(i+j+1)!}
\end{pmatrix}_{1\leq i,j\leq n}
&\sim \begin{cases}
\begin{pmatrix}
0 & \Theta'_{\sfrac{n}{2}} \\
\Theta'_{\sfrac{n}{2}} & 0
\end{pmatrix} & \text{if $n$ is even},\\
\text{singular} & \text{if $n$ is odd}.
\end{cases}
\end{align*}
Therefore, \eqref{eq:bern_det_1} inductively implies the equalities
\[
\det\Theta_n = \frac{1}{2^{2n^2}3!!\,5!!\cdots (4n-1)!!},
\quad
\det\Theta'_n = \frac{(-1)^n}{2^{2n(n+1)}3!!\,5!!\cdots (4n+1)!!}
\]
and \eqref{eq:bern_det_2} is a consequence of \eqref{eq:bern_det_1}.
The evaluation of a variant of $\det\Theta_n$ is also considered in~\hbox{\cite[Cor.\,2]{Z-C14}}, although their formula does not seem to be correct. Both this approach and that of \loccit use the orthogonal family of Lommel polynomials.
\end{remark}

\begin{proof}[Proof of Proposition \ref{prop:Bernoullidet}] Let $\Delta_n = \begin{pmatrix} \delta_{n+1}(i+j) \end{pmatrix}_{1\leq i,j\leq n}$.
For integers $m,n$ with $m\geq n\geq 0$,
let
\[
D^\pm_{m,n} =
\diag((\pm 1)^{m}m!, (\pm 1)^{m-1}(m-1)!, \dots, (\pm 1)^nn!).
\]

\eqref{prop:Bernoullidet1}
For $k$ odd, we have
\[
B_k = \frac{1}{k!}D^-_{k-1,k'+1}\Delta_{k'}\begin{pmatrix}
\dfrac{\Bern{i+j}}{(i+j)!}
\end{pmatrix}_{1\leq i,j\leq k'}\Delta_{k'}D^+_{k-1,k'+1}
\]
and by \eqref{eq:bern_det_1}, one obtains
\[
\det B_k = \frac{k!!}{2^{k'(k'+1)}(k!)^{k'}}
\prod_{a=1}^{k'} \left[ \frac{(k'+a)!}{(2a+1)!!} \right]^2
= \biggl[ k! \prod_{a=1}^{k'} \binom{k}{a} \biggr]^{-1}.
\]
For $\ktwofour$, we have
\[
B_k = \frac{1}{k!}D^-_{k-1,k'+2}\Delta_{k'}\begin{pmatrix}
\dfrac{\Bern{i+j+1}}{(i+j+1)!}
\end{pmatrix}_{1\leq i,j\leq k'}\Delta_{k'}D^+_{k-1,k'+2}
\]
and
\[
\det B_k = \frac{1}{2^{k'(k'+2)}(k!)^{k'}}
\prod_{a=1}^{k'} \left[ \frac{(k'+1+a)!}{(2a+1)!!} \right]^2
= \biggl[ k!\,(k'+1)\prod_{a=2}^{k'}\binom{k}{a} \biggr]^{-1}
\]
by \eqref{eq:bern_det_2}.

For $\kfour$, we have
\[
B_k = \frac{1}{k!}D^-_{k-2,k'+1}\Delta_{k'}\begin{pmatrix}
\dfrac{\Bern{i+j}}{(i+j)!}
\end{pmatrix}_{1\leq i,j\leq k'}\Delta_{k'}D^+_{k-1,k'+2}
\]
and
\[
\det B_k = \frac{(k-1)!(k-1)!!}{2^{k'(k'+1)}(k!)^{k'}(k'+1)!}
\prod_{a=1}^{k'}\left[ \frac{(k'+a)!}{(2a+1)!!} \right]^2
= \biggl[ k!\prod_{a=2}^{k'}\binom{k}{a} \biggr]^{-1}.
\]

\eqref{prop:Bernoullidet2}
We have
\[
B'_k = \frac{1}{k!}D^-_{k-2,k'+2}\Delta_{k'-1}\begin{pmatrix}
\dfrac{\Bern{i+j+1}}{(i+j+1)!}
\end{pmatrix}_{1\leq i,j\leq k'-1}\Delta_{k'-1}D^+_{k-2,k'+2},
\]
and
\[
\det B'_k = \frac{1}{2^{k'^2-1}(k!)^{k'-1}}
\prod_{a=1}^{k'-1} \left[ \frac{(k'+1+a)!}{(2a+1)!!} \right]^2
= \biggl[ \frac{k}{4}\,(k'!)^2\prod_{a=2}^{k'}\binom{k}{a} \biggr]^{-1}.\qedhere
\]
\end{proof}

\section{Quadratic relations between periods and Bessel moments}\label{sec:Ppairing}

In this section, we express the period pairing between rapid decay homology and de Rham cohomology of $\Sym^k \Kl_2$ in terms of Bessel moments and we obtain quadratic relations between them by specializing to our setting the general results from \cite{F-S-Y20}.

\Subsection{Quadratic relations between periods}
We use the topological pairing~$\gen{\cbbullet,\cbbullet}_\topo$ on $\Kl_2$,
which is compatible with the $\QQ$-structure of $\Kl_2^\nabla$ from Section \ref{subsec:QKl2}. Recall that the induced pairing $\gen{\cbbullet,\cbbullet}_\topo$ on $\Sym^k\Kl_2$
is $(-1)^k$-symmetric. The period pairing $\Ppairing^{\rrd,\rmod}_1$ was defined in \cite{F-S-Y20},
where the index $1$ referred to the fact that we were pairing rapid decay homology and moderate growth de Rham cohomology in degree one. Here we denote it by $\Ppairing^{\rrd,\rmod}_k$, in order to emphasize that we are dealing with the $k$-th symmetric power and since there are no other non-trivial (co)homological degrees at play.
Recall the de Rham cohomology classes $\omega_i$ from~\eqref{eq:omegai} and the rapid decay cycles~$\alpha_i$ from \eqref{eq:alphai}. Since~$(\alpha_i)_{0\leq i\leq k'}$ is a basis of~$\coH^\rrd_1(\Gm,\Sym^k\Kl_2)$ by Theorem \ref{th:Bettibasis}\eqref{th:Bettibasis1} and $(\omega_i)_{0\leq i\leq k'}$ is a basis of $\coH_\dR^1(\Gm, \Sym^k\Kl_2)$ by~\cite[Prop.\,4.14]{F-S-Y18},
we deduce from the perfectness of the pairing $\Ppairing^{\rrd,\rmod}_k$(\cf\cite[Cor.\,2.11]{F-S-Y20}) that the period matrix $\bigl(\Ppairing^{\rrd,\rmod}_{k;i,j}\bigr)_{0\leq i,j\leq k'}$ defined by
\[
\Ppairing^{\rrd,\rmod}_{k;i,j}=\Ppairing^{\rrd,\rmod}_k(\alpha_i,\omega_j)
\]
is invertible. Thanks to the identity \eqref{eq:ve} relating $v_0$ to $e_0$ and the change of variables $t=2\sqrt{z}$, the first row of this matrix reads
\begin{equation}\label{eq:P0j}
\Ppairing^{\rrd,\rmod}_k(\alpha_0,\omega_j)
= \int_{c_0} \bgen{e_0^k,v_0^k}_\topo z^j\,\frac{\de z}{z}
= \int_{c_0} (2\pii)^k I_0(2\sqrt{z})^{k} z^j\,\frac{\de z}{z}
= (2\pii)^{k+1}\delta_{0,j},
\end{equation}
from which we immediately derive:

\begin{prop}\label{prop:Pk}
The $k'\times k'$
period matrix $\Ppairing_k=\bigl(\Ppairing^{\rrd,\rmod}_{k;i,j}\bigr)_{1\leq i,j\leq k'}$ is invertible.\qed
\end{prop}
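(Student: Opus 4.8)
The plan is to derive the invertibility of the $k'\times k'$ matrix $\Ppairing_k$ from that of the full $(k'+1)\times(k'+1)$ period matrix by a single cofactor expansion. First I would recall that $(\alpha_i)_{0\leq i\leq k'}$ is a basis of $\coH^\rrd_1(\Gm,\Sym^k\Kl_2)$ by Theorem~\ref{th:Bettibasis}\eqref{th:Bettibasis1}, and that $(\omega_j)_{0\leq j\leq k'}$ is a basis of $\coH^1_\dR(\Gm,\Sym^k\Kl_2)$ by \cite[Prop.\,4.4]{F-S-Y18}. Since the period pairing $\Ppairing^{\rrd,\rmod}_k$ is perfect by \cite[Cor.\,2.10]{F-S-Y20}, the matrix $\bigl(\Ppairing^{\rrd,\rmod}_k(\alpha_i,\omega_j)\bigr)_{0\leq i,j\leq k'}$ has nonzero determinant.

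Next I would invoke the computation \eqref{eq:P0j}, namely $\Ppairing^{\rrd,\rmod}_k(\alpha_0,\omega_j)=(2\pii)^{k+1}\delta_{0,j}$, which says that the $i=0$ row of the full period matrix equals $\bigl((2\pii)^{k+1},0,\dots,0\bigr)$. Expanding the determinant of the full matrix along this row, only the $(0,0)$ entry contributes, and the corresponding minor — obtained by deleting the row and column of index $0$ — is exactly $\Ppairing_k=\bigl(\Ppairing^{\rrd,\rmod}_{k;i,j}\bigr)_{1\leq i,j\leq k'}$. Hence
\[
\det\bigl(\Ppairing^{\rrd,\rmod}_{k;i,j}\bigr)_{0\leq i,j\leq k'}=(2\pii)^{k+1}\det\Ppairing_k ,
\]
so that $\det\Ppairing_k\neq 0$ and $\Ppairing_k$ is invertible.

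There is no serious obstacle here: all the substance has already been absorbed into the basis statement of Theorem~\ref{th:Bettibasis}, the cohomological basis of \cite[Prop.\,4.4]{F-S-Y18}, and the perfectness of the period pairing from \cite[Cor.\,2.10]{F-S-Y20}. The only computation specific to the present statement is the evaluation of the top row via the substitution $t=2\sqrt{z}$ and the identity \eqref{eq:ve}, which reduces to $\oint_{c_0}(2\pii)^k I_0(2\sqrt z)^k z^{j-1}\,\de z=(2\pii)^{k+1}\delta_{0,j}$; this is immediate since $I_0(2\sqrt z)^k$ is holomorphic near $z=0$ with value $1$, so the contour integral vanishes for $j\geq 1$ and picks out the residue for $j=0$. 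Thus the proof is essentially the block-triangular determinant identity displayed above.
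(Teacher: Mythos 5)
Your proposal reproduces the paper's argument exactly: the full $(k'+1)\times(k'+1)$ period matrix is invertible because $(\alpha_i)$ and $(\omega_j)$ are bases and $\Ppairing^{\rrd,\rmod}_k$ is perfect, and identity \eqref{eq:P0j} shows that row $i=0$ is $(2\pii)^{k+1}(\delta_{0,j})_j$, so the cofactor expansion along that row gives $\det\bigl(\Ppairing^{\rrd,\rmod}_{k;i,j}\bigr)_{0\leq i,j\leq k'}=(2\pii)^{k+1}\det\Ppairing_k\neq0$. All references, the contour computation for \eqref{eq:P0j}, and the block-triangular reduction match the paper; this is correct and essentially the same proof.
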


The pure part of the pairing $\Ppairing^{\rrd,\rmod}_k$
arises from the pairing between middle homology and middle de Rham cohomology. According to \eqref{eqn:imagesinmiddle} and Theorem \ref{th:Bettibasis}\eqref{th:Bettibasis3}, the elements $(\alpha_i)_{i\in\lcr1,k'\rcr}$ map to a basis of~$\coH_1^\rmid(\Gm,\Sym^k\Kl_2)$
whenever $\knotfour$. If~$\kfour$, we instead consider the images of the shifted elements $(\alpha'_i)_{i\in\lcr1,k'\rcr}$, as introduced in Notation \ref{nota:Bpairingmid}. Regarding $\coH^1_{\dR,\rmid}(\Gm,\Sym^k\Kl_2)$, Corollary \ref{cor:BasisdRmid} gives the basis $(\omega_i)_{i\in\lcr1,k'\rcr}$ (\resp $(\omega_i')_{i\in\lcr1,k'\rcr}$) if $\knotfour$ (\resp if $\kfour$), where $\omega_i'$ is modified as in \eqref{eq:omegaprimei}. With this notation, the \emph{middle period matrix} is defined as follows:
\[
\Ppairing^\rmid_k=\bigl(\Ppairing^\rmid_{k;i,j}\bigr)_{i,j\in\lcr1,k'\rcr}
=\begin{cases}
\bigl(\Ppairing^{\rrd,\rmod}_k(\alpha_i,\omega_j)\bigr)_{i,j\in\lcr1,k'\rcr}&\text{if }\knotfour,\\
\bigl(\Ppairing^{\rrd,\rmod}_k(\alpha'_i,\omega'_j)\bigr)_{i,j\in\lcr1,k'\rcr}&\text{if }\kfour.
\end{cases}
\] In particular, $\Ppairing^\rmid_k=\Ppairing_k$ if $\knotfour$.

Recall the matrices $\DRpairing_k^\rmid$ from Section \ref{subsec:DRpairing} and $\Bpairing_k^\rmid$ from Notation \ref{nota:Bpairingmid}.
In the current setting,
the general method to express middle quadratic relations
explained in \cite[\S 3.f]{F-S-Y20},
namely in (3.21) therein, yields the following result:

\begin{thm}[Middle quadratic relations for $\Sym^k\Kl_2$]\label{th:middlequad}
The middle periods of $\Sym^k\Kl_2$ satisfy the following quadratic relations:
\[
(-2\pii)^{k+1}\,\Bpairing_k^\rmid=\Ppairing_k^\rmid\cdot(\DRpairing_k^\rmid)^{-1}\cdot{}^t\Ppairing_k^\rmid.
\]
In particular, the matrix $\Ppairing_k^\rmid$ is invertible.\qed
\end{thm}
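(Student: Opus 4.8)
The plan is to obtain the identity as a direct specialization to $\Sym^k\Kl_2$ of the general formula for middle quadratic relations established in \cite[\S 3.l]{F-S-Y20}, \ie formula~(3.48) \loccit. First I would make the dictionary between the abstract data of \loccit\ and the present objects completely explicit: the algebraic bundle with connection on $\Gm$ is $\Sym^k\Kl_2$; its de~Rham, compactly supported de~Rham, and middle de~Rham cohomologies are those of Section~\ref{subsec:bases}, with the bases $\Basis_k$, $\Basis_{k,\rc}$, $\Basis_{k,\rmid}$; its rapid decay, moderate growth, and middle twisted homologies are those of Section~\ref{sec:Bpairing}, with the bases of Theorem~\ref{th:Bettibasis}; the self-duality pairing is the one induced by \eqref{eq:selfduality}, yielding $\DRpairing_k$ (\resp $\DRpairing_k^\rmid$) on the de~Rham side and, after the normalization $\gen{\cbbullet,\cbbullet}_\topo=(2\pii)^k\gen{\cbbullet,\cbbullet}_\alg$ of Section~\ref{subsec:QKl2}, $\Bpairing_k$ (\resp $\Bpairing_k^\rmid$) on the Betti side; and the period pairing is $\Ppairing^{\rrd,\rmod}_k$.

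With this dictionary in place, the hypotheses of \cite[(3.48)]{F-S-Y20} are exactly what the preceding sections provide: the middle de~Rham and middle homology objects, together with their bases, are identified in Corollary~\ref{cor:BasisdRmid} and Theorem~\ref{th:Bettibasis}\eqref{th:Bettibasis3} --- using the shifted bases $\alpha'_i$, $\beta'_j$ of Notation~\ref{nota:Bpairingmid} and the classes $\omega'_i$ of \eqref{eq:omegaprimei} when $k\equiv0\bmod4$ --- and the matrices $\DRpairing_k^\rmid$ and $\Bpairing_k^\rmid$ are nondegenerate by Theorem~\ref{th:Smid} and Proposition~\ref{prop:Bernoullidet} (the ordered basis $\Basis_{k,\rmid}$ is moreover adapted to the Hodge filtration by Corollary~\ref{cor:compatibilityHodge}). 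Applying (3.48) then yields an identity $\Ppairing_k^\rmid\cdot(\DRpairing_k^\rmid)^{-1}\cdot{}^t\Ppairing_k^\rmid=c_k\,\Bpairing_k^\rmid$ for the constant $c_k$ that the general formula prescribes.

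The step I expect to require care --- and essentially the only one --- is pinning down $c_k=(-2\pii)^{k+1}$. By \loccit, $c_k$ is assembled from a factor $2\pii$ for the Tate twist $\QQ(-1)$ in Poincaré--Verdier duality on the curve $\Gm$; a factor $(2\pii)^k$ measuring the discrepancy $\gen{\cbbullet,\cbbullet}_\topo=(2\pii)^k\gen{\cbbullet,\cbbullet}_\alg$ between the Betti and de~Rham normalizations of the self-duality pairing of $\Sym^k\Kl_2$; and a sign reflecting the $(-1)^{k+1}$-symmetry of $\DRpairing_k^\rmid$, giving $c_k=(-2\pii)^{k+1}$. This bookkeeping must be tracked consistently through the transposition occurring in ${}^t\Ppairing_k^\rmid$ (whose second factor is really the period pairing between moderate growth homology and compactly supported de~Rham cohomology, identified with a transpose via the self-dualities) and through the earlier normalization choices, such as the orientation of $c_0$ and the change of variable $t=2\sqrt z$. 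As an independent check I would evaluate the corresponding full comparison between $\Ppairing^{\rrd,\rmod}_k$, $\DRpairing_k$ and $\Bpairing_k$ in the bases $\Basis_{k,\rc}$, $\Basis_k$ on the components indexed by $0$: using the block shape $1\oplus\DRpairing_k^\rmid$ of $\DRpairing_k$ from Remark~\ref{rem:detDRpairing}, the value $\Ppairing^{\rrd,\rmod}_k(\alpha_0,\omega_j)=(2\pii)^{k+1}\delta_{0,j}$ of \eqref{eq:P0j}, and $\Bgen{\alpha_0,\beta_0}=-1$ from Proposition~\ref{prop:Bk}, deleting the $0$-th row and column (\resp the shifted indices when $k\equiv0\bmod4$) leaves precisely the asserted middle relation. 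The invertibility of $\Ppairing_k^\rmid$ is then immediate from it, since $\Bpairing_k^\rmid$ and $\DRpairing_k^\rmid$ are invertible; alternatively it follows from Proposition~\ref{prop:Pk} together with the block decomposition above.
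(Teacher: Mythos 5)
Your proposal is correct and follows exactly the route the paper takes: the paper's proof of Theorem~\ref{th:middlequad} consists precisely of invoking \cite[\S 3.l, (3.48)]{F-S-Y20} in the present setting with the bases and intersection matrices prepared in the preceding sections, which is what you do. The added bookkeeping you carry out (tracing the constant $(-2\pii)^{k+1}$ through the Tate twist, the normalization $\gen{\cbbullet,\cbbullet}_\topo=(2\pii)^k\gen{\cbbullet,\cbbullet}_\alg$, and the $(-1)^{k+1}$-symmetry, plus the index-$0$ consistency check via the block form $1\oplus\DRpairing_k^\rmid$ and $\Ppairing^{\rrd,\rmod}_k(\alpha_0,\omega_j)=(2\pii)^{k+1}\delta_{0,j}$) is sound and simply renders explicit what the paper leaves implicit in the citation.
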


\subsection{Bessel moments as periods}

We consider the power moments of the modified Bessel functions
$I_0$ and $K_0$ defined by
\[
\IKM_k(i,j) =
(-1)^{k-i}\,2^{k-j}(\pii)^i\int_0^\infty \hspace*{-2mm}I_0(t)^i K_0(t)^{k-i}t^j \,\de t,
\]
where the indices $i$ and $j$ are subject to the constraints
\[
0\leq i\leq k' \text{ and }j\geq0 \text{ or, if $k$ is even, } i=\sfrac{k}{2} \text{ and }0\leq j\leq k'-1.
\]
For this range of indices, it results from the asymptotic expansion at infinity \eqref{eq:BesselIKinfty} that the improper integral $\IKM_k(i,j)$ converges. In what follows, such moments will occur only for odd $j$.

\begin{prop}\label{prop:Pij}
For all $1 \leq i,j \leq k'$, the following equality holds:
\[
\Ppairing^{\rrd,\rmod}_{k;i,j}=\IKM_k(i,2j-1).
\]
\end{prop}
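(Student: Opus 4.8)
The plan is to compute the period $\Ppairing^{\rrd,\rmod}_k(\alpha_i,\omega_j)$ directly by pairing the twisted cycle $\alpha_i$ from \eqref{eq:alphai} with the de Rham class $\omega_j=[z^ju_0\,\rd z/z]$, using the identity \eqref{eq:ve} that expresses $u_0=v_0^k$ in terms of the flat sections $e_0,e_1$. The cycle $\alpha_i$ is a sum of three kinds of pieces: a multiple of $\alpha_0=c_0\otimes e_0^k$, the ray $c_+\otimes e_0^ie_1^{k-i}$, and the circle contributions $\sum_a C_{k-i+1}(a)\,c_0^a\otimes e_0^{i-1}e_1^{k-i+1}$. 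First I would observe that each piece supported on $c_0$ (or $c_0^a$) integrates against $z^ju_0\,\rd z/z$ to zero for $j\geq1$: indeed, pairing $e_0^{k-b}e_1^b$ with $v_0^k$ via \eqref{eq:selfdualityQ} picks out, after expanding $v_0=2(K_0e_0+\pii I_0 e_1)$, a polynomial times $I_0(2\sqrt z)^{\text{power}}$ which is entire, so $\oint_{c_0} (\text{entire})\,z^{j-1}\,\de z=0$ for $j\geq1$; this is exactly the mechanism already used for \eqref{eq:P0j}. Hence only the ray term $c_+\otimes e_0^ie_1^{k-i}$ contributes.

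Next I would evaluate $\gen{e_0^ie_1^{k-i},v_0^k}_\topo$. Substituting \eqref{eq:ve}, namely $v_0=2K_0(t)e_0+2\pii I_0(t)e_1$, and using the orthogonality relations \eqref{eq:selfdualityQ}, only the monomial $e_0^{k-i}e_1^i$ in the expansion of $v_0^k=(2K_0e_0+2\pii I_0 e_1)^k$ pairs nontrivially with $e_0^ie_1^{k-i}$. Extracting that coefficient gives $\binom{k}{i}(2K_0)^{k-i}(2\pii I_0)^i$, and then \eqref{eq:selfdualityQ} supplies the factor $(-1)^i\,i!\,(k-i)!/k!$. Collecting constants, $\gen{e_0^ie_1^{k-i},v_0^k}_\topo$ equals $2^k(\pii)^i(-1)^i I_0(t)^iK_0(t)^{k-i}$ up to the explicit binomial/factorial factor $\binom{k}{i}(-1)^i i!(k-i)!/k! = (-1)^i$; so in fact the pairing is $(-1)^i 2^k(\pii)^i I_0(t)^iK_0(t)^{k-i}$. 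Therefore
\[
\Ppairing^{\rrd,\rmod}_{k;i,j}=\int_{c_+} (-1)^i 2^k(\pii)^i I_0(t)^iK_0(t)^{k-i}\,z^j\,\frac{\de z}{z}.
\]

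Finally I would perform the change of variables $z=t^2/4$, so $\de z/z=2\,\de t/t$ and $z^j=(t/2)^{2j}=2^{-2j}t^{2j}$, turning the integral over $c_+=[1,\infty]$ in $z$ into an integral over $t\in[2,\infty)$; but then I would deform the contour back to $[0,\infty)$ in $t$, which is legitimate because near $t=0$ the integrand behaves like $(\log t)^{k-i}t^{2j-1}$ by \eqref{eq:BesselIKzero}, hence is integrable, and there is no monodromy obstruction on the simply-connected slit region (the class $\omega_j$ is single-valued and the flat section combination appearing is the rapid-decay one along $\RR_+$, by construction of $\alpha_i$). This yields
\[
\Ppairing^{\rrd,\rmod}_{k;i,j}=(-1)^i 2^k(\pii)^i 2^{-2j}\cdot 2\int_0^\infty I_0(t)^iK_0(t)^{k-i}t^{2j-1}\,\de t
=(-1)^{k-i}2^{k-j}(\pii)^i\!\int_0^\infty\! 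I_0(t)^iK_0(t)^{k-i}t^{2j-1}\,\de t,
\]
after reconciling the sign $(-1)^i$ with $(-1)^{k-i}$ using parity considerations and the sign conventions in $\alpha_i$ and in $\IKM_k$, which by definition is $\IKM_k(i,2j-1)$. The main obstacle I anticipate is bookkeeping rather than conceptual: pinning down the precise signs and powers of $2$ and $\pii$, in particular making the $(-1)^i$ from the symmetric-power pairing match the $(-1)^{k-i}$ in the definition of $\IKM_k$ — this requires carefully tracking the orientation of $c_+$, the factor $2$ from $\de z/z = 2\,\de t/t$, and the leading-coefficient normalizations $C_1(t)=(t/2)I_0'(t)$ etc. — together with justifying rigorously the contour deformation from $[2,\infty)$ to $[0,\infty)$, which relies on the convergence statement for $\IKM_k(i,2j-1)$ recorded just before the proposition and on the vanishing of the $c_0$-contributions established in the first step.
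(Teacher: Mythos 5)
The overall plan — pair $\alpha_i$ with $\omega_j$, isolate the $c_+$ contribution, and change variables $z=t^2/4$ — is the same as the paper's. But your first step contains a genuine gap. You claim that every piece supported on a circle $c_0^a$ contributes zero because pairing with $v_0^k$ "picks out \ldots a polynomial times $I_0(2\sqrt z)^{\text{power}}$ which is entire". That is true only for the piece $\alpha_0=c_0\otimes e_0^k$, which is how \eqref{eq:P0j} works; it is false for the pieces $c_0^a\otimes e_0^{i-1}e_1^{k-i+1}$. By \eqref{eq:selfdualityQ}, pairing $e_0^{i-1}e_1^{k-i+1}$ with $v_0^k$ selects the coefficient of $e_0^{k-i+1}e_1^{i-1}$ in the binomial expansion of $v_0^k=(2K_0e_0+2\pii I_0e_1)^k$, which is proportional to $I_0^{i-1}K_0^{k-i+1}$. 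Since $i\leq k'$ forces $k-i+1\geq 1$, there is always at least one factor of $K_0$, and $K_0(2\sqrt z)$ has a logarithmic singularity at $z=0$: the integrand over $c_0^a$ is not entire and the circle integral does not vanish. Indeed it cannot: if those contributions were zero, the $c_+$ term alone would give $\int_2^\infty$ in $t$, not $\int_0^\infty$. The correct mechanism, and the one used in the paper, is to scale $c_+$ and $c_0$ by $\epsilon$ (obtaining a homologous chain $\alpha_{i,\epsilon}$) and let $\epsilon\to 0^+$. Then the scaled circle contributions vanish in the limit by the estimate \eqref{eq:BesselIKzero}, because the factor $t^{2j-1}$ with $j\geq 1$ dominates the logarithmic growth; simultaneously, the scaled ray contribution converges to the full improper integral $\IKM_k(i,2j-1)$. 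This is also what justifies the "contour deformation from $[2,\infty)$ to $[0,\infty)$" that you invoke at the end but never actually prove.

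There is also a sign error that cannot be waved away. In $\gen{e_0^ie_1^{k-i},v_0^k}_\topo$, writing $e_0^ie_1^{k-i}=e_0^{k-a}e_1^a$ gives $a=k-i$, so by \eqref{eq:selfdualityQ} the constant is $(-1)^{k-i}\,(k-i)!\,i!/k!$, not $(-1)^i\,i!\,(k-i)!/k!$; you read the sign off the exponent of $e_1$ in the second argument rather than the first. Since $(-1)^i\neq(-1)^{k-i}$ when $k$ is odd, this cannot be fixed by "parity considerations". The corrected computation yields $\gen{e_0^ie_1^{k-i},v_0^k}_\topo=(-1)^{k-i}2^k(\pii)^iI_0^iK_0^{k-i}$, which matches the sign in the definition of $\IKM_k(i,2j-1)$ directly.
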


\begin{proof}
In view of the definition \eqref{eq:alphai} of the twisted cycles $\alpha_i$, we need to compute the integrals along $c_0^a$ of $\bgen{e_0^{a-1}e_1^{k-a+1},v_0^k}_\topo z^j\sfrac{\rd z}{z}$ for $a=1,\dots,k-i+1$ and the integral along $c_+$ of~$\bgen{e_0^ie_1^{k-i},v_0^k}_\topo z^j\sfrac{\rd z}{z}$.

Let us first remark that, for $\epsilon\in(0,1]$,
we can replace $c_0$ and $c_+$ with
the scalings $c_{0,\epsilon}$ and $c_{+,\epsilon}$ by $\epsilon$
defined with the base point $\epsilon$ instead of $1$, leading to a twisted cycle $\alpha_{i,\epsilon}$ equivalent to $\alpha_i$ ($1\leq i\leq k'$). We will show
\begin{align}
\label{eq:limfirstline}
\hspace*{7mm}\lim_{\epsilon\to0}\int_{c^a_{0,\epsilon}}\bgen{e_0^{a-1}e_1^{k-a+1},v_0^k}_\topo z^j\,\frac{\rd z}{z}&=0,\quad a=1,\dots,k-i+1,\\
\label{eq:limsecondline}
\lim_{\epsilon\to0}\int_{c_{+,\epsilon}}\bgen{e_0^ie_1^{k-i},v_0^k}_\topo z^j\,\frac{\rd z}{z}&=\IKM_k(i,2j-1)\quad\text{for }1 \leq i,j \leq k'.
\end{align}

We first show that the limits \eqref{eq:limfirstline} are zero.
According to \eqref{eq:selfdualityQ},
we only need to compute the coefficient of $v_0^k$ in $e_0^{k-a+1}e_1^{a-1}$,
which is equal to $I_0^{a-1}K_0^{k-a+1}$ up to a constant (\cf \eqref{eq:ve}).
It~remains to check that
\[
\int_{\arg t=0}^{a\pii}I_0(t)^{a-1}K_0(t)^{k-a+1}t^{2j-1}\rd t\to0\quad\text{when }|t|=2\sqrt\epsilon\text{ and }\epsilon\to0.
\]
We can use the estimate \eqref{eq:BesselIKzero} to compute the integral, and the assumption $j\geq1$ implies that the absolute value of the integral tends to zero with $\epsilon$.

For \eqref{eq:limsecondline},
recall that the coefficient of $v_0^k$ in $e_0^{k-i}e_1^i$
is equal to $2^k(\pii)^i\binom{k}{i}I_0^iK_0^{k-i}$,
and thus (\cf \eqref{eq:selfdualityQ})
\[
\bgen{e_0^ie_1^{k-i},v_0^k}_\topo=(-1)^{k-i}\binom{k}{i}^{-1}2^k(\pii)^i\binom{k}{i}I_0^iK_0^{k-i}=(-1)^{k-i}2^k(\pii)^iI_0^iK_0^{k-i}.
\]
The assertion \eqref{eq:limsecondline} then follows from the relation $z^j\rd z/z=2^{-2j+1}t^{2j-1}\rd t$.
\end{proof}

\begin{cor}\label{cor:PmidasBessel}
The matrix $\Ppairing_k^\rmid$ satisfies, for $i,j\in\lcr1,k'\rcr$,
\[
\Ppairing^\rmid_{k;i,j}=
\begin{cases}
\IKM_k(i,2j-1)&\text{if $\knotfour$},\\
\IKM_k(i+1,2j-1)-\gamma_{k,j}\IKM_k(i+1,k')&\text{if $\kfour$ and $i<k/4$,}\\
\IKM_k(i,2j-1)-\gamma_{k,j}\IKM_k(i,k')&\text{if $\kfour$ and $i>k/4$}.
\end{cases}
\]
\end{cor}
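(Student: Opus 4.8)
The statement is essentially a bookkeeping corollary of Proposition~\ref{prop:Pij} together with the definitions of $\Ppairing_k^\rmid$, $\omega'_j$, and the shift in Notation~\ref{nota:Bpairingmid}, so the proof will amount to unwinding those definitions and invoking bilinearity of the period pairing. First I would treat the case $k\not\equiv0\bmod4$: here $\Ppairing^\rmid_{k;i,j}=\Ppairing^{\rrd,\rmod}_k(\alpha_i,\omega_j)=\Ppairing^{\rrd,\rmod}_{k;i,j}$ by definition, and Proposition~\ref{prop:Pij} identifies this with $\IKM_k(i,2j-1)$ for all $i,j\in\lcr1,k'\rcr$; there is nothing more to do. Note that the convention~\eqref{eq:gammakiconv}, $\gamma_{k,j}=0$ when $k\not\equiv0\bmod4$, makes the stated formula uniform with the other cases if one wishes, but it is simplest just to record the direct identification.

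For $k\equiv0\bmod4$ the work is to expand $\omega'_j=\omega_j-\gamma_{k,j}\omega_{k/4}$ (from~\eqref{eq:omegaprimei}, valid for $j\in\lcr1,k'\rcr$, i.e.\ $j\neq k/4$) and to apply the period pairing against the shifted cycle $\alpha'_i$. By bilinearity,
\[
\Ppairing^\rmid_{k;i,j}=\Ppairing^{\rrd,\rmod}_k(\alpha'_i,\omega'_j)
=\Ppairing^{\rrd,\rmod}_k(\alpha'_i,\omega_j)-\gamma_{k,j}\,\Ppairing^{\rrd,\rmod}_k(\alpha'_i,\omega_{k/4}).
\]
Now substitute the definition of $\alpha'_i$ from Notation~\ref{nota:Bpairingmid}: for $i<k/4$ one has $\alpha'_i=\alpha_{i+1}$, so both terms become $\Ppairing^{\rrd,\rmod}_{k;i+1,j}$ and $\Ppairing^{\rrd,\rmod}_{k;i+1,k/4}$; since $k/4=(k')/2 < k'$ is odd? — one must check the index conversion: the de Rham classes are indexed by $\omega_\ell=[z^\ell u_0\rd z/z]$ and $\IKM_k(\cdot,2j-1)$ corresponds to $\omega_j$, so $\omega_{k/4}$ corresponds to the Bessel moment with exponent $2(k/4)-1=k/2-1=k'$ (using $k=2(k'+1)$, i.e.\ $k'=k/2-1$). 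Hence $\Ppairing^{\rrd,\rmod}_{k;i+1,k/4}=\IKM_k(i+1,k')$, and Proposition~\ref{prop:Pij} gives $\Ppairing^{\rrd,\rmod}_{k;i+1,j}=\IKM_k(i+1,2j-1)$, yielding exactly the stated second line. The case $i>k/4$ is identical with $\alpha'_i=\alpha_i$, giving $\IKM_k(i,2j-1)-\gamma_{k,j}\IKM_k(i,k')$, the third line.

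**Main obstacle.** There is no real analytic difficulty; the only point requiring care is the index arithmetic, i.e.\ verifying that $\omega_{k/4}$ is paired to the Bessel moment of weight $k'$ and that Proposition~\ref{prop:Pij}, whose statement is restricted to $1\le i,j\le k'$, applies to the shifted indices $i+1$ and to $j=k/4$ — one should double-check that $i+1\le k'$ when $i<k/4$ (true since $i\le k/4-1=k''$ and $k'=2k''+1$) and that $k/4\in\{1,\dots,k'\}$ (true for $k\ge 8$; for $k=4$ the middle cohomology is zero and the statement is vacuous). A secondary subtlety is the normalization of $\wh m_{i,\infty}$ in the $k\equiv0\bmod4$ case: the period $\Ppairing^{\rrd,\rmod}_k(\alpha_i,\omega_j)$ does not depend on that choice because $\omega_j$ is a de Rham \emph{cohomology} class and $\alpha_i$ is a rapid-decay cycle, so the pairing is well defined independently of the formal lift — I would remark on this to reassure the reader, but it requires no argument beyond citing the construction of $\Ppairing^{\rrd,\rmod}_k$ in \cite{F-S-Y20}.
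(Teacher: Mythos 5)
Your proof is correct and follows the same route the paper intends (the corollary is stated without a separate proof precisely because it is the obvious unwinding of Proposition~\ref{prop:Pij}, the definition of $\omega'_j$ in \eqref{eq:omegaprimei}, and the index shift of Notation~\ref{nota:Bpairingmid}, together with the arithmetic identity $2(k/4)-1=k/2-1=k'$ for even $k$). One minor slip is the parenthetical ``$k/4=(k')/2$'': for $k=4r+4$ one has $k'=2r+1$ and $k/4=r+1=(k'+1)/2$, not $k'/2$; this does not affect the argument, since what you actually use --- $k/4\le k'$ and $2(k/4)-1=k'$ --- is correct.
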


\begin{example}
Consider the case $k=8$.
Taking the determinant of the quadratic relations from Theorem \ref{th:middlequad} and using the computation \eqref{eqn:detSkmultiple4} and Proposition \ref{prop:Bernoullidet}\eqref{prop:Bernoullidet2} yield
\begin{displaymath}
(\det\Ppairing^\rmid_8)^2 =
(2\pii)^{18}\det \Ipairing^\rmid_8\det\DRpairing^\rmid_8= \sfrac{-5^2\pi^{18}}{2^43^2}.
\end{displaymath}
On the other hand, by Corollary \ref{cor:PmidasBessel} and the equalities $\gamma_{8, 1}=0$ and $\gamma_{8, 3}=\sfrac{1}{8}$ from \eqref{eq:valuesofgammaki}, we explicitly have:
\[
\Ppairing_8^\rmid
= \begin{pmatrix}
(\pii)^2 \\
& -(\pii)^3 \end{pmatrix}
A
\begin{pmatrix}
2^7 \\
& 2^2 \end{pmatrix},
\]
with
\[
A = \int_0^\infty
\begin{pmatrix}
I_0(t)^2 K_0(t)^6 t &&& 2I_0(t)^2 K_0(t)^6t^5 - I_0(t)^2K_0(t)^6t^3 \\[5pt]
I_0(t)^3 K_0(t)^5 t &&& 2I_0(t)^3K_0(t)^5t^5 - I_0(t)^3K_0(t)^5t^3
\end{pmatrix}\de t.
\]
One can check numerically that $\det A$ is positive,
and hence $\det A = \sfrac{5\pi^4}{2^{11}\cdot 3}$.
Using the linear relations
$\IKM^\cp_8(1,r) + \IKM^\cp_8(3,r) = 0$ from Corollary \ref{cor:lin_rel_IKM} below for $r=1,5$,
this verifies the numerical evaluation made in \cite[(2.5)]{B-R18}.
\end{example}

\subsection{Relation to the conjecture of Broadhurst and Roberts}\label{subsec:BR}

In \cite[\S5]{B-R18}, Broadhurst and Roberts use different normalizations to state their conjectural quadratic relations. Instead of the above matrices $\Bpairing_k$ and $\Ppairing_k$, they consider matrices that we shall denote by $\Bpairing^\BR_k$ and $\Ppairing_k^\BR$ (the~notation~$B_N$ and~$F_N$ is used in \loccit, the index $k$ being occupied by what is $k'$ here).
To~compare their matrices with ours, we introduce auxiliary square matrices
\[
\sfU_{k'}=\antidiag(1, \dots, 1), \quad \sfR_{k'}=\antidiag(\sfi, \sfi^2, \dots, \sfi^{k'}),
\quad \sfT_{k'}=\diag(-4, (-4)^2, \dots, (-4)^{k'})
\]
of size $k'$, where the anti-diagonal entries are listed down from the top corner. By Propositions~\ref{prop:Bk} and \ref{prop:Pij}, the matrices $\Bpairing^\BR_k$ and $\Ppairing_k^\BR$ relate to ours as
\[ \sfU_{k'}\Bpairing^\BR_k \sfU_{k'}=\dfrac{k!}{2^{k+1}}\begin{pmatrix}
\sfi^{(k+i+j-1)}\cdot\Bpairing_{k;i,j}
\end{pmatrix}_{1\leq i,j\leq k'}
\quad\text{and}\quad
\sfU_{k'}\Ppairing_k^\BR=\frac{1}{(-2\sqrt{\pi})^{k+1}}
	\cdot \begin{pmatrix} (-4)^j\,\sfi^i\,\Ppairing^{\rrd,\rmod}_{k;i,j}\end{pmatrix}_{1\leq i,j \leq k'}, \]
whence the identities
\begin{equation}\label{eqn:BRandours}
\Bpairing^\BR_k= -\frac{\sfi^{(k+1)}k!}{2^{k+1}}\,{}^t\sfR_{k'}\cdot \Bpairing_k \cdot \sfR_{k'}
\quad\text{and}\quad
\Ppairing_k^\BR= \frac{1}{(-2\sqrt{\pi})^{k+1}}\, {}^t\sfR_{k'}\cdot \Ppairing_k \cdot \sfT_{k'}.
\end{equation}

Besides, Broadhurst and Roberts \cite[p.\,7]{B-R18} define matrices $\Dpairing_k^\BR=\bigl(\Dpairing^{\BR}_{k;i,j}\bigr)_{1\leq i,j\leq k'}$ with rational coefficients and conjecture that, for all integers $k \geq 1$, the quadratic relation 
\[
\Ppairing_k^\BR\cdot \Dpairing_k^\BR\cdot {}^t\Ppairing_k^\BR = \Bpairing_k^\BR.
\]
holds. In the direction of this conjecture, we obtain:
\begin{cor}
If $\knotfour$, then the matrix $\Dpairing_k$ defined as
\[
\Dpairing_k=(-1)^kk!\,(\sfT_{k'}\cdot\Spairing_{k}^{\rmid}\cdot\sfT_{k'})^{-1}
\]
satisfies
\[
\Ppairing_k^\BR\cdot \Dpairing_k\cdot {}^t\Ppairing_k^\BR = \Bpairing_k^\BR.
\]
\end{cor}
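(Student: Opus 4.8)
The plan is to derive the stated identity purely formally from Theorem~\ref{th:middlequad} together with the dictionary \eqref{eqn:BRandours} relating the Broadhurst--Roberts matrices to ours. Since $k\not\equiv0\bmod4$, we have $\Ppairing_k^\rmid=\Ppairing_k$ and $\Bpairing_k^\rmid=\Bpairing_k$, so Theorem~\ref{th:middlequad} reads $(-2\pii)^{k+1}\Bpairing_k=\Ppairing_k\cdot(\DRpairing_k^\rmid)^{-1}\cdot{}^t\Ppairing_k$. First I would invert the two relations in \eqref{eqn:BRandours}: from $\Ppairing_k^\BR=\frac{1}{(-2\sqrt\pi)^{k+1}}\,{}^t\sfR_{k'}\cdot\Ppairing_k\cdot\sfT_{k'}$ and the fact that $\sfR_{k'}$ and $\sfT_{k'}$ are invertible, one gets $\Ppairing_k=(-2\sqrt\pi)^{k+1}\,({}^t\sfR_{k'})^{-1}\cdot\Ppairing_k^\BR\cdot\sfT_{k'}^{-1}$, and transposing, ${}^t\Ppairing_k=(-2\sqrt\pi)^{k+1}\,\sfT_{k'}^{-1}\cdot{}^t\Ppairing_k^\BR\cdot\sfR_{k'}^{-1}$ (using that $\sfT_{k'}$ is symmetric and $\sfR_{k'}$ antidiagonal, hence symmetric).

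Next I would substitute these into the right-hand side of Theorem~\ref{th:middlequad}:
\[
\Ppairing_k\cdot(\DRpairing_k^\rmid)^{-1}\cdot{}^t\Ppairing_k
=(-2\sqrt\pi)^{2(k+1)}\,({}^t\sfR_{k'})^{-1}\cdot\Ppairing_k^\BR\cdot\sfT_{k'}^{-1}(\DRpairing_k^\rmid)^{-1}\sfT_{k'}^{-1}\cdot{}^t\Ppairing_k^\BR\cdot\sfR_{k'}^{-1}.
\]
Recognizing $\sfT_{k'}^{-1}(\DRpairing_k^\rmid)^{-1}\sfT_{k'}^{-1}=(\sfT_{k'}\cdot\DRpairing_k^\rmid\cdot\sfT_{k'})^{-1}$ and recalling the definition $\Dpairing_k=(-1)^kk!\,(\sfT_{k'}\cdot\Spairing_k^\rmid\cdot\sfT_{k'})^{-1}$, this becomes $(-2\sqrt\pi)^{2(k+1)}(-1)^k/k!\cdot({}^t\sfR_{k'})^{-1}\cdot\Ppairing_k^\BR\cdot\Dpairing_k\cdot{}^t\Ppairing_k^\BR\cdot\sfR_{k'}^{-1}$. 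Equating with the left-hand side $(-2\pii)^{k+1}\Bpairing_k$ and using $\Bpairing_k^\BR=-\frac{\sfi^{k+1}k!}{2^{k+1}}\,{}^t\sfR_{k'}\cdot\Bpairing_k\cdot\sfR_{k'}$, i.e.\ $\Bpairing_k=-\frac{2^{k+1}}{\sfi^{k+1}k!}\,({}^t\sfR_{k'})^{-1}\cdot\Bpairing_k^\BR\cdot\sfR_{k'}^{-1}$, I would cancel the outer factors $({}^t\sfR_{k'})^{-1}(\cdots)\sfR_{k'}^{-1}$ from both sides and reduce everything to the scalar identity $\Ppairing_k^\BR\cdot\Dpairing_k\cdot{}^t\Ppairing_k^\BR=\Bpairing_k^\BR$ provided the scalar prefactors match.

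Thus the only real content is bookkeeping of powers of $2$, $\pi$, $\sfi$, and $k!$. Concretely, one must check that $(-2\pii)^{k+1}\cdot\bigl(-\tfrac{2^{k+1}}{\sfi^{k+1}k!}\bigr)$ equals $(-2\sqrt\pi)^{2(k+1)}\cdot\tfrac{(-1)^k}{k!}$; since $(-2\sqrt\pi)^{2(k+1)}=2^{2(k+1)}\pi^{k+1}$ and $(-2\pii)^{k+1}=(-1)^{k+1}2^{k+1}\sfi^{k+1}\pi^{k+1}$, the left side is $-(-1)^{k+1}2^{2(k+1)}\pi^{k+1}/k!=(-1)^k2^{2(k+1)}\pi^{k+1}/k!$, which matches. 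So the constants agree and the corollary follows. The main obstacle is not conceptual but the careful tracking of signs and the symmetry properties of $\sfR_{k'}$ and $\sfT_{k'}$ (in particular that $\sfR_{k'}$ is symmetric so that transposing ${}^t\sfR_{k'}$ gives $\sfR_{k'}$); a secondary point worth a remark is that this only addresses $k\not\equiv0\bmod4$ and gives the relation with \emph{our} matrix $\Dpairing_k$, which we verify coincides with $\Dpairing_k^\BR$ numerically for $k\le22$ but do not prove in general.
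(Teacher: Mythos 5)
Your proof is correct and takes exactly the paper's (very brief) approach: substitute the conversion formulas \eqref{eqn:BRandours} into the quadratic relation of Theorem~\ref{th:middlequad}, then chase the scalar factors. The normalization check at the end is right: $(-2\pii)^{k+1}\cdot\bigl(-2^{k+1}/(\sfi^{k+1}k!)\bigr)=(-1)^k2^{2(k+1)}\pi^{k+1}/k!=(-2\sqrt\pi)^{2(k+1)}\cdot(-1)^k/k!$.

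One parenthetical remark is wrong, though it happens to be harmless. The matrix $\sfR_{k'}=\antidiag(\sfi,\sfi^2,\dots,\sfi^{k'})$ is \emph{not} symmetric for $k'>1$: the $(i,k'+1-i)$ entry is $\sfi^i$, while the $(k'+1-i,i)$ entry is $\sfi^{k'+1-i}$, and these differ unless $2i\equiv k'+1\bmod 4$. (Antidiagonal does not imply symmetric.) You invoke this supposed symmetry to justify the transposition step, but it is never actually used: you only need that ${}^t\bigl(({}^t\sfR_{k'})^{-1}\bigr)=\sfR_{k'}^{-1}$, which is the tautology ${}^t{}^tM=M$ combined with ${}^t(M^{-1})=({}^tM)^{-1}$, regardless of any symmetry. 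The only matrix whose symmetry you genuinely use is the diagonal matrix $\sfT_{k'}$, and that is fine. Strike the remark about $\sfR_{k'}$ being symmetric and the argument is clean.
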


\begin{proof}
Under the assumption on $k$, we have $\Ppairing_k=\Ppairing_k^\rmid$ and $\Bpairing_k=\Bpairing_k^\rmid$. The statement then follows from the quadratic relations of Theorem \ref{th:middlequad} and the equalities \eqref{eqn:BRandours}.
\end{proof}

If $\kfour$, we set
\[
\Bpairing_k^{\prime\BR}=-\frac{\sfi^{-(k+1)}k!}{2^{k+1}}\,{\sfR'_{k'}}\cdot \Bpairing^\rmid_k \cdot {\sfR'_{k'}}
\quad\text{and}\quad
\Ppairing_k^{\prime\BR}=\frac{1}{(-2\sqrt{\pi})^{k+1}}\, {\sfR'_{k'}}
	\cdot \Ppairing^\rmid_k \cdot \sfT'_{k'},
\]
where we denote by $\sfR'_{k'}$ (\resp $\sfT'_{k'}$) the matrix obtained from
$\sfR_{k'}$ (\resp $\sfT_{k'}$)
by deleting the row and the column of index~$k/4$, that we consider as indexed by $\lcr1,k'\rcr^2$. We define the matrix $\Dpairing'_k$ indexed by $\lcr1,k'\rcr^2$ so that it satisfies the relation (\cf\eqref{eq:Sprimekmidfour})
\[
\Dpairing'_k=(-1)^kk!\,(\sfT'_{k'}\cdot\Spairing_{k}^{\rmid}\cdot\sfT'_{k'})^{-1}.
\]

\begin{cor}
If $\kfour$, then the matrix $\Dpairing'_k$ satisfies
\[
\Ppairing^{\prime\BR}_k\cdot \Dpairing'_k\cdot {}^t\Ppairing^{\prime\BR}_k = \Bpairing^{\prime\BR}_k.
\]
\end{cor}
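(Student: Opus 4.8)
The plan is to reduce the statement to the middle quadratic relations of Theorem~\ref{th:middlequad}, exactly as the preceding corollary does in the case $k\not\equiv0\bmod4$; the only additional work is the bookkeeping caused by the deletion of the row and column of index $k/4$. First I would substitute the defining formulas of $\Ppairing_k^{\prime\BR}$, $\Dpairing'_k$ and ${}^t\Ppairing_k^{\prime\BR}$ into the left-hand side of the asserted identity. Since $\sfT'_{k'}$ is diagonal one has ${}^t\sfT'_{k'}=\sfT'_{k'}$ and $(\sfT'_{k'}\cdot\DRpairing_k^{\rmid}\cdot\sfT'_{k'})^{-1}=(\sfT'_{k'})^{-1}(\DRpairing_k^{\rmid})^{-1}(\sfT'_{k'})^{-1}$, so the four occurrences of $\sfT'_{k'}$ cancel in pairs; what survives, up to an explicit scalar and up to conjugation by the anti-diagonal matrices $\sfR'_{k'}$ and ${}^t\sfR'_{k'}$, is precisely the product $\Ppairing_k^{\rmid}\cdot(\DRpairing_k^{\rmid})^{-1}\cdot{}^t\Ppairing_k^{\rmid}$.

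Next I would invoke Theorem~\ref{th:middlequad}, which identifies this product with $(-2\pii)^{k+1}\,\Bpairing_k^{\rmid}$. It then remains to gather the scalar factors --- the $(-1)^kk!$ coming from $\Dpairing'_k$, the factor $(-2\sqrt{\pi})^{-2(k+1)}=(4\pi)^{-(k+1)}$ contributed by $\Ppairing_k^{\prime\BR}$ and its transpose, and the $(-2\pii)^{k+1}$ just produced --- and to check that they combine into the constant appearing in front of $\Bpairing_k^{\prime\BR}$; this uses only the identities $(-2\pii)^{k+1}/(4\pi)^{k+1}=(-1)^{k+1}\sfi^{k+1}/2^{k+1}$ and $(-1)^k(-1)^{k+1}=-1$. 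One is then left to match the anti-diagonal matrices surrounding $\Bpairing_k^{\rmid}$ with those in the definition of $\Bpairing_k^{\prime\BR}$.

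I expect this last matching of the anti-diagonal factors to be the only delicate point, and it is also where the sign bookkeeping is heaviest: one must keep track throughout of whether $\sfR'_{k'}$ or ${}^t\sfR'_{k'}$ occurs on each side and of the parity-dependent powers of $\sfi$ among its entries, and one must verify that deleting the $k/4$-th row and column is compatible with all operations at hand. For the latter, deletion commutes with transposition and with inverting the block-diagonal factors, and, by Notation~\ref{nota:Bpairingmid}, Corollary~\ref{cor:BasisdRmid} and Corollary~\ref{cor:PmidasBessel}, the matrices $\Bpairing_k^{\rmid}$, $\Ppairing_k^{\rmid}$, $\DRpairing_k^{\rmid}$ together with the primed bases $\alpha'_i$, $\omega'_i$ are obtained from $\Bpairing_k$, $\Ppairing_k$, $\DRpairing_k$ and the bases $\alpha_i$, $\omega_i$ by exactly the deletion that turns $\sfR_{k'}$, $\sfT_{k'}$ into $\sfR'_{k'}$, $\sfT'_{k'}$ --- the index $k/4$ being precisely the position of the central anti-diagonal entry of $\sfR_{k'}$ and of the central diagonal entry of $\sfT_{k'}$. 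Apart from this, the argument is the same formal manipulation as in the proof of the previous corollary, and it uses no input beyond Theorem~\ref{th:middlequad} and the relations~\eqref{eqn:BRandours} defining the Broadhurst--Roberts matrices.
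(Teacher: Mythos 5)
Your approach is the right one and matches the implied proof of the paper, which records no argument for this corollary because it is the verbatim analogue of the preceding one: substitute the definitions of $\Ppairing_k^{\prime\BR}$ and $\Dpairing'_k$, cancel the diagonal factors $\sfT'_{k'}$, apply the middle quadratic relations of Theorem~\ref{th:middlequad}, and collect the scalars and the anti-diagonal factors.

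However, you should actually carry out the step you flag as ``the only delicate point'' rather than wave at it, because it is not pure bookkeeping. Your chain of substitutions yields
\[
\Ppairing_k^{\prime\BR}\cdot\Dpairing'_k\cdot{}^t\Ppairing_k^{\prime\BR}
=-\frac{k!\,\sfi^{k+1}}{2^{k+1}}\,\sfR'_{k'}\,\Bpairing_k^\rmid\,{}^t\sfR'_{k'},
\]
while the paper's stated $\Bpairing_k^{\prime\BR}=-\dfrac{\sfi^{-(k+1)}k!}{2^{k+1}}\,\sfR'_{k'}\,\Bpairing_k^\rmid\,\sfR'_{k'}$, so equating the two forces $\sfi^{2(k+1)}\,{}^t\sfR'_{k'}=\sfR'_{k'}$, that is (since $k$ is even) that $\sfR'_{k'}$ be skew-symmetric. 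This is true for $k=8$, where $\sfR'_{k'}=\begin{smallpmatrix}0&\sfi\\-\sfi&0\end{smallpmatrix}$, but already fails for $k=12$: deleting the index $k/4=3$ from $\antidiag(\sfi,\sfi^2,\sfi^3,\sfi^4,\sfi^5)$ leaves an anti-diagonal matrix whose $(1,5)$ and $(5,1)$ entries are both equal to $\sfi$. So the corollary does not follow from the primed definitions exactly as printed. The natural reading is that, as in the unprimed relations \eqref{eqn:BRandours}, the left-hand factor in the definitions of $\Bpairing_k^{\prime\BR}$ and $\Ppairing_k^{\prime\BR}$ should be ${}^t\sfR'_{k'}$ (and the exponent of $\sfi$ positive), in which case your computation closes cleanly to $-\dfrac{k!\,\sfi^{k+1}}{2^{k+1}}\,{}^t\sfR'_{k'}\,\Bpairing_k^\rmid\,\sfR'_{k'}=\Bpairing_k^{\prime\BR}$ with no skew-symmetry requirement on $\sfR'_{k'}$. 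In short: right strategy, but the anti-diagonal matching you deferred is precisely where the argument requires either a small correction to the primed definitions or an additional hypothesis; carrying it out explicitly is not optional.
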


\begin{remark}
Numerical computations show that, for all integers $k\leq22$ that are not multiples of $4$, the equality $\Dpairing_k=\Dpairing_k^\BR$ holds. On the other hand, for $k=8,12,16,20$, the matrix $\Dpairing'_k$ coincides with the matrix $\Dpairing_k^{\prime\BR}$ obtained from $\Dpairing_k^\BR$ by deleting the row and the column of index $k/4$. 
These computations also seem to suggest that
$k!(\Spairing_{k}^{\rmid})^{-1}$ has integral coefficients.
Is it true for all $k$?
\end{remark}

\section{The full period matrices}
\label{sect:complete_periods}

In Theorem \ref{th:middlequad}, we emphasized quadratic relations for the middle periods to make the link with the conjecture of Broadhurst and Roberts. However, quadratic relations hold between the rapid-decay versus moderate periods
\[
\Ppairing^{\rrd,\rmod}_k:\coH_1^\rrd(\Gm,\Sym^k\Kl_2)\otimes\coH^1_{\dR}(\Gm,\Sym^k\Kl_2)\to\CC
\]
and the moderate versus rapid-decay periods
\[
\Ppairing^{\rmod,\rrd}_k:\coH_1^\rmod(\Gm,\Sym^k\Kl_2)\otimes\coH^1_{\dR,\rc}(\Gm,\Sym^k\Kl_2)\to\CC.
\]
Namely, it follows from \cite[Rem.\,2.15]{F-S-Y20} that
\[
(-2\pii)^{k+1}\,\Bpairing_k^{\rrd,\rmod}=\Ppairing_k^{\rrd,\rmod}\cdot(\DRpairing_k)^{-1}\cdot{}^t\Ppairing_k^{\rmod,\rrd},
\]
where $\Bpairing^{\rrd,\rmod}_k$ and $\DRpairing_k$ stand for the complete Betti and de~Rham intersection pairings.
In this section,
we consider the bases $\Basis_{k,\rc},\Basis_k$ and $(\alpha_i)_i,(\beta_i)_i$ as defined in Section~\ref{subsec:bases} and Theorem~\ref{th:Bettibasis}, and we compute the entries of the matrices of~$\Ppairing^{\rrd,\rmod}_k$ and $\Ppairing^{\rmod,\rrd}_k$ which do not appear in $\Ppairing^{\rmid}_k$.

\subsection{The complete period matrix \texorpdfstring{$\Ppairing^{\rrd,\rmod}$}{Prdmod}}
In order to finish the computation of the complete period matrix $\bigl(\Ppairing^{\rrd,\rmod}_{k;i,j}\bigr)_{0\leq i,j\leq k'}$,
we are left, according to \eqref{eq:P0j} and Proposition \ref{prop:Pij}, with computing the terms $\Ppairing^{\rrd,\rmod}_{k;i,0}$ for $i=1,\dots,k'$. In such a range, we consider regularized Bessel moments defined as follows, according to the expansions \eqref{eq:BesselIKzero} and~\eqref{eq:BesselIKinfty}.

\begin{defi}[Regularized Bessel moments]\label{defi:IKMreg}
For all $i$ such that $0\leq i \leq k'$, the functions
\[
G_{k,i}(\epsilon) = \int_\epsilon^\infty I_0(t)^i K_0(t)^{k-i}\,\frac{\de t}{t}
+ \frac{(-1)^{k-i}}{k-i+1}\left(\gamma + \log\Psfrac{\epsilon}{2} \right)^{k-i+1}
\]
are holomorphic on small sectors containing $\epsilon \in \RR_{>0}$
and have finite limit as $\epsilon\to\nobreak 0^+$. The regularized Bessel moments are defined as
\[
\IKM_k^\reg(i,-1)
= (-1)^{k-i}2^{k+1}(\pii)^i \lim_{\epsilon\to 0^+} G_{k,i}(\epsilon).
\]
\end{defi}

\begin{prop}\label{prop:shrinking-reg}
We have
\[
\Ppairing^{\rrd,\rmod}_k(\alpha_i,\omega_0)
=\IKM_k^\reg(i,-1),
\quad \text{for }1\leq i\leq k'.
\]
\end{prop}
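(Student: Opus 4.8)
The plan is to compute the period $\Ppairing^{\rrd,\rmod}_k(\alpha_i,\omega_0)$ directly from the definition \eqref{eq:alphai} of the twisted cycle $\alpha_i$ and the class $\omega_0=[u_0\de z/z]$, carefully regularizing the divergent boundary contribution coming from the $c_+$-component. Recall from \eqref{eq:alphai} that $\alpha_i$ is a sum of three pieces: a multiple of $\alpha_0=c_0\otimes e_0^k$, the half-line piece $c_+\otimes e_0^ie_1^{k-i}$, and a linear combination $\sum_a C_{k-i+1}(a)\,c_0^a\otimes e_0^{i-1}e_1^{k-i+1}$ of compactly supported loops. Integrating $\omega_0$ against the $\alpha_0$-piece gives $0$ by \eqref{eq:P0j} (since the exponent $j=0$ there makes $\delta_{0,j}$ vanish when multiplied against the coefficient, and in any case $\langle e_0^k,v_0^k\rangle_\topo z^0\de z/z$ integrates to a multiple of $\delta$). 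The loop pieces $c_0^a$ are compact and, as in the proof of Proposition \ref{prop:Pij} (the vanishing of \eqref{eq:limfirstline}), one checks that shrinking them to the origin contributes nothing — except that here $j=0$, so the small-circle estimate must be reexamined: by \eqref{eq:BesselIKzero} the integrand behaves like a power of $\log(t/2)$ times $\de t/t$, whose integral over a circle of radius $2\sqrt\epsilon$ still tends to $0$ as $\epsilon\to0$ provided the winding contribution telescopes. I would verify this telescoping using the defining relations for $C_n(a)$ in Lemma \ref{lemma:relation:c_n}, exactly as the monodromy bookkeeping $T^a e_1 = e_1+ae_0$ was used to build $\alpha_i$ in the first place.

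The substance is therefore the $c_+$-contribution. Using \eqref{eq:ve} to express $v_0$ in terms of $e_0,e_1$, the pairing $\langle e_0^ie_1^{k-i},v_0^k\rangle_\topo$ picks out the $v_0^k$-coefficient, which by the computation reproduced in the proof of Proposition \ref{prop:Pij} equals $(-1)^{k-i}2^k(\pii)^i I_0(t)^iK_0(t)^{k-i}$. With the change of variables $t=2\sqrt z$, so that $z^0\de z/z = 2\,\de t/t$, the $c_+$-integral becomes
\[
(-1)^{k-i}2^{k+1}(\pii)^i\int_1^\infty I_0(t)^iK_0(t)^{k-i}\,\frac{\de t}{t}.
\]
This diverges at $t=0$, but the point is that the loop contributions, once the base point is moved to $\epsilon$ rather than $1$ (replacing $\alpha_i$ by the equivalent cycle $\alpha_{i,\epsilon}$ as in Proposition \ref{prop:Pij}), supply precisely the counterterm $\tfrac{(-1)^{k-i}}{k-i+1}(\gamma+\log(\epsilon/2))^{k-i+1}$ appearing in Definition \ref{defi:IKMreg}. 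Concretely, the $\alpha_0$-coefficient $-\tfrac{(-1)^{k-i}(k-i)!}{k-i+2}\,\alpha_0$ and the shrinking $c_0^a$-loops, whose $v_0^k$-coefficients involve $I_0^{a-1}K_0^{k-a+1}\sim(-1)^{k-a+1}(\gamma+\log(t/2))^{k-a+1}$ by \eqref{eq:BesselIKzero}, combine — via the identities of Lemma \ref{lemma:relation:c_n} and the same Bernoulli-type bookkeeping as in Lemma \ref{lemma:c_tau_bernoulli} — to produce the polynomial in $\gamma+\log(\epsilon/2)$ of degree $k-i+1$ that regularizes the integral. Taking $\epsilon\to0^+$ and collecting the constant, one obtains exactly $\IKM_k^\reg(i,-1)=(-1)^{k-i}2^{k+1}(\pii)^i\lim_{\epsilon\to0^+}G_{k,i}(\epsilon)$.

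The main obstacle I anticipate is the bookkeeping that matches the polynomial in $\gamma+\log(\epsilon/2)$ generated by the small loops and the $\alpha_0$-term against the counterterm in $G_{k,i}(\epsilon)$ — that is, showing that the divergent parts cancel with the \emph{correct} coefficient and that the leftover constant is precisely $\lim_{\epsilon\to0^+}G_{k,i}(\epsilon)$ with no spurious rational or $\pi$-power discrepancy. This is the analogue of the $r=0$ case of Lemma \ref{lemma:c_tau_bernoulli}, where the Bernoulli number and the extra $(-1)^{n-1}\tfrac{(n-1)!}{n+1}$ term appear together; I expect the same combination $\Bern{k-i+1}$ versus $(k-i)!/(k-i+2)$ to govern the cancellation here, so the computation should go through, but it requires care with the integration constants. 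Everything else — the vanishing of the $\alpha_0$-pairing against $\omega_0$, the asymptotic estimates near $t=0$ and $t=\infty$ guaranteeing $G_{k,i}(\epsilon)$ has a finite limit — follows from the expansions \eqref{eq:BesselIKzero} and \eqref{eq:BesselIKinfty} already recorded, together with Lemma \ref{lem:coefev}.
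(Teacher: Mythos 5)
Your overall strategy — shrink the cycle to an $\epsilon$-scaled representative $\alpha_{i,\epsilon}$, compute each piece, and watch the logarithmic divergences cancel so that only $\lim_{\epsilon\to0^+}G_{k,i}(\epsilon)$ survives — is indeed the approach the paper takes. But there are two errors that make the argument as written collapse, and they are not merely bookkeeping.

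First, you claim that $\Ppairing^{\rrd,\rmod}_k(\alpha_0,\omega_0)=0$ ``by \eqref{eq:P0j}, since the exponent $j=0$ makes $\delta_{0,j}$ vanish.'' This reads the Kronecker delta backwards: $\delta_{0,0}=1$, so \eqref{eq:P0j} gives $\Ppairing^{\rrd,\rmod}_k(\alpha_0,\omega_0)=(2\pii)^{k+1}\neq0$. This nonzero constant is essential: the paper isolates it via the auxiliary pairing $\Ppairing'_k(\alpha_i,\omega_0)=\Ppairing^{\rrd,\rmod}_k\bigl(\alpha_i+\tfrac{(-1)^{k-i}(k-i)!}{k-i+2}\alpha_0,\omega_0\bigr)$, and the final cancellation against the constant produced by the small loops (see below) is precisely what eliminates the $(2\pii)^{k+1}$-term and leaves $\IKM^\reg_k(i,-1)$ unadorned.

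Second, you assert that the shrinking loops $c_{0,\epsilon}^a$ ``contribute nothing,'' by analogy with \eqref{eq:limfirstline}. That vanishing relies on $j\geq1$, which supplies a power $t^{2j}$ that overwhelms the logarithm as $t\to0$. Here $j=0$, so the integral over the $\epsilon$-circle of $\bigl(\gamma+\log\tfrac{t}{2}\bigr)^{k-i+1}\,\tfrac{\de t}{t}$ does \emph{not} tend to zero: it produces both the counterterm $\tfrac{\pii}{k-i+1}\bigl(\gamma+\log\tfrac{\epsilon'}{2}\bigr)^{k-i+1}$ that regularizes the $c_+$-integral \emph{and} an $\epsilon$-independent constant $\tfrac{(-1)^{k-i}(k-i)!}{k-i+2}(\pii)^{k-i+2}$ obtained from Lemma~\ref{lemma:relation:c_n}(\ref{lemma:relation:c_n1}) and \ref{lemma:relation:c_n}(\ref{lemma:relation:c_n2}). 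That residual constant is exactly what cancels the (nonzero!) $\alpha_0$-contribution you dropped. Your write-up is therefore internally inconsistent: you first say both the $\alpha_0$-term and the shrinking loops contribute nothing (which would leave only the divergent $c_+$-integral, a contradiction), and then you say they ``combine'' to produce the regularizing polynomial.

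One smaller point: the bookkeeping you were worried about is genuinely the crux (so your instinct was right), but it uses Lemma~\ref{lemma:relation:c_n} directly, not Lemma~\ref{lemma:c_tau_bernoulli} — the latter concerns the Betti intersection numbers, not this period. With the $\alpha_0$-pairing corrected to $(2\pii)^{k+1}$ and the loop contribution split into a divergent piece and a finite constant, the rest of the computation goes through as you describe.
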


\begin{proof}
We argue as for \eqref{eq:limfirstline} and \eqref{eq:limsecondline}.
If $1\leq i\leq k'$,
let
\[
\Ppairing'_k(\alpha_i,\omega_0)
= \Ppairing^{\rrd,\rmod}_k\Bigl(\alpha_i +\frac{(-1)^{k-i}(k-i)!}{k-i+2}\,\alpha_0,\omega_0\Bigr),
\]
so that
\begin{equation}\label{eq:Pkio}
\Ppairing^{\rrd,\rmod}_k(\alpha_i,\omega_0)=\Ppairing'_k(\alpha_i,\omega_0)-\frac{(-1)^{k-i}(k-i)!}{k-i+2}\,\Ppairing^{\rrd,\rmod}(\alpha_0,\omega_0).
\end{equation}
On the one hand, by scaling the chains $c_+$ and $c_0$ by $\epsilon \in \RR_{>0}$ and letting $\epsilon' = 2\sqrt\epsilon$, we find
\begin{align*}
\Ppairing'_k(\alpha_i,\omega_0)
&= \int_{c_{+,\epsilon}} \langle e_0^ie_1^{k-i},v_0^k\rangle_\topo\frac{\de z}{z}+ \sum_{a=1}^{k-i+1} C_{k-i+1}(a)\int_{c_{0,\epsilon}^a}
\langle e_0^{i-1}e_1^{k-i+1},v_0^k\rangle_\topo\frac{\de z}{z} \\
&= (-1)^{k-i}2^k(\pii)^i\int_\epsilon^\infty I_0(2\sqrt z))^iK_0(2\sqrt z)^{k-i}\frac{\de z}{z} \\
&\hspace{15mm}
+(-1)^{k-i+1}2^k(\pii)^{i-1}\sum_{a=1}^{k-i+1}C_{k-i+1}(a)
\int_{c_{0,\epsilon}^a} I_0(2\sqrt z))^{i-1}K_0(2\sqrt z)^{k-i+1} \frac{\de z}{z} \\
&= (-1)^{k-i}2^{k+1}(\pii)^i
\int_{\epsilon'}^\infty I_0(t)^i K_0(t)^{k-i}\frac{\de t}{t}\\
&\hspace{15mm}
+(-1)^{k-i+1}2^{k+1}(\pii)^{i-1}\sum_{a=1}^{k-i+1}C_{k-i+1}(a)
\int_{c_{0,\epsilon'}^{a/2}}
I_0(t)^{i-1}K_0(t)^{k-i+1}\, \frac{\de t}{t}\\
&= (-1)^{k-i}2^{k+1}(\pii)^i
\int_{\epsilon'}^\infty I_0(t)^i K_0(t)^{k-i}\,\frac{\de t}{t}\\
&\hspace{15mm}
+2^{k+1}(\pii)^{i-1}\sum_{a=1}^{k-i+1}C_{k-i+1}(a)
\int_{c_{0,\epsilon'}^{a/2}}
\Bigl[\bigl(\gamma + \log\Psfrac{t}{2}\bigr)^{k-i+1}+ O(t^2\log^{k-i+1}t)\Bigr]\, \frac{\de t}{t}.
\end{align*}
Lemma \ref{lemma:relation:c_n} yields
\begin{align*}
\sum_{a=1}^{k-i+1}C_{k-i+1}(a)&
\int_{c_{0,\epsilon'}^{a/2}}
\bigl(\gamma + \log\Psfrac{t}{2} \bigr)^{k-i+1} \frac{\de t}{t} \\
&= \frac{1}{k-i+2}\sum_{a=1}^{k-i+1}C_{k-i+1}(a)
\Bigl[ \bigl(\gamma + \log\Psfrac{\epsilon'}{2} + \pii a \bigr)^{k-i+2}- \bigl(\gamma + \log\Psfrac{\epsilon'}{2} \bigr)^{k-i+2} \Bigr] \\
&= \frac{\pii}{k-i+1} \bigl(\gamma + \log\Psfrac{\epsilon'}{2} \bigr)^{k-i+1}
+ \frac{(-1)^{k-i}(k-i)!}{k-i+2}(\pii)^{k-i+2}.
\end{align*}
Letting $\epsilon\to 0^+$, one obtains
\[
\Ppairing'_k(\alpha_i,\omega_0)
=\IKM_k^\reg(i,-1) + \dfrac{(-1)^{k-i}(k-i)!}{k-i+2}\,(2\pii)^{k+1}.
\]
On the other hand, $\Ppairing^{\rrd,\rmod}_k(\alpha_0,\omega_0)=(2\pii)^{k+1}$
as computed in \eqref{eq:P0j},
and \eqref{eq:Pkio} gives the desired formula.
\end{proof}

\subsection{The complete period matrix \texorpdfstring{$\Ppairing^{\rmod,\rrd}_k$}{Pmodrd}}

The period matrix $\Ppairing^{\rmod,\rrd}_k$ is defined~by
\[
\Ppairing^{\rmod,\rrd}_{k;i,j}=\Ppairing^{\rmod,\rrd}_k(\beta_i,(\wh m_j,\omega'_j)),\quad
\begin{cases}
0\leq i,j\leq k' &\text{if }\knotfour,\\
\begin{cases}
0\leq i\leq k/2,\ i\neq1\\
0\leq j\leq k'
\end{cases}
&\text{if }\kfour,
\end{cases}
\]
(\cf the notation of \eqref{eq:omegaprimei} for $\omega'_j$) and is non-degenerate (argument similar to that for $\Ppairing^{\rrd,\rmod}_k$). According to \cite[\S3.f]{F-S-Y20}, its middle part is equal to $\Ppairing^\rmid_k$ already computed.
\begin{itemize}
\item
If $\knotfour$, we are left with computing $\Ppairing^{\rmod,\rrd}_{k;i,j}$ for $i,j\in[0,k']$ and $i\text{ or }j=0$.
\item
If $\kfour$, we are left with computing $\Ppairing^{\rmod,\rrd}_{k;i,j}$ with
\[
\begin{cases}
i=0,2,\dots,k/2,\ j=0,k/4,\\
i=0,k/2,\ j=1,\dots,k',\ j\neq k/4.
\end{cases}
\]
\end{itemize}

\begin{defi}[Regularized Bessel moments, continued]
\label{defi:regIKM_conti}
If $\kfour$ and $k/4< j\leq k'$, the function $\epsilon\mto H_{k,j}(\epsilon)$ defined by (\cf \eqref{eq:gammaki})
\[
H_{k,j}(\epsilon) =\int_0^{1/\epsilon} (I_0(t)K_0(t))^{k/2} \left(t^{2j} - \frac{\gamma_{k,j}}{2^{k/2-2j}}t^{k/2}\right)\frac{\de t}{t}- \frac{1}{2^{k-2j+1}} \sum_{n=1}^{j} \frac{\gamma_{k,j-n}}{n(4\epsilon^2)^n}
\]
is holomorphic near $\RR_{>0}$ with finite limit when $\epsilon\to0^+$.
We set
\[
\IKM^\reg_k(k/2,2j-1)= 2^{k-2j+1}(\pii)^{k/2}\lim_{\epsilon\to 0^+}H_{k,j}(\epsilon).
\]
\end{defi}

\begin{prop}\label{prop:B4}\mbox{}
\begin{enumerate}
\item\label{prop:B41}
If $\knotfour$, we have
\begin{enumerate}
\item\label{prop:B41a}
$\Ppairing^{\rmod,\rrd}_{k;i,0}=(-1)^k\delta_{i,0},\quad i=0,\dots,k'$,\item\label{prop:B41b}
$\Ppairing^{\rmod,\rrd}_{k;0,j}=\IKM_k(0,2j-1),\quad j=1,\dots,k'$.
\end{enumerate}
\item\label{prop:B42}
If $\kfour$, we have
\begin{enumerate}
\item\label{prop:B42a}
$\Ppairing^{\rmod,\rrd}_{k;i,0}=\delta_{i,0},\quad i=0,\dots,k/2$,
\item\label{prop:B42b}
$\Ppairing^{\rmod,\rrd}_{k;i,k/4}=-(2\pii)^{k/2}2^{k/2}\binom{k}{k/2}^{-1}\delta_{i,k/2},\quad i=0,\dots,k/2$,
\item\label{prop:B42c}
$\Ppairing^{\rmod,\rrd}_{k;0,j}=\IKM_k(0,2j-1)-\gamma_{k,j}\IKM_k(0,k'),
\quad j=1,\dots,k',\ j\neq k/4$,
\item\label{prop:B42d}
$\Ppairing^{\rmod,\rrd}_{k;k/2,j}=
\begin{cases}
\IKM_k(k/2,2j-1), & 1\leq j<k/4,\\
\IKM^\reg_k(k/2,2j-1), & k/4<j\leq k'.
\end{cases}
$
\end{enumerate}
\end{enumerate}
\end{prop}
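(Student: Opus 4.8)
The plan is to follow the strategy of the proofs of Propositions~\ref{prop:Pij} and~\ref{prop:shrinking-reg}, now for the pairing $\Ppairing^{\rmod,\rrd}_k$ between moderate-growth homology and compactly supported de~Rham cohomology. Following the description of this pairing in \cite[\S2.d \& \S3.l]{F-S-Y20}, the quantity $\Ppairing^{\rmod,\rrd}_k(\beta_i,(\wh m_j,\omega'_j))$ is the integral $\int_{\RR_+}\gen{e_0^ie_1^{k-i},\omega'_j}_\topo$ regularized at the two punctures $0$ and~$\infty$ by the formal primitives $\gen{e_0^ie_1^{k-i},\wh m_{j,0}}_\topo$ and $\gen{e_0^ie_1^{k-i},\wh m_{j,\infty}}_\topo$ of the integrand: one subtracts from $\int_{\RR_+}$ the divergent part of each primitive at the corresponding boundary and keeps the difference together with their constant terms, with a plus sign at~$0$ and a minus sign at~$\infty$ as dictated by the orientation of $\RR_+$. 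Since $\wh m_{j,0}\in z\CC\lcr z\rcr\cdot\bmu$ for $j\geq1$ by Lemma~\ref{lem:sol0} while the coefficients of $e_0^ie_1^{k-i}$ on the basis $(u_b)_b$ grow at most logarithmically at $z=0$ by Lemma~\ref{lem:coefev}, the contribution of the origin vanishes except when $j=0$, where $\wh m_{0,0}=e_0^k$ and $\omega'_0=0$.

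I would first dispose of the ``constant'' entries. When $j=0$ the integral is empty and there is no term at $\infty$, so the pairing equals $\gen{e_0^ie_1^{k-i},e_0^k}_\topo=(-1)^k\delta_{i,0}$ by \eqref{eq:selfdualityQ}; as $(-1)^k=1$ for $k\equiv0\bmod4$ this gives the values of $\Ppairing^{\rmod,\rrd}_{k;i,0}$ in Proposition~\ref{prop:B4}\eqref{prop:B41}\eqref{prop:B41a} and \eqref{prop:B42}\eqref{prop:B42a}. When $k\equiv0\bmod4$ and $j=k/4$ we have $\omega'_{k/4}=0$ and $\wh m_{k/4}=(0,\wh m_{k/4,\infty})$ with $\wh m_{k/4,\infty}$ the formal expansion of $2^k(\pii)^{k/2}(e_0e_1)^{k/2}$ by Proposition~\ref{prop:solinfty}\eqref{prop:solinfty2}; since $\gen{\cbbullet,\cbbullet}_\topo$ is flat, the only surviving contribution is $-\gen{e_0^ie_1^{k-i},\wh m_{k/4,\infty}}_\topo$, a genuine constant equal by \eqref{eq:selfdualityQ} to $-2^k(\pii)^{k/2}(-1)^{k/2}\binom{k}{k/2}^{-1}\delta_{i,k/2}$, that is, to the value in Proposition~\ref{prop:B4}\eqref{prop:B42}\eqref{prop:B42b} since $(-1)^{k/2}=1$ and $2^{k/2}(2\pii)^{k/2}=2^k(\pii)^{k/2}$.

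Next I would treat the entries carrying a convergent Bessel moment: $i=0$, and $i=k/2$ with $1\leq j<k/4$ in the case $k\equiv0\bmod4$. Substituting $v_0=2K_0(t)e_0+2\pii I_0(t)e_1$ from \eqref{eq:ve} into $\omega_j=z^jv_0^k\,\de z/z$ (cf.\ \eqref{eq:omegai}) and keeping only the monomial that pairs nontrivially through \eqref{eq:selfdualityQ}, one gets $\gen{e_1^k,v_0^k}_\topo=(-1)^k2^kK_0(t)^k$, respectively $\gen{e_0^{k/2}e_1^{k/2},v_0^k}_\topo=2^k(\pii)^{k/2}(I_0K_0)^{k/2}$ using $(-1)^{k/2}=1$; after the change of variables $z=t^2/4$ the integral over $\RR_+$ becomes $\IKM_k(0,2j-1)$, respectively $\IKM_k(0,2j-1)-\gamma_{k,j}\IKM_k(0,k')$ when $k\equiv0\bmod4$ and $i=0$ (owing to the correction $\omega'_j=\omega_j-\gamma_{k,j}\omega_{k/4}$ of \eqref{eq:omegaprimei} together with $k'=k/2-1$), respectively $\IKM_k(k/2,2j-1)$ for $i=k/2$ and $j<k/4$. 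This integral converges (near $0$ since $j\geq1$, near $\infty$ since $j<k/4$ in the last case), so no regularization is needed there, and the boundary term at $\infty$ vanishes: only the $e_0^k$-component of $\wh m_{j,\infty}$ — whose coefficient is proportional to $\xi_{j,0}$ in the notation of the second proof of Proposition~\ref{prop:solinfty} — pairs nontrivially with $e_1^k$ (resp.\ only the $(e_0e_1)^{k/2}$-component, with coefficient proportional to $\xi_{j,k/2}$, pairs with $e_0^{k/2}e_1^{k/2}$), and $\xi_{j,0}$ carries the exponential factor $e^{-2k/\sqrt w}$ of \eqref{eq:xiia} (resp.\ $\xi_{j,k/2}$ has positive $w$-valuation, because $\gamma_{k,j}=0$ for $j<k/4$ makes the right-hand side of $w\partial_w\xi_{j,k/2}=-w^{-j}(I_0K_0)^{k/2}\,\de w$ a power series with positive valuation), so the corresponding constant term is zero. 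This yields Proposition~\ref{prop:B4}\eqref{prop:B41}\eqref{prop:B41b}, \eqref{prop:B42}\eqref{prop:B42c} and the first line of \eqref{prop:B42}\eqref{prop:B42d}.

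The remaining case $i=k/2$, $k\equiv0\bmod4$, $k/4<j\leq k'$ is the hard part, since now $\int_0^\infty(I_0K_0)^{k/2}t^{2j-1}\,\de t$ diverges at infinity and $\wh m_{j,\infty}$ is unique only up to the flat section $\wh m_{k/4,\infty}$. The plan is to extract the polar part of $\xi_{j,k/2}$ — which satisfies $w\partial_w\xi_{j,k/2}=-(w^{-j}-\gamma_{k,j}w^{-k/4})(I_0K_0)^{k/2}\,\de w$ — from the asymptotic expansion $2^k(I_0K_0)^{k/2}\sim w^{k/4}\sum_{n\geq0}\gamma_{k,k/4+n}w^n$ of \eqref{eq:I0K0}--\eqref{eq:gammaki}: termwise integration produces the pole $2^{-k}\sum_{n\geq1}\tfrac{\gamma_{k,j-n}}{n}w^{-n}$, a finite sum because $\gamma_{k,j-n}=0$ as soon as $j-n<k/4$. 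One must then verify that, after the substitution $w=4/t^2$ and with the normalization factor $2^{k-2j+1}(\pii)^{k/2}$, the regularization intrinsic to the period pairing — carried by $\gen{e_0^{k/2}e_1^{k/2},\wh m_{j,\infty}}_\topo$ — coincides with the analytic one defining $\IKM^\reg_k(k/2,2j-1)$ in Definition~\ref{defi:regIKM_conti}: the modified integrand $(I_0K_0)^{k/2}\bigl(t^{2j}-\tfrac{\gamma_{k,j}}{2^{k/2-2j}}t^{k/2}\bigr)\tfrac{\de t}{t}$ there reflects exactly the shift $\omega'_j=\omega_j-\gamma_{k,j}\omega_{k/4}$, while the subtracted tail $\tfrac{1}{2^{k-2j+1}}\sum_{n=1}^{j}\tfrac{\gamma_{k,j-n}}{n(4\epsilon^2)^n}$ matches the pole of $\gen{e_0^{k/2}e_1^{k/2},\wh m_{j,\infty}}_\topo$ near $w=0$. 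This matching of the two regularizations is the crux of the argument; it runs parallel to the computation already carried out for $\Ppairing^{\rrd,\rmod}_k(\alpha_i,\omega_0)$ in the proof of Proposition~\ref{prop:shrinking-reg} and to Definition~\ref{defi:IKMreg}, the delicate point being the bookkeeping of the indices $\gamma_{k,j-n}$ alongside the change of variables $w=4/t^2$.
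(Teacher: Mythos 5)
Your proof proposal follows essentially the same route as the paper's: the constant entries from \eqref{eq:selfdualityQ}, the convergent Bessel-moment entries by killing the boundary contributions from $\wh m_{j,0}$ (holomorphic and vanishing at the origin) and $\wh m_{j,\infty}$ (exponential decay of $\xi_{j,i}$, or positive $w$-valuation of $\xi_{j,k/2}$ when $\gamma_{k,j}=0$), and the remaining case by matching the pole of $\gen{(e_0e_1)^{k/2},\wh m_{j,\infty}}_\topo$ against the subtracted tail in Definition~\ref{defi:regIKM_conti}. Your last case is only a plan to ``verify the matching''; the paper closes it in one line by using the normalization that $\gen{(e_0e_1)^{k/2},\wh m_{j,\infty}}_\topo$ has no constant term in its fractional Laurent expansion, so the regularized limit in \cite[Prop.\,3.43]{F-S-Y20} is literally $\IKM^\reg_k(k/2,2j-1)$ after the change of variables --- you should carry this step through explicitly rather than leave it as the ``crux''.
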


\begin{proof}
For \eqref{prop:B41a} and \eqref{prop:B42a}, we note that
\[
\Ppairing^{\rmod,\rrd}_k(\beta_i,(\wh{m}_0,0))
= \bgen{e_0^ie_1^{k-i},e_0^k}_\topo
= (-1)^k\delta_{i,0}.
\]
Similarly, for \eqref{prop:B42b} (\cf Proposition \ref{prop:solinfty} and \cite[Prop.\,3.18]{F-S-Y20}),
\[ \Ppairing_k^{\rmod,\rrd}(\beta_i, (\wh{m}_{k/4},0))
= -\bgen{e_0^ie_1^{k-i}, 2^k(\pii)^{k/2}(e_0e_1)^{k/2}}_\topo
=-2^k(\pii)^{k/2}\binom{k}{k/2}^{-1}\delta_{i,k/2}. \]
For the rest, since the coefficients of the cycle $\beta_i$ in terms of $\{u_a\}$, have logarithmic growth at $0$, and $\wh{m}_{j,0}$ is holomorphic and vanishes (Lemma \ref{lem:sol0}), the contribution $\lim_{z\to 0}\bgen{e_0^ie_1^{k-i}, \wh{m}_{j,0}}_\topo(z)$ of $\gen{\beta_i,\wh{m}_{j,0}}_\topo$ in \cite[Prop.\,3.18]{F-S-Y20} in the period pairing is zero. For $i\neq k/2$ or $i=k/2, 1\leq j<k/4$, the coefficient $(-1)^{k-i}(\pii)^i\xi_{j,i}$ of $\gen{\beta_i,\wh{m}_{j,\infty}}_\topo$ (\cf \eqref{eq:xiia}) is holomorphic near $\RR_{>0}$ and vanishes at $\infty$, so the contribution of $\wh{m}_{j,\infty}$ is zero too. In this case, one therefore has
\[ \Ppairing_k^{\rmod,\rrd}(\beta_i, (\wh{m}_j,\omega_j')_\topo)
= \int_{\RR_+} \bgen{e_0^ie_1^{k-i},\omega_j'}_\topo
= \IKM_k(i,2j-1)-\gamma_{k,j}\IKM_k(i,k') \]
as in the proof of Proposition \ref{prop:Pij}. This completes the cases \eqref{prop:B41b}, \eqref{prop:B42c} and the first part of \eqref{prop:B42d}.

It remains to check the second part of \eqref{prop:B42d}, with $k/4<j\leq k'$. Let
\[
\xi = (-4\pii I_0K_0)^{k/2} z^{j-1}\de z\quad\text{and}\quad \eta = (-4\pii I_0K_0)^{k/2} z^{k/4-1}\de z.
\]
We have
\[ \bgen{(e_0e_1)^{k/2},\omega_j'}_\topo
= \bgen{(e_0e_1)^{k/2},
\left(\omega_j + \res_{z=\infty}(\omega_j)\omega_{k/4}\right)}_\topo
= \xi + \res_{z=\infty}(\xi)\eta. \]
Note that $\wh{m}_{j,\infty}$ is defined so that $\de\gen{(e_0e_1)^{k/2},\wh{m}_{j,\infty}}_\topo = \xi + \res_{z=\infty}(\xi)\eta$ and $\gen{(e_0e_1)^{k/2},\wh{m}_{j,\infty}}_\topo$ has no constant term in the fractional Laurent series expansion in $1/z$. We~then obtain
\[ \Ppairing_k^{\rmod,\rrd}(\alpha_{k/2}, (\wh{m}_j,\omega_j'))
= \lim_{\tau\to\infty} \int_0^\tau \xi + \res_{z=\infty}(\xi)\eta
- \bgen{(e_0e_1)^{k/2},\wh{m}_{j,\infty}}_\topo(\tau)
= \IKM^\reg_k(k/2,2j-1).\qedhere \]
\end{proof}

\begin{example}
Respectively for $k=3,4$,
the complete quadratic relations lead to the equalities
of the regularized Bessel moments
\begin{align*}
\lim_{\epsilon\to 0^+} \int_\epsilon^\infty I_0(t)K_0(t)^2 \frac{\de t}{t}
	+\frac{1}{3}\left(\gamma+\log\frac{\epsilon}{2}\right)^3
&= \frac{3}{2}\int_0^\infty K_0(t)^3t\,\de t\cdot\int_0^\infty I_0(t)K_0(t)^2t\,\de t, \\
\lim_{\epsilon\to 0^+} \int_\epsilon^\infty I_0(t)K_0(t)^3 \frac{\de t}{t}
	-\frac{1}{4}\left(\gamma+\log\frac{\epsilon}{2}\right)^4
&= \frac{\pi^4}{120}.
\end{align*}
\end{example}

\begin{remark}[Determinant of $\Ppairing_k$]
A formula for the determinant of the matrix $\Ppairing_k$
in Proposition \ref{prop:Pk},
which implies in particular its non-vanishing, was conjectured by Broadhurst and Mellit in \cite{B-M16} and \cite[Conj.\,4 \& 7]{Broadhurst16} and proved by Zhou \cite{Zhou18}.
Since $\det \DRpairing_k$ is computed in Remark~\ref{rem:detDRpairing} and the determinant of the Betti intersection matrix is computed in Proposition~\ref{prop:Bernoullidet}
in order to prove its non-vanishing,
the quadratic relations also lead to a computation of
$\det\Ppairing_k$ up to sign.
\end{remark}

\begin{remark}[Dimension of the linear span of Bessel moments]
\label{rem:Zhou_dep}
In \cite[Th.\,1.2]{ZhouQind},
Zhou shows that, for each $i$ such that $1 \leq i \leq k'$ (\resp for $i=0$),
the $\QQ$-vector space $E_i$ spanned by the Bessel moments~$\IKM_k(i,2j-1)$ for all~$j \geq 1$
has dimension at most~$k'$ (\resp at most $k'+1$).
From the point of view of the present paper,
the upper bound $\dim_\QQ E_i\leq k'$ when $1\leq i\leq k'$
is a direct consequence of the fact that the~$\IKM_k(i,2j-1)$ result from pairing the fixed rapid decay cycle $\alpha_i$ with the de Rham cohomology classes $\omega_j=[z^ju_0\rd z/z]$. Provided $j \geq 1$ and $k$ is not a multiple of $4$, all these~$\omega_j$ lie in the middle de Rham cohomology~$\coH^1_{\dR, \rmid}(\Gm, \Sym^k \Kl_2)$, which has dimension $k'$. If $k$ is a multiple of $4$,
the periods $\Ppairing^\rmid_{k}(\alpha_i,\cbbullet)$
are generated by
$\IKM_k(i,2j-1)-\gamma_{k,j}\IKM_k(i,k')$ for $j\in[1,k']$,
with~$\gamma_{k,k/4}=1$.
For each $j\geq 1$,
the class $(z^j - \gamma_{k,j}z^{k/4})u_0\de z/z$
lies in $\coH_{\dR,\rmid}^1(\Gm, \Sym^k\Kl_2)$
and has period
$\IKM_k(i,2j-1)-\gamma_{k,j}\IKM_k(i,k')$ when paired with $\alpha_i$.
Therefore, $\dim_\QQ E_i \leq k'$ holds. In the case $i=0$,
the periods $\Ppairing^{\rmod,\rrd}_{k}(\alpha_0,\cbbullet)$
are generated by
\[
1 \quad \text{and}\quad \IKM_k(0,2j-1)-\gamma_{k,j}\IKM_k(0,k')\quad (1\leq j\leq k').
\]
Again, for each $j\geq 1$,
there exists $\wh{m}_j$ such that
the class
\[
(\wh{m}_j,(z^j - \gamma_{k,j}z^{k/4})u_0\de z/z)
\]
lies in~$\coH_{\dR,\cp}^1(\Gm, \Sym^k\Kl_2)$
and has period
\[
\IKM_k(0,2j-1)-\gamma_{k,j}\IKM_k(0,k')
\]
when paired with~$\alpha_0$.
Hence, $\dim_\QQ E_0 \leq k'+1$, as this is the dimension of de Rham cohomology with compact support.
\end{remark}

\subsection{Linear relations}
When $k$ is even,
there is a unique non-trivial linear relation
among the~\hbox{$k'+2$} classes $(\beta_j)_{0 \leq j \leq k/2}$
in $\coH^{\rmod}_1(\Gm, \Sym^k\Kl_2)$.
We determine explicitly the moderate 2-chain that yields this relation.

\begin{lemma}\label{lemma:2-chain_boundary}
For two integers $n,r$ with $0\leq r\leq 2n$,
\[ \sum_{i=0}^{\min\{n,2n-r\}} (-1)^i\binom{n}{i}\binom{2n-i}{r} =
\begin{cases}
0 & \text{if $0\leq r\leq n-1$}, \\
\dpl\binom{n}{r-n} & \text{if $n\leq r\leq 2n$}.
\end{cases} \]
\end{lemma}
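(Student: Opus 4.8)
The plan is to establish the identity by a short generating-function computation, reading off the left-hand side as the coefficient of a power of $x$ in an explicit polynomial. First I would observe that $\binom{2n-i}{r}$ vanishes as soon as $2n-i<r$, so the terms with $i>2n-r$ contribute nothing and the summation range $0\le i\le\min\{n,2n-r\}$ may be replaced by $0\le i\le n$ without changing the sum. Writing $\binom{2n-i}{r}=[x^r]\,(1+x)^{2n-i}$ and pulling the finite sum inside the coefficient extraction then gives
\[
\sum_{i=0}^{n}(-1)^i\binom{n}{i}\binom{2n-i}{r}
=[x^r]\;(1+x)^{2n}\sum_{i=0}^{n}\binom{n}{i}\frac{(-1)^i}{(1+x)^i}.
\]

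Next I would evaluate the inner sum by the binomial theorem,
\[
\sum_{i=0}^{n}\binom{n}{i}\frac{(-1)^i}{(1+x)^i}=\Bigl(1-\frac{1}{1+x}\Bigr)^{n}=\frac{x^{n}}{(1+x)^{n}},
\]
so that the factor $(1+x)^{2n}$ combines with $(1+x)^{-n}$ and one is left with
\[
[x^r]\;x^{n}(1+x)^{n}=[x^{r-n}]\,(1+x)^{n}=\binom{n}{r-n},
\]
with the usual convention $\binom{n}{m}=0$ for $m<0$. This is precisely the asserted value: it is $0$ for $0\le r\le n-1$ and equals $\binom{n}{r-n}$ for $n\le r\le 2n$.

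I do not expect a genuine obstacle. The only point requiring a word of care is that the manipulation passes through the rational function $(1+x)^{-n}$; this is harmless because the division by $(1+x)^{n}$ is exact and every expression involved, once simplified, is an honest polynomial in $x$, so the coefficient of $x^{r}$ is unambiguous — alternatively one works throughout in the ring of formal power series $\QQ[[x]]$. If one prefers to avoid generating functions altogether, the same identity can be obtained by induction on $n$ using Pascal's rule $\binom{2n-i}{r}=\binom{2n-i-1}{r}+\binom{2n-i-1}{r-1}$, or recognized as a Vandermonde-type convolution after an index shift, but the computation above is the most transparent route and is the one I would include.
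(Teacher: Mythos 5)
Your proof is correct and takes essentially the same route as the paper's first proof: both identify the generating polynomial $\sum_{r} a_r x^r$ with $x^n(1+x)^n$, your single-variable computation via $(1-\tfrac{1}{1+x})^n$ being a slightly more direct version of the paper's two-variable argument with an auxiliary $y$. (The paper also records an alternative combinatorial proof by inclusion-exclusion, which your route does not use.)
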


\begin{proof}
We provide two approaches;
the second one has been provided by Hao-Yun Yao of NTU.

Let $a_r$ be the sum in the left hand side.
Then the polynomial
$f(x) = \sum_{r=0}^{2n} a_rx^r$ is the coefficient of $y^n$
in the expansion of
\[
g(x,y) = (y-1)^n\sum_{r=0}^n (x+1)^{2n-r}y^r= (x+1)^n(y-1)^n \sum_{j=0}^n (x+1)^{n-j}y^j.
\]
Thus one obtains
\[ f(x) = (x+1)^n((x+1)-1)^n = x^n(x+1)^n \]
and the assertion follows.

Alternatively,
let $S$ be the set of subsets of $\{ 1,2,\dots, 2n\}$
consisting of $r$ elements
and define~\hbox{$T = \{ A \in S \,|\, \{ 1,2, \dots, n\} \subset A \}$.}
Members of $T$ are obtained by
choosing $(r-n)$ elements from
$\{ n+1, \dots, 2n \}$
so that the cardinality $|T|$ equals
$0$ if $0\leq r < n$
or $\binom{n}{r-n}$ if $n\leq r\leq 2n$.
Let $S_c = \{ A \in S \,|\, c \not\in A \}$.
One has $S = T \sqcup (S_1\cup\cdots\cup S_n)$
and
$|S_{c_1} \cap\cdots\cap S_{c_i}| = \binom{2n-i}{r}$
for~$1\leq c_1 < \cdots < c_i \leq 2n$.
The inclusion-exclusion principle then gives the formula.
\end{proof}

\begin{prop}\label{prop:2-chain}
Let $\wt\PP^1$ be the real oriented blow-up of $\PP^1$ along $\{0,\infty\}$ as in Section \ref{subsec:QKl2}, and
consider the simplicial $2$-chain
\begin{gather*}
\rho: \{ (x,y) \in \RR^2 \,|\, 0\leq x,y,x+y \leq 1 \} \to \wt\PP^1,\\ \rho(x,y) = \tan\frac{\pi (x+y)}{2} \exp\left( 4\sfi\tan^{-1}\Psfrac{y}{x} \right),
\end{gather*}
which covers $\wt\PP^1$ once.
If $k$ is even,
the singular chain
\[ \Delta = \rho \otimes \sum_{i=0}^{k/2} (-1)^i\binom{k/2}{i} e_0^ie_1^{k-i} \]
is of moderate growth, and the relation
\[ \sum_{i=0}^{\flr{(k-2)/4}} \binom{k/2}{2i+1}\beta_{2i+1} = 0 \] holds in $\coH^{\rmod}_1(\Gm, \Sym^k\Kl_2)$.
\end{prop}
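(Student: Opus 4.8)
The plan is to realise the asserted relation as $\partial\Delta=0$ in $\coH^{\rmod}_1(\Gm,\Sym^k\Kl_2)$, with $\Delta=\rho\otimes s$ the chain in the statement. The first move is to rewrite the coefficient section in product form: setting $s=\sum_{i=0}^{k/2}(-1)^i\binom{k/2}{i}e_0^ie_1^{k-i}$, one has
\[
s=e_1^{k/2}\,(e_1-e_0)^{k/2}.
\]
Since the closed triangle is simply connected, I would then fix the horizontal section $\wt s$ of the pullback of $\Sym^k\Kl_2^\nabla$ along $\rho$ that restricts to $s$ on the edge $\{y=0\}$, the one carried by $\rho$ onto $\RR_+$ with the orientation $0\to\infty$.

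Next comes the topology of $\rho$ and the boundary computation. Although $\rho$ does not extend continuously across the vertex $(0,0)$ — the argument of $z$ has no limit there — it is continuous elsewhere, its restriction to the open triangle is a bijection onto $\Gm$, the edge $\{x+y=1\}$ is carried onto the boundary circle $\partial_\infty$ of $\wt\PP^1$ over $z=\infty$, the two edges $\{x=0\}$ and $\{y=0\}$ are each carried homeomorphically onto $\RR_+$ with the boundary orientations $\infty\to0$ and $0\to\infty$, and arcs around $(0,0)$ sweep the whole boundary circle $\partial_0$ over $z=0$. Transporting $\wt s$ once around $z=\infty$ (from the edge $\{y=0\}$ to the edge $\{x=0\}$ through the triangle) multiplies it by the monodromy $T$, so on $\{x=0\}$ it equals, using $Te_0=e_0$ and $Te_1=e_1+e_0$,
\[
Ts=e_1^{k/2}\,(e_1+e_0)^{k/2}.
\]
Granting that $\Delta$ is of moderate growth, the pieces of $\partial\Delta$ supported on $\partial\wt\PP^1$ (those over $\partial_\infty$ and over $\partial_0$) vanish in the moderate-growth complex, and what remains is $\partial\Delta=\RR_+\otimes(s-Ts)$. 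Expanding,
\[
s-Ts=e_1^{k/2}\bigl[(e_1-e_0)^{k/2}-(e_1+e_0)^{k/2}\bigr]=-2\!\!\sum_{\substack{0\le i\le k/2\\ i\text{ odd}}}\!\!\binom{k/2}{i}e_0^ie_1^{k-i}=-2\sum_{j=0}^{\flr{(k-2)/4}}\binom{k/2}{2j+1}e_0^{2j+1}e_1^{k-2j-1},
\]
and since $\RR_+\otimes e_0^{2j+1}e_1^{k-2j-1}=\beta_{2j+1}$ this reads $\partial\Delta=-2\sum_{j=0}^{\flr{(k-2)/4}}\binom{k/2}{2j+1}\beta_{2j+1}$. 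As $\partial\Delta$ is a boundary, its class vanishes, and dividing by $-2$ gives the relation.

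It remains — and this is the heart of the matter — to prove that $\Delta$ has moderate growth, i.e.\ that the coefficients of $\wt s$ on the meromorphic frame $(u_a)$ of $\Sym^k\Kl_2$ have moderate growth near every boundary point of $\wt\PP^1$ met by $\rho$. Near $\partial_0$ this follows from the logarithmic estimates \eqref{eq:BesselIKzero} together with the first assertion of Lemma \ref{lem:coefev}, which hold uniformly in $\arg z$. Near $\partial_\infty$ one has to run a Stokes analysis of $\Sym^k\Kl_2$ at infinity: in the sector around $\RR_{>0}$ the section $e_1$ has exponential type $e^{-t}$ and $e_1-e_0\sim-e_0$ has type $e^{+t}$ (with $t=2\sqrt z$), so $s=e_1^{k/2}(e_1-e_0)^{k/2}$ is moderate there; to cover the remaining directions swept by the edge $\{x+y=1\}$ one expands $\wt s=\sum_i(-1)^i\binom{k/2}{i}e_0^ie_1^{k-i}$ using \eqref{eq:BesselIKinfty} across the Stokes rays, and the point is that the coefficient of each potentially growing exponential in the sum over $i$ vanishes by the identity of Lemma \ref{lemma:2-chain_boundary} (the vanishing $\sum_i(-1)^i\binom{k/2}{i}\binom{k-i}{r}=0$ for $r<k/2$), while the surviving contributions are moderate.

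The main obstacle is precisely this last step: organising the Stokes structure of $\Sym^k\Kl_2$ at infinity so that the cancellation furnished by Lemma \ref{lemma:2-chain_boundary} can be read off cleanly; to a lesser extent one also has to be careful with the topology of $\rho$ near the singular vertex $(0,0)$ and with the boundary orientations. By contrast the algebraic core — rewriting $s$ in product form and identifying $s-Ts$ with the desired combination of the $\beta_{2j+1}$ — is short.
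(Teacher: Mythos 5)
Your proof is correct, and it rests on the same two ingredients the paper's terse proof cites: the monodromy relation $Te_1=e_1+e_0$ and Lemma~\ref{lemma:2-chain_boundary}. The genuine improvement you add is the factorization $s=e_1^{k/2}(e_1-e_0)^{k/2}$, which lets you compute $\partial\Delta$ directly, whereas the paper reads the coefficients of $Ts=\sum_j\binom{k/2}{j}e_0^je_1^{k-j}$ off the lemma. What you do not notice is that the very same factorization also disposes of the step you flag as the heart of the matter, moderate growth over $\partial_\infty$, with no Stokes bookkeeping beyond the connection formula for $K_0$. For the branch of $e_1$ carried continuously across the triangle (continued counterclockwise from $\RR_{>0}$), and for every $\arg t\in[0,\pi]$ with $t=2\sqrt z$, the $(v_0,v_1)$-coefficients of $e_1$ involve $\kappa=(1/\pii)K_0(t)\sim e^{-t}\cdot(\mathrm{alg})$, while those of $e_1-e_0$ involve $\kappa+\ell=(1/\pii)K_0(t)+I_0(t)=(1/\pii)K_0(e^{-\pii}t)\sim e^{t}\cdot(\mathrm{alg})$, by $K_0(e^{-\pii}t)=K_0(t)+\pii I_0(t)$ and \eqref{eq:BesselIKinfty}. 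So on each side of the ray $\arg t=\pi/2$ exactly one of $e_1$, $e_1-e_0$ grows and the other decays at the compensating rate $e^{\mp t}$, and the $(u_a)$-coefficients of $\wt s$, being sums of products of $k/2$ coefficients of each factor, are $O(1)$ up to algebraic factors. This is exactly the content of the vanishing $\sum_i(-1)^i\binom{k/2}{i}\binom{k-i}{m}=0$ for $m<k/2$ you wanted to invoke: it says that $(e_0+(e_1-e_0))^{k/2}(e_1-e_0)^{k/2}$ expands with no monomial containing fewer than $k/2$ factors of $e_1-e_0$, which the factorization makes visible without a separate identity. Finally, a small slip in the write-up: none of the three faces of the triangle maps to $\partial_0$ (both radial edges go to $\RR_+$), so the only pieces of $\partial\Delta$ to discard lie over $\partial_\infty$, and the moderate growth near the vertex $(0,0)$ that you mention is in any case immediate from the logarithmic estimates \eqref{eq:BesselIKzero}.
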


\begin{proof}
It follows as direct consequences of
the monodromy relation and Lemma \ref{lemma:2-chain_boundary},
which also imply that
\[ \frac{-1}{2}\pt\Delta
= \sum_{i=0}^{\flr{(k-2)/4}} \binom{k/2}{2i+1}\beta_{2i+1}.
\qedhere\]
\end{proof}

If $\ktwofour$, choosing the principal determination of $w^{1/2}$
near $\RR_{>0}$, we write, in a way similar to \eqref{eq:I0K0}, the asymptotic expansion
\[
2^k(I_0(t)K_0(t))^{k/2} \sim w^{k/4}\sum_{n=0}^\infty \gamma'_{k,n+k/4}w^n.
\]
As in Definition \ref{defi:regIKM_conti}, one checks that, for $j\in[(k+2)/4,k']$, the difference
\[ H_{k,j}(\epsilon) =
\int_0^{1/\epsilon} (I_0(t)K_0(t))^{k/2}t^{2j}\frac{\de t}{t}
- \frac{1}{2^{k-2j+1}\epsilon}
\sum_{n=0}^{j} \frac{\gamma'_{k,j-n-1/2}}{(2n+1)(4\epsilon^2)^n} \]
is holomorphic near $\RR_{>0}$ with finite limit as $\epsilon\to0$.
Set
\[ \IKM^\reg_k(k/2,2j-1) =
-2^{k-2j+1}(\pii)^{k/2}\lim_{\sigma\to 0^+} H_{k,j}(\sigma). \]
In this case, one has
\[ \Ppairing^{\rmod,\rrd}_k(\beta_{k/2},(\wh{m}_j,\omega'_j)) =
\begin{cases}
0 & \text{if $j=0$}, \\
\IKM_k(k/2,2j-1) & \text{if $1\leq j\leq (k-2)/4$}, \\
\IKM^\reg_k(k/2,2j-1) & \text{if $(k+2)/4 \leq j\leq k'$}
\end{cases} \]
as in Proposition \ref{prop:B4}(\ref{prop:B42}).
To unify the situations,
we introduce the compactly supported version
of the Bessel moments.

\begin{defi}
Assume $k$ is even.
We set
\[ \IKM^\cp_k(i,2j-1) = \begin{cases}
\IKM^\reg_k(k/2, 2j-1)
&\text{if $\begin{cases}i = \sfrac{k}{2},\\ \flr{\sfrac{k}{4}}+1\leq j\leq k',\end{cases}$} \\
\IKM_k(i,2j-1) - \gamma_{k,j}\IKM_k(i,k')
& \text{if $\kfour,\ 0\leq i\leq k'$}, \\
\IKM_k(i,2j-1) & \text{otherwise}.
\end{cases}\]
\end{defi}

\begin{cor}[Sum rule identities]\label{cor:lin_rel_IKM} Assume $k$ is even and set $k'' = \flr{\psfrac{k-1}{4}}$. The linear relations among Bessel moments
\begin{equation}\label{eq:linear-IKM}
\sum_{i=0}^{k''} \binom{k/2}{2i+1} \IKM_k^\cp(2i+1,2j-1) = 0
\end{equation} hold for all $j$ such that
\[
\begin{cases}
1\leq j\leq 2k''&\text{if }\ktwofour,\\
1\leq j\leq 2k''+1,\ j\neq k/4&\text{if }\kfour.
\end{cases}
\]
\end{cor}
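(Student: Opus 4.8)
The plan is to extract the asserted linear relations by pairing the explicit moderate $2$-chain constructed in Proposition~\ref{prop:2-chain} against compactly supported de~Rham cohomology. By that proposition, and noting that $k''=\flr{(k-2)/4}$ (so $k''=k'/2$ if $k\equiv2\bmod4$ and $k''=(k'-1)/2$ if $k\equiv0\bmod4$), the twisted cycle $\sum_{i=0}^{k''}\binom{k/2}{2i+1}\beta_{2i+1}$ equals $-\tfrac12\partial\Delta$ with $\Delta$ of moderate growth, hence represents the zero class in $\coH^{\rmod}_1(\Gm,\Sym^k\Kl_2)$. Applying the period pairing $\Ppairing^{\rmod,\rrd}_k$ against the basis elements $(\wh m_j,\omega'_j)\in\coH^1_{\dR,\rc}(\Gm,\Sym^k\Kl_2)$ for $0\le j\le k'$ and using bilinearity, I obtain
\[
\sum_{i=0}^{k''}\binom{k/2}{2i+1}\,\Ppairing^{\rmod,\rrd}_k\bigl(\beta_{2i+1},(\wh m_j,\omega'_j)\bigr)=0
\]
for every such $j$.

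It then remains to identify each term with a compactly supported Bessel moment and to determine which $j$ yield a nontrivial relation. For an odd index $2i+1<k/2$, the entry $\Ppairing^{\rmod,\rrd}_k(\beta_{2i+1},(\wh m_j,\omega'_j))$ agrees with $\Ppairing^{\rrd,\rmod}_k(\alpha_{2i+1},\omega'_j)$ by compatibility of the two period pairings, $\beta_{2i+1}$ being the image of the rapid decay cycle $\alpha_{2i+1}$ of \eqref{eq:alphai}; by Proposition~\ref{prop:Pij} and Corollary~\ref{cor:PmidasBessel} this equals $\IKM_k(2i+1,2j-1)$ when $k\equiv2\bmod4$ and $\IKM_k(2i+1,2j-1)-\gamma_{k,j}\IKM_k(2i+1,k')$ when $k\equiv0\bmod4$, i.e.\ in both cases exactly $\IKM^\cp_k(2i+1,2j-1)$. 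When $k\equiv2\bmod4$ the sum also contains the top term $i=k''$, namely $\beta_{k/2}$ with $k/2$ odd, which is no longer the image of any $\alpha$; here I would invoke the evaluation of $\Ppairing^{\rmod,\rrd}_k(\beta_{k/2},(\wh m_j,\omega'_j))$ recorded just before the definition of $\IKM^\cp_k$, giving $\IKM_k(k/2,2j-1)$ for $1\le j\le(k-2)/4$ and the regularized moment $\IKM^\reg_k(k/2,2j-1)$ for $(k+2)/4\le j\le k'$, again $\IKM^\cp_k(k/2,2j-1)$. Substituting, the displayed identity becomes precisely \eqref{eq:linear-IKM}.

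Finally I would pin down the range of $j$: since $k'=2k''$ for $k\equiv2\bmod4$ and $k'=2k''+1$ for $k\equiv0\bmod4$, the candidates are $0\le j\le k'$. For $j=0$ one has $\omega'_0=0$ and $\Ppairing^{\rmod,\rrd}_{k;i,0}=\pm\delta_{i,0}$ by Proposition~\ref{prop:B4}\eqref{prop:B41a}/\eqref{prop:B42a}, which vanishes on every odd index, so $j=0$ gives only $0=0$. When $k\equiv0\bmod4$ the value $j=k/4$ must likewise be dropped: there $\omega'_{k/4}=0$ and, by Proposition~\ref{prop:B4}\eqref{prop:B42b}, pairing with $(\wh m_{k/4},0)$ produces $-(2\pii)^{k/2}2^{k/2}\binom{k}{k/2}^{-1}\delta_{i,k/2}$, which vanishes on all odd $2i+1<k/2$ and hence yields no relation among Bessel moments. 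This leaves exactly $1\le j\le2k''$ when $k\equiv2\bmod4$ and $1\le j\le2k''+1$, $j\ne k/4$, when $k\equiv0\bmod4$, as claimed. The corollary is thus a formal consequence of Propositions~\ref{prop:2-chain} and~\ref{prop:B4}; the only step demanding genuine care is the identification, for $k\equiv2\bmod4$ and large $j$, of $\Ppairing^{\rmod,\rrd}_k(\beta_{k/2},(\wh m_j,\omega'_j))$ with $\IKM^\reg_k(k/2,2j-1)$ — one must verify that scaling $c_+$ and shrinking $c_0$ in \cite[Prop.\,3.43]{F-S-Y20} reproduces exactly the singular subtraction $2^{-(k-2j+1)}\epsilon^{-1}\sum_n\gamma'_{k,j-n-1/2}/((2n+1)(4\epsilon^2)^n)$, matched against the fractional Laurent expansion of $\gen{(e_0e_1)^{k/2},\wh m_{j,\infty}}_\topo$, i.e.\ the $k\equiv2\bmod4$ analogue of Proposition~\ref{prop:B4}\eqref{prop:B42d}; this regularization bookkeeping is where I expect the only real friction.
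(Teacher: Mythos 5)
Your proof is correct and follows the paper's own (largely implicit) argument: pairing the vanishing moderate-growth homology class from Proposition~\ref{prop:2-chain} against the compactly supported de Rham classes $(\wh m_j,\omega'_j)$, identifying the resulting entries with $\IKM^\cp_k$ via Proposition~\ref{prop:Pij}, Corollary~\ref{cor:PmidasBessel}, and the $\beta_{k/2}$ evaluation recorded before the definition of $\IKM^\cp_k$, and discarding the degenerate columns $j=0$ and (for $k\equiv0\bmod4$) $j=k/4$. You are also right that the one step the paper leaves unchecked is the $k\equiv2\bmod4$ analogue of Proposition~\ref{prop:B4}\eqref{prop:B42d} identifying $\Ppairing^{\rmod,\rrd}_k(\beta_{k/2},(\wh m_j,\omega'_j))$ with $\IKM^\reg_k(k/2,2j-1)$ in the regularized range.
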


\begin{remark}
For $j\in[1,k'']$, these \textit{sum rule identities} were proved by Zhou by analytic means
(see \cite[(1.3)]{Zhou19} for $\ktwofour$ and \cite[(1.5)]{Zhou19} for $\kfour$, where the second argument of the Bessel moments is also allowed to be even). Our proof, closer to the spirit of the Kontsevich-Zagier period conjecture, produces the relation \eqref{eq:linear-IKM} simply from the Stokes formula.
\end{remark}

\section{Comparison of period structures}
\label{sect:Lfunctions}

In \cite{F-S-Y18}, we have introduced the Nori motives $\Motive_k$ defined over $\QQ$, whose definition we recall in Section~\ref{subsec:Mk}, and we have explained how the Hodge filtration of their Hodge realization can be computed in terms of symmetric powers of the Kloosterman connection on $\Gm$ and their irregular Hodge filtration. We aim at making Deligne's conjecture on critical values explicit for them in Section \ref{subsec:Deligne}.
In this section as a preparation,
we focus on the comparisons of various $\QQ$-structures.
For that purpose, we only retain from each Nori motive $\Motive_k$
\begin{itemize}
\item
its \emph{period realization} (over $\Spec\QQ$, \cf Section \ref{subsec:PerSpecQ}), that is, the $\QQ$-vector spaces $(\Motive_k)_\dR$ (de~Rham realization) and $(\Motive_k)_\Betti$ (Betti realization), together with the period isomorphism $\per:\CC\otimes_\QQ(\Motive_k)_\Betti\simeq\CC\otimes_\QQ(\Motive_k)_\dR$,
and
\item
the action $F_\infty$ on $(\Motive_k)_\Betti$ induced by geometric complex conjugation.
\end{itemize}

On the other hand, the computations of the previous sections lead us to define a (transposed) period structure (over $\Spec\QQ$) as follows:
\begin{itemize}
\item
The de~Rham $\QQ$-vector space $\coH_{\dR,\rmid}^1(\Gm,\Sym^k\Kl_2)_\QQ$ is the $\QQ$-vector space generated by the family $\Basis_{k,\rmid}$ of Corollary \ref{cor:BasisdRmid}.

\item
The (dual) Betti $\QQ$-vector space $\coH_1^\rmid(\Gm,\Sym^k\Kl_2)_\QQ$ is the $\QQ$-vector space generated by the family $(\beta_j)$ of twisted cycles of Theorem \ref{th:Bettibasis}\eqref{th:Bettibasis3}.

\item
The period pairing is the pairing $\Ppairing_k^\rmid$.
\end{itemize}

There is an obvious notion of morphism of period structures.

\begin{thm}\label{th:periodstructures}
The period structure $((\Motive_k)_\dR,(\Motive_k)_\Betti,\per)$
of the motive $\Motive_k$
is isomorphic to the transpose of $(\coH_{\dR,\rmid}^1(\Gm,\Sym^k\Kl_2)_\QQ,\coH_1^\rmid(\Gm,\Sym^k\Kl_2)_\QQ,\Ppairing_k^\rmid)$.
\end{thm}

Furthermore, we will give an explicit description under this correspondence of the action $F_\infty$ on $\coH_1^\rmid(\Gm,\Sym^k\Kl_2)_\QQ$.

We proceed in two steps.
Firstly in Section \ref{subsec:Mk}, in an analogous way to the method used in~\cite{F-S-Y18}, we realize the mixed Hodge structure corresponding to $\Motive_k$ together with its associated period structure as the exponential mixed Hodge structure and the associated period structure attached to a function on a smooth variety equipped with the action of a finite group. We also analyze the automorphism
of the Betti fiber of this exponential mixed Hodge structure
induced by the complex conjugation on the variety underlying $\Motive_k$.
In particular, we avoid computing periods directly on the variety underlying the motive $\Motive_k$. The tools for this part are developed in the appendix in which the period realization
of exponential mixed Hodge structures is investigated,
extending the focus on the de Rham realization
in the appendix of \cite{F-S-Y18}.
Then in Section \ref{subsec:Uf-SymKl},
we compare these objects to those of the previous sections
by making explicit differential forms of higher degree and higher dimensional twisted cycles that correspond to the bases obtained for $\Sym^k\Kl_2$ there. The period matrix computed in Proposition \ref{prop:Pij} is thus a period matrix for $\Motive_k$, and the action of the conjugation is explicit since the Bessel moments we consider are either real or purely imaginary.

\subsection{The motives \texorpdfstring{$\Motive_k$}{Mk}}\label{subsec:Mk}
Let $y=(y_1,\cdots,y_k)$ be the Cartesian coordinates
of the torus $\Gm^k$ defined over $\QQ$.
Upon $\Gm^k$,
consider the action of $\symgp_k\times\mu_2$
where the symmetric group $\symgp_k$ acts by permuting the variables $y$
and the group $\mu_2=\{\pm1\}$ acts as $y_i\mto\pm y_i$.
Denote by $g_k\colon\Gm^k\to\Afu$ the regular function
given by the Laurent polynomial
\[
g_k(y_1,\dots,y_k)=\sum_{i=1}^k \Bigl(y_i+\frac{1}{y_i}\Bigr)
\]
and let $\KM_0 = (g_k)$
be the associated closed subvariety of $\Gm^k$.
Then $\KM_0$ is invariant under the action of $\symgp_k\times\mu_2$
and hence the latter acts on various cohomology spaces,
or on the (Nori) motives $\coH^{k-1}(\KM_0)$ and $\coH_\cp^{k-1}(\KM_0)$
of degree $k-1$ of $\KM_0$.
Let $\sgn\colon\symgp_k\times\mu_2\to \QQ^\times$
be the product of the sign character on $\symgp_k$
and the trivial one on $\mu_2$.
We define the pure motive $\Motive_k$ of weight $k+1$ over $\QQ$ by
taking the $\sgn$-isotypic part
\[ \Motive_k =
\gr^W_{k+1}\left[\coH_\cp^{k-1}(\KM_0)(-1)\right]^{\symgp_k\times\mu_2,\sgn}, \]
where $W$ indicates the weight filtration (\cf\cite[(3.1)]{F-S-Y18}).
The Betti realization $(\Motive_k)_\Betti$ and the de~Rham realization $(\Motive_k)_\dR$
of $\Motive_k$ are the $\QQ$-vector spaces of dimension
$\flr{\psfrac{k-1}{2}}-\delta_{4\ZZ}(k)$
obtained by replacing $\coH_\cp$
by the corresponding cohomology functors (with compact support).

Let $\KM$ be the complex variety defined by the base change of $\KM_0$ to $\CC$.
It is shown in \loccit\ that the base change
\[ \CC\otimes (\Motive_k)_\dR
= \gr^W_{k+1}\left[\coH_{\dR,\cp}^{k-1}(\KM)(-1)\right]^{\symgp_k\times\mu_2,\sgn} \]
of the de Rham realization $(\Motive_k)_\dR$
is identified with $\coH_{\dR,\rmid}^1(\Gm,\Sym^k\Kl_2)$
discussed in the previous sections.

\subsubsection*{The pair $(U,\wt{f}_k)$}
We consider the torus $U_0=\Gmt\times\Gm^k$ over $\QQ$ with its $\CC$-exten\-sion $U$, endowed with
\begin{itemize}
\item
the action of $\symgp_k$ permuting the coordinates on $\Gm^k$,
\item
the action of $\mu_2$ sending $(t,y)$ to $\pm(t,y)$,
\item
the involution $\iota$ sending $t$ to $t$ and $y_i$ to $-y_i$ for each $i$,
\item
the anti-linear involution $\conj$ on the analytic manifold $U(\CC)$
induced by the conjugation of coordinates, which commutes with all the above actions.
\end{itemize}
Let $\wt f_k:U_0=\Gmt\times\Gm^k\to\Afu$ denote the Laurent polynomial $(\sfrac t2)\cdot g_k$ where $g_k$ is defined above. Then $\wt f_k$ is left invariant by the action of $\symgp_k$, $\mu_2$ and $\conj$, and satisfies $\iota^*\wt{f}_k = -\wt{f}_k$.

Conjugation acts on $\Omega^p(U)$ by changing a $p$-form $\omega(t,y)$ to $\ov{\omega(\ov t,\ov y)}$. Therefore, $\conj^*(\rd\wt f_k)=\rd\wt f_k$ and $\conj^*$
induces an involution on
$\coH^{k+1}_{\dR,\rc}(U,\wt f_k)$, $\coH_\dR^{k+1}(U,\wt f_k)$ and $\coH^{k+1}_{\dR,\rmid}(U,\wt f_k)$ which commutes with the action induced by $\symgp_k\times\mu_2$ and $\iota^*$.

According to \cite[Th.\,3.8]{F-S-Y18},
the associated mixed Hodge structure $(\Motive_k)_\rH$
is identified with the exponential mixed Hodge structure $\coH^{k+1}_\rmid(U,\wt f_k)^{\symgp_k \times \mu_2,\,\sgn}$.
In particular, they have isomorphic period realizations (\cf Appendix).

\subsubsection*{The de Rham realization $(\Motive_k)_{\dR}$}
The pair $(U,\wt{f}_k)$, together with the action of $\symgp_k\!\times\!\mu_2$, is the $\CC$-extension of the pair $(U_0,\wt{f}_k)$ defined over~$\QQ$. As explained in Section \ref{subsec:PerSpecQ}, the de~Rham cohomologies $\coH^{k+1}_{\dR,?}(U,\wt f_k)$ ($?=\emptyset,\rc,\rmid$) are also endowed with the $\QQ$-structure
$\coH^{k+1}_{\dR,?}(U_0,\wt f_k)$,
and therefore so are the $\sgn$-isotypic components $\coH^{k+1}_{\dR,?}(U,\wt f_k)^{\symgp_k \times \mu_2,\,\sgn}$
with the $\QQ$-structure
$\coH^{k+1}_{\dR,?}(U_0,\wt f_k)^{\symgp_k \times \mu_2,\,\sgn}$.

\begin{lemma}
The isomorphism
$\gr^W_{k+1}\coH_{\dR,\cp}^{k-1}(\KM)(-1)^{\symgp_k\times\mu_2,\sgn} \simeq
\coH^{k+1}_{\dR,\rmid}(U,\wt f_k)^{\symgp_k \times \mu_2,\,\sgn}$
of \cite[Th.\,3.8]{F-S-Y18} identifies the $\QQ$-vector spaces $(\Motive_k)_{\dR}$ and $\coH^{k+1}_{\dR,\rmid}(U_0,\wt f_k)^{\symgp_k \times \mu_2,\,\sgn}$.
\end{lemma}

\begin{proof}
We start from Example \ref{ex:dRQiso}, on noting that $\wt f_k=tg_k$.
We thus obtain isomorphisms (setting $Z_0=\Afu_t\times\cK_0$)
\begin{align*}
\coH^{k+1}_{\dR,\rc}(Z_0)^{\symgp_k \times \mu_2,\,\sgn}&\simeq \coH^{k+1}_{\dR,\rc}(\Afu_t\times\Gm^k/\QQ,\wt f_k)^{\symgp_k \times \mu_2,\,\sgn} \\
&\simeq\coH^{k+1}_{\dR,\rc}(U_0,\wt f_k)^{\symgp_k \times \mu_2,\,\sgn},
\end{align*}
where the second isomorphism is obtained as in the proof of \cite[Th.\,3.8]{F-S-Y18}. After extension of scalars from $\QQ$ to $\CC$, these isomorphisms become the de~Rham part of the isomorphisms considered in the proof of \loccit\ Since the left-hand sides are defined from Nori motives, the weight filtration is defined on them. We note that $(\Motive_k)_\dR\simeq\coH^{k+1}_{\dR,\rc}(Z_0)^{\symgp_k \times \mu_2,\,\sgn}/W_k\coH^{k+1}_{\dR,\rc}(Z_0)^{\symgp_k \times \mu_2,\,\sgn}$ because this holds after extension of scalars.
Furthermore, the composition
\[
\coH^{k+1}_{\dR,\rc}(Z_0)^{\symgp_k \times \mu_2,\,\sgn}\isom\coH^{k+1}_{\dR,\rc}(U_0,\wt f_k)^{\symgp_k \times \mu_2,\,\sgn}\to\coH^{k+1}_{\dR}(U_0,\wt f_k)^{\symgp_k \times \mu_2,\,\sgn}
\]
whose image is
$\coH^{k+1}_{\dR,\rmid}(U_0,\wt f_k)^{\symgp_k \times \mu_2,\,\sgn}$
by definition, has kernel equal to $W_k\coH^{k+1}_{\dR,\rc}(Z_0)^{\symgp_k \times \mu_2,\,\sgn}$, because this holds after extension of scalars. This proves the lemma.
\end{proof}

\subsubsection*{The Betti realization $(\Motive_k)_{\Betti}$}
We use the notation and apply the results of Appendix \ref{subsec:fiberperiodfunction},
more specifically Formulas \eqref{eq:BettiUfD}, \eqref{eq:BettiUfDPhi} and \eqref{eq:BettiUfDPhimid},
to the pair $(U,\wt f_k)$.
Since $\wt f_k$ is invariant under the action of $\symgp_k \times \mu_2$ on $U$, as well as under the action of complex conjugation $\conj:U(\CC)\to U(\CC)$, the families of supports~$\Phi_\rrd$ and~$\Phi_\rmod$ are also invariant under these actions. There is thus a natural action of $\symgp_k \times \mu_2$ on $\coH^{k+1}_\rc(\wt U_\rrd(D),\QQ)$ and $\coH^{k+1}_\rc(\wt U_\rmod(D),\QQ)$. We thus have
\begin{equation}
\label{eq:Bettirealcohomology}
\begin{aligned}
(\Motive_k)_{\Betti}&\simeq\coH^{k+1}_{\Betti,\rmid}(U,\wt f_k)^{\symgp_k \times \mu_2,\,\sgn}\\
&=\image\bigl[\coH^{k+1}_{\Betti,\rc}(U,\wt f_k)^{\symgp_k \times \mu_2,\,\sgn}\ra\coH^{k+1}_\Betti(U,\wt f_k)^{\symgp_k \times \mu_2,\,\sgn}\bigr]\\
&\simeq\image\bigl[\coH^{k+1}_\rc(\wt U_\rrd(D),\QQ)^{\symgp_k \times \mu_2,\,\sgn}\ra\coH^{k+1}_\rc(\wt U_\rmod(D),\QQ)^{\symgp_k \times \mu_2,\,\sgn}\bigr].
\end{aligned}
\end{equation}

\begin{lemma}
Under the identification \eqref{eq:Bettirealcohomology}, the action of the conjugation on $(\Motive_k)_\Betti$
coincides with that on
$\coH^{k+1}_{\Betti,\rmid}(U,\wt f_k)^{\symgp_k \times \mu_2,\,\sgn}$.
\end{lemma}

\begin{proof}
We start from the identification of Example \ref{ex:conj}. Since the action of $\symgp_k\times\mu_2$ on $\Afu_t\times\Gm^k$ commutes with $\conj$, we obtain an identification
\[
(\Motive_k)_\Betti\simeq\coH^{k+1}_{\Betti,\rmid}(\Afu_t\times\Gm^k,\wt f_k)^{\symgp_k \times \mu_2,\,\sgn}
\]
compatible with $\conj^*$. Since the isomorphism (\cf\cite[Proof of Th.\,3.8]{F-S-Y18})
\[
\coH^{k+1}_{\Betti,\rmid}(\Afu_t\times\Gm^k,\wt f_k)^{\symgp_k \times \mu_2,\,\sgn}\simeq\coH^{k+1}_{\Betti,\rmid}(U,\wt f_k)^{\symgp_k \times \mu_2,\,\sgn}
\]
is clearly compatible with $\conj^*$
the desired result follows.
\end{proof}

The Betti realization is also given~by (\cf \eqref{eq:Bettihomology})
\begin{equation}\label{eqn:Bettireal}
(\Motive_k)_{\Betti}\simeq\image\bigl[\coH^\rrd_{k+1}(U,\wt f_k)^{\symgp_k \times \mu_2,\,\sgn}\ra\coH^\rmod_{k+1}(U,\wt f_k)^{\symgp_k \times \mu_2,\,\sgn}\bigr]
\end{equation}
and carries the involution $F_\infty=\conj_*$.
Indeed,
the expression \eqref{eq:UrdR}
makes explicit the interpretation of the elements of $\coH^\rrd_{k+1}(U,\wt f_k)$ in terms of twisted cycles. It is moreover clear that, in this expression, the closed subspace $\partial_{\rrd,R}U$ of $U(\CC)$, upon which $\exp(-\wt{f}_k)$ is sufficiently small, is invariant under both the action of $\symgp_k \times \mu_2$ and the complex conjugation on $U(\CC)$, and hence provide the action of $\symgp_k \times \mu_2$ and $F_\infty$ on $\coH^\rrd_{k+1}(U,\wt f_k)$.

\subsubsection*{Self-duality and period pairing}
The involution $\iota$ yields the identifications
\[ \mu: \coH^{k+1}_?(U,\wt{f}_k) \isom \coH^{k+1}_?(U,-\wt{f}_k),
\quad
? = \emptyset, \cp,
\]
compatible with the action of $\symgp_k \times \mu_2$, and leads to the self-duality
up to a Tate twist by $\QQ(k+1)$ the mixed Hodge structure $(\Motive_k)_\rH\simeq\coH^{k+1}_\rmid(U,\wt{f}_k)^{\symgp_k \times \mu_2,\,\sgn}$.

The nondegenerate period pairing
\[
\Ppairing^{\rrd,\rmod}:\coH^\rrd_{k+1}(U,\wt f_k)^{\symgp_k \times \mu_2,\,\sgn}\otimes\coH^{k+1}_\dR(U,\wt f_k)^{\symgp_k \times \mu_2,\,\sgn}\to\CC,
\]
is computed as follows. Given a twisted cycle
\[
\alpha\in\coH^\rrd_{k+1}(U,\wt f_k)=\coH_{k+1}(\wt U_\rrd,\partial\wt U_\rrd,\QQ)^{\symgp_k \times \mu_2,\,\sgn},
\]
we choose for each large enough $R\!>\!0$ a representative $\alpha_R\!\in\!\coH_{k+1}(U,\partial_{\rrd,R}U,\QQ)$ $\sgn$-invariant under $\symgp_k \times \mu_2$. We also represent a de~Rham class in the space $\coH^{k+1}_\dR(U,\wt f_k)^{\symgp_k \times \mu_2,\,\sgn}$ by a top differential form $\omega$ on $U$ which is $\sgn$-invariant under $\symgp_k \times \mu_2$. Then
\[
\Ppairing^{\rrd,\rmod}(\alpha,[\omega])=\lim_{R\to\infty}\int_{\alpha_R}e^{-\wt f_k}\iota^*\omega.
\]

\subsection{Proof of Theorem \ref{th:periodstructures}}\label{subsec:Uf-SymKl}
The goal of this section is to identify the period structure
\[
(\coH^{k+1}_{\dR,\rmid}(U_0,\wt f_k)^{\symgp_k \times \mu_2,\,\sgn},\coH^{k+1}_{\Betti,\rmid}(U,\wt f_k)^{\symgp_k \times \mu_2,\,\sgn},\per)
\]
as defined from Sections \ref{subsec:computGM} and \ref{subsec:fiberperiodfunction} with the transpose of
\[
(\coH_{\dR,\rmid}^1(\Gm,\Sym^k\Kl_2)_\QQ,\coH_1^\rmid(\Gm,\Sym^k\Kl_2)_\QQ,\Ppairing_k^\rmid),
\]
ending thereby the proof of Theorem \ref{th:periodstructures}, and to make explicit the action of $F_\infty$ on $\coH_1^\rmid(\Gm,\Sym^k\Kl_2)_\QQ$.

\subsubsection*{Summary of the proof}\mbox{}
\begin{enumerate}\renewcommand{\labelenumi}{Step~\theenumi.}
\item\label{summary:0}
We consider the family $(w_j)_{0\leq j\leq k'}$
in $\coH^{k+1}_{\dR}(U_0,\wt f_k)^{\symgp_k \times \mu_2,\,\sgn}$
as defined in \cite[Proof of Prop.\,4.21]{F-S-Y18}, which has been shown to form a $\QQ$-basis of this space.
\item\label{summary:1}
We construct a family of rapid decay cycles $(\tau_i)$ in
$\coH^\rrd_{k+1}(U,\wt f_k)^{\symgp_k \times \mu_2,\,\sgn}$
such that the matrix $(\Ppairing^{\rrd,\rmod}(\tau_i,w_j))_{0\leq i,j\leq k'}$ is equal to $(\Ppairing^{\rrd,\rmod}_{k;i,j})_{0\leq i,j\leq k'}$ obtained from Propositions \ref{prop:Pij} and \ref{prop:shrinking-reg}. Since this matrix is nondegenerate and $(w_j)$ is a basis of $\coH^{k+1}_{\dR}(U,\wt f_k)^{\symgp_k \times \mu_2,\,\sgn}$, we conclude that $(\tau_i)$ is a basis of $\coH^{k+1}_\rrd(U,\wt f_k)^{\symgp_k \times \mu_2,\,\sgn}$.

\item\label{summary:2}
According to Remark \ref{rem:FPerP}, the period realization of $\coH^{k+1}(U_0,\wt f_k)^{\symgp_k \times \mu_2,\,\sgn}$
is isomorphic to
\[
(\QQ\cdot(w_j),\QQ\cdot(\tau_i),(\Ppairing^{\rrd,\rmod}_{i,j})_{0\leq i,j\leq k'}),
\]
hence to
\[
(\QQ\cdot(\omega_j),\QQ\cdot(\alpha_i),(\Ppairing^{\rrd,\rmod}_{k;i,j})_{0\leq i,j\leq k'}),
\]
that~is,
\[
(\coH_\dR^1(\Gm,\Sym^k\Kl_2)_\QQ,\coH_1^\rrd(\Gm,\Sym^k\Kl_2)_\QQ,\Ppairing_k^{\rrd,\rmod}).
\]

\item\label{summary:3}
Owing to the duality argument explained in \cite[Lem.\,2.27]{F-S-Y20},
the same property holds for the period realization of $\coH^{k+1}_\rc(U_0,\wt f_k)^{\symgp_k \times \mu_2,\,\sgn}$ and $(\coH_{\dR,\rc}^1(\Gm,\Sym^k\Kl_2)_\QQ,\coH_1^\rmod(\Gm,\Sym^k\Kl_2)_\QQ,\Ppairing_k^{\rmod,\rrd})$.

\item\label{summary:4}
That the same property thus holds true for the period realization of $\coH^{k+1}_\rmid(U_0,\wt f_k)^{\symgp_k \times \mu_2,\,\sgn}$ and
\[
(\coH_{\dR,\rmid}^1(\Gm,\Sym^k\Kl_2)_\QQ,\coH_1^\rmid(\Gm,\Sym^k\Kl_2)_\QQ,\Ppairing_k^\rmid),
\]
which needs a supplementary argument when $k\equiv0\bmod4$.

\item\label{summary:5}
Lastly, that conjugation is compatible with these identifications amounts to checking whether the entries of the period matrix are real or purely imaginary.
\end{enumerate}

\subsubsection*{A basis of $\coH^{k+1}_{\dR}(U_0,\wt f_k)^{\symgp_k \times \mu_2,\,\sgn}$}
Let us consider the class $[\wt\omega_j]$ of
\[
\wt\omega_j=2^{1-2j}t^{2j-1}
\rd t \wedge \dfrac{\rd y_1}{y_1} \wedge \cdots \wedge \dfrac{\rd y_k}{y_k}
\]
in $\coH^{k+1}_{\dR}(U_0,\wt f_k)$ and set
\begin{equation}\label{eq:wj}
w_j=\frac{1}{2k!}\sum_{\sigma \in \symgp_k\times \mu_2}
	\sgn(\sigma) \cdot \sigma([\wt\omega_j])
	\in\coH^{k+1}_{\dR}(U_0,\wt f_k)^{\symgp_k \times \mu_2,\,\sgn}.
\end{equation}
Through the isomorphism (\cf\cite[Prop.\,2.13]{F-S-Y18})
\[
\coH^1_\dR(\Gm,\Sym^k\Kl_2)\simeq\coH^{k+1}_{\dR}(U,\wt f_k)^{\symgp_k \times \mu_2,\,\sgn},
\]
each class $[z^jv_0^k\rd z/z]$ in the basis $\Basis_k$ corresponds to the class~$w_j$ (\cf \cite[Proof of Prop.\,4.21]{F-S-Y18}). As a consequence, $(w_j)_{0\leq j\leq k'}$ forms a basis of $\coH^{k+1}_{\dR}(U_0,\wt f_k)^{\symgp_k \times \mu_2,\,\sgn}$ satisfying
$\iota^*w_j=w_j$.

\subsubsection*{A family of rapid decay cycles}
For each $i$ such that $0\leq i\leq k'$, we define rapid decay cycles $\wt\alpha_i$
as follows.
First, let $\wt\alpha_0$ be the bounded cycle
\[\textstyle \wt\alpha_0 = \prod_{p=0}^k c_p, \]
where $c_0$ is the circle $|t|=1$ and $c_p$ is the circle $|y_p|=1$ for $p=1, \ldots, k$, both oriented counterclockwise. For the remaining unbounded cycles,
we start with the chain
\[\textstyle \wt\alpha'_i = [1, +\infty) \times (\prod_{p=1}^i c_p) \times (\RR_{>0})^{k-i}
\]
on $U(\CC)$, where $[1, +\infty)$ is relative to the variable~$t$ and the open octant $(\RR_{>0})^{k-i}$ is relative to the variables $y_p$ for $p=i+1, \dots, k$. The orientation is the natural one on each half-line, and given by the order of the variables for $\wt\alpha'_i$.
Since $\mathrm{Re}(y_p+1/y_p) \geq -2$ on $c_p$ and $\geq 2$ on $\RR_{>0}$,
by noting that $k-i>i$ for $1 \leq i \leq k'$,
one sees that $e^{-\wt{f}_k}$ has rapid decay along $\wt \alpha_i'$. The rapid decay chain~$\wt\alpha_i'$ has boundary
\[\textstyle
\pt\wt\alpha_i'= -\{1\} \times (\prod_{p=1}^i c_p) \times (\RR_{>0})^{k-i},
\] and hence is not a cycle. To kill the boundary, we mimic the construction of the cycles $\alpha_i$ from~\eqref{eq:alphai} by introducing, for each $r$ such that $1 \leq r \leq k'$, the chain $\wt\gamma_r$ defined as the image of the map~\hbox{$[0,1]^r \times (0,1)^{k-r+1} \to U(\CC)$} given~by
\[
(s_0, \ldots, s_k) \mto
\begin{cases}
t=e^{2\pii s_0}, & \\
y_p=e^{2\pii s_p}, & 1\leq p\leq r-1, \\
y_p=\begin{cases}
e^{2\pii s_0} \cdot 4s_p, & 0<s_p\leq 1/4, \\
e^{2\pii s_0(2-4s_p)}, & 1/4\leq s_p\leq 3/4, \\
e^{-2\pii s_0}\big(1+\tan\frac{\pi}{2}(4s_p-3)\big), & 3/4\leq s_p<1,
\end{cases} & r\leq p\leq k. \\
\end{cases}\]
(See Figure \ref{fig:gamma_r}.)
The chain~$\wt\gamma_r$ is rapidly decay for $e^{-\wt{f}_k}$
with boundary
\[\textstyle
\pt\wt\gamma_r= \{1\} \times (\prod_{p=1}^{r-1} c_p)
	\times \prod_{p=r}^k (\RR_{>0} -2c_p)
	- \{1\} \times (\prod_{p=1}^{r-1} c_p) \times (\RR_{>0})^{k-r}.
\]

\begin{figure}[htb]
\begin{center}
\includegraphics[scale=1]{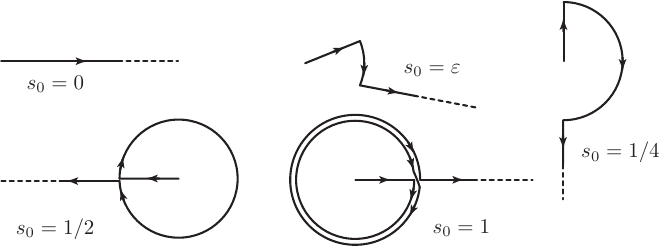}
\caption{The path $s_p\mto y_p(s_0,s_p)$ for $s_0$ fixed and $r\leq p\leq k$.}\label{fig:gamma_r}
\end{center}
\end{figure}

For each integer $m \geq 1$,
there is a unique sequence $(\theta_m(r))_{0 \leq r \leq m-1}$
of rational numbers
satisfying the identity
\begin{equation}\label{eq:theta-coeff}
\sum_{r=0}^{m-1} \theta_m(r)a^r\big[ (a+b)^{m-r}-b^{m-r} \big]
= ab^{m-1}
\end{equation}
in the polynomial ring $\QQ[a,b]$.
Explicitly, the first two values are
$\theta_m(0) = 1/m$ and $\theta_m(1) = -1/2$
and the recursion
\begin{equation}\label{eq:binom-theta}
\sum_{r=0}^{\ell-1} \binom{m-r}{m-\ell}\theta_m(r) = 0
\end{equation} holds if $1<\ell <m$. Set
\[ \wt\theta_m = \sum_{r=0}^{m-1}\frac{\theta_m(r)}{m-r+1}. \]

For each $i$ such that $1\leq i\leq k'$, let
\[ \wt\alpha_i = -(-2)^{k-i}\wt\theta_{k-i+1}\wt\alpha_0
+ \frac{1}{(k-i+1)!}\sum_{\sigma\in\symgp_{k-i+1}}\sigma
\Big(\wt\alpha'_i +\sum_{r=i}^k
	(-2)^{r-i-1}\theta_{k-i+1}(r-i)\wt\gamma_r \Big), \]
where the symmetric group $\symgp_j$
acts by permuting the last $j$ components.
We compute the boundary of the terms involving $\wt\gamma_r$.
Since the average over the group action
makes the positions of circles and lines from $\partial\wt\gamma_r$
equidistributed in the last $k-i+1$ components,
for each fixed $j$ such that $1\leq j\leq k-i$
and any $\sigma\in\symgp_{k-i+1}$,
the coefficient in the boundary of the term
$\sigma\big(\{1\}\times \prod_{p=1}^{i+j-1} c_p \times \prod_{p=i+j}^k \RR_{>0}\big)$,
resulting from $\wt\gamma_r$ for $i\leq r\leq i+j-1$,
is equal to
\begin{align*}
\sum_{r=i}^{i+j-1}
	(-2)^{r-i-1}\theta_{k-i+1}(r-i)\binom{k+1-r}{i+j-r}(-2)^{i+j-r}
&= (-2)^{j-1}\sum_{\ell=0}^{j-1}
	\theta_{k-i+1}(\ell)\binom{k-i+1-\ell}{j-\ell} \\
&= \begin{cases}
1, & j = 1, \\
0, & 1<j< k-i+1,
\end{cases}
\end{align*}
where the last equality follows from \eqref{eq:binom-theta}.
One concludes that the $\wt\alpha_i$ define rapid decay cycles.

\begin{remark}
In fact,
$\theta_m(r) = \binom{m}{r}\sfrac{\Bern{r}}{m}$ if $0\leq r<m$,
and
$\wt\theta_m = \sfrac{-\Bern{m}}{m}$
for all $m\geq 1$.
To see this,
plugging $a=1$ into \eqref{eq:theta-coeff}
and summing the resulting equations for $b=1,\dots, n$,
one obtains
\[ \sum_{\ell=0}^{m-1} n^{m-\ell}\sum_{r=0}^\ell\binom{m-r}{m-\ell}\theta_m(r)
= \sum_{b=1}^n b^{m-1}, \]
and hence by Bernoulli's formula \eqref{eq:BernoulliFormula},
\[ \sum_{r=0}^\ell \binom{m-r}{m-\ell}\theta_m(r)
= \frac{(-1)^\ell}{m}\binom{m}{\ell}\Bern{\ell},
\quad
0\leq\ell <m. \]
Using \eqref{eq:binom-theta} and the equalities
$\theta_m(0) = \Bern{0}/m, \theta_m(1) = \Bern{1}$
and $\Bern{2p+1}=0$ for $p\geq 1$,
we obtain the identity for $\theta_m(r)$. Therefore,
\[
\wt\theta_m
= \frac{1}{m}\sum_{r=0}^{m-1}\frac{1}{m-r+1}\binom{m}{r}\Bern{r}
= \frac{1}{m(m+1)}\sum_{r=0}^{m-1} \binom{m+1}{r}\Bern{r},
\]
and the identity for $\wt\theta_m$
follows from the recursive relation $\sum_{r=0}^m \binom{m+1}{r}\Bern{r} = 0$ for all $m\geq 1$.
\end{remark}

\begin{lemma}For any positive integer $m$,
the following identity holds in $\QQ[a,b]$:
\begin{equation}\label{eq:theta-tilde}
\sum_{r=0}^{m-1}\frac{\theta_m(r)}{m-r+1}a^{r-1}\big[
	(a+b)^{m-r+1}-b^{m-r+1}\big]
= \wt\theta_ma^m + \frac{b^m}{m}.
\end{equation}
\end{lemma}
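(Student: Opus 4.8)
The plan is to obtain \eqref{eq:theta-tilde} by ``integrating'' the defining identity \eqref{eq:theta-coeff} with respect to $b$. Let $\Lambda\colon\QQ[a,b]\to\QQ[a,b]$ be the $\QQ[a]$-linear operator defined on monomials by $\Lambda(b^{j})=b^{j+1}/(j+1)$, i.e. the term-by-term antiderivative in $b$ normalised so that $\Lambda(P)$ vanishes at $b=0$; this is a perfectly well-defined operation on polynomials. One computes $\Lambda\bigl((a+b)^{n}\bigr)=\tfrac{1}{n+1}\bigl((a+b)^{n+1}-a^{n+1}\bigr)$, $\Lambda(b^{n})=\tfrac{1}{n+1}b^{n+1}$, and $\Lambda(b^{m-1})=b^{m}/m$, so applying $\Lambda$ to both sides of \eqref{eq:theta-coeff} gives
\[
\sum_{r=0}^{m-1}\frac{\theta_m(r)}{m-r+1}\,a^{r}\bigl[(a+b)^{m-r+1}-b^{m-r+1}\bigr]-a^{m+1}\sum_{r=0}^{m-1}\frac{\theta_m(r)}{m-r+1}=\frac{a\,b^{m}}{m}.
\]

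Since $\sum_{r=0}^{m-1}\theta_m(r)/(m-r+1)=\wt\theta_m$ by definition, the displayed identity reads $\sum_{r}\tfrac{\theta_m(r)}{m-r+1}a^{r}\bigl[(a+b)^{m-r+1}-b^{m-r+1}\bigr]=\tfrac{a b^{m}}{m}+\wt\theta_m\,a^{m+1}$. To finish, I divide by $a$ inside $\QQ[a,b]$: each bracket $(a+b)^{m-r+1}-b^{m-r+1}$ vanishes at $a=0$ and is therefore divisible by $a$, so the whole left-hand side is divisible by $a$, as is manifestly the right-hand side. Dividing yields
\[
\sum_{r=0}^{m-1}\frac{\theta_m(r)}{m-r+1}\,a^{r-1}\bigl[(a+b)^{m-r+1}-b^{m-r+1}\bigr]=\frac{b^{m}}{m}+\wt\theta_m\,a^{m},
\]
which is exactly \eqref{eq:theta-tilde}.

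I do not expect a real obstacle here: the only points requiring a word of care are that \eqref{eq:theta-coeff} is an identity of polynomials, so ``integration'' must be phrased through the term-by-term operator $\Lambda$, and that the final division by $a$ must be justified within $\QQ[a,b]$ — both immediate. As a cross-check (and an alternative route that avoids \eqref{eq:theta-coeff} altogether), one can argue directly from the closed forms $\theta_m(r)=\binom{m}{r}\Bern{r}/m$ and $\wt\theta_m=-\Bern{m}/m$ recorded in the preceding remark: using $\binom{m}{r}/(m-r+1)=\binom{m+1}{r}/(m+1)$ and extending the sum to $r=m,m+1$ — which only adds the term $(m+1)\Bern{m}a^{m}$ — the two resulting sums are $a^{m+1}B_{m+1}\bigl(1+\tfrac{b}{a}\bigr)$ and $a^{m+1}B_{m+1}\bigl(\tfrac{b}{a}\bigr)$ for the Bernoulli polynomial $B_{m+1}$, via $\sum_{r}\binom{n}{r}\Bern{r}p^{r}q^{n-r}=p^{n}B_{n}(q/p)$, and one concludes with the difference equation $B_{m+1}(x+1)-B_{m+1}(x)=(m+1)x^{m}$. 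I would present the first approach, as it is self-contained and computationally lighter.
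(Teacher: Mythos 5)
Your proof is correct and takes essentially the same approach as the paper's, whose entire proof is the one-liner ``Integrating the equation \eqref{eq:theta-coeff} with respect to $b$ from $0$ to $b$ yields the formula.'' You simply unpack that step via the term-by-term antiderivative operator $\Lambda$ and carefully justify the final division by $a$ in $\QQ[a,b]$.
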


\begin{proof}
Integrating the equation \eqref{eq:theta-coeff}
with respect to $b$ from $0$ to $b$
yields the formula.
\end{proof}

\begin{prop}\label{prop:geometricpairing}
The period pairing
\[
\Ppairing^{\rrd,\rmod}:
\coH^{\rrd}_{k+1}(U, \wt{f}_k) \otimes
\coH_\dR^{k+1}(U, \wt{f}_k) \to \CC
\]
is given on the rapid decay cycles $\wt\alpha_i$ and the differential forms $\tilde \omega_j$ by
\[ \Ppairing^{\rrd,\rmod}(\wt\alpha_i,\wt\omega_j) = \begin{cases}
2(2\pii)^{k+1}\delta_{0,j} & \text{if $i=0$}, \\
(-1)^{k-i}\IKM^\reg_k(i,-1) & \text{if $j=0$ and $1\leq i\leq k'$}, \\
(-1)^{k-i}\IKM_k(i,2j-1) & \text{if $1\leq i,j\leq k'$}.
\end{cases} \]
\end{prop}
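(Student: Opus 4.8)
The plan is to evaluate the period integrals $\Ppairing^{\rrd,\rmod}_{k+1}(\wt\alpha_i,\wt\omega_j)=\lim_{R\to\infty}\int_{(\wt\alpha_i)_R}e^{-\wt f_k}\wt\omega_j$ directly, reducing them by Fubini to the one-variable Bessel integrals already treated in Section~\ref{sec:Ppairing}. The starting observation is that, since $\wt f_k=(t/2)\sum_{p=1}^k(y_p+1/y_p)$ and $\wt\omega_j=2^{1-2j}t^{2j-1}\rd t\wedge\bigwedge_{p=1}^k(\rd y_p/y_p)$, the integral of $e^{-\wt f_k}\wt\omega_j$ over a product chain $\Gamma_0\times\Gamma_1\times\cdots\times\Gamma_k$ (with $\Gamma_0$ in the variable $t$) factorizes as
\[
\int_{\Gamma_0}\Bigl(\prod_{p=1}^k\int_{\Gamma_p}\exp\bigl(-\tfrac t2(y_p+1/y_p)\bigr)\,\tfrac{\rd y_p}{y_p}\Bigr)\,2^{1-2j}t^{2j-1}\,\rd t.
\]
By \eqref{eq:BesselIK} a circle $|y_p|=1$ contributes the factor $2\pii\,I_0(t)$ and a ray $\RR_{>0}$ the factor $2K_0(t)$; a ray of the form $e^{\pm2\pii s_0}\RR_{>0}$ or $e^{-2\pii s_0}(1+\RR_{>0})$ occurring along the transition paths of $\wt\gamma_r$ contributes an analytic continuation of $K_0$, which by the monodromy rule $K_0(e^{\pii}t)=K_0(t)-\pii I_0(t)$ is an explicit combination $K_0(t)+(\text{integer})\cdot\pii\,I_0(t)$.

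For $i=0$ this already finishes the computation: $\wt\alpha_0=\prod_{p=0}^k c_p$ is a product of circles, so by the factorization the integral equals $2^{1-2j}(2\pii)^k\oint_{|t|=1}I_0(t)^k t^{2j-1}\,\rd t$, and since $I_0$ is entire with $I_0(0)=1$ the residue theorem gives $2(2\pii)^{k+1}\delta_{0,j}$, the extra factor $2$ compared to \eqref{eq:P0j} reflecting that $t\mto t^2/4$ has degree two on the unit circle. For $1\leq i\leq k'$, I would split $\wt\alpha_i$ into its three groups of summands --- the multiple of $\wt\alpha_0$, the $\symgp_{k-i+1}$-symmetrization of $\wt\alpha'_i$, and the symmetrizations of the $\wt\gamma_r$ --- and evaluate each by the factorization. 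The averaging over $\symgp_{k-i+1}$ reproduces each integral $(k-i+1)!$ times up to the sign $\sgn(\sigma)$, which is absorbed by $\sigma^*\wt\omega_j=\sgn(\sigma)\wt\omega_j$; the $\wt\alpha'_i$-term gives $(-1)^{k-i}2^{k+1-2j}(\pii)^i\int_1^\infty I_0(t)^i K_0(t)^{k-i}t^{2j-1}\,\rd t$ after $t\mto2\sqrt z$, matching the $c_+$-contribution to $\alpha_i$ in \eqref{eq:alphai}; and the $\wt\gamma_r$-terms, once the $y$-integrations have produced the monodromy combinations of $I_0$ and $K_0$, collapse --- by the defining identity \eqref{eq:theta-coeff} of the $\theta_m(r)$ together with \eqref{eq:binom-theta}, its integrated form \eqref{eq:theta-tilde} and Lemma~\ref{lemma:relation:c_n} --- into exactly the $\sum_a C_{k-i+1}(a)\int_{c_0^a}I_0^{i-1}K_0^{k-i+1}t^{2j-1}\,\rd t$ and $-\tfrac{(-1)^{k-i}(k-i)!}{k-i+2}\alpha_0$ contributions of \eqref{eq:alphai}. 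Matching term by term yields $\Ppairing^{\rrd,\rmod}_{k+1}(\wt\alpha_i,\wt\omega_j)=(-1)^{k-i}\,\Ppairing^{\rrd,\rmod}_k(\alpha_i,\omega_j)$, whence the case $j\geq1$ follows from Proposition~\ref{prop:Pij} and the case $j=0$ from Proposition~\ref{prop:shrinking-reg}, the limit $R\to\infty$ (equivalently $\epsilon\to0^+$ at the inner endpoints of the transition paths) being handled with the same regularization that defines $\IKM^\reg_k(i,-1)$, using the estimate \eqref{eq:BesselIKzero}.

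The main obstacle is the bookkeeping in this last step: one must check that the $I_0$-monodromy corrections picked up by $K_0$ along the transition paths $s_p\mto y_p(s_0,s_p)$ of $\wt\gamma_r$, weighted by $(-2)^{r-i-1}\theta_{k-i+1}(r-i)$ and summed over $r$, reproduce precisely the weights $C_{k-i+1}(a)$ and the constant $\tfrac{(-1)^{k-i}(k-i)!}{k-i+2}$ of \eqref{eq:alphai}. This is exactly where the relation between the $\theta_m(r)$ and the Bernoulli numbers --- hence, through Lemma~\ref{lemma:c_tau_bernoulli}, between the $\theta_m(r)$ and the $C_n(a)$ --- is used; granting the identities \eqref{eq:theta-coeff}--\eqref{eq:theta-tilde} already established, what remains is a careful tracking of orientations, of the positions of circles versus rays under the symmetrization, and of the nested limits, which is delicate but requires no further conceptual input.
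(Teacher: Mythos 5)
Your core idea---factorizing the period integral via Fubini into one-variable Bessel moments---is exactly the paper's approach, and the $i=0$ case and the $1\leq i,j\leq k'$ case are on track. The genuine difficulty, which you rightly identify, is the case $j=0$, and there your key claim is imprecise in a way that would make a literal execution fail. You assert that the $\theta$-weighted $\wt\gamma_r$-contributions collapse into exactly the $C$-weighted $c_0^a$-contributions together with the $-\frac{(-1)^{k-i}(k-i)!}{k-i+2}\alpha_0$ correction from~\eqref{eq:alphai}, so that $\Ppairing^{\rrd,\rmod}_{k+1}(\wt\alpha_i,\wt\omega_j)=(-1)^{k-i}\Ppairing^{\rrd,\rmod}_k(\alpha_i,\omega_j)$ follows by matching term by term. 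This is not a literal correspondence. The ``monodromy anomaly'' constant produced by the circle integrations when $j=0$ is $-(-2)^{k-i+1}(2\pii)^{k+1}\,\wt\theta_{k-i+1}$ on the $\wt\gamma_r$-side (a Bernoulli number, since $\wt\theta_m=-\Bern{m}/m$), versus $\frac{(-1)^{k-i}(k-i)!}{k-i+2}(2\pii)^{k+1}$ on the $c_0^a$-side (a factorial); correspondingly, the $\wt\alpha_0$-coefficient $-(-2)^{k-i}\wt\theta_{k-i+1}$ in the definition of $\wt\alpha_i$ and the $\alpha_0$-coefficient $-\frac{(-1)^{k-i}(k-i)!}{k-i+2}$ in the definition of~$\alpha_i$ are simply different numbers. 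What in fact happens is that each anomaly constant cancels \emph{internally} against the corresponding $\wt\alpha_0$- or $\alpha_0$-multiple, leaving only the divergent parts, which agree; there is no piece-by-piece bijection between the $\wt\gamma_r$-chains and the $c_0^a$-twisted cycles. Moreover, Lemma~\ref{lemma:relation:c_n}, which you invoke for the matching, characterizes the $C_n(a)$ but does not relate them to the $\theta_m(r)$; the relevant bridge (via $\theta_m(r)=\binom{m}{r}\Bern{r}/m$ and Lemma~\ref{lemma:c_tau_bernoulli}) is neither cited nor used.

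The paper's proof sidesteps the mismatch entirely by not passing through $\alpha_i$ at all: it scales the chains by $\epsilon$, computes the asymptotics of $\int_{\wt\alpha'_{i,\epsilon}}$ and of each $\int_{\wt\gamma_{r,\epsilon}}$ separately using~\eqref{eq:BesselIKzero}, substitutes $m=k-i+1$, $a=2\pii$, $b=\gamma+\log(\epsilon/2)$ into the integrated identity~\eqref{eq:theta-tilde}, and reads off directly that the divergence cancels against the $\wt\alpha'_i$-contribution while the constant cancels against the $\wt\alpha_0$-multiple, leaving $(-1)^{k-i}\IKM^\reg_k(i,-1)$ in the limit. If you want to retain your framing as a reduction to $\Ppairing^{\rrd,\rmod}_k(\alpha_i,\omega_j)$, you would need to state and prove the intermediate matching at the level of $\epsilon$-asymptotics of the total (not chain by chain), and the computation needed is essentially identical to the paper's direct one.
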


\begin{proof}
It is clear that
$\Ppairing^{\rrd,\rmod}(\wt\alpha_0,\wt\omega_j)
= 2(2\pii)^{k+1}\delta_{0,j}$.

For the rest,
as in the proof of Proposition \ref{prop:shrinking-reg},
for $\epsilon\in\RR_{>0}$,
we let
$\wt\alpha'_{i,\epsilon} = \epsilon\wt\alpha'_i$ and
$\wt\gamma_{r,\epsilon} = \epsilon\wt\gamma_r$
be the scalings of $\wt\alpha'_i$ and $\wt\gamma_r$,
which are homologous to $\wt\alpha'_i$ and $\wt\gamma_r$, respectively.
Consider the case $j=0$.
By the limiting behavior \eqref{eq:BesselIKzero}
and Definition \ref{defi:IKMreg} of the regularized Bessel moments,
we have the limiting behaviors
\begin{align*}
\int_{\wt\alpha'_{i,\epsilon}} e^{-\wt{f}_k}\wt\omega_0
&= 2^{k+1}(\pii)^i\int_\epsilon^\infty I_0(t)^iK_0(t)^{k-i}\frac{\de t}{t} \\
&
\sim_{\varepsilon \to 0^+} (-1)^{k-i}\IKM^\reg_k(i,-1)
+ \frac{(-1)^{k-i+1}2^{k+1}(\pii)^i}{k-i+1}
	\big(\gamma+\log(\sfrac{\epsilon}{2})\big)^{k-i+1}, \\
\int_{\wt\gamma_{r,\epsilon}} e^{-\wt{f}_k}\wt\omega_0
&= 2^{k+1}(\pii)^{r-1}\int_{|t|=\epsilon} I_0(t)^{r-1} K_0(t)^{k-r+1}\frac{\de t}{t} \\
&
\sim_{\varepsilon \to 0^+} \frac{(-1)^{k-r+1}2^{k+1}(\pii)^{r-1}}{k-r+2}\cdot\Big[\big(2\pii+\gamma+\log(\sfrac{\epsilon}{2})\big)^{k-r+2}
- \big(\gamma+\log(\sfrac{\epsilon}{2})\big)^{k-r+2}\Big].
\end{align*}
Substituting $m=k-i+1$, $a = 2\pii,$ and $b= \gamma+\log\sfrac{\epsilon}{2}$
into the identity \eqref{eq:theta-tilde},
one obtains
\begin{multline*}
\sum_{r=i}^k (-2)^{r-i-1}\theta_{k-i+1}(r-i)
	\int_{\wt\gamma_{r,\epsilon}} e^{-\wt{f}_k}\wt\omega_0 \\[-5pt]
\sim_{\varepsilon \to 0^+} -(-2)^{k-i+1}(2\pii)^{k+1}\wt\theta_{k-i+1}
	+ \frac{(-1)^{k-i}2^{k+1}(\pii)^i}{k-i+1}
	\big(\gamma+\log(\sfrac{\epsilon}{2})\big)^{k-i+1}.
\end{multline*}
Letting $\epsilon\to 0^+$ leads to
$\Ppairing^{\rrd,\rmod}(\wt\alpha_i,\wt\omega_0)
= (-1)^{k-i}\IKM^\reg_k(i,-1)$.

Finally, if $1\leq i,j\leq k'$,
it is straightforward to verify that
\[
\lim_{\epsilon\to 0^+}\int_{\wt\alpha'_{i,\epsilon}} e^{-\wt{f}_k}\wt\omega_j
= (-1)^{k-i}\IKM_k(i,2j-1) \quand
\lim_{\epsilon\to 0^+}\int_{\wt\gamma_{i,\epsilon}} e^{-\wt{f}_k}\wt\omega_j
= 0
\]
(similar to the proof of Proposition \ref{prop:Pij}), and the remaining case follows.
\end{proof}

Starting from the $\wt\alpha_i$, we obtain the following cycles in $\coH^\rrd_{k+1}(U,\wt f_k)^{\symgp_k \times \mu_2,\,\sgn}$:\begin{align*}
\tau_0&=\frac{1}{k!}\sum_{\sigma \in \symgp_k \times \mu_2} \sgn(\sigma) \cdot \sigma(\wt \alpha_0), \\
\tau_i&=\frac{(-1)^{k-i}}{2k!}\sum_{\sigma \in \symgp_k \times \mu_2} \sgn(\sigma) \cdot \sigma(\wt \alpha_i)
\quad (1\leq i\leq k').
\end{align*}
For each $i$ such that $1 \leq i \leq k'$,
let $\Gamma_i$ denote the image of $\tau_i$
in $\coH^\rmod_{k+1}(U,\wt f_k)^{\symgp_k \times \mu_2,\,\sgn}$.

\begin{cor}\label{thm:Bettirealization}\mbox{}
\begin{enumerate}
\item\label{Qstr:1}
The family $(\tau_i)_{0 \leq i \leq k'}$ forms a basis of $\coH^\rrd_{k+1}(U,\wt f_k)^{\symgp_k \times \mu_2,\,\sgn}$.

\item\label{Qstr:2}
The period realizations
\begin{itemize}
\item
of $\coH^1_\rc(\Gm,\Sym^k\Kl_2)$ and of $\coH_\rc^{k+1}(U_0,\wt f_k)^{\symgp_k \times \mu_2,\,\sgn}$,
\item
as well as those of $\coH^1(\Gm,\Sym^k\Kl_2)$ and $\coH^{k+1}(U_0,\wt f_k)^{\symgp_k \times \mu_2,\,\sgn}$,
\end{itemize}
are isomorphic.

\item\label{eq:Betti1} If $k$ is not a multiple of $4$, the family $(\Gamma_i)_{1 \leq i \leq k'}$ forms a basis of $(\Motive_k)_{\Betti}$. If~$k$ is a multiple of $4$, the $\QQ$-linear relation
\[
\sum_{i=0}^{\flr{(k-2)/4}} \binom{k/2}{2i+1}\Gamma_{2i+1} = 0
\]
holds and $(\Gamma_i)_{2 \leq i \leq k'}$ forms a basis of $(\Motive_k)_{\Betti}$. In particular, the period realization of the motive $\Motive_k$ is isomorphic to
\[
(\coH^1_{\dR, \rmid}(\Gm, \Sym^k \Kl_2)_\QQ, \coH_1^{\rmid}(\Gm, \Sym^k \Kl_2)_\QQ, \Ppairing^\rmid_{k}).
\]
\item\label{eq:Betti2} The involution $F_\infty$ acts as $F_\infty(\Gamma_i)\!=\!(-1)^i\Gamma_i$ for all $i$ such that \hbox{$1 \leq i \leq k'$}.
\end{enumerate}
\end{cor}

\begin{proof}
For each $i$ such that $0 \leq i \leq k'$, equation \eqref{eq:P0j}, Propositions \ref{prop:Pij}, \ref{prop:shrinking-reg} and \ref{prop:geometricpairing} yield the equality of periods $\Ppairing^{\rrd,\rmod}(\tau_i, w_j)=\Ppairing^{\rrd,\rmod}_k(\alpha_i, \omega_j)$ for all $j$ such that $0 \leq j \leq k'$ (with $w_j$ defined by \eqref{eq:wj}), from which~\eqref{Qstr:1} follows, as indicated in Step \ref{summary:1} of the summary. The proof of \eqref{Qstr:2} has been explained in Steps \ref{summary:2} and \ref{summary:3} of the summary.

In view of Theorem \ref{th:Bettibasis}\eqref{th:Bettibasis3} and Proposition \ref{prop:2-chain}, to prove \eqref{eq:Betti1}, it suffices to show that $\Ppairing^\rmid(\Gamma_i, w_j)=\Ppairing^\rmid_{k}(\beta_i, \omega_j)$ for all $i,j$ such that $1 \leq i, j \leq k'$. Since $\Gamma_i$ and $\beta_i$ are the images of the rapid decay cycles $\tau_i$ and $\alpha_i$ respectively and the pairing $\Ppairing^\rmid_{k}$ is induced by $\Ppairing^{\rrd,\rmod}$, this amounts to~$\Ppairing^{\rrd,\rmod}(\tau_i, w_j)=\Ppairing^{\rrd,\rmod}_k(\alpha_i, \omega_j)$, which we just checked.

Finally, the statement about $F_\infty$ follows from the fact that the period pairing $\Ppairing^{\rrd,\rmod}$ is compatible with complex conjugation in that (recall that $\iota^*w_j=w_j$ and $\conj^*w_j=w_j$),
\[
\Ppairing^{\rrd,\rmod}(\conj_*\tau_i, w_j)=\int_{F_\infty\tau_i}e^{-\wt f_k}w_j=\int_{\conj_*\tau_i}\conj^*(e^{-\wt f_k}w_j)
=\overline{\Ppairing^{\rrd,\rmod}(\tau_i, w_j)}\quad\text{for all $i,j$},
\]
along with the equality $\overline{\Ppairing^{\rrd,\rmod}(\tau_i, w_j)}=(-1)^i\Ppairing^{\rrd,\rmod}(\tau_i, w_j)$ if $1\leq i,j\leq k'$, since $\IKM_k(i,2j-1)$ is real for even $i$ and purely imaginary for odd $i$.
\end{proof}

\section{\texorpdfstring{$L$}{L}-functions of Kloosterman sums and Bessel moments}\label{subsec:Deligne}
In this section, we make precise the statement of Deligne's conjecture for the motive $\Motive_k$. Being a classical motive, $\Motive_k$ has an associated $L$\nobreakdash-function $L(\Motive_k, s)$, which is identified in \cite[Th.\,5.8 \& 5.17]{F-S-Y18}
with the $L$-function $L_k(s)$ of symmetric power moments of Kloosterman sums, and its Betti realization $(\Motive_k)_\Betti$ carries a pure Hodge structure of weight $k+1$. The factor at infinity was computed in \cite[Cor.\,5.30]{F-S-Y18}:
\begin{equation}\label{eqn:shapeGamma}
L_\infty(\Motive_k, s)=\pi^{-\sfrac{ms}{2}}\prod_{j=1}^m \Gamma\Bigl(\dfrac{s-j}{2}\Bigr),
\end{equation} where $m=k'$ if $k$ is not a multiple of $4$ and $m=k'-1$ otherwise. This also follows from Corollary~\ref{thm:Bettirealization}\eqref{eq:Betti2}: indeed, according to~\cite[Th.\,1.8]{F-S-Y18},
the only non-trivial Hodge numbers of~$\Motive_k$ are
\[
h^{p, q}=\begin{cases} 1 & \text{for $p=2, \ldots, k-1$ if $k$ is odd} \\
1 & \text{for $\min\{p, q\}=2, \ldots, 2\flr{\psfrac{k-1}{4}}$ if $k$ is even.}
\end{cases}
\]
The ordered basis $\Basis_{k,\rmid}$ is adapted to the Hodge filtration by Corollary~\ref{cor:compatibilityHodge}, and~$F_\infty$~acts as~$-1$ on $H^{p, p}$ for $k \equiv 3\bmod{4}$, which is what we need for the conclusion.

By \cite[Th.\,1.2 \& 1.3]{F-S-Y18}, the $L$-function $L(\Motive_k, s)$ extends meromorphically to the complex plane and the completed $L$\nobreakdash-function
\[
\Lambda(\Motive_k, s)=L(\Motive_k, s)L_\infty(\Motive_k, s)
\]
satisfies a functional equation relating its values at $s$ and~\hbox{$k+2-s$.} The \emph{critical integers} are the integral values of $s$ at which neither $L_\infty(\Motive_k, s)$ nor \hbox{$L_\infty(\Motive_k, k+2-s)$} has a pole and the \emph{critical values} are the values of $L(\Motive_k, s)$ at critical integers.

Deligne's conjecture \cite[\S1]{Deligne} predicts that critical values agree, up to a rational factor, with the determinants of certain minors of the period matrix, which are defined as follows. Let $a$ be an integer, and let $\Motive_k(a)_\Betti^+$ and $\Motive_k(a)_\Betti^-$ denote respectively the invariants and anti-invariants of~$F_\infty$ acting on~$\Motive_k(a)_\Betti$. As~$F_\infty$ exchanges the subspaces $H^{p-a, q-a}$ and $H^{q-a, p-a}$ in the Hodge decomposition and acts as~$-1$ on~$H^{p-a, p-a}$, the eigenspaces for $F_\infty$ have dimensions either $\sum_{p >q} h^{p-a, q-a}$ or $\sum_{p \geq q} h^{p-a, q-a}$, and there exists unique steps $F^{\pm}\Motive_k(a)_{\dR}$ of the Hodge filtration with $\dim F^{\pm}\Motive_k(a)_{\dR}=\dim \Motive_k(a)_\Betti ^{\pm}.$ If $n$ is a critical integer, Deligne defines
\[
c_n=\det (\Ppairing^\rmid_{k}(\sigma_i, \nu_i) ) \in \CC^\times \slash \QQ^\times
\] where $\sigma_i$ runs through any basis of the $\QQ$-linear dual of $\Motive_k(k+1-n)^+_\Betti$ and $\nu_i$ runs through any basis of $F^+\Motive_k(k+1-n)_{\dR}$, \cf the paragraph before \cite[Conj.\,1.8]{Deligne} taking the duality of pure Hodge structures~$(\Motive_k)_\rH^\vee \cong (\Motive_k)_\rH(k+1)$ from \cite[(3.4)]{F-S-Y18} into account.
He then conjectures that $L(\Motive_k, n)$ is a rational multiple of $c_n$.

\begin{notation} For each $k \geq 3$, the determinants of Bessel moments $D_{k,\odd}$ and $D_{k,\even}$ are defined by the following formulas:
\begin{equation}\label{eqn:Deligne}
\begin{aligned}
D_{k,\odd}&=\det\begin{pmatrix} \dpl\int_0^\infty I_0(t)^{2i-1} K_0(t)^{k+1-2i} t^{2j-1} \,\de t \end{pmatrix}_{1 \leq i, j \leq \flr{\psfrac{k+1}{4}}} \\
D_{k,\even}&=\det\begin{pmatrix} \dpl\int_0^\infty I_0(t)^{2i} K_0(t)^{k-2i} t^{2j-1} \,\de t
\end{pmatrix}_{1 \leq i, j \leq \flr{\sfrac{k}{4}}}.
\end{aligned}
\end{equation}
In other words, $D_{k,\odd}$, \resp $D_{k,\even}$, is obtained by extracting from $\Ppairing_k^\rmid$ the entries that belong to odd, \resp even, lines and to the first $\flr{\psfrac{k+1}{4}}$, \resp $\flr{\sfrac{k}{4}}$, columns.
If $k$ is a multiple of $4$, we let $D_{k,\odd}'$ and $D_{k,\even}'$ be the determinants of the same matrices except that we remove the last row and column, \ie those indexed by $i=j=\sfrac{k}{4}$.
\end{notation}

\begin{thm}
For $k=3$ or $k\geq 5$,
the critical integers $n$ for $L(\Motive_k, s)$ and the corresponding values $c_n$ are given by Table \ref{table:critical} below.
\begin{table}[htb]
\renewcommand{\arraystretch}{1.2}
\begin{center}
\begin{tabular}{|c|c|c|c|}
\hline
$k$ & rank & critical integers $n$ & $c_n$ \\
\hline\rule{0pt}{3ex}
\multirow{2}{*}{$3$} & \multirow{2}{*}{$1$} & $2-2a$ & $1$ \\
& & $2a+3$ & $\pi^{2a} D_{3,\odd}$
\\
\hline\rule{0pt}{3ex}
\multirow{2}{*}{$4r + 1$} & \multirow{2}{*}{$2r$}
& $2r + 1$ & $\pi^{-r(r+1)}D_{k,\odd}$ \\
&& $2r + 2$
& $\pi^{-r(r-1)}D_{k,\even}$ \\
\hline\rule{0pt}{3ex}
\multirow{3}{*}{$4r+2$} & \multirow{3}{*}{$2r$}
& $2r + 1$ & $\pi^{-r(r+1)}D_{k,\even}$ \\
&& $2r + 2$
& $\pi^{-r(r+1)}D_{k,\odd}$ \\
&& $2r + 3$
& $\pi^{-r(r-1)}D_{k,\even}$ \\
\hline\rule{0pt}{3ex}
\multirow{2}{*}{$4r + 3$} & \multirow{2}{*}{$2r+1$}
& $2r + 2$ & $\pi^{-r(r+1)}D_{k,\even}$ \\
&& $2r + 3$
& $\pi^{-r(r+1)}D_{k,\odd}$ \\
\hline\rule{0pt}{3ex}
\multirow{5}{*}{$4r+4$} & \multirow{5}{*}{$2r$}
& $2r + 1$ & $\pi^{-r(r+3)}D'_{k,\even}$ \\
&& $2r + 2$
& $\pi^{-r(r+3)}D'_{k,\odd}$ \\
&& $2r + 3$
& $\pi^{-r(r+1)}D'_{k,\even}$ \\
&& $2r + 4$
& $\pi^{-r(r+1)}D'_{k,\odd}$ \\
&& $2r + 5$
& $\pi^{-r(r-1)}D'_{k,\even}$ \\
\hline
\end{tabular}
\vspace{5pt}
\caption{\label{table:critical}Critical integers $n$ and values of $c_n$ ($r\geq 1$)}
\end{center}
\end{table}
\end{thm}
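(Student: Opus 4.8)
The plan is to compute, for each residue class of $k$ modulo $4$, first the critical integers from the explicit shape of the Gamma factor $L_\infty(\Motive_k,s)$ in \eqref{eqn:shapeGamma}, and then the Deligne period $c_n$ by pairing the correct $F_\infty$-eigenspace of the Betti realization with the correct step of the Hodge filtration. The first task is purely formal: $L_\infty(\Motive_k,s)=\pi^{-ms/2}\prod_{j=1}^m\Gamma((s-j)/2)$ has its poles at $s=j,j-2,j-4,\dots$ for $j=1,\dots,m$, and an integer $n$ is critical exactly when neither $L_\infty(\Motive_k,s)$ nor $L_\infty(\Motive_k,k+2-s)$ has a pole at $s=n$; writing $m=k'$ (resp. $m=k'-1$) and $k=4r+\varepsilon$ one reads off the intervals of critical integers listed in Table~\ref{table:critical}. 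Care is needed at the parity boundaries, and when $k\equiv3\bmod4$ one must use that $F_\infty$ acts as $-1$ on the middle Hodge piece $H^{p,p}$ (\cite[Cor.\,5.24]{F-S-Y18}, reproved in Corollary~\ref{thm:Bettirealization}\eqref{eq:Betti2}), which shifts by one the dimension count of $\Motive_k(a)^{\pm}_\Betti$ and hence changes which Hodge step $F^{\pm}$ is selected; this is the source of the two distinct column‑counts $\flr{(k+1)/4}$ and $\flr{k/4}$ appearing in $D_{k,k-2}$ and $D_{k,k-3}$.

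Next I would compute $c_n$ itself. By definition $c_n=\det(\Ppairing^\rmid_k(\sigma_i,\nu_i))$ where the $\sigma_i$ range over a $\QQ$-basis of $(\Motive_k(k+1-n)^+_\Betti)^\vee$ and the $\nu_i$ over a $\QQ$-basis of $F^+\Motive_k(k+1-n)_\dR$, all modulo $\QQ^\times$. The key inputs are: Corollary~\ref{thm:Bettirealization}\eqref{eq:Betti2}, which says $F_\infty(\Gamma_i)=(-1)^i\Gamma_i$, so the $+$‑eigenspace of $F_\infty$ on $(\Motive_k)^\vee_\Betti$ is spanned by the $\Gamma_i$ with $i$ even and the $-$‑eigenspace by those with $i$ odd (after a Tate twist by $k+1-n$, $F_\infty$ gets multiplied by $(-1)^{k+1-n}$, so the parity of the selected indices toggles with the parity of $n$); Corollary~\ref{cor:compatibilityHodge}, which says the ordered basis $\Basis_{k,\rmid}=\{\omega_i\}$ (resp. $\{\omega_i'\}$) is adapted to the Hodge filtration, so $F^p\Motive_k{}_\dR$ is spanned by an explicit initial segment of the $\omega_i$; and Corollary~\ref{cor:PmidasBessel} together with Proposition~\ref{prop:Pij}, which identify the matrix entries $\Ppairing^\rmid_k(\alpha_i,\omega_j)=\IKM_k(i,2j-1)=(-1)^{k-i}2^{k-j}(\pii)^i\int_0^\infty I_0^iK_0^{k-i}t^{2j-1}\de t$. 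Substituting these three facts, $c_n$ becomes the determinant of a square submatrix of the Bessel‑moment matrix whose rows are the $\alpha_i$ with the prescribed parity of $i$ and whose columns are $\omega_j$ for $j$ in the appropriate range; pulling out the scalars $(-1)^{k-i}$, $2^{k-j}$ and $(\pii)^i$ from rows and columns (which only affects $c_n$ by a rational factor and a power of $\pi$, the latter being exactly the $\pi^{-r(\dots)}$ prefactors in the table), what remains is precisely $D_{k,k-2}$, $D_{k,k-3}$ (or their primed versions when $k\equiv0\bmod4$). In the $k\equiv0\bmod4$ case one additionally uses that $(\Motive_k)^\vee_\Betti$ is spanned by the $\Gamma_i$ with $i\neq k/4$ (the linear relation of Corollary~\ref{thm:Bettirealization}\eqref{eq:Betti1}) and that the de Rham basis is $\{\omega_i'\}_{i\in\lcr1,k'\rcr}$; since $\gamma_{k,j}=0$ for $j<k/4$, the correction terms $\gamma_{k,j}\IKM_k(i,k')$ drop out of the relevant minors, recovering the primed determinants with row and column $k/4$ removed.

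Finally I would tabulate the bookkeeping: for each of the five cases in Table~\ref{table:critical}, list the critical integers found in the first step, and for each such $n$ record (i) the parity of the indices $i$ selected by $F^+$ on $\Motive_k(k+1-n)_\Betti$, hence whether the relevant rows are $\{\alpha_{2i-1}\}$ or $\{\alpha_{2i}\}$, i.e.\ whether we get $D_{k,k-2}$ or $D_{k,k-3}$; (ii) the dimension $\dim F^+\Motive_k(k+1-n)_\dR$, which equals the number of columns and must match $\flr{(k+1)/4}$ resp.\ $\flr{k/4}$ as computed from the Hodge numbers; and (iii) the total power of $\pi$ produced by extracting the $(\pii)^i$ factors along the selected rows, which gives the exponents $-r(r+1)$, $-r(r-1)$, $-r(r+3)$, etc. I expect the main obstacle to be precisely this case analysis at the boundary critical integers — keeping straight simultaneously the parity toggling of the Betti eigenspace under the Tate twist, the corresponding shift of the Hodge step $F^+$, the special behaviour on the middle Hodge piece when $k\equiv3\bmod4$, and the deletion of index $k/4$ when $k\equiv0\bmod4$ — rather than any individual computation, each of which is routine given Corollaries~\ref{cor:compatibilityHodge}, \ref{cor:PmidasBessel} and~\ref{thm:Bettirealization}. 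The case $k=3$ is small enough to be checked by hand and serves as a sanity check on the sign and parity conventions.
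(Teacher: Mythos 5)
Your proposal is correct and takes essentially the same approach as the paper's proof: read off the critical integers from the explicit Gamma factor \eqref{eqn:shapeGamma}, use Corollary~\ref{thm:Bettirealization}\eqref{eq:Betti2} to pick out the $F_\infty$-eigenbasis formed by the $\Gamma_i$ of a fixed parity (twist-shifted), use Corollary~\ref{cor:compatibilityHodge} for the Hodge steps, identify the period matrix entries with Bessel moments via Corollary~\ref{cor:PmidasBessel}, and extract the powers of $\pi$ from the $(\pii)^i$ factors along the rows; the special handling of $k\equiv0\bmod4$ via the sum rule of Corollary~\ref{thm:Bettirealization}\eqref{eq:Betti1} and the vanishing $\gamma_{k,j}=0$ for $j<k/4$ is also as in the paper. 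One small slip in your narration: the differing sizes $\flr{(k+1)/4}$ versus $\flr{k/4}$ of $D_{k,k-2}$ and $D_{k,k-3}$ are simply the number of odd versus even indices in $\{1,\dots,k'\}$ (they coincide for $k\equiv1\bmod4$ and differ by one for $k\equiv3\bmod4$ because $k'$ is then odd), and the role of the fact that $F_\infty$ acts as $-1$ on $H^{p,p}$ for $k\equiv3\bmod4$ is to pin down which eigenspace the middle Hodge piece lands in, not to ``shift the column count''—both reflect $k'$ being odd, but attributing one to the other reverses the logic.
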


\begin{proof}
The case $k=3$ is exceptional in that there exist infinitely many critical integers, namely all even integers $\leq 2$ and all odd integers $\geq 3$. For each $k \geq 5$, a straightforward computation using~\eqref{eqn:shapeGamma} yields the list of critical integers. To compute~$c_n$, we first exhibit a basis of the $\QQ$\nobreakdash-linear dual of $\Motive_k(a)_\Betti ^+$ by means of Corollary \ref{thm:Bettirealization}. If $k$ is not a multiple of $4$, a basis is given by~$\{(2\pi \sfi)^{-a} \Gamma_{2i}\}_{1 \leq i \leq \flr{\sfrac{k'}{2}}}$ if~$a$ is even and by~$\{(2\pi \sfi)^{-a} \Gamma_{2i-1}\}_{1 \leq i\leq \flr{\psfrac{k'+1}{2}}}$ if $a$ is odd. If $k$ is a multiple of $4$, say $k=4r+4$, then the~$\QQ$\nobreakdash-linear dual of~$\Motive_k(a)_\Betti ^{+}$ has basis $\{(2\pi \sfi)^{-a} \Gamma_{2i}\}_{1 \leq i \leq r} $ if $a$ is even and $\{(2\pi \sfi)^{-a} \Gamma_{2i+1}\}_{1 \leq i \leq r}$ if $a$ is odd, which shows that $\Motive_k(a)^+_\Betti $ always has dimension $r$.

Let us treat the case $k\!=\!4r\!+\!3$ in detail. For $n\!=\!2r\!+\!2$, the eigenspace~\hbox{$\Motive_k(2r\!+\!2)_\Betti ^+$} has dimension $r$, and hence $F^+\Motive_k(2r+2)_{\dR}$ is spanned by $\{w_j\}_{1 \leq j \leq r}$. Therefore, $c_{2r+2}$ is the determinant of the matrix with entries
\[
(2\pi \sfi)^{-(2r+2)}\Ppairing^{\rrd,\rmod}(\Gamma_{2i},w_j) \sim_{\QQ^\times} \pi^{2(i-r-1)} \int_0^\infty\hspace{-2mm} I_0(t)^{2i} K_0(t)^{k-2i} t^{2j-1} \,\de t,
\] from which we get $c_{2r+2}=\pi^{-r(r+1)}D_{k,\even}$. For $n=2r+3$, the eigenspace \hbox{$\Motive_k(2r+1)_\Betti ^+$} has dimension~$r+1$, and hence $F^+\Motive_k(2r+1)_{\dR}$ is spanned by $\{w_j\}_{1 \leq j \leq r+1}$. With respect to these bases, the matrix defining $c_{2r+3}$ has entries
\[
(2\pi \sfi)^{-(2r+1)}\Ppairing^{\rrd,\rmod}(\Gamma_{2i-1},w_j) \sim_{\QQ^\times} \pi^{2(i-r-1)}\int_0^\infty I_0(t)^{2i-1} K_0(t)^{k+1-2i} t^{2j-1}\,\de t,
\] which gives $c_{2r+3}=\pi^{-r(r+1)}D_{k,\odd}$. The cases $k=4r+1$ and $k=4r+2$ are completely parallel.

For $k=4r+4$ and any critical value $n$, the eigenspace $\Motive_k(4r+5-n)_\Betti ^+$ has dimension $r$, and hence $F^+\Motive_k(4r+5-n)_{\dR}$ is spanned by $\{w_j\}_{1 \leq j \leq r}$ (note that $r<k/4$, so that we do not need to modify the $\omega$'s). If $n$ is odd, $c_n$ is the determinant of the matrix with entries
\[
(2\pi \sfi)^{-(4r+5-n)}\Ppairing^{\rrd,\rmod}(\Gamma_{2i},w_j) \sim_{\QQ^\times} \pi^{2i+n-4r-5}\int_0^\infty I_0(t)^{2i} K_0(t)^{k-2i} t^{2j-1}\,\de t,
\] thus yielding
\[
c_{2r+1}=\pi^{-r(r+3)}D'_{k,\even},\ \ c_{2r+3}=\pi^{-r(r+1)}D'_{k,\even},\ \text{ and }\ c_{2r+5}=\pi^{-r(r-1)}D'_{k,\even}.
\]
If $n$ is even, the matrix has entries
\[
(2\pi \sfi)^{-(4r+5-n)}\Ppairing^{\rrd,\rmod}(\Gamma_{2i+1},w_j).
\] Thanks to the linear relation from Corollary \ref{thm:Bettirealization}\eqref{eq:Betti1}, the determinant of this matrix agrees up to a rational number with that of
\[
\pi^{2i+n-4r-6} \int_0^\infty I_0(t)^{2i-1} K_0(t)^{k+1-2i} t^{2j-1}\,\de t,
\] which gives the remaining values
\[
c_{2r+2}=\pi^{-r(r+3)}D_{k,\odd}'\quad\text{and}\quad c_{2r+4}=\pi^{-r(r+1)}D_{k,\odd}'.\qedhere
\]
\end{proof}

\begin{remark} Besides the case $k=3$, in which $L_3(s)=L(\chi_3, s-2)$ is a shifted Dirichlet $L$\nobreakdash-function, Deligne's conjecture holds for $k=5, 6, 8$. In all three cases,~$L_k(s)$ is the $L$-function of a modular form evaluated at $s-2$ (\cf \cite[Table\,1]{F-S-Y18} and the references therein) and the matrices in \eqref{eqn:Deligne} have size one. The critical values have been expressed in terms of Bessel moments in~\cite{BKV, wick, ZhouQind}, thus confirming the conjecture. The first case where a true determinant is expected to occur is~$L_7(5)$, which is a critical value of the $L$-function of the symmetric square of a modular form. Other numerical confirmations of Deligne's conjecture for $k \leq 24$ appear in \cite{BKLF}.
\end{remark}

\begin{remark}
Broadhurst and Roberts conjecture that, for $k \geq 12$ a multiple of $4$, the $L$-function $L(\Motive_k, s)$ always vanishes at the central point $s=\psfrac{k+2}{2}$, even when the expected sign of the functional equation (see \cite[below Th.\,1.3]{F-S-Y18}) is positive. According to Beilinson's conjecture, this vanishing should be explained
by the existence of certain non-trivial extension of
$\Motive_k$ by $\QQ(-\sfrac{k}{2})$. It seems possible to construct such an extension by considering the quotient of the cohomology with compact support of $\Sym^k \Kl_2$ by its weight zero piece and to relate its non-splitting to the shape of the full period matrix $\Ppairing^{\rmod,\rrd}_k$ from Section~\ref{sect:complete_periods},
Indeed,
suppose $\kfour$.
The Nori motive
$\wt\Motive_k = (\coH_\cp^{k-1}(\KM_0)/W_0)^{\symgp_k\times\mu_2,\sgn}(-1)$,
where $W_0$ denotes the weight zero part,
is an extension of $\Motive_k$ by a rank-one motive $E$ of weight $k$.
It has been shown in the proof
of \cite[Th.\,5.17]{F-S-Y18}
(see especially the discussion of the terms
$(E_\infty^{1,k-2})_{\FF_p}^{G,\chi}$
and $\ker(\beta)^{G,\chi}/W_0$)
that as a representation of $\Gal(\ol\QQ_p/\QQ_p)$,
the $\ell$-adic \'etale realization 
of $E$
is isomorphic to $\QQ_\ell(-k/2)$
for all primes $p\geq 3$ and $\ell\neq p$.
Since the Frobenius elements of characteristic $p\geq 3$ are dense
in $\Gal(\ol\QQ/\QQ)$,
one obtains an isomorphism $E \simeq \QQ(-k/2)$.
From the viewpoint of the period realization,
the non-splitting of the extension $\wt\Motive_k$
amounts to the claim that
the entries of the last row of the period matrix $\Ppairing^{\rmod,\rrd}_k$
span a $\QQ$-space of dimension at least two in $\CC$
by Proposition \ref{prop:B4}\eqref{prop:B42}.
Concretely, the latter holds if and only if the moments
$\IKM_k(k/2, 2j-1)$ and $\IKM^\reg_k(k/2,k'+2j)$, for
$1\leq j< k/4$,
are not all in $(2\pii)^{k/2}\QQ$
(rational multiples of the entry $\Ppairing^{\rmod,\rrd}_{k;k/2,k/4}$).
Such non-splitting has been verified numerically
for the period matrix $\Ppairing_k$ of $\wt\Motive_k^\vee(-k-1)$
by Broadhurst and Roberts, see \cite[\S 2]{Roberts17}.
\end{remark}

\appendix
\section*{Appendix. Period realization of an exponential mixed Hodge structure, by~Claude~Sabbah}
\refstepcounter{section}
This appendix can be regarded as a complement, with respect to period structures, to the appendix in \cite{F-S-Y18}. We consider the abelian category $\catPer$ of \emph{period structures}, whose objects consist of pairs $(V^\CC,V_\QQ)$ consisting of a finite dimensional $\CC$-vector space $V^\CC$, a finite dimensional $\QQ$-vector space $V_\QQ$, together with an isomorphism
\[
\per:\CC\otimes_\QQ V_\QQ\isom V^\CC
\]
and whose morphisms are the natural ones. There is a natural forgetful functor $\Per:\MHS\to\catPer$ from the category of $\QQ$-mixed Hodge structures to that of period structures.

\subsection{The fiber period realization of an exponential mixed Hodge structure}
\label{subsec:FT_EMHS}

Let $X$ be a complex smooth quasi-projective variety
and let $N^\rH$ be an object of the abelian category $\MHM(X)$ of $\QQ$-mixed Hodge modules on $X$. It consists of a triple $((N,F^\cbbullet N),(\cF_\QQ,W_\bbullet\cF_\QQ),\per_X)$, where
\begin{itemize}
\item
$(N,F^\cbbullet N)$ is a holonomic $\cD_X$-module endowed with a coherent filtration,
\item
$(\cF_\QQ,W_\bbullet\cF_\QQ)$ is a $\QQ$-perverse sheaf on $X^\an$ endowed with an increasing filtration by $\QQ$\nobreakdash-perverse subsheaves,
\item
and the \emph{period isomorphism} $\per_X$ is an isomorphism $\CC\otimes_\QQ\cF_\QQ\isom\pDR N$, where $\pDR N$ is the shifted de~Rham complex $\DR N[\dim X]$.
\end{itemize}
These data are subject to various compatibility relations that we do not make explicit here, referring to \cite{MSaito87, MSaito17} for details. From the mixed Hodge module $N^\rH$ we only retain the triple $\Per(N^\rH):=(N,\cF_\QQ,\per_X)$ by forgetting the Hodge and weight filtrations.

For example, let us consider the pure Hodge module $\pQQ_X^\rH$, with underlying $\cD_X$-module equal to $\cO_X$ and underlying perverse sheaf $\pQQ_X=\QQ_X[\dim X]$. The period isomorphism is induced by the isomorphism $\CC_X=\cH^0\DR\cO_X\isom\DR\cO_X$.

Let $\Afu_\ts$ be the affine line with coordinate $\ts$. The $\QQ$-linear neutral Tannakian category $\EMHS$ (\emph{exponential mixed Hodge structures}), as defined in \cite[\S4]{K-S10}, is the full subcategory of $\MHM(\Afu_\ts)$ consisting of objects~$N^\rH$ whose underlying perverse sheaf has vanishing global cohomology, with tensor structure given by the additive convolution $\star$. Denoting by $j:\Gm \to \Afu$ the inclusion, one defines a projector
\[ \Pi: \MHM(\Afu) \to \EMHS,\quad N^\rH\mto N^\rH\star \Hm{j}_!\pQQ^\rH_{\Gm},
\]
consisting in neglecting constant mixed Hodge modules on $\Afu$.

For an object $N^\rH$ in $\EMHS$, its \emph{de~Rham fibre} is the $\CC$-vector space defined as
\[
\coH^1_{\dR}(\Afu_\ts,N\otimes E^\ts)
=\coH^0a_{\Afu_\ts,\bast}(N\otimes E^\ts),
\]
where $E^\ts$ is the connection $(\cO_{\Afu_\ts},\de + \de\ts)$
and $a_{\Afu_\ts}$ denote the structure morphism of $\Afu_\ts$. This vector space is endowed with a filtration, called the irregular Hodge filtration (\cf\cite[\S A.3]{F-S-Y18}).

In order to define the Betti fiber functor, we consider the real oriented blowing-up $\varpi:\wt\PP^1\to\PP^1$ of~$\PP^1$ at $\infty$ and the open subset $\wt\PP^1_\rmod=\Afuan\cup\partial_\rmod\wt\PP^1$ in the neighbourhood of which $\rme^{-\ts}$ has moderate growth, equivalently rapid decay, \ie defined by $\mathrm{Re}(\ts)>0$. Let us denote the open inclusions \hbox{$\Afuan\hto\wt\PP^1_\rmod$} and $\wt\PP^1_\rmod\hto\wt\PP^1$ respectively by $\alpha$ and~$\beta$. The \emph{Betti fiber} of $N^\rH$ is defined~as
\[
\coH^0(\wt\PP^1,\beta_!R\alpha_*\cF_\QQ)=\coH^0_\rc(\wt\PP^1_\rmod,R\alpha_*\cF_\QQ).
\]

Let us notice that these definitions can be extended to all objects $N^\rH$ of $\MHM(\Afu_\ts)$ and then the corresponding vector spaces only depend on $\Pi(N^\rH)$ since $\coH_\dR^r(\Afu_\ts,E^\ts)=0$ for all $r$.

In order to define the period isomorphism, we first recall that the de~Rham fiber $\coH^1_{\dR}(\Afu_\ts,N\otimes E^\ts)$ can be computed as the hypercohomology of the moderate de~Rham complex $\DR^\rmod(N\otimes E^\ts)$ on~$\wt\PP^1$ and that there exists a canonical functorial isomorphism (\cf\eg\cite[\S2.e]{F-S-Y20} and the references therein)
\begin{equation}\label{eq:blup}
\coH^1_{\dR}(\Afu_\ts,N\otimes E^\ts)\simeq\coH^0(\wt\PP^1,\pDR^\rmod(N\otimes E^\ts)).
\end{equation}

Furthermore:

\begin{lemma}\label{lemma:RH_N_tw}
There exists a \emph{unique} isomorphism
\[
\beta_!R\alpha_*\DR^\an(N)\simeq\DR^\rmod(N\otimes E^\ts)
\]
extending the identity on the analytic complex line $\Afuan$.
\end{lemma}

\begin{proof}
Indeed, let us recall (\cf\loccit) that $\DR^\rmod(N\otimes E^\ts)$ has cohomology in degree zero only.
Then one can easily show that its $\cH^0$ is zero on the complement of $\wt{\PP}^1_\rmod$,
so that the natural morphism
\[
\beta_!\beta^{-1}\DR^\rmod(N\otimes E^\ts)\to\DR^\rmod(N\otimes E^\ts)
\]
is an isomorphism. One then shows in the same way that
\[
\beta^{-1}\DR^\rmod(N\otimes E^\ts)=R\alpha_*\alpha^{-1}\beta^{-1}\DR^\rmod(N\otimes E^\ts).
\]
Uniqueness follows from the adjunction formulas \cite[(2.3.6) \& (3.0.1)]{K-S90}.
\end{proof}

On the other hand, termwise multiplication by $\rme^{-\ts}$ induces an isomorphism
\[
\pDR^\an(N)\isom\pDR^\an(N\otimes E^\ts).
\]
As a consequence, there exists a unique isomorphism
\[
\per_{\wt\PP^1}:\beta_!R\alpha_*\cF_\CC\to\pDR^\rmod(N\otimes E^\ts)
\]
extending $\rme^{-\ts}\circ\per_{\Afuan}$. This morphism is thus functorial with respect to $N^\rH$.

\pagebreak[2]
\begin{defi}[Fiber period structure $\FPer(N^\rH)$]
Let $N^\rH = (N,\cF_\QQ,\per_{\Afuan})$ be an object of $\EMHS$.
\begin{enumerate}
\item
The \emph{fiber period isomorphism}
\[ \per: \CC\otimes\coH^0(\wt\PP^1,\beta_!R\alpha_*\cF_\QQ)
\to \coH^1_{\dR}(\Afu_\ts,N\otimes E^\ts) \]
is the composition of
$\coH^0(\wt\PP^1,\per_{\wt\PP^1})$ with that given by \eqref{eq:blup}.
\item
The \emph{fiber period structure} $\FPer(N^\rH)$ of an exponential mixed Hodge structure~$N^\rH$ is the following object of the category $\catPer$:
\[
\FPer(N^\rH)=(\coH^1_{\dR}(\Afu_\ts,N\otimes E^\ts),\coH^0(\wt\PP^1,\beta_!R\alpha_*\cF_\QQ),\per).
\]
\end{enumerate}
\end{defi}

In the following, we also regard $\FPer$ as a functor defined on $\MHM(\Afu_\ts)$ by factorization though $\EMHS$ by $\Pi$. The following lemma is then obvious.

\begin{lemma}\label{lem:modelalphabeta}
The assignment $\FPer$ defines an exact functor $\MHM(\Afu_\ts)\to\catPer$ factoring through~$\Pi$,
and there is an isomorphism of functors to the category $\catPer$:
\begin{equation*}
\FPer(N^\rH)\simeq\Bigl(\coH^0\bigl(\wt\PP^1,\beta_!R\alpha_*\pDR^\an(N\otimes E^\ts)\bigr),
\coH^0(\wt\PP^1,\beta_!R\alpha_*\cF_\QQ),\coH^0\bigl((\wt\PP^1,\beta_!R\alpha_*(\rme^{-\ts}\circ\per_{\Afuan})\bigr)\Bigr).
\end{equation*}
\end{lemma}

On the other hand,
let \hbox{$i_0: \{0\} \hto \Afu$} be the closed embedding.
The abelian category $\MHS = \MHM(\{0\})$
of mixed Hodge structures
is identified with a full subcategory of $\MHM(\Afu)$
and of $\EMHS$ via
the pushforward~$\Hm{i_0}_!$ of mixed Hodge modules
and the composition $\Pi\circ \Hm{i_0}_!$,
respectively.

\begin{prop}\label{prop:perMHS}
There is a commutative diagram of functors
\[
\xymatrix@C=1.5cm@R=1.2cm{
\MHS\ar[d]_\Per\ar[r]^-{\Pi\circ\Hm i_{0!}}&\EMHS\ar[dl]^(.35){\FPer}\\
\catPer
}
\]
\end{prop}

\begin{proof}
This follows from the tautological identification for a vector space $V$ over either $\QQ$ or $\CC$:
\[
\coH^0_\rc(\wt\PP^1_\rmod,R\alpha_*Ri_{0!}V)=V,
\]
which commutes with change of base field.
\end{proof}

\subsection{Computation of the fiber period isomorphism for a Gauss-Manin exponential mixed Hodge structure}\label{subsec:computGM}

Let $f:X\to\Afu_\ts$ be a projective morphism from a smooth quasi-projective variety $X$ to the affine line $\Afu_\ts$. We denote the pushforward functor $\catD^\rb(\MHM(X))\to\catD^\rb(\MHM(\Afu_\ts))$ by $\Hm f_*$ (\cf \cite[Th.\,4.3]{MSaito87}). For each object $N^\rH$ of $\MHM(X)$ and each integer~$r$, we consider the mixed Hodge module $\cH^r\Hm f_*N^\rH$ that we simply denote by $\Hm f_*^rN^\rH$. We will compute its fiber period structure by means of cohomology on a suitable real blow-up space.

For that purpose, consider a smooth projective compactification $\ov f:\ov X\to\PP^1$ of~$X$ and $f$ giving rise to a commutative diagram
\[
\xymatrix{
X\ar[d]_f\ar@{^{ (}->}[r]^-j&\ov X\ar[d]^{\ov f} \\
\Afu_\ts\ar@{^{ (}->}[r]&\PP^1
}
\]
such that the pole divisor $P=\ov f^{-1}(\infty)$ is a strict normal crossing divisor in~$\ov X$. We denote by $\varpi:\wt X(P)\to\ov X$ the real-oriented blowing-up of~$\ov X$ along the irreducible components of~$P$. Then $\ov f$ lifts as a real-analytic morphism $\wt f:\wt X(P)\to\wt\PP^1$. In this section, we set $\wt X=\wt X(P)$. We define the open subset $\partial_\rmod\wt X$ of $\partial\wt X=\varpi^{-1}(P)$ as consisting of points of $\varpi^{-1}(P)$ in the neighbourhood of which $\rme^{-f}$ has moderate growth---equivalently, rapid decay---so~that $\partial_\rmod\wt X=\wt f{}^{-1}(\partial_\rmod\wt\PP^1)$. The subset
\[
\wt X_\rmod:=X\cup\partial_\rmod\wt X=\wt f{}^{-1}(\wt\PP^1_\rmod)
\]
is open in $\wt X$. We also denote by $\alpha,\beta$ the respective open inclusions
\[
X\Hto{\alpha}\wt X_\rmod\Hto{\beta}\wt X.
\]

As in Section \ref{subsec:Kl2connection}, we set $E^f=(\cO_X,\rd+\rd f)$ and, for a $\cD_X$-module $N$ regarded as an $\cO_X$-module with flat connection $\nabla$, we set $N\otimes E^f=(N,\nabla+\rd f)$.
To a mixed Hodge module $N^\rH$ on $X$ we associate a period structure in $\catPer$ as follows.
The vector spaces are (the perverse convention is used here)
\begin{align*}
\pcoH^r_\dR(X,N\otimes E^f)&:=\coH^r(X,\pDR_X(N\otimes E^f)),\\
\pcoH^r_\Betti(X,\cF_\QQ\otimes E^f)&:=\coH^r(\wt X,\beta_!R\alpha_*\cF_\QQ)=\coH^r_\rc(\wt X_\rmod,R\alpha_*\cF_\QQ).
\end{align*}
In order to make explicit the period isomorphism, we need the following proposition. We note that termwise multiplication by the holomorphic function $\rme^{-f}$ induces an isomorphism $\pDR_X^\an N\isom\pDR_X^\an(N\otimes E^f)$. On the other hand, we endow~$\wt X$ with the sheaf $\cA_{\wt X}^\rmod$ of holomorphic functions on $X$ having moderate growth along $\partial\wt X$. Then any $\cD_X$-module $M$ has a (shifted) moderate de~Rham complex $\pDR_{\wt X}^\rmod M$ on $\wt X$ (\cf\eg\cite[\S2.e]{F-S-Y20} and the references therein). We also denote by $j: X\to\wt{X}$ the inclusion.

\begin{prop}\label{prop:DRmod}
For a regular holonomic $\cD_X$-module $N$, the following two natural morphisms
\begin{align*}
\beta_!\beta^{-1}\,\pDR^\rmod_{\wt X}j_+(N\otimes E^f)&\to\pDR^\rmod_{\wt X}j_+(N\otimes E^f),\\
\beta^{-1}\,\pDR^\rmod_{\wt X}j_+(N\otimes E^f)&\to R\alpha_*\alpha^{-1}\beta^{-1}\,\pDR^\rmod_{\wt X}j_+(N\otimes E^f)
\end{align*}
are quasi-isomorphisms.
\end{prop}

\begin{proof}[Sketch of proof]
A similar result is proved in \cite{Bibi94} (proof of Th.\,5.1) but the definition of the sheaf of functions with moderate growth used there is more restrictive than the one needed for our purposes. Instead, one uses computations similar to those of \cite[Prop.\,3.3]{Hien07} (\cf also \cite[Prop.\,1]{Hien09}) together with \cite[Cor.\,4.7.3]{Mochizuki10}.
\end{proof}

We conclude that there is a natural isomorphism
\begin{equation}\label{eq:uniqueiso}
\beta_!R\alpha_*\pDR^\an_X(N\otimes E^f)\isom\pDR^\rmod_{\wt X}j_+(N\otimes E^f)
\end{equation}
in the derived category $\catD^\rb(\CC_X)$, which is functorial with respect to $N$. Moreover, by the arguments of adjunction already used in the proof of Lemma \ref{lemma:RH_N_tw}, this is the unique isomorphism extending the identity on $X$.

\begin{cor}
For a mixed Hodge module $N^\rH\in\MHM(X)$ there exists a unique isomorphism
\[
\per_{\wt X}:\beta_!R\alpha_*\cF_\CC\to\pDR^\rmod_{\wt X}j_+(N\otimes E^f)
\]
which extends $\rme^{-f}\circ\per_X:\cF_\CC\to\pDR_X^\an(N\otimes E^f)$. We have
\[
\per_{\wt X}=\eqref{eq:uniqueiso}\circ\beta_!R\alpha_*(\rme^{-f}\circ\per_X).
\]
This morphism is functorial on $\MHM(X)$.\qed
\end{cor}

In a way similar to \eqref{eq:blup},
one shows that, for any $r$, there exists a canonical isomorphism
\begin{align*}
\can_r:\coH^r(\wt X,\pDR_{\wt X}^\rmod j_+(N\otimes E^f))&\simeq \coH^r(\ov X,\pDR_{\ov X}j_+(N\otimes E^f))\\
&=\coH^r(X,\pDR_X(N\otimes E^f))=:\pcoH^r_\dR(X,N\otimes E^f).
\end{align*}

\begin{defi}
Let $N^\rH$ be an object of $\MHM(X)$. For each $r$ (the perverse convention is used here), the fiber period structure $\FPer^r(N^\rH\otimes E^f)$ is defined as
\[
\FPer^r(N^\rH\otimes E^f)=\bigl(\pcoH^r_\dR(X,N\otimes E^f),\coH^r_\Betti(X,\cF_\QQ\otimes E^f),\can_r\circ\pcoH^r(\wt X,\per_{\wt X})\bigr).
\]
\end{defi}

\begin{lemma}[Expression of $\FPer^r(N^\rH\otimes E^f)$ on the real blow-up]\label{lem:modelalphabetaX}
For each $r$, there is an isomorphism in $\catPer$:
\begin{equation*}
\FPer^r(N^\rH\otimes E^f)
\simeq\Bigl(\coH^r\bigl(\wt X,\beta_!R\alpha_*\pDR_X^\an j_+(N\otimes E^f)v),
\coH^r(\wt X,\beta_!R\alpha_*\cF_\QQ),\coH^r\bigl(\wt X,\beta_!R\alpha_*(\rme^{-f}\circ\per_X)\bigr)\Bigr).
\end{equation*}
\end{lemma}

\begin{proof}
The isomorphism is the identity on the middle term and the isomorphism on the left term is given by $\can_r\circ\coH^r(\wt X,\eqref{eq:uniqueiso})$.
\end{proof}

We then deduce that the fiber period structure $\FPer^r(N^\rH\otimes E^f)$ is isomorphic to that attached to the exponential mixed Hodge structure $\Pi(\Hm f_*^rN^\rH)$:

\begin{prop}\label{prop:FPFP}
For each $r$, there is a functorial isomorphism of fiber period structures:
\[
\FPer(\Hm f_*^rN^\rH)\simeq\FPer^r(N^\rH\otimes E^f).
\]
\end{prop}

\begin{proof}
By definition, the period isomorphism $\per_{\Afu_\ts}$ for $\Hm f_*^jN^\rH$ is obtained by taking the $j$th perverse cohomology of the composed isomorphism
\[
Rf_*\cF_\CC\To{Rf_*\per_X}Rf_*\pDR^\an N\isom\pDR^\an f_+N,
\]
where the second morphism is the standard functorial isomorphism from $\cD$-module theory. We also have a commutative diagram
\[
\xymatrix{
Rf_*\pDR^\an N\ar[d]_{Rf_*\rme^{-f}}\ar[r]&\pDR^\an f_+N\ar[d]^{\rme^{-\ts}}\\
Rf_*\pDR^\an(N\otimes E^f)\ar[r]&\pDR^\an ((f_+N)\otimes E^\ts).
}
\]
We now take the models of Lemmas \ref{lem:modelalphabeta} and \ref{lem:modelalphabetaX} and the desired isomorphism is obtained (after applying $R\Gamma(\wt\PP^1,\cbbullet)$ and taking cohomology) from the canonical isomorphism of functors
\[
R\wt f_*\circ\beta_!R\alpha_*\simeq\beta_!R\alpha_*Rf_*
\]
from $\catD^\rb(\CC_X)$ to $\catD^\rb(\CC_{\wt\PP^1})$, that we recall now. We consider the two commutative squares
\[
\xymatrix@C=1.5cm{
X\ar@{^{ (}->}[r]^-\alpha\ar[d]_f&\wt X_\rmod\ar@{^{ (}->}[r]^-\beta\ar[d]^{\wt f_\rmod}&\wt X\ar[d]^{\wt f}\\
\Afu_\ts\ar@{^{ (}->}[r]^-\alpha&\wt\PP^1_\rmod\ar@{^{ (}->}[r]^-\beta&\wt\PP^1
}
\]
Since $\wt f$ is proper, we have a canonical isomorphism of functors $R\wt f_*\circ\beta_!\simeq\beta_!\circ R\wt f_{\rmod*}$, and on the other hand we have $R\wt f_{\rmod*}\circ R\alpha_*\simeq R\alpha_*\circ R\wt f_*$.
\end{proof}

\Subsection{Fiber period realization of a pair $(U,f)$}\label{subsec:fiberperiodfunction}
\subsubsection*{Application of the results of Section \ref{subsec:computGM}}
Let~$U$ be a smooth complex quasi-projective variety of dimension~$d$ and let $f: U\to\Afu_\ts$ be a regular function. Starting with the mixed Hodge module $\pQQ_U^\rH$, we aim at giving a formula for the fiber period structure of the exponential mixed Hodge structures associated with the pushforward mixed Hodge modules $\Hm f_*^r\,\pQQ_U^\rH$ and $\Hm f_!^r\,\pQQ_U$ ($r\in\ZZ$). For that purpose, it is convenient to choose a partial completion $\kappa:U\hto X$ of $U$ as a smooth quasi-projective variety $X$ so that $H=X\moins U$ is a strict normal crossing divisor and that~$f$ extends as a projective morphism $f:X\to\Afu_\ts$. The result will be independent of such a choice. The commutative diagram used in Section \ref{subsec:computGM} is thus completed on the left as follows:
\[
\xymatrix{
U\ar[d]_f\ar@{^{ (}->}[r]^-\kappa& X\ar[d]^f\ar@{^{ (}->}[r]^-j&\ov X\ar[d]^{\ov f} \\
\Afu_\ts\ar@{=}[r]&\Afu_\ts\ar@{^{ (}->}[r]&\PP^1
}
\]
We consider the objects $N^\rH$ of $\MHM(X)$ defined as $N^\rH=\Hm \kappa_*\pQQ_U^\rH$ or $N^\rH=\Hm \kappa_!\pQQ_U^\rH$. Correspondingly, we write $N\otimes E^f$ as $E^f(*H)$ or $E^f(!H)$ and we have $\cF_\QQ=R\kappa_*\pQQ_U$ or $R\kappa_!\pQQ_U$. In this way, we are in the setting of Section \ref{subsec:computGM}.

We denote respectively by $\coH^r(U,f)$ and $\coH^r_\rc(U,f)$ the exponential mixed Hodge structure $\Pi(\Hm f_*^{r-d}\,\pQQ_U^\rH)$ and $\Pi(\Hm f_!^{r-d}\,\pQQ_U^\rH)$, and
\begin{equation}\label{eq:Hrmid}
\coH^r_\rmid(U,f)=\image[\coH^r_\rc(U,f)\ra\coH^r(U,f)]
\end{equation}
We denote respectively the associated de Rham fibers by
$\coH_\dR^r(U,f)$ and $\coH_{\dR,\cp}^r(U,f)$,
and the Betti fibers by
$\coH^r_\Betti(U,f)$ and $\coH^r_{\Betti,\cp}(U,f)$. We then have
\[
\coH_\dR^r(U,f)\simeq \coH^r\bigl(X,\DR(E^f(*H))\bigr),\quad \coH_{\dR,\rc}^r(U,f)\simeq \coH^r\bigl(X,\DR(E^f(!H))\bigr).
\]

Keeping the notation of Section \ref{subsec:computGM} for $\alpha,\beta$ (and emphasizing now the divisor~$P$), we have by Proposition \ref{prop:FPFP}:
\begin{align*}
\coH^r_\Betti(U,f)&=\coH^r(\wt X(P),\beta_!R\alpha_*R\kappa_*\QQ_U),\\
\coH^r_{\Betti,\rc}(U,f)&=\coH^r(\wt X(P),\beta_!R\alpha_*R\kappa_!\QQ_U).
\end{align*}

We set $\wt U_\rmod(P)=\wt X_\rmod(P)\moins\varpi^{-1}(H)$ and we denote by $\Phi$ the family of closed subsets of $\wt U_\rmod(P)$ whose closure in $\wt X$ is contained in the open subset $\wt X_\rmod(P)$.

\begin{prop}\label{prop:BettiD}
We have
\[
\coH^r_\Betti(U,f)\simeq\coH^r_\Phi(\wt U_\rmod(P),\QQ)\quand\coH^r_{\Betti,\rc}(U,f)\simeq\coH^r_\rc(\wt U_\rmod(P),\QQ),
\]
and the natural morphism $\coH^r_{\Betti,\rc}(U,f)\to\coH^r_\Betti(U,f)$ is induced by the inclusion of the families of supports.
\end{prop}

\begin{remark}\label{rem:relcohomology}
Setting $\wt U(P)=\wt X(P)\moins\varpi^{-1}(H)$ and denoting by $\partial_{\exp}\wt X(P)$, respectively $\partial_{\exp}\wt U(P)$, the closed subset complement to $\wt X_\rmod(P)$ in $\wt X(P)$, respectively to $\wt U_\rmod(P)$ in $\wt U(P)$, the spaces $\coH^r_\Betti(U,f)$ and $\coH^r_{\Betti,\rc}(U,f)$ also read in terms of relative cohomology:
\begin{align*}
\coH^r_\Betti(U,f)&\simeq\coH^r(\wt U(P),\partial_{\exp}\wt U(P),\QQ),\\
\coH^r_{\Betti,\rc}(U,f)&\simeq\coH^r(\wt X(P),\partial_{\exp}\wt X(P)\cup\varpi^{-1}(H),\QQ),
\end{align*}
and the natural morphism between both is induced by the inclusion of pairs $(\wt U(P),\partial_{\exp}\wt U(P))\hto(\wt X(P),\partial_{\exp}\wt X(P))$, so that $\coH^r_{\Betti,\rmid}(U,f)$ is the corresponding image, according to \eqref{eq:Hrmid}.
\end{remark}

\begin{proof}[Proof of Proposition \ref{prop:BettiD}]
In the proof, we simply set $\wt X=\wt X(P)$. With the notation above, we consider the commutative diagram
\[
\xymatrix@C=1.3cm{
U\ar@{^{ (}->}[r]^-{\alpha_U}\ar@{^{ (}->}[d]^\kappa&\wt U_\rmod \ar@{^{ (}->}[r]^-{\beta_U}\ar@{^{ (}->}[d]^{\wt\kappa'}&\wt U\ar@{^{ (}->}[d]^{\wt\kappa}&\ar@{_{ (}->}[l]_-{\gamma_U}\partial_{\exp}\wt U\ar@{^{ (}->}[d]\\
X\ar@{^{ (}->}[r]^-\alpha&\wt X_\rmod\ar@{^{ (}->}[r]^-\beta&\wt X&\ar@{_{ (}->}[l]_-\gamma\partial_{\exp}\wt X
}
\]
where the first line is obtained from the second one by deleting $\varpi^{-1}(H)$.

For the identification of $\pcoH^r_\Betti(U,f)$, we need the next lemma.

\begin{lemma}
For $\star=*$ or $\star={}!$, there is an isomorphism in $\catD^\rb(\wt X,\QQ)$:
\[
R\wt\kappa_\star\beta_{U!}R\alpha_{U*}\QQ_U\simeq\beta_!R\alpha_*R\kappa_\star\QQ_U.
\]
\end{lemma}

\begin{proof}
We can replace $R\alpha_*R\kappa_\star$ with $R\wt\kappa'_\star R\alpha_{U*}$. Furthermore, a local computation shows that
\[
R\alpha_{U*}\QQ_U=\QQ_{\wt U_\rmod}=\beta_U^{-1}\QQ_{\wt U}\quand (R\beta_U\circ R\alpha_U)_*\QQ_U=\QQ_{\wt U}.
\]
We are thus reduced to finding an isomorphism $R\wt\kappa_\star\beta_{U!}\QQ_{\wt U_\rmod}\simeq\beta_!R\wt\kappa'_\star\QQ_{\wt U_\rmod}$. Let us first construct a morphism. There is a natural morphism
\[
R\wt\kappa_\star\beta_{U!}\QQ_{\wt U_\rmod}\to R\wt\kappa_\star R\beta_{U*}\QQ_{\wt U_\rmod}\simeq R\beta_*R\wt\kappa'_\star\QQ_{\wt U_\rmod}
\]
and this morphism can be lifted as a morphism to $\beta_!R\wt\kappa'_\star\QQ_{\wt U_\rmod}$ if and only if its restriction by $\gamma$ is zero. Clearly, $\gamma^{-1}R\wt\kappa_\star\beta_{U!}\QQ_{\wt U_\rmod}$ is zero on $\partial_{\exp}\wt U$ and we need to check that the same property holds true on $\varpi^{-1}(H)\cap\partial_{\exp}\wt X$. The question reduces to a local computation in the neighbourhood of each point of $P\cap H$ in $\ov X$. We thus work in an adapted coordinate neighbourhood $\Delta^d$ of such a point. We can write $\Delta^d=\Delta^\ell\times\Delta^{d-\ell}$, with $P\cap\Delta^d=P'\times\Delta^{d-\ell}$ defined by the vanishing of the product of coordinates in~$\Delta^\ell$ and $H\cap\Delta^d=\Delta^\ell\times H''$ defined by the vanishing of the product of some coordinates in $\Delta^{d-\ell}$. In this model, the real blowing-up $\varpi:\wt\Delta^\ell\times\Delta^{d-\ell}\to\Delta^\ell\times\Delta^{d-\ell}$ is induced by the real blowing-up of $\Delta^\ell$ along its coordinates hyperplanes. In restriction to this chart we have $U=\Delta^\ell\times(\Delta^{d-\ell}\moins H'')$ and
\begin{equation}\label{eq:chartblup}
\begin{aligned}
\wt X_\rmod&=(\wt\Delta^\ell)_\rmod\times\Delta^{d-\ell},&
\wt U_\rmod&=(\wt\Delta^\ell)_\rmod\times(\Delta^{d-\ell}\moins H''),\\
\partial_{\exp}\wt X&=\partial_{\exp}(\wt\Delta^\ell)\times\Delta^{d-\ell},& \partial_{\exp}\wt U&=\partial_{\exp}(\wt\Delta^\ell)\times(\Delta^{d-\ell}\moins H'').
\end{aligned}
\end{equation}
The assertion is then clear since the morphisms $\beta_U$ and $\wt\kappa$ act on disjoint sets of variables. With the same local computation, one checks that the morphism thus obtained is an isomorphism.
\end{proof}

We can now conclude the proof for $\coH^r_\Betti(U,f)$. From the previous lemma with $\star=*$ we deduce
\[
\coH^r(\wt X,\beta_!R\alpha_*R\kappa_*\QQ_U)\simeq\coH^r(\wt X,R\wt\kappa_*\beta_{U!}R\alpha_{U*}\QQ_U)=\coH^r(\wt U,\beta_{U!}\QQ_{\wt U_\rmod}),
\]
and the assertion is then clear. On the other hand, the distinguished triangle in $\catD^\rb(\wt U,\QQ)$
\[
\beta_{U!}\beta_U^{-1}\QQ_{\wt U}\to\QQ_{\wt U}\to R\gamma_{U*}\gamma^{-1}_U\QQ_{\wt U}\To{+1}
\]
gives the expression of $\coH^r_\Betti(U,f)$ in terms of relative cohomology as asserted in Remark \ref{rem:relcohomology}.

For $\coH^r_{\Betti,\rc}(U,f)$, the previous lemma with $\star={}!$ gives similarly
\[
\coH^r(\wt X,\beta_!R\alpha_*\kappa_!\QQ_U)\simeq\coH^r(\wt X,(\wt\kappa\circ\beta_U)_!R\alpha_{U*}\QQ_U)=\coH^r_\rc(\wt U_\rmod,\QQ).\qedhere
\]
\end{proof}

\begin{remark}\label{rem:exactseq}
Let $Z\subset U$ be a divisor on which $f$ vanishes, let $a_Z$ denote the structure morphism, let $i_Z\,{:}\,Z\!\hto\!U$ denote the closed inclusion and \hbox{$j_Z\,{:}\,U\!\moins\! Z\!\hto\!U$} the complementary open inclusion. We set $\pQQ_Z^\rH=\Hm a_Z^*\QQ^\rH_{\mathrm{Spec}\CC}[\dim Z]$, so that there is an isomorphism
\[
\Hm i_Z^*\,\pQQ_U^\rH=\cH^0\Hm i_Z^*\,\pQQ_U^\rH\simeq \pQQ_Z^\rH
\]
and an exact sequence
\[
0\to\Hm i_{Z*}\pQQ_Z^\rH\to\Hm j_{Z!}\,\Hm j_Z^*\pQQ_U^\rH\to\pQQ_U^\rH\to0,
\]
giving rise to an exact sequence in $\EMHS$ (\cf\cite[(A.21)]{F-S-Y18}):
\[
\cdots\to\coH^{r-1}_\rc(Z)\to\coH^r_\rc(U\moins Z,f)\to\coH^r_\rc(U,f)\to\coH^r_\rc(Z)\to\cdots
\]
If $\coH^r_\rc(U\moins Z,f)=0$ for each $r$, the exponential mixed Hodge structure $\coH^r_\rc(U,f)$ is isomorphic to the mixed Hodge structure $\coH^r_\rc(Z)$ and, correspondingly, the fiber period structure $\FPer(\coH^r_\rc(U,f))$ is isomorphic to $\Per(\coH^r_\rc(Z))$.
We will make explicit this exact sequence for the Betti fibers. Since $\ol{f}:\ov X\to\PP^1$ is a morphism, we have $\ov Z\cap P=\emptyset$.
We have a distinguished triangle
\[
\beta_!R\alpha_*Ri_{Z*}\QQ_Z\to \beta_!R\alpha_*j_{Z!}\QQ_{U\moins Z}\to\beta_!R\alpha_*\QQ_U\To{+1}
\]
and since the closure of $Z$ in $\wt X(P)$ does not intersect $\partial\wt X(P)$, we find
\[
\beta_!R\alpha_*Ri_{Z*}\QQ_Z=\beta_!R\alpha_!Ri_{Z*}\QQ_Z\quand\beta_!R\alpha_*j_{Z!}\QQ_{U\moins Z}=\beta_{Z!}\QQ_{\wt U_\rmod(P)\moins Z},
\]
where $\beta_Z$ is the inclusion $\wt U_\rmod(P)\moins Z\hto\wt X(P)$. The Betti exact sequence reduces then to
\begin{equation}\label{eq:exactseq}
\cdots\to\coH^{r-1}_\rc(Z,\QQ)\to\coH^r_\rc(\wt U_\rmod(P)\moins Z,\QQ)
\to\coH^r_\rc(\wt U_\rmod(P),\QQ)
\to\coH^r_\rc(Z,\QQ)\to\cdots
\end{equation}
\end{remark}

\subsubsection*{Computation with the total real blow-up}
In order to use results of \cite{F-S-Y20}, we consider the real blowing-up
$\pi:\wt X(D)\to\ov X$ of the irreducible components of $D=P\cup H$ in $\ov X$. There is a natural morphism $\wt\varpi:\wt X(D)\to\wt X(P)$, so that $\pi=\varpi\circ\wt\varpi$. In a local chart where formulas \eqref{eq:chartblup} hold, $\wt\varpi$ is the blowing-up map of the components of $H''$ in~$\Delta^{d-\ell}$:
\[
\wt X(D)=\wt\Delta^\ell\times\wt\Delta^{d-\ell}\to\wt\Delta^\ell\times\Delta^{d-\ell}=\wt X(P).
\]
We consider the open subsets $\wt U_\rmod(D)=U\cup\partial_\rmod\wt X(D)$ and $\wt U_\rrd(D)=U\cup\partial_\rrd\wt X(D)$, where
\begin{itemize}
\item
$\partial_\rmod\wt X(D)$ is the open subset of $\pi^{-1}(D)$ in the neighbourhood of which $\rme^{-f}$ has moderate growth (it contains $\pi^{-1}(D\moins P)$),
\item
$\partial_\rrd\wt X(D)$ is the open subset of $\pi^{-1}(P)$ in the neighbourhood of which $\rme^{-f}$ has moderate growth, equivalently, rapid decay.
\end{itemize}
In the local chart as above, these sets read
\[
\wt U_\rmod(D)=(\wt\Delta^\ell)_\rmod\times\wt\Delta^{d-\ell},\quad \wt U_\rrd(D)=(\wt\Delta^\ell)_\rmod\times(\Delta^{d-\ell}\moins H'').
\]
For the sake of simplicity, we denote by $\QQ_{\wt U_\rmod(D)}$ the sheaf on $\wt X(D)$ which is the extension by zero of the constant sheaf on $\wt U_\rmod(D)$ with stalk $\QQ$ (notation of \cite{K-S90}), and similarly with $\rrd$.
From the previous identifications with now $\alpha: X \hto \wt{X}_\rmod(D)$ and $\beta: \wt{X}_\rmod(D) \hto \wt{X}(D)$, we obtain
\begin{align*}
R\wt\varpi_*\QQ_{\wt U_\rmod(D)}&=\beta_!R\alpha_*R\kappa_*\QQ_U,\\
R\wt\varpi_*\QQ_{\wt U_\rrd(D)}&=\QQ_{\wt U_\rmod(P)}
\end{align*}
(in fact $\wt\varpi:\wt U_\rrd(D)\ra\wt U_\rmod(P)$ is an isomorphism).Therefore,
\begin{equation}\label{eq:BettiUfD}
\coH^r_\Betti(U,f)\simeq\coH^r_\rc(\wt U_\rmod(D),\QQ)\quand\coH^r_{\Betti,\rc}(U,f)\simeq\coH^r_\rc(\wt U_\rrd(D),\QQ).
\end{equation}
Let $\Phi_\rrd$ (\resp $\Phi_\rmod$) denote the family of closed subsets $F$ of $U$ whose (compact) closure $\ov F$ in $\wt X(D)$ is contained in $\wt U_\rrd(D)$ (\resp $\wt U_\rmod(D)$). A closed set $F$ of $U$ belongs to $\Phi_\rrd$ (\resp $\Phi_\rmod$)
if and only if $|\exp(-f)|_{|F}$ tends to zero
faster than any negative power of $\mathrm{dist}(x,x_o)$ (\resp is bounded by some positive power of
$\mathrm{dist}(x,x_o)$) when $x\in F$ tends to some $x_o\in D$. Then the right-hand sides in \eqref{eq:BettiUfD} read
\begin{equation}\label{eq:BettiUfDPhi}
\coH_\rc^r(\wt U_\rmod(D),\QQ)=\coH_{\Phi_\rmod}^r(U,\QQ),\qquad
\coH_\rc^r(\wt U_\rrd(D),\QQ)=\coH_{\Phi_\rrd}^r(U,\QQ),
\end{equation}
and, by considering the natural morphism induced by the inclusion of family of supports, we have
\begin{equation}\label{eq:BettiUfDPhimid}
\coH^r_{\Betti,\rmid}(U,f)=\image\bigl[\coH_{\Phi_\rrd}^r(U,\QQ)\ra\coH_{\Phi_\rmod}^r(U,\QQ)\bigr].
\end{equation}

\subsubsection*{Rapid decay and moderate growth homology spaces for the pair $(U,f)$}
If $\mathbf{1}$ denotes the generator of $E^f=(\cO_U,\rd+\rd f)$, then $\exp(-f)\cdot\mathbf{1}$ is an analytic flat section of~$E^f$. The moderate growth and the rapid decay homology spaces of the pair $(U, f)$, as defined in~\cite{F-S-Y20}, are the homology of the chain complexes consisting of singular chains in $\wt X(D)$ with boundary in $\partial\wt X(D)$ twisted by the flat section $\exp(-f)\cdot\mathbf{1}$, whose support is contained in
$\wt U_\rmod(D)$ and $\wt U_\rrd(D)$, respectively.
The flat section being fixed, we get identifications with relative homology spaces:
\begin{equation}\label{eq:Bettihomology}
\begin{split}
\coH^\rmod_r(U,f)&\simeq\coH_r(\wt U_\rmod(D),\partial\wt U_\rmod(D),\QQ),\\
\coH^\rrd_r(U,f)&\simeq\coH_r(\wt U_\rrd(D),\partial\wt U_\rrd(D),\QQ).
\end{split}
\end{equation}

\begin{notation}
For the sake of simplicity, \emph{we omit the flat section $\exp(-f)\cdot\mathbf{1}$ in the notation of such twisted chains}, that we simply call respectively rapid decay and moderate chains (we will not make use of the latter).
\end{notation}

We have a more explicit expression of the rapid decay homology as follows. By suitably lifting to $\wt X(D)$ the radial vector field of length one centered at $\infty\in\PP^1$ so that it remains tangent to
$\pi^{-1}(D\moins P)$, and by following its flow, we obtain for every large enough $R>0$ a deformation retraction of the pair $(\wt U_\rrd(D),\partial_R\wt U_\rrd(D))$ to the pair $(\wt U_\rrd(D),\partial\wt U_\rrd(D))$, where the thickened boundary $\partial_R\wt U_\rrd(D)$ is defined as $\wt U_\rrd(D)\cap\{|f|\geq R\}\cap\{\Re(f)>0\}$. Setting
\[
\partial_{\rrd,R}U=U\cap\partial_R\wt U_\rrd(D)=U\cap\{|f|\geq R\}\cap\{\Re(f)>0\},\quad R\gg0,
\]
by excision of $\partial\wt U_\rrd(D)$, we obtain
\begin{equation}\label{eq:UrdR}
\begin{split}
\coH^\rrd_r(U,f)\simeq\coH_r(\wt U_\rrd(D),\partial\wt U_\rrd(D),\QQ)&\simeq\coH_r(\wt U_\rrd(D),\partial_R\wt U_\rrd(D),\QQ)\\
&\simeq\coH_r(U,\partial_{\rrd,R}U,\QQ),\quad R\gg0.
\end{split}
\end{equation}

\begin{remark}[Period pairing and period realization]\label{rem:FPerP}
Working with the transposed period structures as in \cite[Prop.\,2.28]{F-S-Y20}, and considering rapid decay and moderate growth homology, one can show that there exist isomorphisms of (the transposes of) period structures
\begin{align*}
\FPer\coH^r(U,f)&\simeq(\coH^r_{\dR}(U,f),\coH_r^\rrd(U,f),\Ppairing_r^{\rrd,\rmod}),\\
\FPer\coH^r_\rc(U,f)&\simeq(\coH^r_{\dR,\rc}(U,f),\coH_r^\rmod(U,f),\Ppairing_r^{\rmod,\rrd})\\
\FPer\coH^r_\rmid(U,f)&\simeq(\coH^r_{\dR,\rmid}(U,f),\coH_r^\rmod(U,f),\Ppairing_r^{\rmid}).
\end{align*}
\end{remark}

\Subsection{Period structures over the category of varieties and morphisms defined over $\QQ$}\label{subsec:PerSpecQ}
In this section, we denote by $U_0$ a variety defined over $\QQ$ and by $U$ the variety defined over~$\CC$ after extension of scalars from $\QQ$ to $\CC$. When working over varieties and morphism defined over~$\QQ$, that is, smooth separated schemes of finite type over~$\QQ$ and separated morphisms (\eg $U_0$ is $\mathbb{A}^n$ or~$\Gm^n$ and $f$ is a polynomial or a Laurent polynomial with rational coefficients), we are led to consider period structures over $\Spec\QQ$.
Such a period structure consists of a pair of finite-dimensional $\QQ$-vector spaces $(V_0,V_\QQ)$ together with a period isomorphism $\per:\CC\otimes_\QQ V_\QQ\simeq\CC\otimes_\QQ V_0=V^\CC$.

\subsubsection*{$\QQ$-structure on the de~Rham cohomology}
We fix a good compactification
\[
j:(U_0,f)\hto(\ov X_0,\ov f),
\]
that~is, such that $D_0=X_0\moins U_0$ is a divisor with strict normal crossings
(\ie such that the irreducible components over $\ov\QQ$ are smooth and intersect transversally). We work with the corresponding category of $\cD$-modules and functors (\cf\eg \cite[\S\S4,\,5]{Laumon83} and the references therein). The de~Rham cohomology $\coH^r_{\dR}(U_0,f)$ is defined in a standard way as the de~Rham cohomology of the $\cD_{U_0}$-module $(\cO_{U_0},\rd+\rd f)$, and the de~Rham cohomology with compact support $\coH^r_{\dR,\rc}(U_0,f)$ is the de~Rham cohomology of the $\cD_{\ov X_0}$-module $j_\dag(\cO_{U_0},\rd+\rd f)$, with $j_\dag=\bD\circ j_+\circ\bD$ and $\bD$ is the duality functor of $\cD$-modules. We denote by $(U, f)$ the corresponding object obtained by extension of scalars from $\QQ$ to $\CC$. We have:

\begin{lemma}\label{lem:extscalars}
Extension of scalars is compatible with taking de~Rham cohomology, that is,
\[
\CC\otimes_\QQ\coH^r_{\dR}(U_0,f)\simeq\coH^r_{\dR}(U,f),\quad \CC\otimes_\QQ\coH^r_{\dR,\rc}(U_0,f)\simeq\coH^r_{\dR,\rc}(U,f).\eqno\qed
\]
\end{lemma}

As an immediate consequence we obtain that the same result holds for the middle de~Rham cohomology $\coH^r_{\dR,\rmid}(U_0,f)$.

Let us consider the setting of Remark \ref{rem:exactseq} and let us assume that the triple $(U,f,Z)$ is defined over $\QQ$. Let us set $M=(\cO_{U_0},\rd+\rd f)$. There is a natural exact sequence:
\[
0=R^0\Gamma_{Z_0}M\to M\to j_{Z_0+}j_{Z_0}^+M=M(*Z_0)\to R^1\Gamma_{Z_0}M\to0
\]
which identifies the complex $R\Gamma_{Z_0}M$ with $(\cO_{U_0}(*Z_0)/\cO_{U_0},\rd+\rd f)[-1]$. We note that
\[
(\cO_{U_0}(*Z_0)/\cO_{U_0},\rd+\rd f)\simeq(\cO_{U_0}(*Z_0)/\cO_{U_0},\rd).
\]
Indeed, this amounts to showing that the exponential function $\exp(\pm f)$ is well-defined on $\cO_{U_0}(*Z_0)/\cO_{U_0}$, and this follows from the nilpotency of multiplication by $f$ on each local section of $\cO_{U_0}(*Z_0)/\cO_{U_0}$. The long exact sequence in de~Rham cohomology thus reads
\[
\cdots\to\coH^r_{\dR,Z_0}(U_0)\to\coH^r_\dR(U_0,f)\to\coH^r_\dR(U_0\moins Z_0,f)\to\coH^{r+1}_{\dR,Z_0}(U_0)\to\cdots
\]
Dually (in the sense of $\cD_{\ov X_0}$-modules, and up to changing $f$ to $-f$, we obtain the exact sequence
\[
\cdots\to\coH^{r-1}_{\dR,\rc}(Z_0)\to\coH^r_{\dR,\rc}(U_0\moins Z_0,f)\to\coH^r_{\dR,\rc}(U_0,f)\to\coH^r_{\dR,\rc}(Z_0)\to\cdots
\]

\begin{cor}\label{cor:dRQiso}
Assume moreover that the $\QQ$-vector spaces $\coH^r_\dR(U_0\moins Z_0,f)$ and $\coH^r_{\dR,\rc}(U_0\moins Z_0,f)$ are zero for all $r$. Then the $\QQ$-de~Rham vector spaces $\coH^r_\dR(U_0,f)$ and $\coH^r_{\dR,Z_0}(U_0)$, respectively $\coH^r_{\dR,\rc}(U_0,f)$ and $\coH^r_{\dR,\rc}(Z_0)$, coincide.\qed
\end{cor}

\begin{example}\label{ex:dRQiso}
We consider the setting of \cite[Ex.\,A.27]{F-S-Y18} where the assumptions of Corollary \ref{cor:dRQiso} hold. We thus assume that $U_0=\Afu_t\times V$ for some smooth quasi-projective variety~$V_0$ and $f=tg$ for some regular function~$g$ on~$V_0$. We set $\cK_0=g^{-1}(0)$ and $Z_0=\Afu_t\times\cK_0$. Corollary \ref{cor:dRQiso} gives identifications of $\QQ$-vector spaces
\[
\coH^r_{\dR,Z_0}(U_0)\simeq\coH^r_\dR(U_0,f)\quand\coH^r_{\dR,\rc}(Z_0)\simeq\coH^r_{\dR,\rc}(U_0,f).
\]
\end{example}

\subsubsection*{Action of complex conjugation}
We denote by $(U^\RR, f^\RR)$ (or simply $U^\RR,f^\RR)$ the real-analytic space and map associated with $(U(\CC), f)$.
Then the complex conjugation endows $U^\RR$ with a real analytic involution $\conj$ which commutes with~$f^\RR$. Furthermore, one can find a compactification $(\ov X_0,D_0)$ defined over~$\QQ$ (since resolution of singularities holds in characteristic zero) so that $\conj$ extends in a unique way as a real analytic involution of $(\ov X{}^\RR,D^\RR)$. Similarly, $\wt X(D)^\RR$, etc.\ belong to the semi-analytic category and $\conj$ can then be lifted in a unique way as a semi-analytic involution $\wt\conj$ of $\wt X(D)^\RR$ that preserves $\partial\wt X(D)^\RR$. Lastly, since the moderate growth or rapid decay condition only involves $\Re(f^\RR)$, the involution $\wt\conj$ preserves the subsets $\wt U_\rmod(D)^\RR$ and $\wt U_\rrd(D)^\RR$.

\begin{cor}
If $(U,f)$ is the extension of $(U_0,f)$ defined over $\QQ$, the $\QQ$-Betti fibers $\coH_\Betti^r(U,f)$ and $\coH^r_{\Betti,\rc}(U,f)$ are naturally endowed with an involution $\conj^*$, which is compatible with the natural morphism $\coH^r_{\Betti,\rc}(U,f)\to\coH_\Betti^r(U,f)$.
\end{cor}

\begin{proof}
Indeed, $\conj^*$ is induced by
\begin{align*}
\wt\conj{}^*:\coH^r_\rc(\wt U_\rmod(D)^\RR,\QQ)&\to\coH^r_\rc(\wt U_\rmod(D)^\RR,\QQ),\quad\text{or}\\
\wt\conj{}^*:\coH^r_\rc(\wt U_\rrd(D)^\RR,\QQ)&\to\coH^r_\rc(\wt U_\rrd(D)^\RR,\QQ).\qedhere
\end{align*}
\end{proof}

\begin{cor}\label{cor:conjiso}
In the setting of Remark \ref{rem:exactseq}, assume that $(U_0,f,Z_0)$ is defined over $\QQ$. Then $\wt\conj{}^*$ is compatible with the morphisms of the Betti exact sequence \eqref{eq:exactseq}. In particular, if \hbox{$\coH^r_\rc(U\moins Z,f)=0$} for all $r$, then the involutions $\conj^*$ on $\coH^r_{\Betti,\rc}(U,f)$ and $\coH^r_\rc(Z^\RR,\QQ)$ coincide.\qed
\end{cor}

\begin{example}\label{ex:conj}
Let us keep the setting of Corollary \ref{ex:dRQiso}, that is, \cite[Ex.\,A.27]{F-S-Y18}. It is proved in \loccit that, for all $r\in\ZZ$, we have a diagram of mixed Hodge structures
\begin{equation}\label{eq:isofZ}
\begin{array}{c}
\xymatrix{
\coH^r_\rc(\Afu_t\times V,f)\ar[r]^-\sim\ar[d]&\coH^{r-2}_\rc(Z)(-1)\\
\coH^r(\Afu_t\times V,f)&\coH^r_Z(\Afu_t\times V)\ar[l]_-\sim
}
\end{array}
\end{equation}
where the vertical arrow is the natural one. Consider the case $r=d$,
the dimension of $\Afu_t\times V$. According to \cite[Prop.\,A.19]{F-S-Y18}, the upper line is mixed of weights $\leq d$ and the lower line is mixed of weights $\geq d$.
Furthermore, denoting by \hbox{$\coH^d_\rmid(\Afu_t\times V,f)$} the image of the vertical arrow, we have induced isomorphisms of pure Hodge structures
\[
\xymatrix{
\gr_d^W\coH^d_\rc(\Afu_t\times V,f)\ar[r]^-\sim\ar[d]^\wr
	& \big(\gr_{d-2}^W\coH^{d-2}_\rc(Z)\big)(-1)\\
\coH^d_\rmid(\Afu_t\times V,f)\ar[r]^-\sim&W_d\coH^d(\Afu_t\times V,f)&W_d\coH^d_Z(\Afu_t\times V)\ar[l]_-\sim
}
\]

We assume that $V_0,g$ are defined over~$\QQ$, making $U_0,f$ also defined over $\QQ$, as well as $\cK_0=g^{-1}(0)$ and $Z_0=\Afu_t\times\cK$, so that $\cK^\RR$ and $Z^\RR$ are preserved by $\conj$. It follows from Corollary \ref{cor:conjiso} that the isomorphisms of (exponential) mixed Hodge structures \eqref{eq:isofZ} induce isomorphisms of Betti fibers which are compatible with $\conj^*$.

Furthermore, the weight filtration $W_\bbullet \coH^r_\rc(Z^\RR,\QQ)$ is preserved by $\conj^*$, since it comes from a filtration defined at the level of Nori motives (\cf\cite[Th.\,10.2.5]{HMS17}). One can argue that the weight filtration of $\coH^r_{\Betti,\rc}(\Afu_t\times V,f)$ is also preserved by $\conj^*$ by analyzing first the behaviour of $\conj^*$ on $R^r\!f_!\,\pQQ_U$. Nevertheless, it is enough for our purpose to check that $\conj^*$ induces an action on $\coH^r_{\Betti,\rmid}(\Afu_t\times V,f)$, a property that follows from interpreting the latter space as the image of the Betti vertical arrow \eqref{eq:isofZ}.
\end{example}

\backmatter
\bibliographystyle{amsplain}
\bibliography{qr-Bessel2}
\end{document}